\definecolor{shadecolor}{gray}{0.95}
\declaretheoremstyle[
headfont=\normalfont\bfseries,
notefont=\mdseries, notebraces={(}{)},
bodyfont=\normalfont,
postheadspace=0.5em,
spaceabove=1pt,
mdframed={
  skipabove=8pt,
  skipbelow=8pt,
  hidealllines=true,
  backgroundcolor={shadecolor},
  innerleftmargin=4pt,
  innerrightmargin=4pt}
]{shaded}
\newcommand{\R}{\mathbb{R}}
\newcommand{\N}{\mathbb{N}}
\newcommand{\A}{\mathcal{A}}
  \newcommand{\argmin}{\text{\rm argmin}}
 \newcommand{\eqdef}{\overset{\text{def}}{=}} 
\providecommand{\Null}[1]{\mathbf{Null}\left( #1\right)}
\providecommand{\Rank}[1]{\mathbf{Rank}\left( #1\right)}
\providecommand{\Range}[1]{\mathbf{Range}\left( #1\right)}
\providecommand{\Tr}[1]{\mathbf{Tr}\left( #1\right)}
\providecommand{\diag}[1]{\mathbf{Diag}\left( #1\right)}
\providecommand{\E}[1]{\mathbf{E}\left[ #1\right]}
  \providecommand{\dotprod}[1]{\langle #1\rangle} 
\providecommand{\norm}[1]{\lVert#1\rVert}
\newtheorem{lemma}{Lemma}
\newtheorem{corollary}[lemma]{Corollary}
\newtheorem{definition}[lemma]{Definition}
\newtheorem{theorem}[lemma]{Theorem}
\newtheorem{example}[lemma]{Example}
\newcommand{\cD}{{\mathcal D}}
\newcommand{\cN}{{\mathcal N}}
\newcommand{\cS}{{\mathcal S}}
\newcommand{\cX}{{\mathcal X}}
\newcommand{\cY}{{\mathcal Y}}
\newcommand{\cZ}{{\mathcal Z}}
\newcommand{\mA}{{ A}}
\newcommand{\mI}{{I}}
\newcommand{\mS}{{ S}}
\newcommand{\mX}{{X}}
\newcommand{\bigZ}{{ \mathbf{Z}}}
\newcommand{\gS}{ {\mS_0}}
\providecommand{\Vect}[1]{\mathbf{Vec}\left( #1\right)}
\title{Accelerated Stochastic Matrix Inversion:  General Theory and  Speeding up BFGS Rules for Faster Second-Order Optimization}
\author{Robert M. Gower\footnote{T\'{e}l\'{e}com ParisTech, Paris, France} \and Filip Hanzely\footnote{King Abdullah University of Science and Technology, Thuwal, Saudi Arabia} \and Peter Richt\'{a}rik\footnote{King Abdullah University of Science and Technology, Thuwal, Saudi Arabia --- University of Edinburgh, Edinburgh, United Kingdom --- Moscow Institute of Physics and Technology, Moscow, Russia} \and Sebastian Stich\footnote{\'{E}cole polytechnique f\'{e}d\'{e}rale de Lausanne (EPFL), Lausanne, Switzerland}}
\date{June 13, 2018}
\begin{document}

\maketitle

\begin{abstract} We present the first accelerated randomized algorithm for solving linear systems in Euclidean spaces. One essential problem of this type is the matrix inversion problem. In particular, our algorithm can be specialized to invert positive definite matrices in such a way that all iterates (approximate solutions) generated by the algorithm are positive definite matrices themselves. This opens the way for many applications in the field of optimization and machine learning.  As an application of our general theory, we develop the {\em first  accelerated (deterministic and stochastic) quasi-Newton updates}. Our updates lead to provably more aggressive approximations of the inverse Hessian, and lead to speed-ups over classical non-accelerated rules in numerical experiments. Experiments with empirical risk minimization show that our rules can accelerate training of machine learning models.
\end{abstract}


\section{Introduction}
\label{s:into}


%
 Consider the optimization problem
\begin{equation}\label{eq:opt_main}\min_{w\in \R^n} f(w),\end{equation}
and assume $f$ is sufficiently smooth. A new wave of second order stochastic methods are being developed with the aim of solving large scale optimization problems. In particular, many of these new methods are based on stochastic BFGS updates~\cite{Schraudolph2007,wang2017stochastic,Mokhtari2014, moritz2016linearly,
Byrd2015}. Here we develop a new stochastic accelerated BFGS update that can form the basis of new stochastic quasi-Newton methods. 

Another approach to scaling up second order methods is to use randomized~\emph{sketching} to reduce the dimension, and hence the complexity of the Hessian and the updates involving the Hessian~ \cite{PilanciW17,Xu2016}, or \emph{subsampled} 
  Hessian matrices  when the objective function is a sum of many loss functions~\cite{BerahasBN17,agarwal2017second}. 

The starting point for developing second order methods is arguably Newton's method, which performs the iterative process
\begin{align}
 w_{k+1} = w_k - (\nabla^2 f(w_k))^{-1} \nabla f(w_k),
\end{align}
where $\nabla^2 f(w_k)$ and $\nabla f(w_k)$ are the Hessian and gradient of $f$, respectively. However, it is inefficient for solving large scale problems as it requires the computation of the Hessian and then solving a linear system in each iteration. Several methods have been developed to address this issue, based on the idea of approximating the exact update.

\emph{Quasi-Newton} methods, in particular the BFGS~\cite{broyden1967quasi,fletcher1970new,goldfarb1970family,shanno1970conditioning}, have been the leading optimization algorithm in various fields since the late 60's until the rise of big data, which brought a need for simpler first order algorithms. It is well known that Nesterov's acceleration \cite{nesterov1983method} is a reliable way to speed up first order methods. However until now, acceleration techniques have been applied exclusively to speeding up gradient updates. In this paper we present an accelerated BFGS algorithm, opening up new applications for acceleration. The acceleration in fact comes from an accelerated algorithm for inverting the Hessian matrix.

To be more specific, recall that quasi-Newton rules aim to maintain an estimate of the inverse Hessian $X_k$, adjusting it every iteration so that the inverse Hessian acts appropriately in a particular direction, while enforcing symmetry:
\begin{equation} \label{eq:bfgs_const}
X_k(\nabla f(w_{k})- \nabla f(w_{k-1})) =w_{k}-w_{k-1}, \qquad X_k  =X^\top_k.
\end{equation}



A notable research direction is the development of stochastic quasi-Newton methods~\cite{Gower:2017}, where the estimated inverse is equal to the true inverse over a subspace:
\begin{equation}
X_k\nabla^2f(w_k) S_k=S_k, \qquad X_k=X^\top_k,
\label{eq:sbfgs}
\end{equation}
where $S_k \in \R^{n \times \tau}$ is a randomly generated matrix.

In fact, \eqref{eq:sbfgs} can be seen as the so called sketch-and-project iteration for inverting $\nabla^2f(w_k)$. In this paper we first develop the accelerated algorithm for inverting positive definite matrices. As a direct application, our algorithm can be used as a primitive in quasi-Newton methods
 which results in a novel accelerated (stochastic) quasi-Newton method of the type \eqref{eq:sbfgs}. In addition, our acceleration technique can also be incorporated in the classical (non stochastic) BFGS method. This results in the accelerated BFGS method. Whereas the matrix inversion contribution is accompanied by strong theoretical justifications, this does not apply to the latter. Rather, we verify the effectiveness of this new  accelerated BFGS method through numerical experiments.

%

\subsection{Sketch-and-project for linear systems}
Our accelerated algorithm can be applied to more general tasks than only inverting matrices. In its most general form, it can be seen as an accelerated version of a  \emph{sketch-and-project} method in Euclidean spaces which we present now.
Consider a linear system $Ax=b$ such that $b\in \Range{A}$. One step of the 
sketch-and-project algorithm reads as:
\begin{equation} \label{eq:sap}
x_{k+1}=\argmin_{x} \; \norm{x_k-x}_B^2 \quad \text{subject to} \quad S_k^\top Ax=S_k^\top b,
\end{equation}
where $\norm{x}^2_B=\dotprod{Bx,x}$ for some $B\succ 0$ and $S_k$ is a random sketching matrix sampled i.i.d at each iteration from a fixed distribution.

Randomized Kaczmarz~\cite{K-1937,strohmer2009randomized} was the first algorithm of this type.
In~\cite{Gower2015}, this sketch-and-project algorithm was analyzed in its full generality.
Note that the dual problem of~\eqref{eq:sap} takes the form of a quadratic minimization problem \cite{Gower2015c}, and randomized methods such as coordinate descent~\cite{Nesterov12,Wright:2015}, random pursuit~\cite{Stich14,Stich2016} or stochastic dual ascent~\cite{Gower2015c} can thus also be captured as special instances of this method. Richt\'{a}rik and Tak\'{a}\v{c} \cite{RT2017_stoch_reformulations} adopt a new point of view through a theory of stochastic reformulations of linear systems. In addition, they consider the addition of a relaxation parameter, as well as  mini-batch and accelerated variants. Acceleration was only achieved for the expected iterates, and not in the L2 sense as we do here. We refer to Richt\'{a}rik and Tak\'{a}\v{c}  \cite{RT2017_stoch_reformulations} for interpretation of sketch-and-project as stochastic gradient descent, stochastic Newton, stochastic proximal point method, and stochastic fixed point method.

Gower \cite{Gower:2017} observed that the procedure~\eqref{eq:sap} can also be applied to find the inverse of a matrix. Assume the optimization variable itself is a matrix, $x=X$, $b= I$, the identity matrix, then sketch-and-project  converges (under mild assumptions) to a solution of $AX=I$. Even the symmetry constraint $X =X^\top$ can be incorporated into the sketch-and-project framework since it is a linear constraint.

There has been recent development in speeding up the sketch-and-project method using the idea of Nesterov's acceleration \cite{nesterov1983method}. In~\cite{Liu:2016} an accelerated Kaczmarz algorithm was presented for special sketches of rank one. Arbitrary sketches of rank one where considered in~\cite{Stich14}, block sketches in~\cite{Nesterov:2017} and recently, Tu and coathors \cite{TuVWGJR17} developed acceleration for special sketching matrices, assuming the matrix $A$ is square. This assumption, along with any assumptions on $A$, was later dropped in~\cite{MartinRichtarikAccell}. Another notable way to accelerate the sketch-and-project algorithm is by using momentum or stochastic momentum~\cite{loizou2017momentum}.

We build on recent work of Richt\'{a}rik and Tak\'{a}\v{c}  \cite{MartinRichtarikAccell} and further extend their analysis by studying accelerated sketch-and-project in general Euclidean spaces. This allows us to deduce the result for matrix inversion as a special case. However, there is one additional caveat that has to be considered for the intended application in quasi-Newton methods: ideally, all iterates of the algorithm should be symmetric positive definite matrices. This is not the case in general, but we address this problem by constructing special sketch operators that preserve symmetry and positive definiteness.

\section{Contributions} \label{sec:contrib}
We now present our main contributions.\\
\textbf{Accelerated Sketch and Project in Euclidean Spaces.} We generalize the analysis of an accelerated version of the sketch-and-project algorithm~\cite{MartinRichtarikAccell} to linear operator systems in Euclidean spaces. We provide a self-contained convergence analysis, recovering the original results in a more general setting. 

\textbf{Faster Algorithms for Matrix Inversion.} We develop an accelerated algorithm for inverting positive definite matrices. This algorithm can be seen as a special case of the accelerated sketch-and-project in Euclidean space, thus its convergence follows from the main theorem. However, we also provide a different formulation of the proof that is specialized to this setting. Similarly to~\cite{TuVWGJR17}, the performance of the algorithm depends on two parameters $\mu$ and $\nu$ that capture spectral properties of the input matrix and the sketches that are used. 
Whilst for the non-accelerated sketch-and-project algorithm for matrix inversion~\cite{Gower:2017} the knowledge of these parameters is not necessary, they need to be given as input to the accelerated scheme. When employed with the correct choice of parameters, the accelerated algorithm is always faster than the non-accelerated one. We also provide a theoretical rate for sub-optimal parameters $\mu, \nu$, and we perform numerical experiments to argue the choice of $\mu, \nu$ in practice.

\textbf{Randomized Accelerated Quasi-Newton.} 
The proposed iterative algorithm for matrix inversion is designed in such a way that each iterate is a symmetric matrix. This means, we can use the generated approximate solutions as estimators for the inverse Hessian in quasi-Newton methods, which is a direct extension of stochastic quasi-Newton methods. To the best of our knowledge, this yields the first accelerated (stochastic) quasi-Newton method.

\textbf{Accelerated Quasi-Newton.}
In the standard BFGS method the updates to the Hessian estimate are not chosen randomly, but deterministically. Based on the intuition gained from the accelerated random method, we propose an accelerated scheme for BFGS. The main idea is that we replace the random sketching of the Hessian with a deterministic update. The theoretical convergence rates do not transfer to this scheme, but we demonstrate by numerical experiments that it is possible to choose a parameter combination which yields a slightly faster convergence. We believe that the novel idea of accelerating BFGS update is extremely valuable, as until now, acceleration techniques were only considered to improve gradient updates.


\subsection{Outline} 
Our accelerated sketch-and-project algorithm for solving linear systems in Euclidean spaces is developed and analyzed in Section~\ref{sec:ami_paper}, and is used later in Section~\ref{sec:asqn_pap} to analyze an accelerated sketch-and-project algorithm for matrix inversion. The accelerated sketch-and-project algorithm for matrix inversion is then used to accelerate the BFGS update, which in term leads to the development of an accelerated BFGS optimization method. Lastly in Section~\ref{sec:num_pap}, we perform numerical experiments to gain  different insights into the newly developed methods.  Proofs of all results and additional insights can be found in the appendix.

\section{Accelerated Stochastic Algorithm for Matrix Inversion \label{sec:ami_paper}}
In this section we propose an accelerated randomized algorithm to solve linear systems in Euclidean spaces. 
This is a very general problem class and it comprises for instance also the matrix inversion problem.
Thus, we will use the result of this section later to analyze our newly proposed matrix inversion algorithm, which we then use to estimate the inverse of the Hessian within a quasi-Newton method.\footnote{Quasi-Newton methods do not compute an exact matrix inverse, rather, they only compute an incremental update. Thus, it suffices to apply \emph{one step} of our proposed scheme per iteration. This will be detailed in Section~\ref{sec:asqn_pap}.}

Let $\cX$ and $\cY$ be finite dimensional Euclidean spaces and 
let $\A:\cX \mapsto \cY$ be  a linear operator. 
Let $L(\cX,\cY)$ denote the space of linear operators that map from $\cX$ to $\cY.$ Consider the linear system  
\begin{equation}\label{eq:system} \A x = b,\end{equation}
where $x\in \cX$ and $b\in \Range{\A}.$
Consequently there exists a solution to the equation~\eqref{eq:system}. In particular,  we aim to find the solution closest to a given initial point $x_0 \in \cX$:
\begin{equation} \label{eq:primal}
x^* \eqdef \arg\min_{x \in \cX} \tfrac{1}{2}\norm{x-x_0}^2 \quad \mbox{subject to} \quad \A x = b.
\end{equation}
Using the pseudoinverse and Lemma~\ref{lem:pseudo} item~\emph{\ref{it:pseudoleastnorm}}, the solution to~\eqref{eq:primal} is given by
\begin{equation}\label{eq:xsol}
x^* = x_0- \A^{\dagger}(\A x_0 -b) \in x_0+ \Range{\A^*},
\end{equation}
where $A^{\dagger}$ and $A^*$ denote the pseudoinverse and the adjoint of $A,$ respectively.

\subsection{The algorithm}

Let $\cZ$ be a  Euclidean space and consider a random linear operator $\cS_k \in L(\cY,\cZ)$ chosen from some distribution $\cD$ over $L(\cY,\cZ)$ at iteration $k$.
Our method is given in Algorithm~\ref{alg:SketchJac}, where $Z_k \in L(\cX)$ is a random linear operator given by the following compositions
\begin{equation}\label{eq:Z}
Z_k =Z(\cS_k) \eqdef \A^*\cS_k^*(\cS_k\A\A^*\cS_k^*)^{\dagger}\cS_k \A.
\end{equation}
The updates of variables $g_k$ and $x_{k+1}$ on lines~8 and~9, respectively, correspond to what is known as the \emph{sketch-and-project} update:
\begin{equation}  \label{eq:sk}
 x_{k+1} =   \arg\min_{x \in \cX} \tfrac{1}{2}\norm{x-y_k}^2 \quad \text{subject to} \quad \cS_k \A x = \cS_k b, 
\end{equation}
which can also be written as the following operation 
\begin{equation}\label{eq:IZprojres}
x_{k+1} - x_*  = (I- Z_k)(y_k -x_*). 
\end{equation}
This follows from the fact that $b \in \Range{\A}$, together with item~\ref{it:pseudoTTdagT} of Lemma~\ref{lem:pseudo}. 
Furthermore, note that the adjoint $\A^*$ and the pseudoinverse in Algorithm~\ref{alg:SketchJac} are taken with respect to the norm  in~\eqref{eq:primal}. 


\begin{algorithm}[!h]
\begin{algorithmic}[1]
\State \textbf{Parameters:} $\mu, \nu >0$, ${\cal D}$ = distribution over random linear operators.
\State Choose  $x_0\in \cX$ and set $v_0 = x_0$, $\beta  =1 - \sqrt{\frac{\mu}{\nu}},$ $\gamma = \sqrt{\frac{1}{\mu \nu}},$ $\alpha = \frac{1}{1+\gamma\nu}.$  
\For{$k =  0, 1, \dots$}
\State $y_k = \alpha v_k + (1-\alpha) x_k$ 
	\State Sample an independent copy $S_k\sim {\cal D}$
	\State 
	$g_k = \A^*\cS_k^*(\cS_k\A\A^*\cS_k^*)^{\dagger}\cS_k(\A y_k -b)=Z_k(y_k -x_*)$  \label{ln:stochgrad}	
\State $x_{k+1} =y_k -g_k$\label{ln:xupdate}
	\State $v_{k+1} = \beta v_k +(1-\beta)y_k - \gamma g_k$ \label{ln:vupdate}
\EndFor
\end{algorithmic}
\caption{Accelerated  Sketch-and-Project  for solving \eqref{eq:sk}  \cite{MartinRichtarikAccell}}
\label{alg:SketchJac}
\end{algorithm}

Algorithm~\ref{alg:SketchJac} was first proposed and analyzed by Richt\'{a}rik and Tak\'{a}\v{c}  \cite{MartinRichtarikAccell} in the special case when $\cX = \R^n$ and $\cY = \R^m$. Our contribution here is in extending the algorithm and analysis to the more abstract setting of Euclidean spaces.  In addition, we provide some further extensions of this method in Sections~\ref{sec:omega} and~\ref{sec:alpha}, allowing for a non-unit stepsize  and variable $\alpha$, respectively.

\subsection{Key assumptions and quantities}
Denote $Z=Z(\cS)$ for $\cS\sim \cD$. Assume that the \emph{exactness property} holds
\begin{equation}
\Null{\A} = \Null{\E{Z}} ; 
 \label{eq:exactness}
\end{equation}
this is also equivalent to $\Range{\A^*} = \Range{\E{Z}}$. The exactness assumption is of key importance in the sketch-and-project framework, and indeed it is not very strong. For example, it holds for the matrix inversion problem with every sketching strategy we consider. 
We further assume that $\A \neq 0$ and $\E{Z}$ is finite. First we collect a few observation on the $Z$ operator

\begin{lemma}\label{lem:Z}
The $Z$ operator~\eqref{eq:Z} is a self-adjoint positive projection. Consequently $\E{Z}$ is a self-adjoint positive operator.
\end{lemma}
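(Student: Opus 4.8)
The claim has two parts: first that $Z = Z(\cS) = \A^*\cS^*(\cS\A\A^*\cS^*)^{\dagger}\cS\A$ is a self-adjoint positive projection, and second that consequently $\E{Z}$ is self-adjoint and positive. The second part is immediate once the first is established, since self-adjointness is preserved under expectation (averaging self-adjoint operators), and positivity is preserved too: if $\langle Zx,x\rangle \ge 0$ for every realization, then $\langle \E{Z}x,x\rangle = \E{\langle Zx,x\rangle}\ge 0$. So the real content is the statement about a single realization $Z$.

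For the single-realization part, the plan is to work through the three properties in turn: (i) $Z^* = Z$, (ii) $Z^2 = Z$, and (iii) $\langle Zx, x\rangle \ge 0$. Write $T = \cS\A \in L(\cX,\cZ)$, so that $Z = T^*(TT^*)^{\dagger}T$. Self-adjointness follows from the general fact that $(TT^*)^{\dagger}$ is self-adjoint (the pseudoinverse of a self-adjoint operator is self-adjoint, and $TT^*$ is self-adjoint), so $Z^* = T^*\big((TT^*)^{\dagger}\big)^*T^{**} = T^*(TT^*)^{\dagger}T = Z$. For idempotence, compute $Z^2 = T^*(TT^*)^{\dagger}(TT^*)(TT^*)^{\dagger}T$, and use the Moore--Penrose identity $(TT^*)^{\dagger}(TT^*)(TT^*)^{\dagger} = (TT^*)^{\dagger}$ to collapse the middle. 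For positivity, once we know $Z$ is a self-adjoint idempotent, positivity is automatic: $\langle Zx,x\rangle = \langle Z^2 x, x\rangle = \langle Zx, Z^*x\rangle = \langle Zx, Zx\rangle = \norm{Zx}^2 \ge 0$. (Alternatively one can invoke the item of Lemma~\ref{lem:pseudo} that presumably records exactly this: expressions of the form $T^*(TT^*)^{\dagger}T$ are orthogonal projections onto $\Range{T^*}$.)

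The only mild subtlety — and the step I'd be most careful about — is that all adjoints and pseudoinverses here are taken with respect to the (possibly non-standard) inner products on $\cX$, $\cY$, $\cZ$ coming from the norm in~\eqref{eq:primal}, as the text emphasizes right before the algorithm. So I need the pseudoinverse facts in the correct generality: for a linear operator between Euclidean spaces equipped with arbitrary inner products, $(BB^*)^{\dagger}$ is still self-adjoint whenever $B$ is, and the Moore--Penrose relations still hold, because these are coordinate-free statements about the four Penrose equations. I would cite the relevant items of Lemma~\ref{lem:pseudo} (the excerpt already references items like \ref{it:pseudoTTdagT}) rather than re-deriving them. Given those, the proof is a three-line verification plus the trivial passage to the expectation, so there is no genuine obstacle — the work is just in invoking the right abstract pseudoinverse identities rather than arguing in matrix coordinates.
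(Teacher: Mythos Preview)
Your argument is correct. The paper does not actually supply a separate proof of this lemma; it treats the claim as an immediate consequence of the pseudoinverse facts collected in Lemma~\ref{lem:pseudo}. If you want the shortest route using exactly what the paper records: with $T=\cS\A$, item~\emph{\ref{it:pseudoTadjTTadj}} gives $T^{\dagger}=T^*(TT^*)^{\dagger}$, so $Z=T^{\dagger}T$, and item~\emph{\ref{it:pseudoproj}} then identifies $T^{\dagger}T$ as the orthogonal projection onto $\Range{T^*}$, which settles self-adjointness, idempotence, and positivity in one stroke. Your direct verification via $(TT^*)^{\dagger}(TT^*)(TT^*)^{\dagger}=(TT^*)^{\dagger}$ is equally valid; just note that this particular Penrose identity is not listed verbatim in Lemma~\ref{lem:pseudo} (only $TT^{\dagger}T=T$ is), though it follows at once from the definition and items~\emph{\ref{it:pseudorange}} and~\emph{\ref{it:pseudoproj}}.
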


The two parameters that govern the acceleration are
\begin{equation}
 \label{eq:mu+nu}
\mu  \eqdef   \inf_{x \in \Range{\A^*}} \tfrac{\dotprod{\E{Z}x,x}}{\dotprod{x,x}}, \qquad \qquad 
\nu  \eqdef   \sup_{x \in \Range{\A^*}} \tfrac{\dotprod{\E{Z\E{Z}^\dagger Z}x,x}}{\dotprod{\E{Z}x,x}}.
\end{equation}

The supremum in the definition of $\nu$ is well defined due to the exactness assumption together with $\A \neq 0.$

\begin{lemma} \label{lem:Z_bounds}
We have 
\begin{equation} \label{eq:nubnds}
1\quad \leq \quad\nu\quad \leq \quad  \tfrac{1}{\mu} \quad = \quad  \norm{\E{Z}^{\dagger}}.
\end{equation}
Moreover, if $\Range{\A^*}=\cX$, we have
\begin{equation} \label{eq:nu_lower}
\tfrac{\Rank{\A^*} }{\E{\Rank{Z}}}\leq \nu.
\end{equation}
\end{lemma}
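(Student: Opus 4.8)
The plan is to read everything off three facts about $\E{Z}$: by Lemma~\ref{lem:Z} it is self-adjoint and positive semidefinite; by exactness~\eqref{eq:exactness} its range is $\Range{\A^*}$, so it is positive definite on that subspace; and each realization of $Z$ satisfies $Z=Z^*=Z^2$ with $\Range{Z}\subseteq\Range{\A^*}$, directly from~\eqref{eq:Z}. First I would observe that the infimum defining $\mu$ in~\eqref{eq:mu+nu} is the smallest eigenvalue of the restriction $\E{Z}|_{\Range{\A^*}}$ — it is attained on the unit sphere of the finite-dimensional space $\Range{\A^*}$ and is strictly positive because that restriction is invertible. Hence $\E{Z}^\dagger$ vanishes on $\Range{\A^*}^\perp$ and has all its nonzero eigenvalues at most $1/\mu$, so $\norm{\E{Z}^\dagger}=1/\mu$, which is the rightmost identity in~\eqref{eq:nubnds}.

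For $\nu\le 1/\mu$ I would argue pointwise. For any $w\in\cX$ we have $Zw\in\Range{Z}\subseteq\Range{\A^*}$ and $\norm{Zw}^2=\dotprod{Z^2w,w}=\dotprod{Zw,w}$, so
\[
\dotprod{Z\,\E{Z}^\dagger Zw,\,w}=\dotprod{\E{Z}^\dagger(Zw),\,Zw}\le\tfrac1\mu\norm{Zw}^2=\tfrac1\mu\dotprod{Zw,w}.
\]
Taking expectations yields $\E{Z\,\E{Z}^\dagger Z}\preceq\tfrac1\mu\E{Z}$, and since $\dotprod{\E{Z}x,x}>0$ for every nonzero $x\in\Range{\A^*}$, dividing the quadratic forms in the definition of $\nu$ gives $\nu\le 1/\mu$. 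For $\nu\ge 1$ I would use Jensen's inequality for the convex map $v\mapsto\norm{v}^2$ applied to the random vector $(\E{Z}^\dagger)^{1/2}Zx$ with $x\in\Range{\A^*}$:
\[
\dotprod{\E{Z\,\E{Z}^\dagger Z}x,\,x}=\E{\norm{(\E{Z}^\dagger)^{1/2}Zx}^2}\ge\norm{(\E{Z}^\dagger)^{1/2}\,\E{Z}x}^2=\dotprod{\E{Z}^\dagger\E{Z}x,\,\E{Z}x}=\dotprod{\E{Z}x,\,x},
\]
the last equality because $\E{Z}^\dagger\E{Z}$ restricts to the identity on $\Range{\E{Z}}=\Range{\A^*}$. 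Thus every ratio in the supremum defining $\nu$ is at least $1$.

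For the remaining bound, assume $\Range{\A^*}=\cX$, so $\E{Z}^\dagger=\E{Z}^{-1}$ and the definition of $\nu$ says exactly $\E{Z\,\E{Z}^{-1}Z}\preceq\nu\,\E{Z}$ on all of $\cX$. I would take traces of both sides. By the cyclic property of the trace together with $Z^2=Z$, the left-hand side becomes $\E{\Tr{\E{Z}^{-1}Z}}=\Tr{\E{Z}^{-1}\E{Z}}=\dim\cX=\Rank{\A^*}$, while the right-hand side is $\nu\,\E{\Tr{Z}}=\nu\,\E{\Rank{Z}}$ because $Z$ is an orthogonal projector. This gives~\eqref{eq:nu_lower}.

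I expect the only real care is needed in two places: the spectral bookkeeping for $\E{Z}^\dagger$ in the first step — which is precisely where exactness is used — and justifying the interchange of trace and expectation in the last step, which follows from the uniform bound $0\preceq Z\,\E{Z}^{-1}Z\preceq\tfrac1\mu I$ obtained along the way.
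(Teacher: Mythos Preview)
Your proof is correct and follows essentially the same route as the paper. The one notable simplification is in the step $\nu\ge 1$: the paper applies Jensen's inequality to the operator-convex map $X\mapsto \dotprod{X\,\E{Z}^\dagger X x,x}$ (and proves this convexity separately), whereas you apply ordinary Jensen to the scalar convex function $v\mapsto\|v\|^2$ evaluated at the random vector $(\E{Z}^\dagger)^{1/2}Zx$; your version is more elementary and avoids the auxiliary convexity lemma. The remaining steps --- identifying $1/\mu$ with $\|\E{Z}^\dagger\|$ via the spectrum of $\E{Z}|_{\Range{\A^*}}$, the pointwise bound $\dotprod{\E{Z}^\dagger Zw,Zw}\le\tfrac1\mu\|Zw\|^2=\tfrac1\mu\dotprod{Zw,w}$, and the trace computation for~\eqref{eq:nu_lower} --- match the paper's proof almost line for line.
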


\subsection{Convergence and change of the norm}
 For a positive self-adjoint $G \in L(\cX)$  and $x \in \cX$ let $\norm{x}_G \eqdef \sqrt{\dotprod{x,x}_G} \eqdef \sqrt{\dotprod{Gx,x}}$. We now informally state the convergence rate of Algorithm \ref{alg:SketchJac}. Theorem~\ref{theo:conv} generalizes  the main theorem from \cite{MartinRichtarikAccell} to linear systems in Euclidean spaces.

\begin{theorem}\label{theo:conv} Let $x_k, v_k$ be the random iterates of Algorithm \ref{alg:SketchJac}. Then
\[
 \E{\norm{v_{k} -x_*}_{\E{Z}^\dagger}^2 +\tfrac{1}{\mu}\norm{x_{k} -x_*}^2 }  \leq \left(1 - \sqrt{\tfrac{\mu}{\nu}} \right)^k \E{\norm{v_0 -x_*}_{\E{Z}^\dagger}^2 + \tfrac{1}{\mu}\norm{x_0-x_*}^2}.
\]
\end{theorem}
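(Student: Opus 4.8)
The plan is to follow the standard Lyapunov-function approach for accelerated methods, adapted to the Euclidean-space setting. Define the Lyapunov function
\[
\Phi_k \eqdef \norm{v_k - x_*}_{\E{Z}^\dagger}^2 + \tfrac{1}{\mu}\norm{x_k - x_*}^2,
\]
and the goal is to show $\E{\Phi_{k+1} \mid x_k, v_k} \leq (1 - \sqrt{\mu/\nu})\,\Phi_k$, after which the theorem follows by taking total expectations and iterating. First I would rewrite the three update lines of Algorithm~\ref{alg:SketchJac} in terms of the error vectors $a_k \eqdef v_k - x_*$, $b_k \eqdef x_k - x_*$, and $c_k \eqdef y_k - x_*$, using \eqref{eq:IZprojres} and $g_k = Z_k c_k$. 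This gives $c_k = \alpha a_k + (1-\alpha) b_k$, then $b_{k+1} = (I - Z_k) c_k$, and $a_{k+1} = \beta a_k + (1-\beta) c_k - \gamma Z_k c_k$. I would substitute $(1-\beta) a_k = (1-\beta)(c_k - (1-\alpha) b_k)/\alpha$ or similar to express $a_{k+1}$ purely in terms of $c_k$, $a_k$ (or $b_k$); the key algebraic identity the parameter choices $\beta = 1 - \sqrt{\mu/\nu}$, $\gamma = \sqrt{1/(\mu\nu)}$, $\alpha = 1/(1+\gamma\nu)$ are engineered to satisfy is something like $\beta a_k + (1-\beta) c_k = a_k + \tfrac{1-\beta}{\alpha}( \ldots )$, allowing the cross terms to telescope.

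Next I would expand $\E{\norm{a_{k+1}}_{\E{Z}^\dagger}^2 \mid \cdot}$ and $\tfrac1\mu\E{\norm{b_{k+1}}^2\mid \cdot}$ using the quadratic expansions, invoking Lemma~\ref{lem:Z} (so $Z_k$ is a self-adjoint positive projection, hence $Z_k^2 = Z_k$ and $(I-Z_k)^2 = I - Z_k$) and taking expectations over $\cS_k$ so that $\E{Z_k} = \E{Z}$ appears. The term $\tfrac1\mu\E{\norm{(I-Z_k)c_k}^2} = \tfrac1\mu(\norm{c_k}^2 - \dotprod{\E{Z}c_k, c_k})$ uses the projection property. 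The term from $a_{k+1}$ produces an inner product $\dotprod{\E{Z}^\dagger \E{Z_k \,\E{Z}^\dagger\, Z_k} c_k, c_k}$-type expression — this is exactly where the definition of $\nu$ in \eqref{eq:mu+nu} enters, giving the bound $\dotprod{\E{Z \E{Z}^\dagger Z} c_k, c_k} \leq \nu \dotprod{\E{Z} c_k, c_k}$, valid because $c_k \in \Range{\A^*} = \Range{\E{Z}}$ (which must be checked inductively: $y_k$, hence $c_k$, stays in the affine space $x_* + \Range{\A^*}$ since $v_0 = x_0$ and all updates preserve this). Likewise $\mu$ enters through $\dotprod{\E{Z}x,x} \geq \mu\norm{x}^2$ on $\Range{\A^*}$, used to convert an $\E{Z}$-norm term into a Euclidean-norm term matching the $\tfrac1\mu\norm{b_{k+1}}^2$ slot.

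Finally I would collect all terms and verify that the coefficient bookkeeping yields precisely $(1-\sqrt{\mu/\nu})\Phi_k$: the cross terms $\dotprod{\cdot, b_k}$, $\dotprod{\cdot, a_k}$ cancel by the parameter identities, the $\norm{c_k}^2$ and $\dotprod{\E{Z}c_k,c_k}$ terms combine with the right sign, and one uses the convexity-type inequality relating $\norm{c_k}^2$, $\norm{a_k}^2$, $\norm{b_k}^2$ via $c_k = \alpha a_k + (1-\alpha) b_k$ (Jensen, since $\alpha \in (0,1)$). The main obstacle, as usual in accelerated analyses, is not any single step but the precise matching of constants: one must be careful that the adjoint and pseudoinverse are taken with respect to the $\norm\cdot$ inner product of \eqref{eq:primal}, that $\E{Z}^\dagger$ is used consistently (e.g. $\E{Z}\,\E{Z}^\dagger$ acts as the identity on $\Range{\A^*}$ by Lemma~\ref{lem:Z}), and that the inductive invariant $c_k \in \Range{\E{Z}}$ is genuinely maintained so that all the $\mu,\nu$ inequalities apply. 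I expect the cleanest route is to mirror the computation of \cite{MartinRichtarikAccell} step by step, replacing matrix transposes by adjoints and $\R^n$ inner products by the abstract Euclidean inner product, and checking at each line that no finite-dimensional coordinate argument was secretly used.
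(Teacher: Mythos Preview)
Your proposal is correct and follows essentially the same route as the paper: expand the Lyapunov function in the error variables, use that $Z_k$ is a projection together with the fact that $\E{Z}^\dagger\E{Z}$ acts as the identity on $\Range{\A^*}$ (after checking the invariant $c_k\in\Range{\A^*}$, which is the paper's Lemma~\ref{lem:invariant}), apply the $\nu$- and $\mu$-bounds from \eqref{eq:mu+nu}, and then verify that the parameter choices $\alpha,\beta,\gamma$ zero out the offending coefficients. The paper organizes the computation as a three-term decomposition $I+\gamma^2\, II-2\gamma\, III$ of $\norm{v_{k+1}-x_*}_{\E{Z}^\dagger}^2$ and uses the parallelogram identity (rather than Jensen) to handle the convex combinations, but the content is the same.
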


This theorem shows the accelerated Sketch-and-Project algorithm converges linearly with a rate of $ \bigl( 1-\sqrt{\frac{\mu}{\nu}} \bigr),$ which translates to a total of 
$O(\sqrt{\nu/\mu}\log\left( 1/\epsilon\right) )$ iterations to bring the given error in Theorem~\ref{theo:conv} below $\epsilon >0.$ This is in contrast with the non-accelerated 
Sketch-and-Project algorithm which requires $O((1/\mu)\log\left( 1/\epsilon\right) )$ iterations, as shown in~\cite{Gower2015} for solving linear systems. From~\eqref{eq:nubnds}, we have the bounds
$1/\sqrt{\mu}  \leq  \sqrt{\nu/\mu}  \leq 1/\mu.$
On one extreme, this  inequality shows that the iteration complexity of the accelerated algorithm is at least as good as its non-accelerated counterpart. On the other extreme, the accelerated algorithm might require as little as the square root of the number of iterations of its non-accelerated counterpart. Since the cost of a single iteration of the accelerated algorithm is of the same order as the non-accelerated algorithm, this theorem shows that acceleration can offer a significant speed-up, which is verified numerically in Section~\ref{sec:num_pap}. 
It is also possible to get the convergence rate of accelerated sketch-and-project where projections are taken with respect to a different weighted norm. For technical details, see Section~\ref{sec:change_norm} of the Appendix. 

\subsection{Coordinate sketches with convenient probabilities \label{sec:convenient_munu}}
Let us consider a simple example in the setting for Algorithm~\ref{alg:SketchJac} where we can understand parameters $\mu, \nu$. 
 In particular, consider a linear system $Ax=b$ in $\R^n$ where $A$ is symmetric positive definite. 
 
\begin{corollary} \label{cor:sss}
 Choose $B=A$ and $S=e_i$ with probability proportional to $A_{i,i}$. Then
\begin{equation}
\mu=\tfrac{\lambda_{\min}(A)}{\Tr{A}} =: \mu^P \quad \mbox{and} \quad \nu = \tfrac{\Tr{A}}{\min_i A_{i,i}} =: \nu^P
\label{eq:munu_conv_paper}
\end{equation}
and therefore the convergence rate given in Theorem~\ref{theo:conv} for the accelerated algorithm is 
\begin{equation}\label{eq:89g9d8g098f}
\biggl( 1- \sqrt{\tfrac{\mu}{\nu}}\biggr)^k\quad  =\quad   \left( 1-\tfrac{\sqrt{\lambda_{\min}(A) \min_i A_{i,i}}}{\Tr{A}} \right) ^k.
\end{equation}
\end{corollary}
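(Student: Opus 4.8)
The plan is to specialize the abstract objects of Section~\ref{sec:ami_paper} to this setting and then read off $\mu$ and $\nu$ from their definitions~\eqref{eq:mu+nu}. Here $\cX=\R^n$ carries the inner product $\dotprod{u,w}=u^\top A w$ (because $B=A$), while $\cY=\R^n$ has the standard inner product, $\cZ=\R$, $\A$ is multiplication by $A$, and the $i$-th sketch operator is $\cS=e_i^\top$. The first step is to identify the adjoints with respect to these inner products: solving $\dotprod{\A x,y}=\dotprod{x,\A^*y}$ gives $\A^*=B^{-1}A^\top=I$ (using $A=A^\top$ and $B=A$), and $\cS^*=e_i$. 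Since $A\succ 0$, the scalar $\cS\A\A^*\cS^*=e_i^\top A e_i=A_{ii}$ is positive, so~\eqref{eq:Z} collapses to $Z_i\eqdef\frac{1}{A_{ii}}e_ie_i^\top A$, which one checks is an idempotent and $A$-self-adjoint operator, in accordance with Lemma~\ref{lem:Z}.

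Next I take the expectation over $i$ drawn with probability $p_i=A_{ii}/\Tr{A}$. The weights cancel the $1/A_{ii}$ factors, so using $\sum_i e_ie_i^\top=I$,
\[
\E{Z}=\sum_i p_i Z_i=\frac{1}{\Tr{A}}\sum_i e_ie_i^\top A=\frac{1}{\Tr{A}}A .
\]
In particular $\E{Z}$ is invertible, hence exactness~\eqref{eq:exactness} holds, $\Range{\A^*}=\cX=\R^n$, and $\E{Z}^\dagger=\Tr{A}\,A^{-1}$. Unfolding the $\cX$-inner products in~\eqref{eq:mu+nu} we get $\dotprod{\E{Z}x,x}=\frac{1}{\Tr{A}}x^\top A^2 x$ and $\dotprod{x,x}=x^\top A x$, so after the substitution $x\mapsto A^{-1/2}x$,
\[
\mu=\inf_{x\neq 0}\frac{x^\top A^2 x}{\Tr{A}\;x^\top A x}=\frac{\lambda_{\min}(A)}{\Tr{A}}=\mu^P .
\]

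For $\nu$, the key algebraic identity is $e_i^\top A A^{-1}e_i=1$, which gives $Z_i\,\E{Z}^\dagger\,Z_i=\frac{\Tr{A}}{A_{ii}}Z_i$ and therefore, upon taking expectations,
\[
\E{Z\E{Z}^\dagger Z}=\sum_i\frac{1}{A_{ii}}e_ie_i^\top A=D^{-1}A,\qquad D\eqdef\diag{A}.
\]
Consequently $\dotprod{\E{Z\E{Z}^\dagger Z}x,x}=x^\top A D^{-1}A x$, and dividing by $\dotprod{\E{Z}x,x}=\frac{1}{\Tr{A}}x^\top A^2 x$ and substituting $x\mapsto A^{-1}x$ yields $\nu=\Tr{A}\,\lambda_{\max}(D^{-1})=\Tr{A}/\min_i A_{ii}=\nu^P$. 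Substituting the resulting $\mu,\nu$ into the contraction factor $1-\sqrt{\mu/\nu}$ of Theorem~\ref{theo:conv} gives~\eqref{eq:89g9d8g098f}. I expect the only real obstacle to be bookkeeping the two inner products consistently — the $B$-inner product on $\cX$ enters the adjoint $\A^*$, the pseudoinverse $\E{Z}^\dagger$, and the Rayleigh quotients defining $\mu$ and $\nu$ — together with the routine verification that each pseudoinverse that appears is in fact an inverse, which holds since $A\succ 0$ forces $A_{ii}>0$ and $\E{Z}\succ 0$.
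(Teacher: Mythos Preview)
Your proof is correct and follows essentially the same approach as the paper. The only cosmetic difference is that the paper first performs the change of coordinates $x\mapsto A^{1/2}x$ so that $Z=\tfrac{1}{A_{ii}}A^{1/2}e_ie_i^\top A^{1/2}$ is symmetric in the standard inner product, whereas you keep the $A$-inner product on $\cX$ throughout and work with $Z_i=\tfrac{1}{A_{ii}}e_ie_i^\top A$; the resulting computations of $\E{Z}$, $\mu$, and $\nu$ are then identical up to this conjugation.
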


Rate \eqref{eq:89g9d8g098f} of our accelerated method is to be contrasted with the rate of the non-accelerated method: 
$
(1- \mu )^k  =  ( 1- \lambda_{\min}(A)/ \Tr{A}) ) ^k.
$
Clearly, we gain from acceleration if the smallest diagonal element of $A$ is significantly larger than the smallest eigenvalue.

 In fact, parameters $\mu^P, \nu^P$ above are the correct choice for the matrix inversion algorithm, when symmetry is not enforced, as we shall see later.  Unfortunately, we are not able to estimate the parameters while enforcing symmetry for different sketching strategies. We dedicate a section in numerical experiments to test, if the parameter selection \eqref{eq:munu_conv_paper} performs well under enforced symmetry and different sketching strategies, and also how one might safely choose $\mu,\nu$ in practice.


\section{Accelerated Stochastic BFGS Update \label{sec:asqn_pap}}
The update of the inverse Hessian used in quasi-Newton methods (e.g., in BFGS) can be seen as a sketch-and-project update applied to the linear system $AX=I$, while $X= X^\top$ is enforced, and where $A$ denotes and approximation of the Hessian.
In this section, we present an accelerated version of these updates. We provide two different proofs: one based on Theorem~\ref{theo:conv} and one based on vectorization.   By mimicking the updates of the accelerated stochastic BFGS method for inverting matrices, we determine a heuristic for accelerating the classic deterministic BFGS update. We then incorporate this acceleration into the classic BFGS optimization method and show that the resulting  algorithm can offer a speed-up of the standard BFGS algorithm.
 

\subsection{Accelerated matrix inversion}
Consider the symmetric positive definite matrix $A \in \R^{n \times n}$ and the following projection problem
\begin{equation}  \label{eq:primalqN}
 A^{-1} =  \arg\min_{X} \; \norm{X}_{F(A)}^2 \quad \text{subject to} \quad AX= I, \quad X = X^\top,
\end{equation}
where $\norm{X}_{F(A)}\eqdef \Tr{AX^\top AX} = \norm{A^{1/2}XA^{1/2}}_F^2.$
This projection problem can be cast as an instantiation of the general projection problem~\eqref{eq:primal}. Indeed, we need only note that the constraint in~\eqref{eq:primalqN} is linear and equivalent to
$\mathcal{A}(X) \eqdef \left(\begin{smallmatrix} AX \\ X -X^\top \end{smallmatrix} \right)= 
\left(\begin{smallmatrix} I \\ 0 \end{smallmatrix}\right). $
The matrix inversion problem can be efficiently solved using sketch-and-project with a symmetric sketch~\cite{Gower:2017}. The symmetric sketch is given by
$ \mathcal{S}_k\mathcal{A}(X)  = \left(\begin{smallmatrix} S_k^\top AX \\ X -X^\top \end{smallmatrix}\right), $
where $S_k \in \R^{n \times \tau}$ is a random matrix drawn from a distribution $\mathcal{D}$ and $\tau \in \N.$ The resulting sketch-and-project method is as follows
\begin{equation}  \label{eq:primalqNX}
 X_{k+1}=   \arg\min_{X} \; \norm{X - X_k}_{F(A)}^2 \quad   \text{subject to} \quad S_k^\top A X = S_k^\top, \quad X = X^\top,
\end{equation}
 the closed form solution of which is 
\begin{equation}
X_{k+1}  =S_k(S_k^\top A S_k)^{-1}S_k^\top + \left(I-S_k(S_k^\top AS_k)^{-1}S_k^\top A\right) X_{k} \left(I -AS_k(S_k^\top AS_k)^{-1}S_k^\top  \right).\label{eq:qunac} 
\end{equation}

By observing that~\eqref{eq:primalqNX} is the sketch-and-project algorithm applied to a linear operator equation, we have constructed an accelerated version in Algorithm~\ref{alg:qn}. We can also apply Theorem~\ref{theo:conv} to prove that 
 Algorithm~\ref{alg:qn} is indeed accelerated.
\begin{theorem}\label{theo:qn}
Let $L^k\eqdef \norm{V_k -A^{-1}}_{M}^2 + \tfrac{1}{\mu}\norm{X_k-A^{-1}}^2_{F(A)}$. The iterates of Algorithm~\ref{alg:qn} satisfy 
\begin{equation}\label{eq:qnaccconv}
 \E{L_{k+1}} \leq \left(1 - \sqrt{\tfrac{\mu}{\nu}} \right) \E{L_k},
\end{equation} 
where 
$\norm{X}_{M}^2 = \Tr{A^{1/2}X^\top A^{1/2} \E{Z}^\dagger A^{1/2} X A^{1/2}}.$
Furthermore,
\begin{equation}
\mu  \eqdef   \inf_{X \in \R^{n\times n}} \tfrac{\dotprod{\E{Z}X,X}}{\dotprod{X,X}}= \lambda_{\min}(\E{\bigZ}), \qquad \nu \eqdef   \sup_{X \in \R^{n\times n}} \tfrac{\dotprod{\E{Z\E{Z}^\dagger Z}X,X}}{\dotprod{\E{Z}X,X}},\label{eq:nuqn} 
\end{equation}
where 
\begin{equation}
\bigZ \eqdef  I\otimes I- (I-P)\otimes(I-P) , \qquad P \eqdef A^{1/2}S(S^\top AS)^{-1}S^\top A^{1/2}, \label{eq:bigz}
\end{equation}
and
$Z: X \in \R^{n\times n} \rightarrow \R^{n\times n}$ is given by
$Z(X) = X - \left(I-P\right) X\left(I -P \right) = XP + PX(I-P).$
Moreover, $ 2\lambda_{\min}(\E{P})\geq \lambda_{\min}(\E{\bigZ})\geq \lambda_{\min}(\E{P}).$
\end{theorem}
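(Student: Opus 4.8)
The plan is to recognize problem~\eqref{eq:primalqNX} as an instance of the abstract sketch-and-project iteration~\eqref{eq:sk}, read off~\eqref{eq:qnaccconv} from Theorem~\ref{theo:conv}, and then identify all the objects ($Z$, $\bigZ$, $\mu$, $\nu$, $M$) explicitly in matrix language. First I would set up the dictionary: take $\cX = \R^{n\times n}$ equipped with the inner product $\dotprod{X,Y}_{F(A)}\eqdef\Tr{AX^\top AY}$, and let $\cA$ be the operator $\cA(X)=\left(\begin{smallmatrix} AX \\ X-X^\top\end{smallmatrix}\right)$ already introduced after~\eqref{eq:primalqN}, with the symmetric sketch acting by $\cS_k\cA(X)=\left(\begin{smallmatrix} S_k^\top AX \\ X-X^\top\end{smallmatrix}\right)$; then~\eqref{eq:primalqNX} is precisely~\eqref{eq:primal}/\eqref{eq:sk} for this operator. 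Since $A\succ0$, $\cA$ is injective, so $\Null{\cA}=\{0\}$, $\Range{\cA^*}=\cX$, and the exactness property~\eqref{eq:exactness} holds automatically; in particular the infimum and supremum defining $\mu,\nu$ in~\eqref{eq:mu+nu} may be taken over all of $\R^{n\times n}$, which is what~\eqref{eq:nuqn} asserts.

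Next I would compute the operator $Z_k=Z(\cS_k)$. This is cleanest after the change of variables $\tilde X\eqdef A^{1/2}XA^{1/2}$, which is an isometry from $(\R^{n\times n},\dotprod{\cdot,\cdot}_{F(A)})$ onto $(\R^{n\times n},\dotprod{\cdot,\cdot}_F)$: under it the target $A^{-1}$ maps to $I$, the constraint $S_k^\top AX=S_k^\top$ becomes $T_k^\top\tilde X=T_k^\top$ with $T_k\eqdef A^{1/2}S_k$, and the closed form~\eqref{eq:qunac} becomes $\tilde X_{k+1}-I=(I-P_k)(\tilde X_k-I)(I-P_k)$ with $P_k=A^{1/2}S_k(S_k^\top AS_k)^{-1}S_k^\top A^{1/2}$. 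Comparing with~\eqref{eq:IZprojres} identifies $Z(X)=X-(I-P)X(I-P)=XP+PX(I-P)$. Since $P$ is an orthogonal projection ($P=P^\top=P^2$), so is $I-P$, and a short computation confirms that $Z$ is a self-adjoint positive projection, consistent with Lemma~\ref{lem:Z}.

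Then I would vectorize. Using $\Vect{AXB}=(B^\top\otimes A)\Vect{X}$ together with $(I-P)^\top=I-P$ gives $\Vect{Z(X)}=\bigl(I\otimes I-(I-P)\otimes(I-P)\bigr)\Vect{X}=\bigZ\,\Vect{X}$, which is~\eqref{eq:bigz}; expanding the Kronecker product also yields $\bigZ=I\otimes P+P\otimes I-P\otimes P$. Because $\Vect{\cdot}$ is a linear isometry, the operator $\E{Z}$ corresponds to the symmetric matrix $\E{\bigZ}$ with the same spectrum, so $\dotprod{\E{Z}X,X}=\dotprod{\E{\bigZ}\Vect{X},\Vect{X}}$ and hence $\mu=\lambda_{\min}(\E{\bigZ})$, while $\nu$ takes the stated form; the weighted norm $\norm{\cdot}_{\E{Z}^\dagger}$ appearing in Theorem~\ref{theo:conv} pulls back through the change of variables to $\norm{X}_M^2=\Tr{A^{1/2}X^\top A^{1/2}\,\E{Z}^\dagger\, A^{1/2}XA^{1/2}}$. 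Substituting all of this into Theorem~\ref{theo:conv} gives~\eqref{eq:qnaccconv}.

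Finally, for the spectral bounds: from $0\preceq P\preceq I$ and monotonicity of the Kronecker product, $P\otimes(I-P)\succeq0$ and $P\otimes P\succeq0$, so pointwise $I\otimes P\preceq\bigZ=I\otimes P+P\otimes I-P\otimes P\preceq I\otimes P+P\otimes I$; taking expectations, using $\lambda_{\min}(I\otimes\E{P})=\lambda_{\min}(\E{P})$ and $\lambda_{\min}(I\otimes\E{P}+\E{P}\otimes I)=2\lambda_{\min}(\E{P})$ (eigenvalues of the Kronecker sum are pairwise sums of eigenvalues), and the monotonicity of $\lambda_{\min}$ under $\preceq$, we obtain $\lambda_{\min}(\E{P})\leq\lambda_{\min}(\E{\bigZ})\leq2\lambda_{\min}(\E{P})$. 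The main obstacle is the bookkeeping in the first three steps — matching the abstract adjoint/pseudoinverse machinery (taken with respect to the $F(A)$-norm) with the explicit matrix formulas, getting the change of variables and the resulting form of $M$ exactly right, and confirming via injectivity of $\cA$ that the infima in~\eqref{eq:mu+nu} range over the whole matrix space — whereas the Kronecker identities and the $\preceq$-bounds at the end are routine.
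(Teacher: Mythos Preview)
Your proposal is correct and follows essentially the same route as the paper: perform the change of variables $\hat X=A^{1/2}XA^{1/2}$ to reduce to the standard Frobenius inner product, read off $Z(X)=X-(I-P)X(I-P)$ from the closed-form update, vectorize to identify $\bigZ$ and hence $\mu=\lambda_{\min}(\E{\bigZ})$, and invoke Theorem~\ref{theo:conv} after pulling the $\E{Z}^\dagger$--norm back through the isometry to obtain $\norm{\cdot}_M$. Your justification of the eigenvalue sandwich via $I\otimes P\preceq\bigZ\preceq I\otimes P+P\otimes I$ is the same argument the paper uses (they write the lower bound as $\bigZ\succeq P\otimes I$, which has the same minimal eigenvalue), and your use of injectivity of $\cA$ in place of surjectivity of $\cA^*$ to conclude $\Range{\cA^*}=\R^{n\times n}$ is an equivalent way to check exactness.
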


Notice that preserving symmetry yields $\mu =\lambda_{\min}(\E{\bigZ})$ , which can be up to twice as large as  $\lambda_{\min}(\E{P})$, which is the value of the $\mu$ parameter of the method without preserving symmetry. 
This improved rate is new, and was not present in the algorithm's debut publication~\cite{Gower:2017}.  In terms of parameter estimation, once symmetry is not preserved, we fall back onto the setting from Section~\ref{sec:convenient_munu}. Unfortunately, we were not able to quantify the effect of enforcing symmetry on the parameter $\nu$.

\begin{algorithm}[!h]
\begin{algorithmic}[1]
\State \textbf{Parameters:} $\mu, \nu >0$, ${\cal D}$ = distribution over random linear operators.
\State Choose  $X_0\in \cX$ and set $V_0 = X_0$, $\beta  =1 - \sqrt{\frac{\mu}{\nu}},$ $\gamma = \sqrt{\frac{1}{\mu \nu}},$ $\alpha = \frac{1}{1+\gamma\nu}$  
\For {$k =  0, 1, \dots$}
\State $Y_k = \alpha V_k + (1-\alpha) X_k$ 
	\State Sample an independent copy $S\sim {\cal D}$
	\State $X_{k+1} = Y_k + (Y_kA-I)S(S^\top AS)^{-1}S^\top  
- S(S^\top AS)^{-1}S^\top AY_k$ \\
		\qquad \qquad $+ S(S^\top AS)^{-1}S^\top AY_kAS(S^\top AS)^{-1}S^\top $  \label{ln:stochgradqn}	
	\State $V_{k+1} = \beta V_k +(1-\beta)Y_k - \gamma (Y_k-X_{k+1})$ \label{ln:vupdateqn}
\EndFor
\end{algorithmic}
\caption{Accelerated  BFGS matrix inversion (solving \eqref{eq:primalqN})}
\label{alg:qn}
\end{algorithm}

\subsection{Vectorizing -- a different insight}
Define ${\bf Vec}: \R^{n\times n}\rightarrow \R^{n^2}$ to be a vectorization operator of column-wise stacking and denote $x\eqdef \Vect{X}$. It can be shown that the sketch-and-project operation for matrix inversion \eqref{eq:primalqNX} is equivalent to 
\begin{equation*}  \label{eq:primalqNX}
 x_{k+1}=   \arg\min_{x}\; \norm{x- x_k}_{A\otimes A}^2  \quad \text{subject to} \quad  (I\otimes S_k^\top) (I\otimes A) x = (I\otimes S_k^\top) \Vect{I}, \; Cx=0,
\end{equation*}
where $C$ is defined so that  $Cx=0$ if and only if $X~=~X^\top$. The above is a sketch-and-project update for a linear system in $\R^{n^2}$, which allows to obtain an alternative proof of Theorem \ref{theo:qn}, without using our results from Euclidean spaces. The details are provided in Section~\ref{sec:alternate} of the Appendix.

\subsection{Accelerated BFGS as an optimization algorithm}
\label{sec:accBFGSmethod}
 As a tweak in the stochastic BFGS allows for a faster estimation of Hessian inverse and therefore more accurate steps of the method, one might wonder if a equivalent tweak might speed up the standard, deterministic BFGS algorithm for solving~\ref{eq:opt_main}. The mentioned tweaked version of standard BFGS is proposed as Algorithm~\ref{alg:bfgs_opt}. We do not state a convergence theorem for this algorithm---due to the deterministic updates the analysis is currently elusive---nor propose to use it as a default solver, but we rather introduce it as a novel idea for accelerating optimization algorithms. We leave theoretical analysis for the future work. For now, we perform several numerical experiments, in order to understand the potential and limitations of this new method.

\begin{algorithm}[!h]
\begin{algorithmic}[1]
\State \textbf{Parameters:} $\mu, \nu >0$, 
stepsize $\eta$.
\State Choose $X_0\in \cX$, $w_0$ and set $V_0 = X_0$, $\beta  =1 - \sqrt{\frac{\mu}{\nu}},$ $\gamma = \sqrt{\frac{1}{\mu \nu}},$ $\alpha = \frac{1}{1+\gamma\nu}.$  
\For {$k =  0, 1, \dots$}
\State $w_{k+1}=w_{k}-\eta X_{k}\nabla f(w_{k})$
\State $s_k = w_{k+1}-w_k$, \quad $\zeta_k = \nabla f(w_{k+1})- \nabla f(w_k)$
\State $Y_k = \alpha V_k + (1-\alpha) X_k$ 
		 
\State $X_{k+1} =   \frac{\delta_k \delta_k^\top}{\delta_k^\top \zeta_k}+ \left(I-\frac{\delta_k \zeta_k^\top}{\delta_k^\top \zeta_k} \right) Y_{k} \left(I -\frac{\zeta_k\delta_k^\top}{\delta_k^\top \zeta_k}   \right) $ \label{ln:updateX}
	\State $V_{k+1} = \beta V_k+(1-\beta)Y_k-\gamma (Y_k - X_{k+1})$ 
\EndFor
\end{algorithmic}
\caption{BFGS method with accelerated BFGS update for solving \eqref{eq:opt_main}}
\label{alg:bfgs_opt}
\end{algorithm}

To better understand Algorithm~\ref{alg:bfgs_opt}, recall that the BFGS updates an estimate of the inverse Hessian via
\begin{equation}
 X_{k+1}=\argmin_{X} \; \|X-X_k \|^2_{F(A)} \quad \text{subject to} \quad X\delta_k=\zeta_k ,\, X=X^\top, 
\end{equation}
where $\delta_k = w_{k+1}-w_k $ and $ \zeta_k = \nabla f(w_{k+1})- \nabla f(w_k)$. The above has the following  closed form solution
$
X_{k+1}=\tfrac{\delta_k \delta_k^\top}{\delta_k^\top \zeta_k}+ \left(I-\tfrac{\delta_k \zeta_k^\top}{\delta_k^\top \zeta_k} \right) X_{k} \left(I -\tfrac{\zeta_k\delta_k^\top}{\delta_k^\top \zeta_k}   \right).
$
This update appears on line~\ref{ln:updateX} of Algorithm~\ref{alg:bfgs_opt} with the difference being that it is applied to a matrix $Y_k$.

\section{Numerical Experiments \label{sec:num_pap}}

We perform extensive numerical experiments to bring additional insight to both the performance of and to parameter selection for Algorithms~\ref{alg:qn} and~\ref{alg:bfgs_opt}. More numerical experiments can be found in Section~\ref{sec:exp_appendix} of the appendix. We first test our  accelerated matrix inversion algorithm, and subsequently perform experiments related to Section~\ref{sec:accBFGSmethod}. 

\subsection{Accelerated Matrix Inversion} \label{sec:ex-BFGS-opt}

We consider the problem of inverting a matrix symmetric positive matrix $A$. 
We focus on a few particular choices of matrices $A$ (specified when describing each experiment), that differ in their eigenvalue spectra.
Three different sketching strategies are studied: Coordinate sketches with convenient probabilities ($S=e_i$ with probability proportional to $A_{i,i}$), coordinate sketches with uniform probabilities ($S=e_i$ with probability $\tfrac1n$) and Gaussian sketches ($S\sim \cN(0,I)$).
As matrices to be inverted, we use both artificially generated matrices with the access to the spectrum and also Hessians of ridge regression problems from LIBSVM. 

We have shown earlier that $\mu, \nu$ can be estimated as per \eqref{eq:munu_conv_paper} for coordinate sketches with convenient probabilities without enforcing symmetry. We use the mentioned parameters for the other sketching strategies while enforcing the symmetry. Since in practice one might not have an access to the exact parameters $\mu, \nu$ for given sketching strategy, we test sensitivity of the algorithm to parameter choice . We also test test for $\nu$  chosen by \eqref{eq:munu_conv_paper}, $\mu = \tfrac{1}{100\nu}$ and $\mu = \tfrac{1}{10000\nu}$.

\begin{figure}[!h]
    \centering
\begin{minipage}{0.25\textwidth}
  \centering
\includegraphics[width =  \textwidth ]{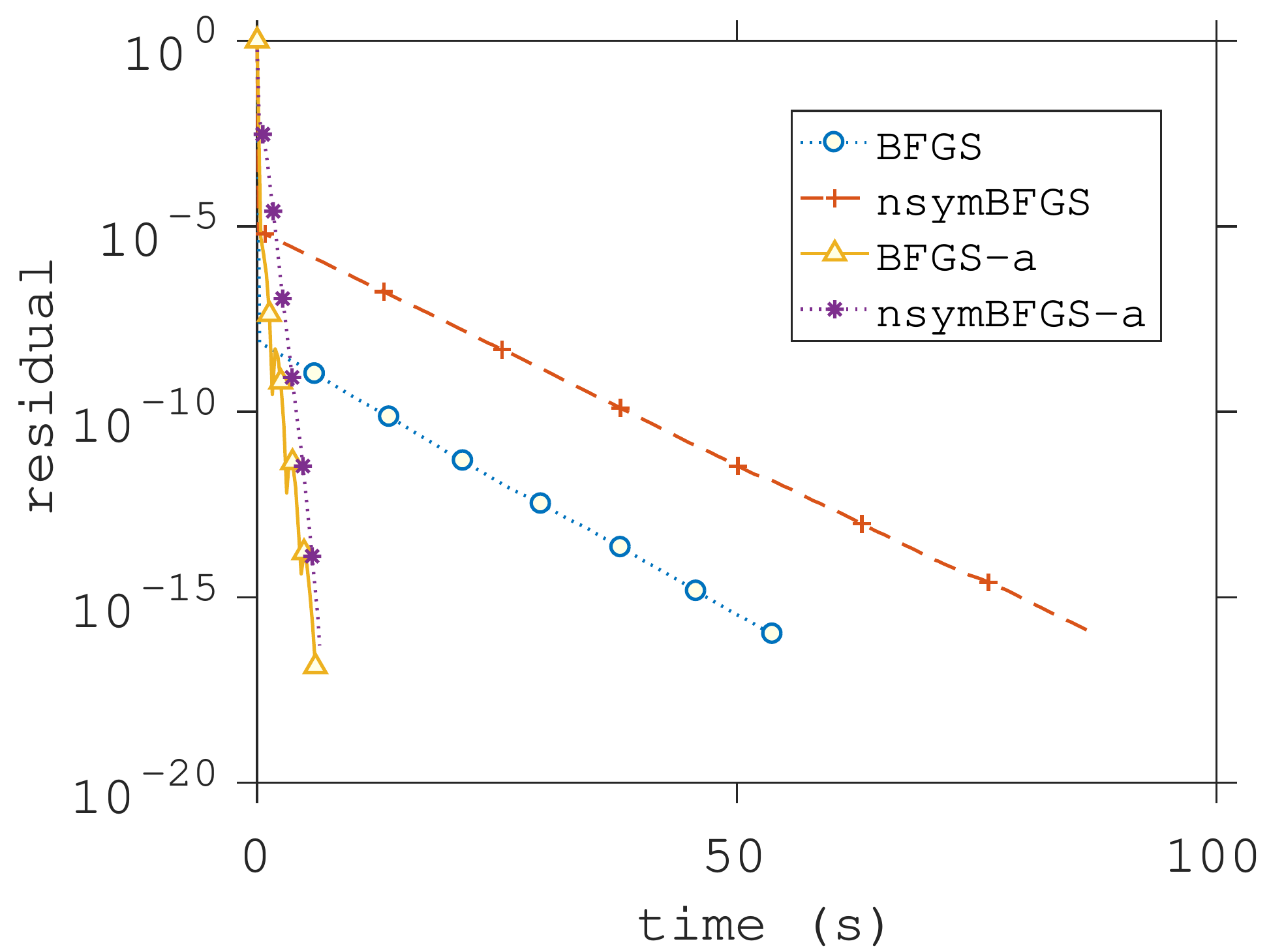}
\end{minipage}%
\begin{minipage}{0.25\textwidth}
  \centering
\includegraphics[width =  \textwidth ]{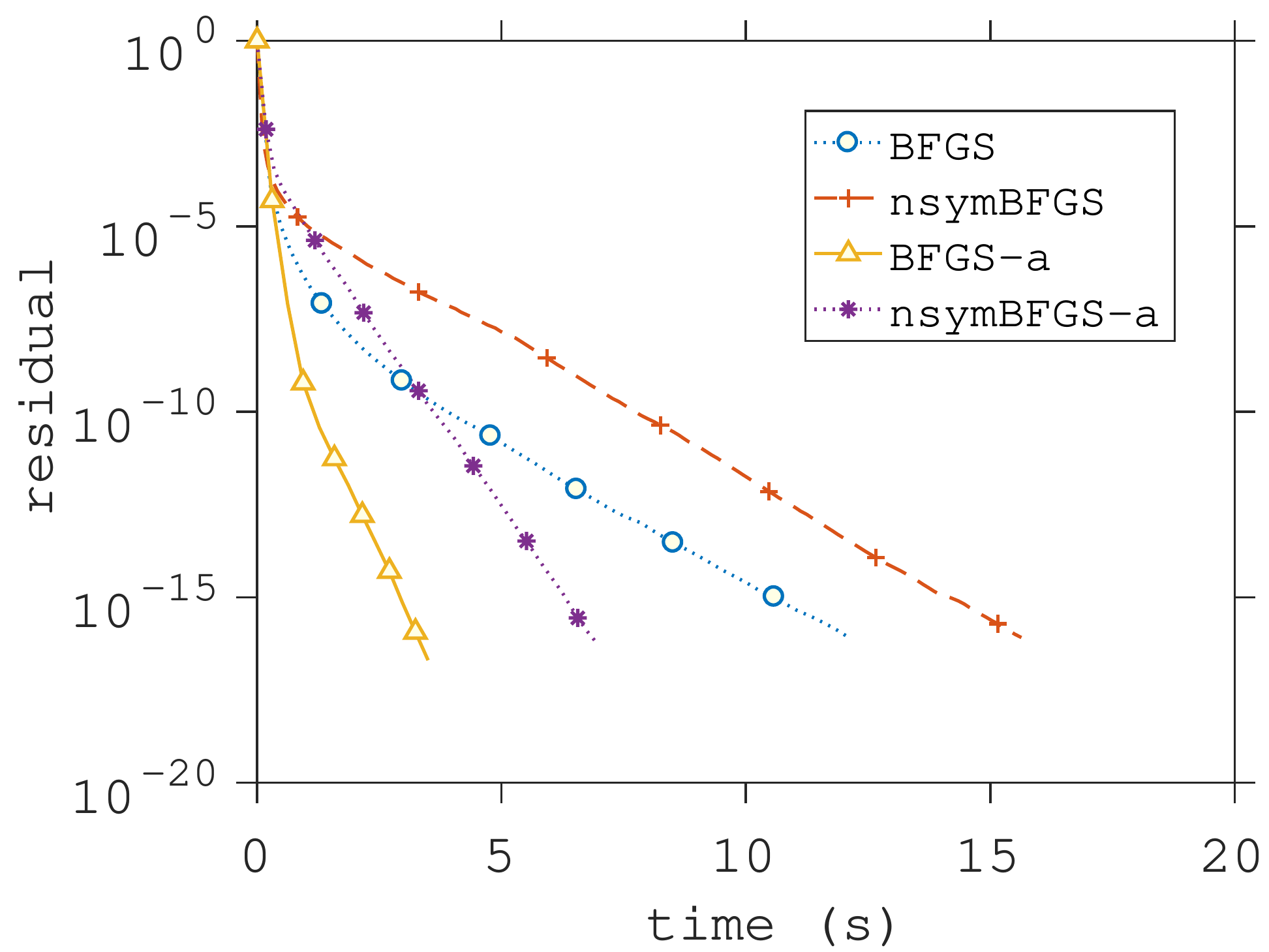}
\end{minipage}%
\begin{minipage}{0.25\textwidth}
  \centering
\includegraphics[width =  \textwidth ]{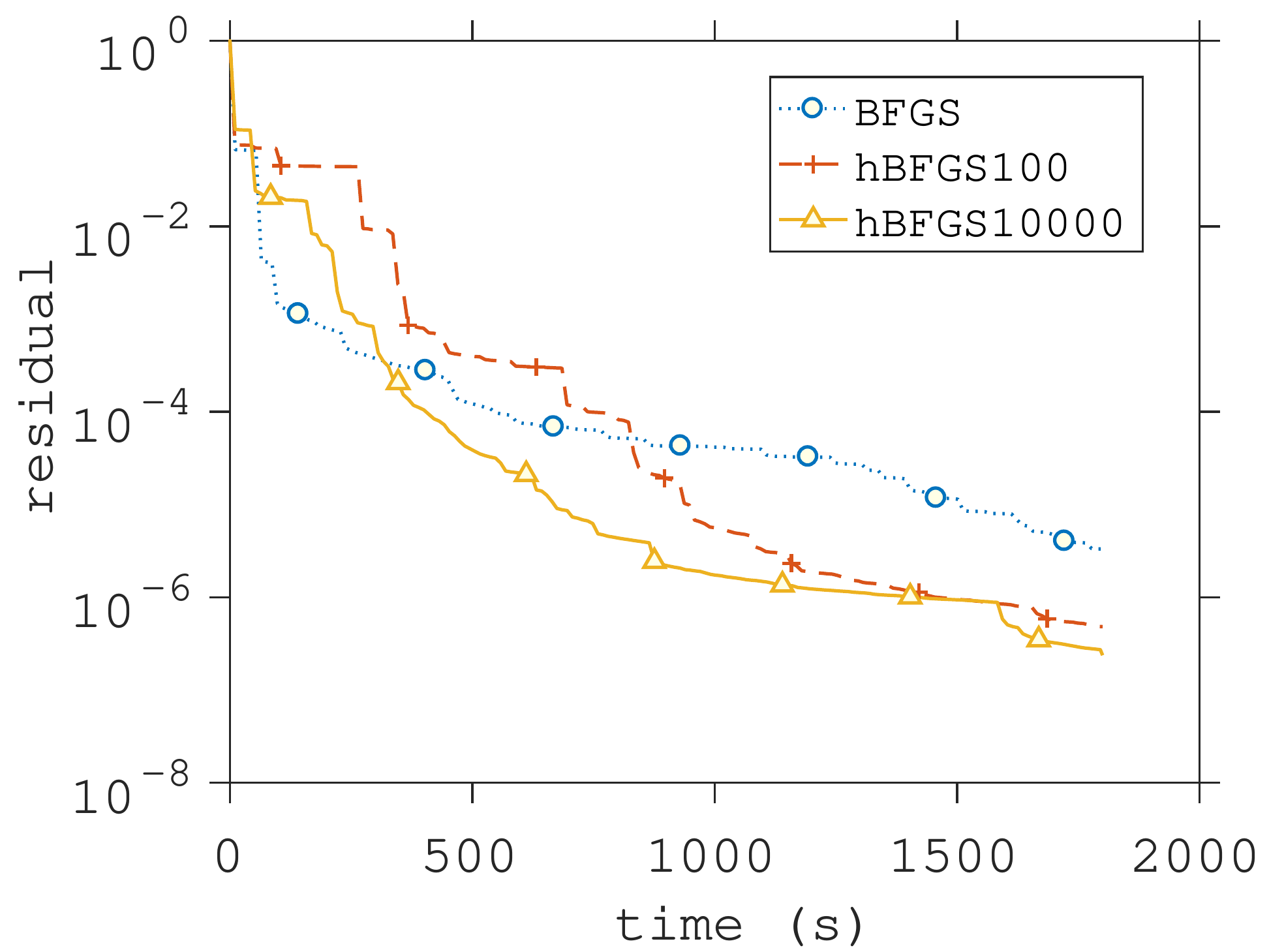}
\end{minipage}%
\begin{minipage}{0.25\textwidth}
  \centering
\includegraphics[width =  \textwidth ]{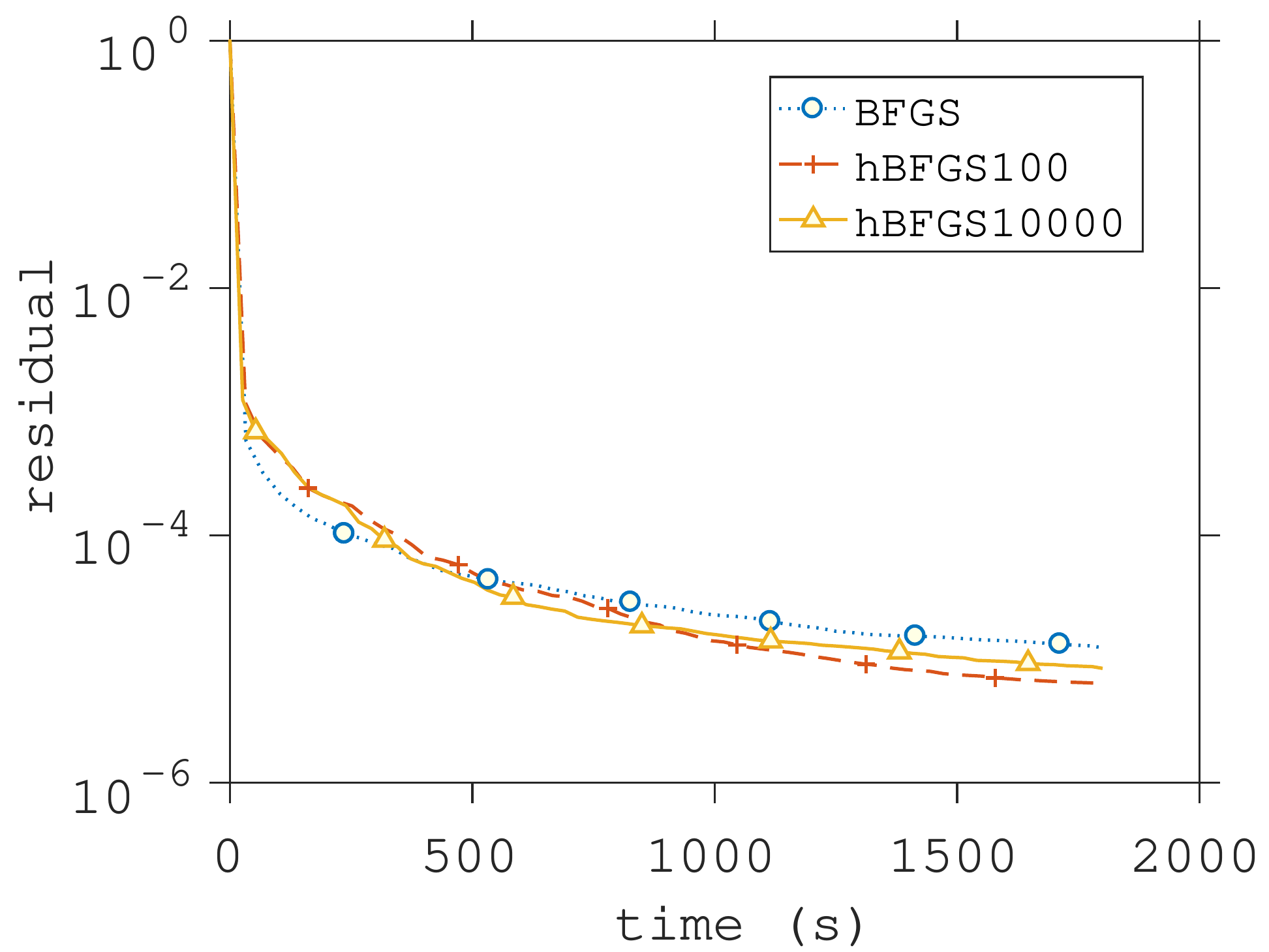}
\end{minipage}%
       \caption{\tiny From left to right: (i) Eigenvalues of $A \in \R^{100\times 100}$ are $1,10^3, 10^3, \dots,10^3$ and coordinate sketches with convenient probabilities are used. (ii) Eigenvalues of $A \in \R^{100\times 100}$  are $1,2,\dots,n$ and Gaussian sketches are used. Label ``nsym'' indicates non-enforcing symmetry and  ``-a'' indicates acceleration. (iii)   Epsilon dataset ($n=2000$), coordinate sketches with uniform probabilities. (iv) SVHN dataset ($n=3072$), coordinate sketches with convenient probabilities. Label ``h'' indicates that $\lambda_{\min}$ was not precomputed, but $\mu$ was chosen as described in the text.}
\label{fig:artificial_paper}
\end{figure}

For more plots, see Section~\ref{sec:exp_appendix} in the appendix as here we provide only a tiny fraction of all plots. The experiments suggest that once the parameters $\mu,\nu$ are estimated exactly, we get a speedup comparing to the nonaccelerated method; and the amount of speedup depends on the structure of $A$ and the sketching strategy. We observe from Figure~\ref{fig:artificial_paper} that we gain a great speedup for ill conditioned problems once the eigenvalues are concentrated around the largest eigenvalue. We also observe from Figure~\ref{fig:artificial_paper} that enforcing symmetry combines well with $\mu, \nu$ computed for the algorithm which do not enforce symmetry. On top of that, choice of $\mu, \nu$ per \eqref{eq:munu_conv_paper} seems to be robust to different sketching strategies, and in worst case performs as fast as nonaccelerated algorithm.

%
%

\subsection{BFGS Optimization Method}
\label{sec:accBFGSexp}
 We test Algorithm~\ref{alg:bfgs_opt} on several logistic regression problems using data from  LIBSVM~\cite{Chang2011}.  In all our tests we centered and normalized the data, included a bias term (a linear intercept), and choose the regularization parameter as $\lambda =1/m$, where $m$ is the number of data points. To keep things as simple as possible, we also used a fixed stepsize which 
was determined using grid search.  Since our theory regarding the choice for the parameters $\mu$ and $\nu$ does not apply in this setting, we simply probed the space of parameters manually and reported the best found result, 
 see Figure~\ref{fig:australian}. In the legend  we use \texttt{BFGS}-a-$\mu$-$\nu$ to denote the accelerated BFGS method (Alg~\ref{alg:bfgs_opt}) with parameters $\mu$ and $\nu$.


\begin{figure}[!h]
    \centering
\begin{minipage}{0.24 \textwidth}
\includegraphics[width =  \textwidth ]{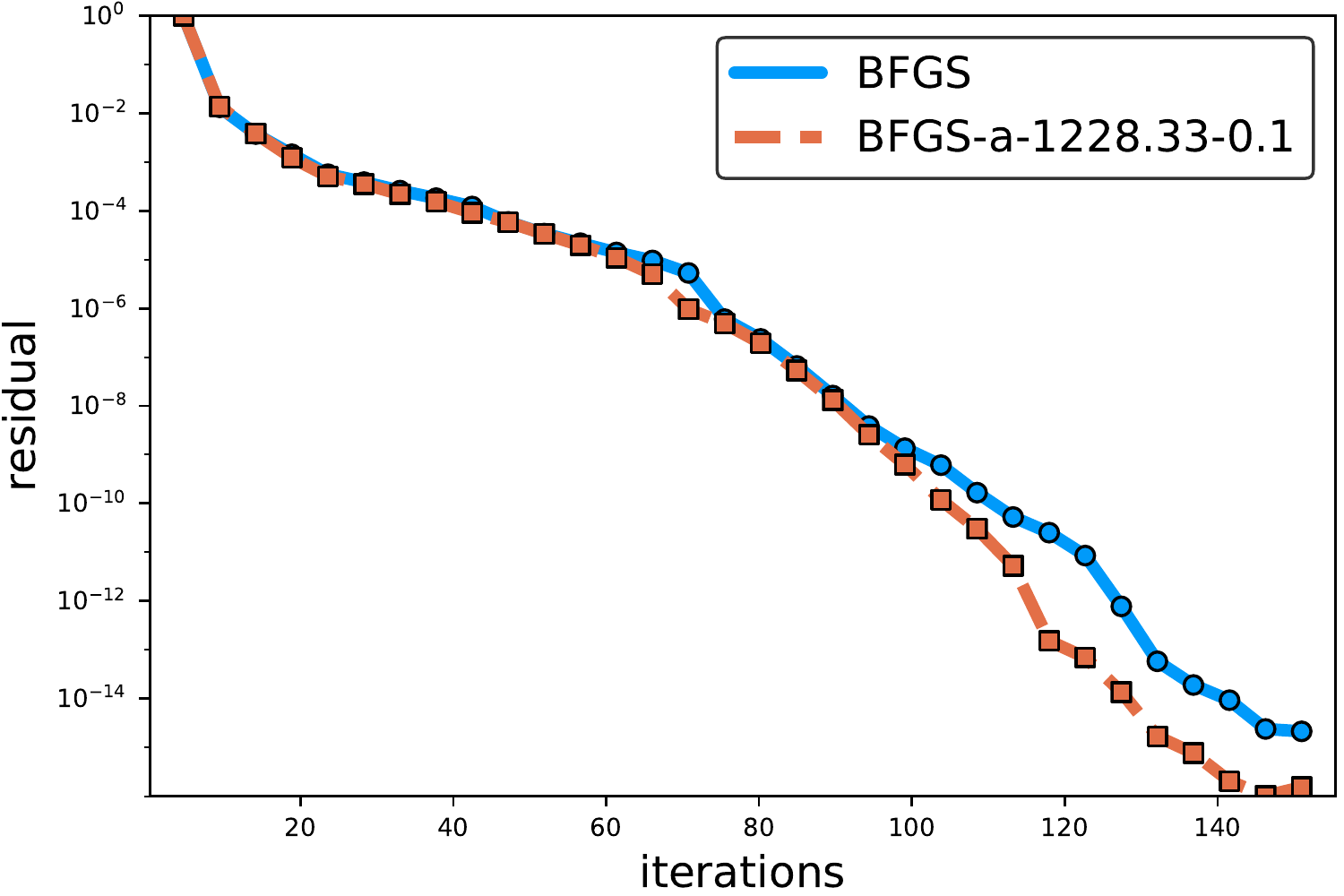}
\end{minipage}%
\begin{minipage}{0.24 \textwidth}
\includegraphics[width =  \textwidth ]{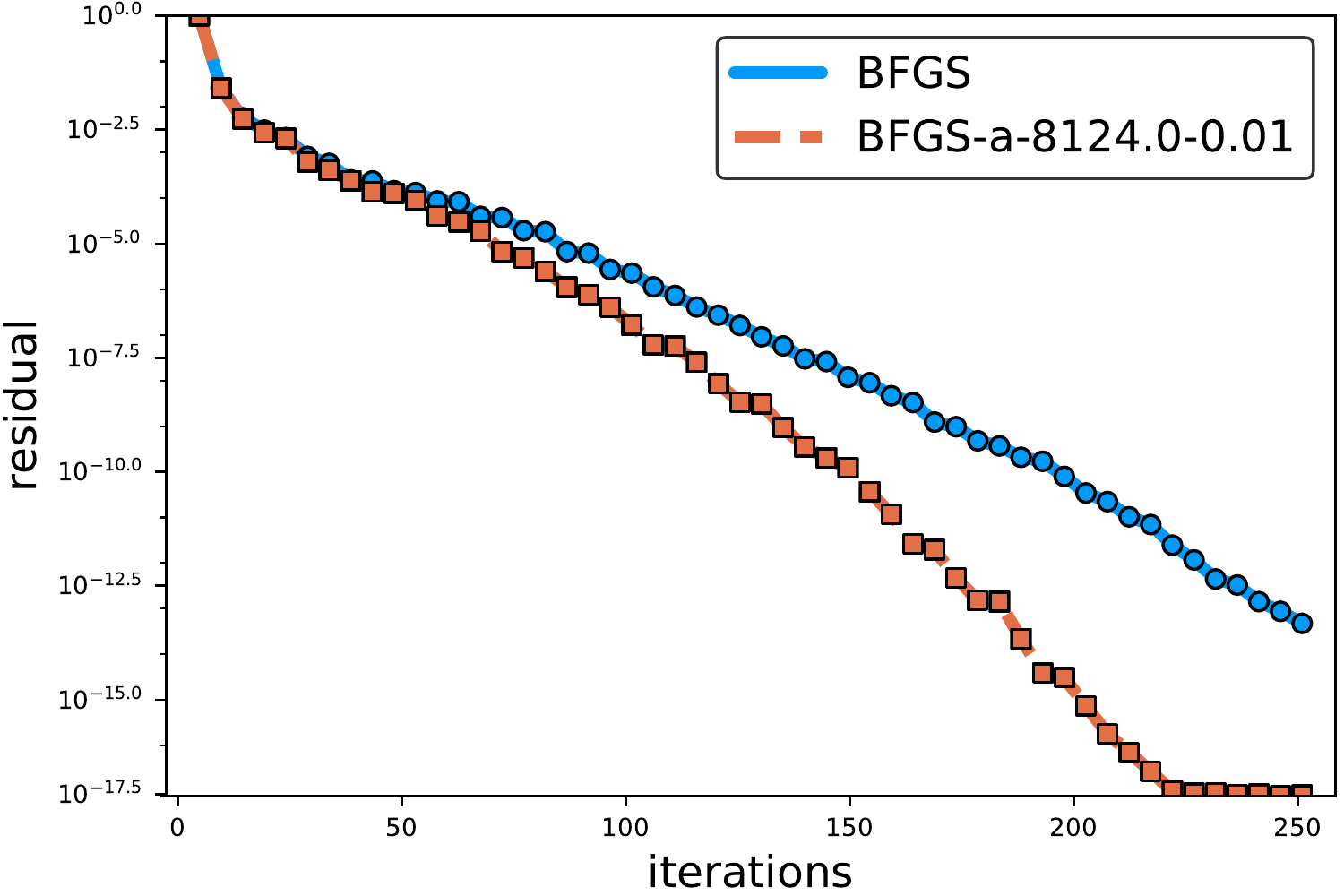}
\end{minipage}
\begin{minipage}{0.24	 \textwidth}
\includegraphics[width =  \textwidth ]{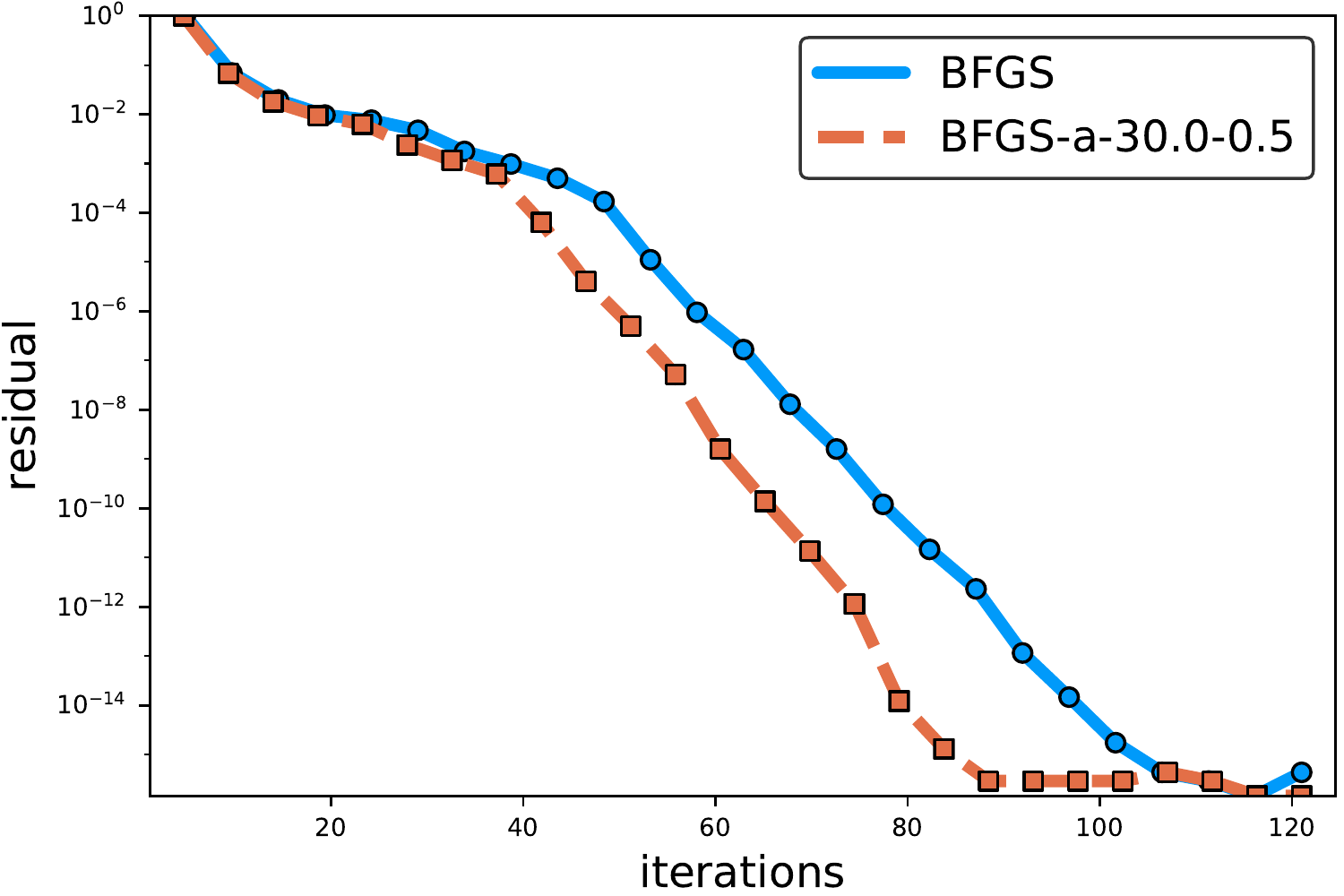}	
\end{minipage}%
\begin{minipage}{0.24 \textwidth}
\includegraphics[width =  \textwidth ]{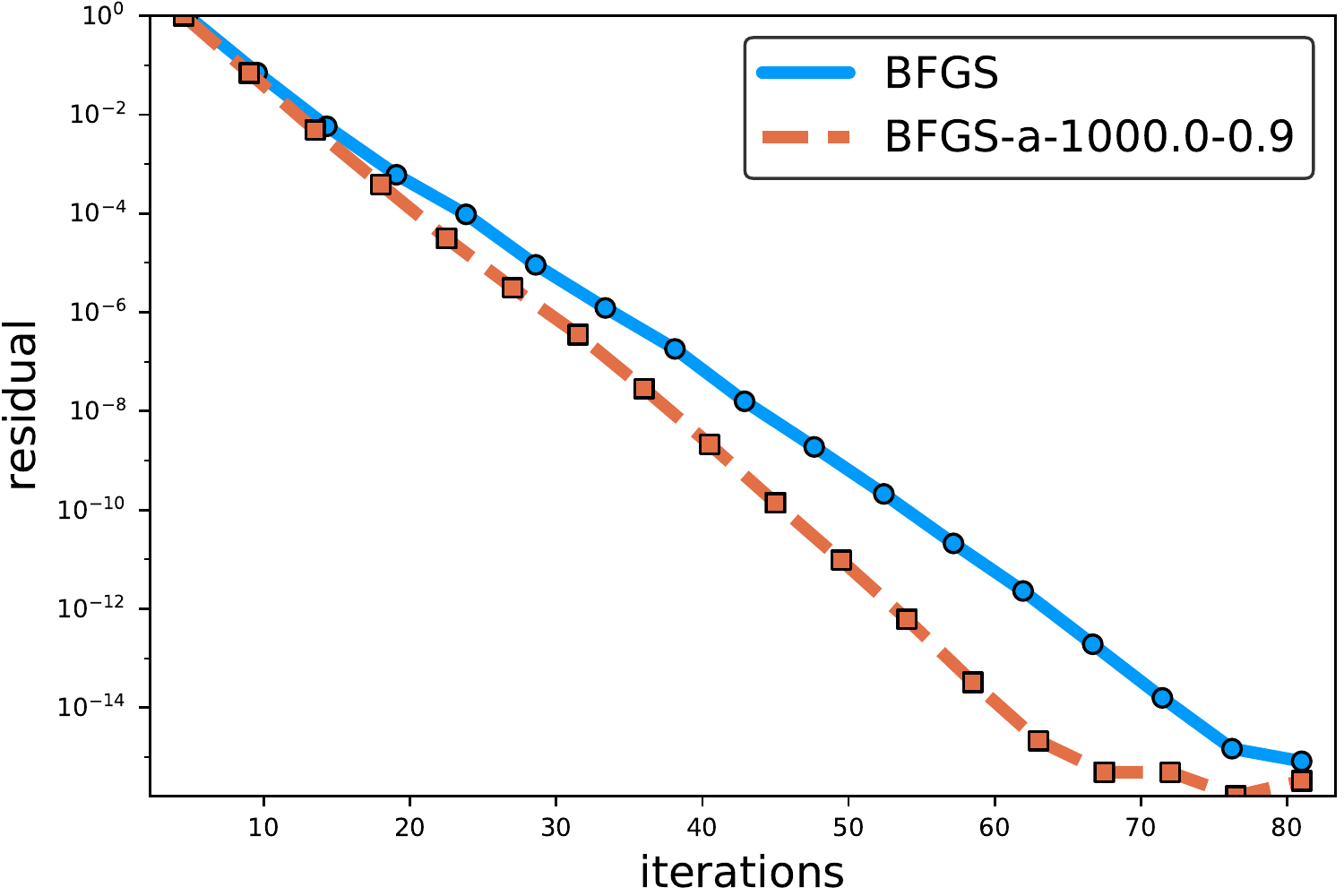}
\end{minipage}%
\caption{Algorithm~\ref{alg:bfgs_opt} (BFGS with accelerated matrix inversion quasi-Newton update) vs standard BFGS. From left to right:  \texttt{phishing}, \texttt{mushrooms}, \texttt{australian} and \texttt{splice} dataset.}
\label{fig:australian}
\end{figure}
On all four datasets, our method outperforms the classic BFGS method, indicating that replacing classic BFGS update rules for learning the inverse Hessian by our new accelerated rules can be beneficial in practice.
In A.4 in the appendix we also show the time plots for solving the problems in Figure~\ref{fig:australian}, and show that the accelerated BFGS method also converges faster in time.
 

\section{Further Experiments with Accelerated quasi-Newton Updates \label{sec:exp_appendix}}
In this section, we test the the empirical rate of convergence of Algorithm~\ref{alg:qn}, the accelerated BFGS update for inverting positive definite matrices. Only vector sketches are considered, as the standard quasi-Newton methods also update the inverse Hessian only according to the action in one direction. We compare the speed of the accelerated method with precomputed estimates of the parameters $\mu, \nu$ to the nonaccelerated method. The precomputed estimates of $\mu^P, \nu^P$ are set as per \eqref{eq:munu_conv_paper}:
\begin{equation*}
\mu^P=\frac{\lambda_{\min}(A)}{\Tr{A}}, \qquad \nu^P=\frac{\Tr{A}}{\min_i(A_{i,i})}, 
\end{equation*}
which is the optimal choice for coordinate sketches with convenient probabilities without enforcing symmetry. In practice we might not have an access to $\lambda_{\min}(A)$, thus we cannot compute $\mu^P$ exactly. Therefore we also test sensitivity of the algorithm to the choice of parameters, and we run some experiments where we only guess parameter $\mu^P$. 

Lastly, the tests are performed on both artificial examples and LIBSVM~\cite{Chang2011} data. We shall also explain the legend of plots: ``a'' indicates acceleration, ``nsym'' indicates the algorithm without enforcing symmetry and ``h'' indicates the setting when $\nu^P$ is not known, and a naive heuristic choice is casted. 

\subsection{Simple and well understood artificial example \label{sec:bl}}

Let us consider inverting the matrix $A = \alpha I + \beta {\bf 1 1^\top}$ for $\alpha>0$ and $\beta\geq -\frac{\alpha}{n}$ so as in this case we have control over both $\mu$ and $\nu$. This artificial example was considered in \cite{TuVWGJR17} for solving linear systems. In particular, we show that for coordinate sketches with convenient probabilities (which is indeed the same as uniform probabilities in this example), we have
\begin{eqnarray*}
\mu^P &\eqdef& \lambda_{\min}(\E{P}) =  \frac{\min \left(\alpha, \alpha+n\beta \right)}{n(\alpha+\beta)},\\
\nu^P &\eqdef& \lambda_{\max}\left(\E{ \E{P}^{-\frac12 } P \E{P}^{-1}P \E{P}^{-\frac12}}\right)  = n. \\
\end{eqnarray*}

Due to the fact that we do not have a theoretical justification of $\mu,\nu$ for $n>2$ when enforcing symmetry, we set $\mu=\mu^P$ and $\nu=\nu^P$ for Gaussian sketches as well.

\begin{figure}[H]
    \centering
\begin{minipage}{0.40\textwidth}
  \centering
\includegraphics[width =  \textwidth]{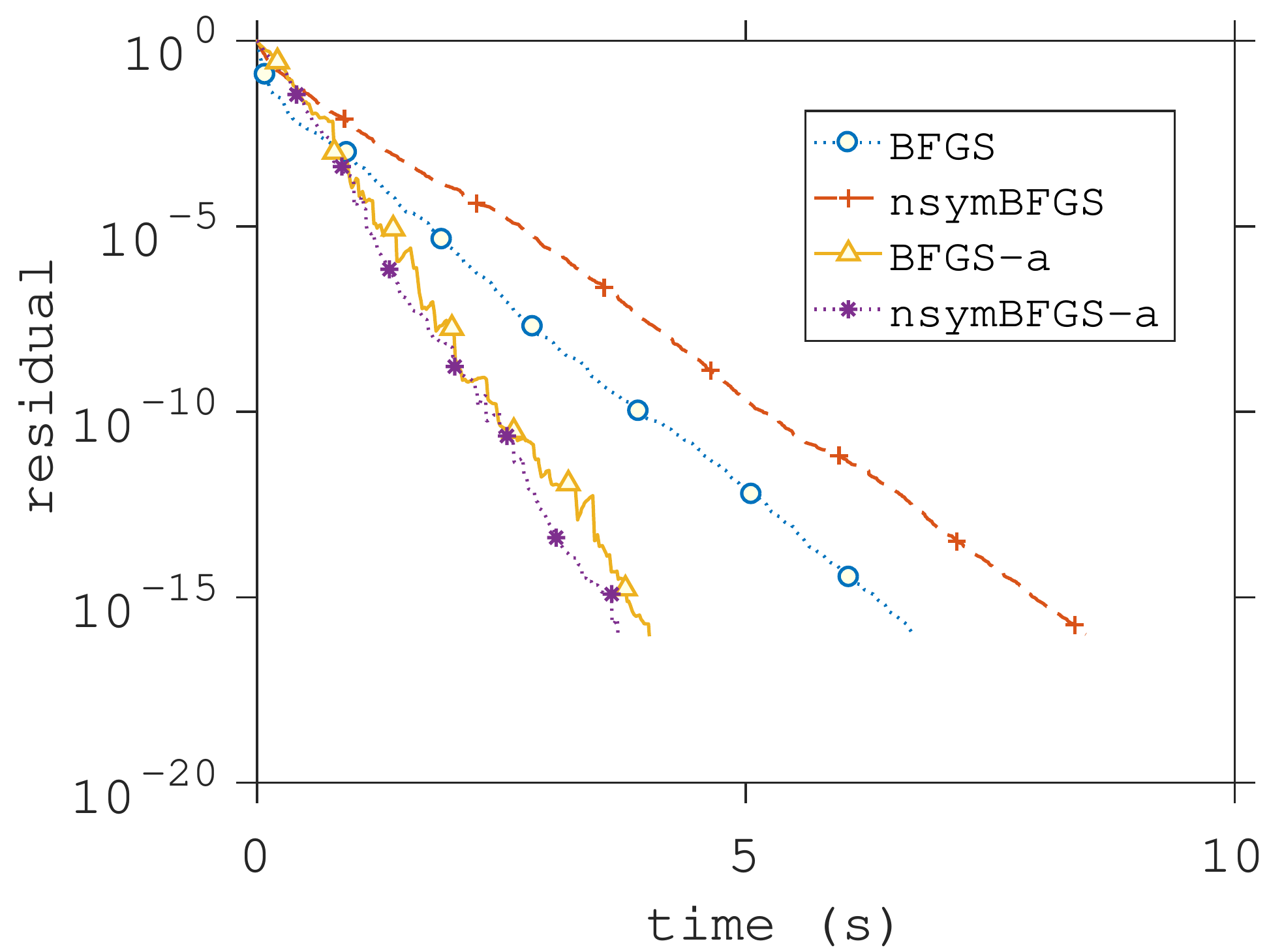}
\end{minipage}%
\begin{minipage}{0.40\textwidth}
  \centering
\includegraphics[width =  \textwidth ]{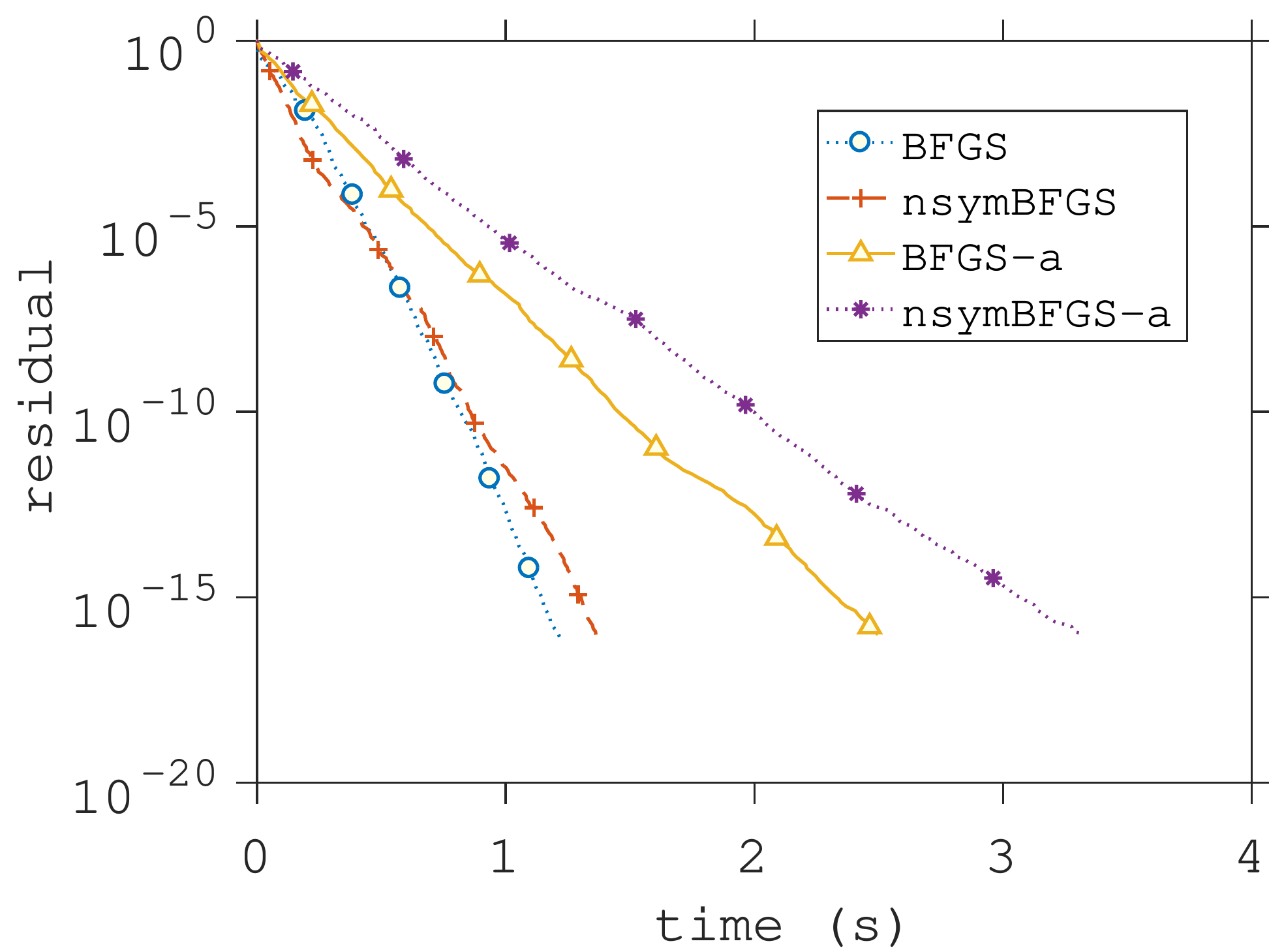}
\end{minipage}%
    \caption{Parameter choice: $\alpha=1+10^{-1}, \beta=-n^{-1}, n=100$. From left to right we have: Coordinate sketch with uniform (convenient) probabilities and Gaussian sketch respectively. 
}\label{fig:bl_ex}
\end{figure}

\begin{figure}[H]
    \centering
\begin{minipage}{0.40\textwidth}
  \centering
\includegraphics[width =  \textwidth]{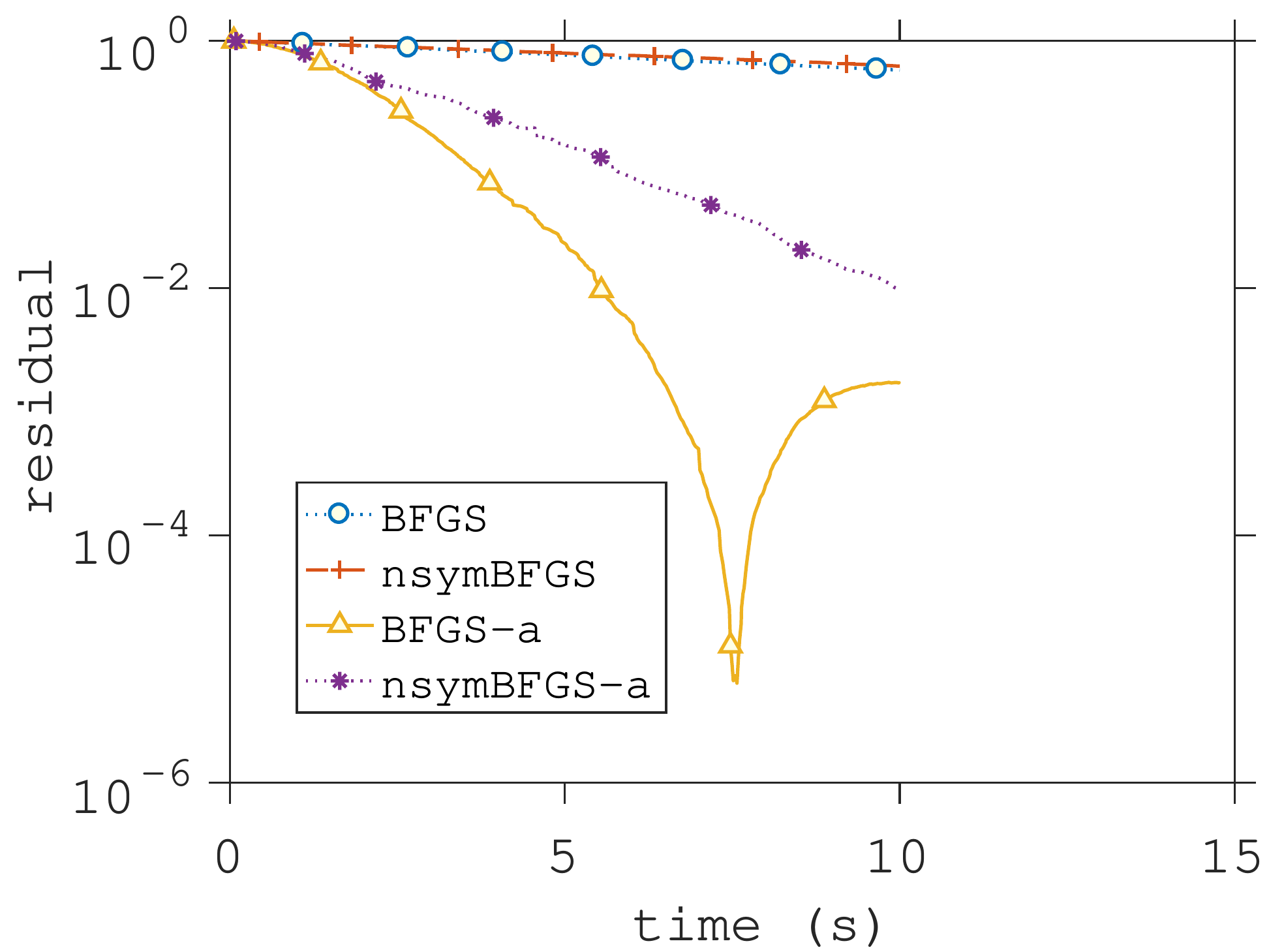}
\end{minipage}%
\begin{minipage}{0.40\textwidth}
  \centering
\includegraphics[width =  \textwidth ]{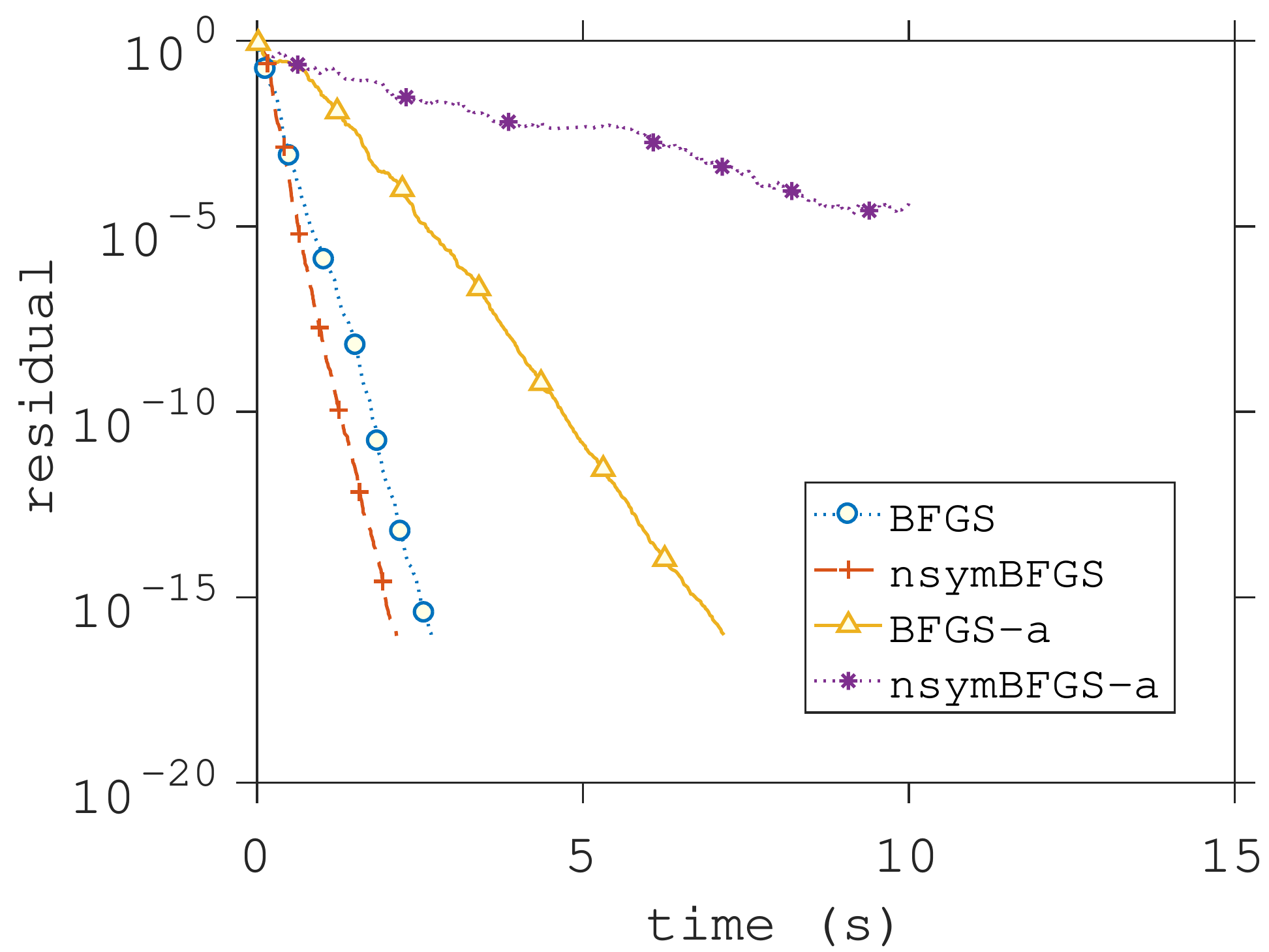}
\end{minipage}%
    \caption{Parameter choice: $\alpha=1+10^{-3}, \beta=-n^{-1}, n=100$. From left to right we have: Coordinate sketch with uniform (convenient) probabilities and Gaussian sketch respectively. 
}\label{fig:bl_ex}
\end{figure}

\begin{figure}[H]
    \centering
\begin{minipage}{0.40\textwidth}
  \centering
\includegraphics[width =  \textwidth]{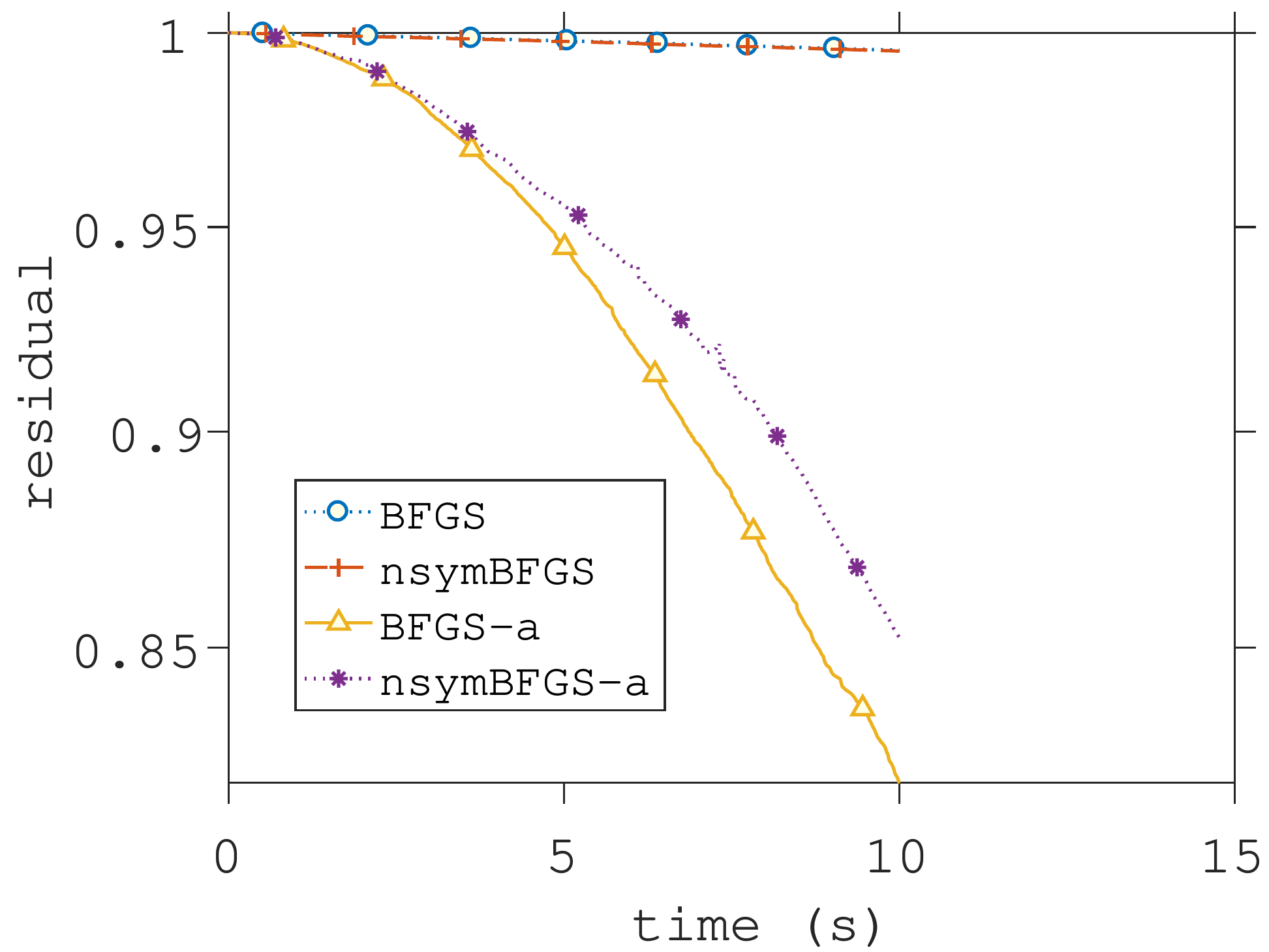}
\end{minipage}%
\begin{minipage}{0.40\textwidth}
  \centering
\includegraphics[width =  \textwidth ]{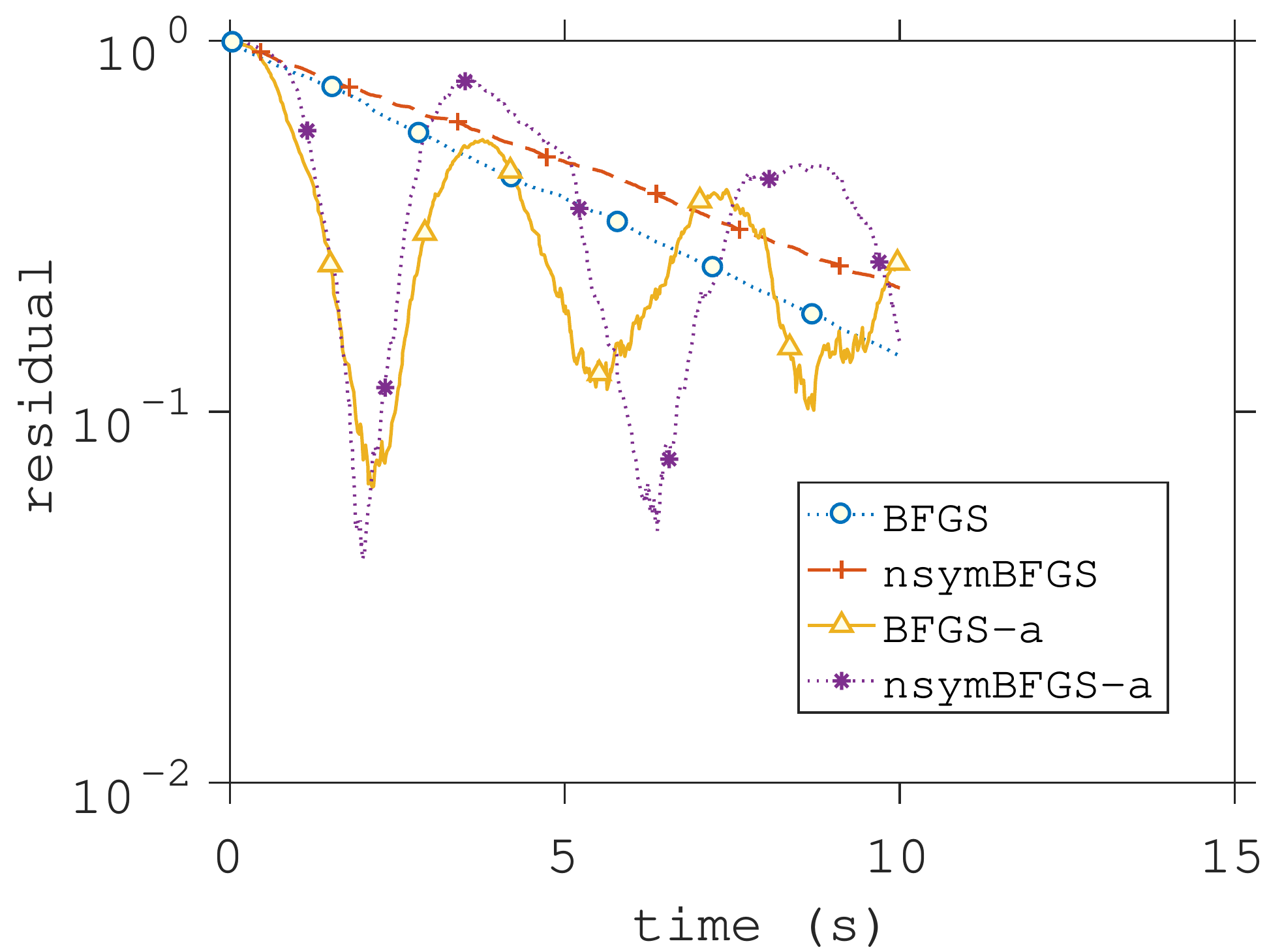}
\end{minipage}%
    \caption{Parameter choice: $\alpha=1+10^{-5}, \beta=-n^{-1}, n=100$. From left to right we have: Coordinate sketch with uniform (convenient) probabilities and Gaussian sketch, respectively. 
}\label{fig:bl_ex}
\end{figure}

As expected from the theory, as the matrix to be inverted becomes more ill conditioned, the accelerated method performs significantly better compared to the nonaccelerated method for coordinate sketches. In fact, an arbitrary speedup can be obtained by setting $\beta=-n^{-1}$ and $\alpha \rightarrow 1$ for the coordinate sketches setup. On the other hand, Gaussian sketches report the slowing of the algorithm, most likely caused by the fact that the theoretical parameters $\mu, \nu$ for Gaussian sketches with enforced symmetry are different to $\mu^P, \nu^P$, which are estimated for coordinate sketches without enforced symmetry. In the case of coordinate sketches with symmetry enforced, we suspect a great speedup even though the parameters $\mu, \nu$ were set to $\mu^P, \nu^P$.

\subsection{Random artificial example}

We randomly generate an orthonormal matrix $U$, choose diagonal matrix $D$, and set $A=UDU^\top$. Clearly, diagonal elements of $D$ are eigenvalues of $A$. We set them in the following way:

\begin{itemize}
\item Uniform grid. The eigenvalues are set to $1,2,\dots,n$.
\item One small, the rest larger. The smallest eigenvalue is $1$, remaining eigenvalues are all $10$ in the first example, all $100$ in the second example and all $1000$ in the third example in this category.
\item One large, the rest small. The largest eigenvalue is $10^4$, the remaining eigenvalues are all $1$.
\end{itemize}

Firstly, consider coordinate sketches with convenient probabilities. Notice that we can easily estimate $\nu^P, \mu^P$ due to the results from Section~\ref{sec:convenient_munu} since we have control of $\lambda_{\min}(A)$ and therefore also of $\mu$. Therefore, we set $\mu=\mu^P=\min D_{i,i}$ and $\nu=\nu^P$ for Algorithm~\ref{alg:qn}. Then, we consider coordinate sketches with uniform probabilities and Gaussian sketches. In both cases, we set the parameters $\mu,\nu$ as for coordinate sketches with convenient probabilities.

\begin{figure}[H]
    \centering
\begin{minipage}{0.30\textwidth}
  \centering
\includegraphics[width =  \textwidth ]{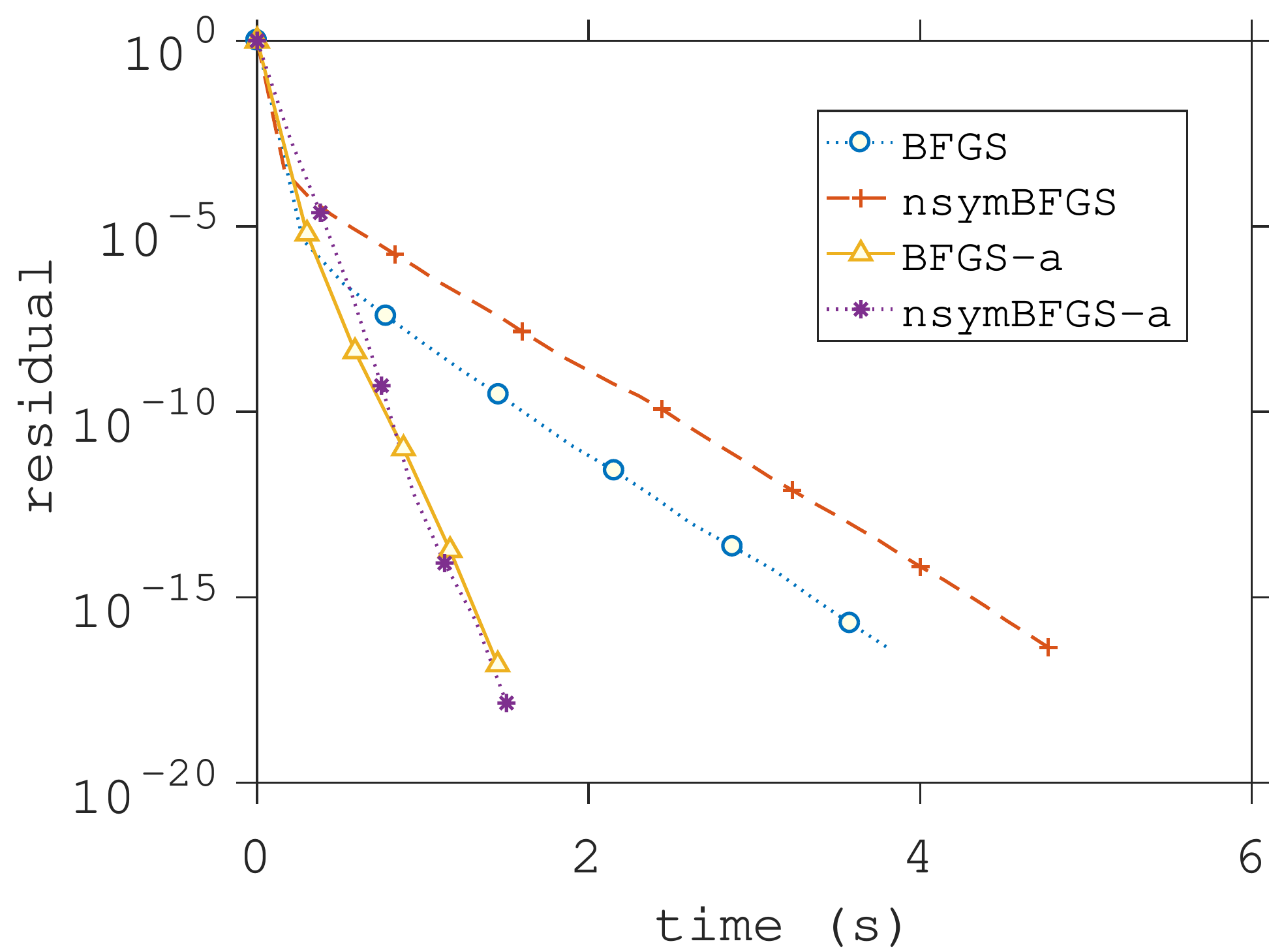}
\end{minipage}%
\begin{minipage}{0.30\textwidth}
  \centering
\includegraphics[width =  \textwidth ]{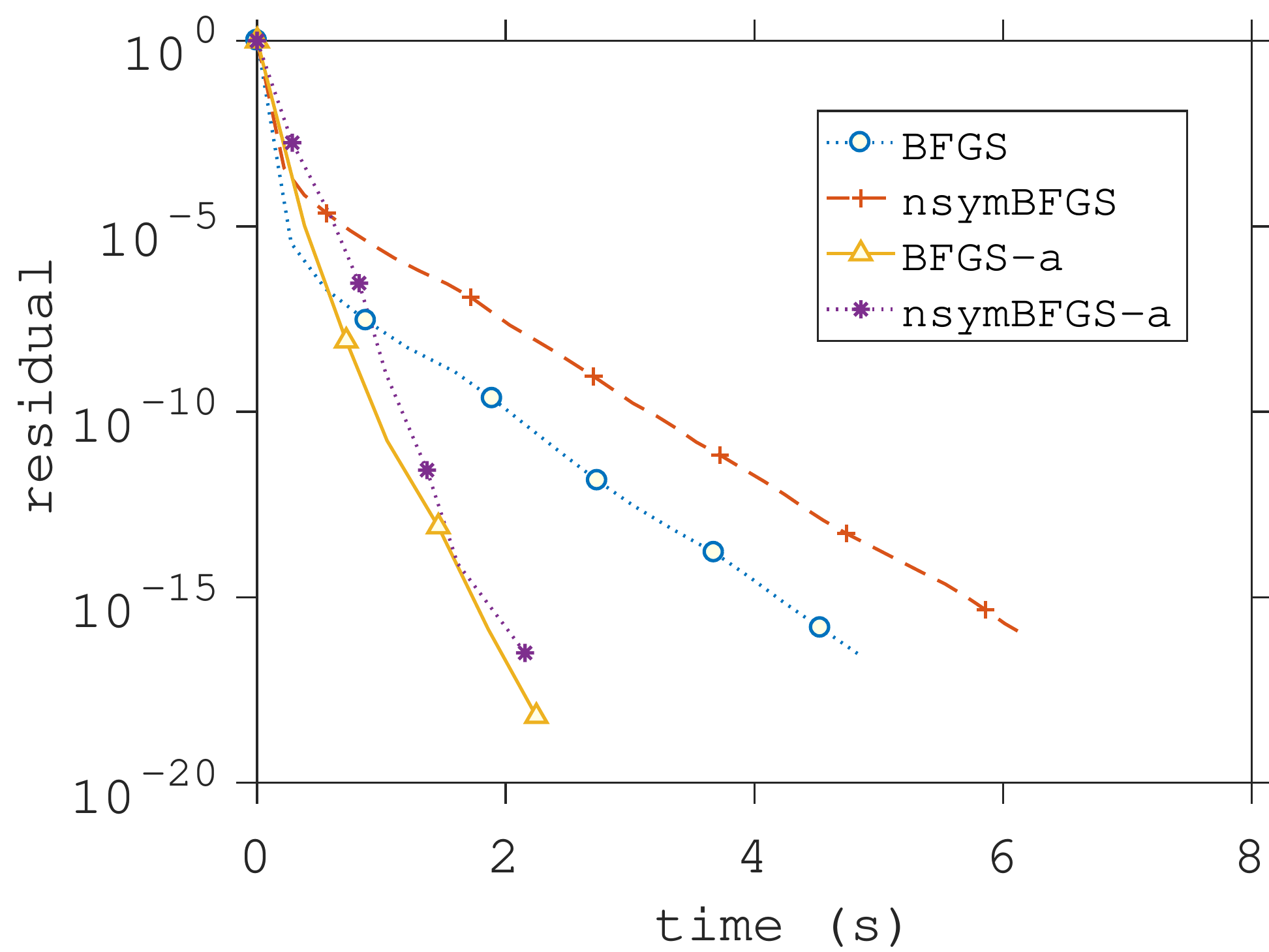}
\end{minipage}%
\begin{minipage}{0.30\textwidth}
  \centering
\includegraphics[width =  \textwidth ]{gaussrandomjj=1-time}
\end{minipage}%
    \caption{   Eigenvalues set to $1,2,3,\dots n$. From left to right we have: Coordinate sketch with convenient probabilities, coordinate sketch with uniform probabilities and Gaussian sketch respectively. 
}
\label{fig:rand_conv_lin}
\end{figure}

\begin{figure}[H]
    \centering
\begin{minipage}{0.30\textwidth}
  \centering
\includegraphics[width =  \textwidth ]{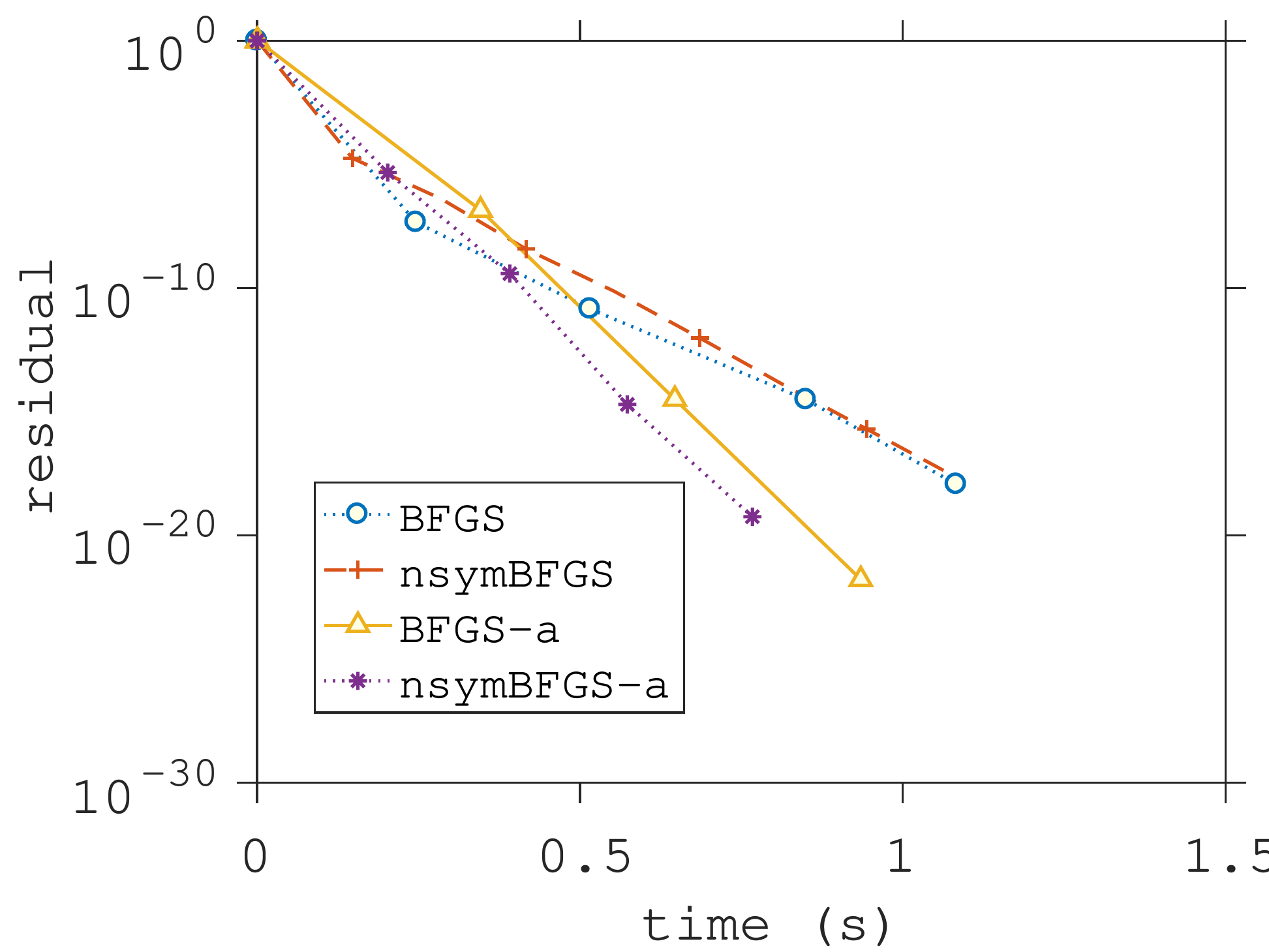}
\end{minipage}%
\begin{minipage}{0.30\textwidth}
  \centering
\includegraphics[width =  \textwidth ]{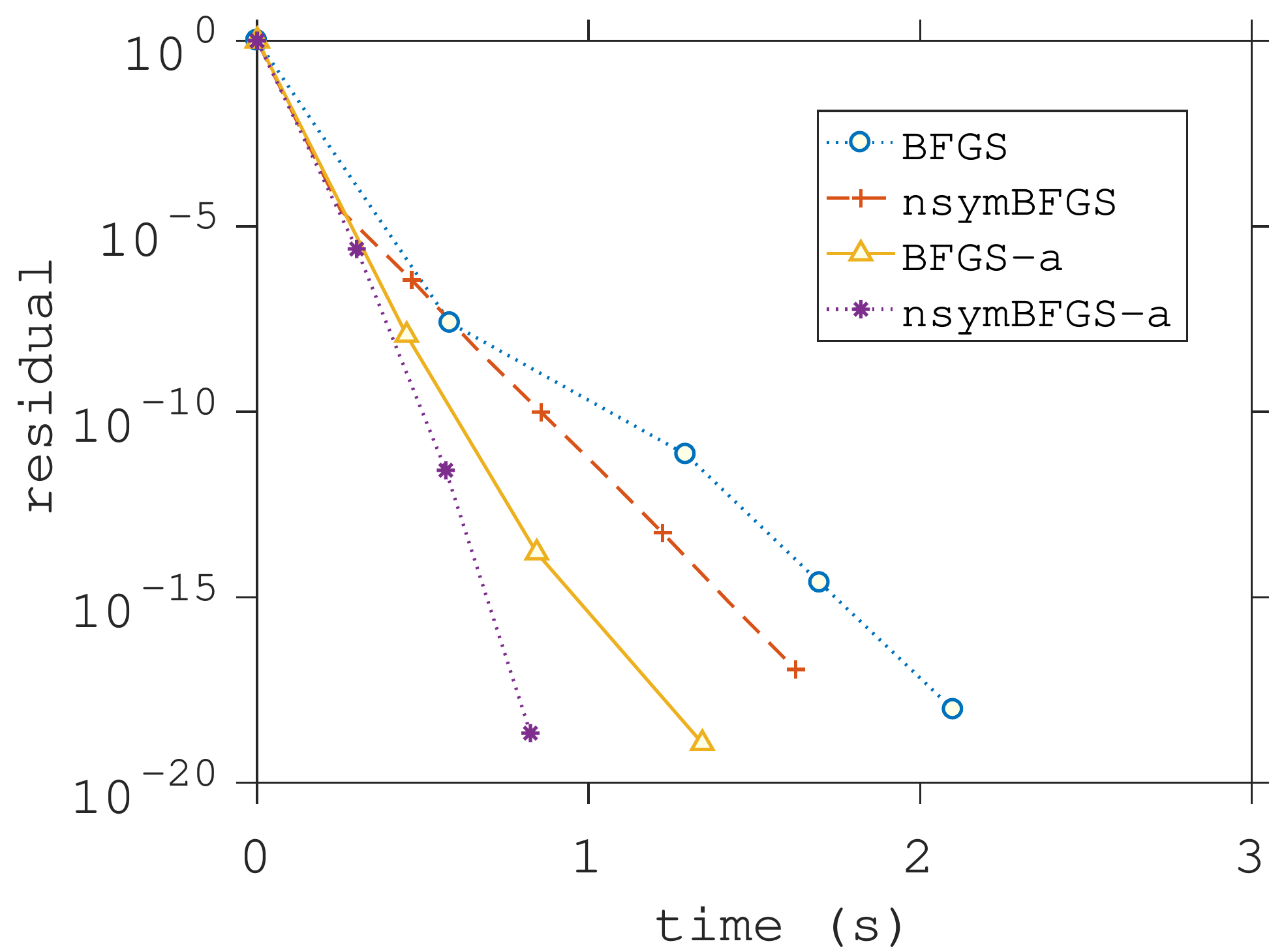}
\end{minipage}%
\begin{minipage}{0.30\textwidth}
  \centering
\includegraphics[width =  \textwidth ]{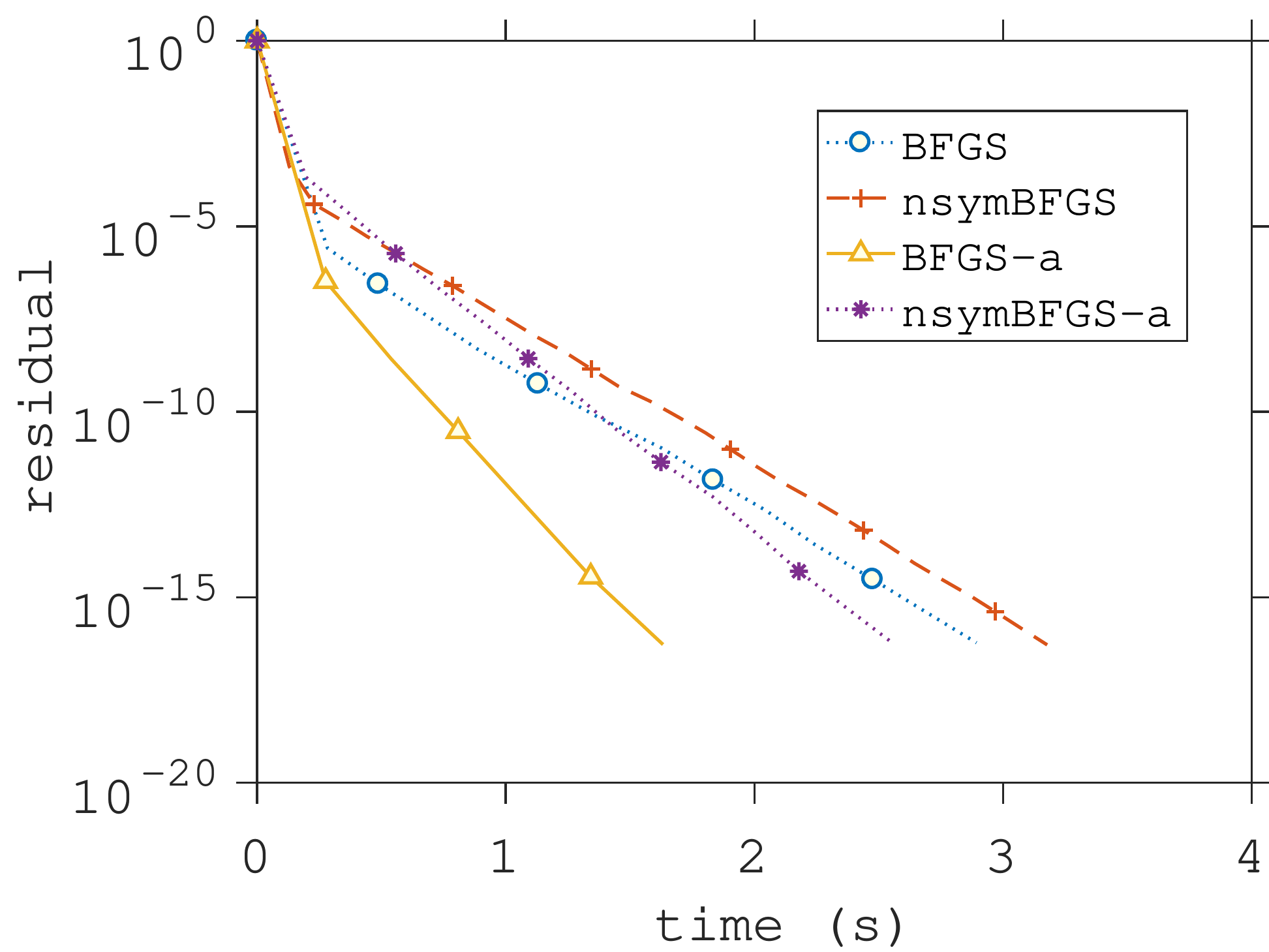}
\end{minipage}%
    \caption{   Eigenvalues set to $1,10,10,\dots 10$. From left to right we have: Coordinate sketch with convenient probabilities, coordinate sketch with uniform probabilities and Gaussian sketch respectively. 
}\label{fig:rand_conv_10}
\end{figure}

\begin{figure}[H]
    \centering
\begin{minipage}{0.30\textwidth}
  \centering
\includegraphics[width =  \textwidth ]{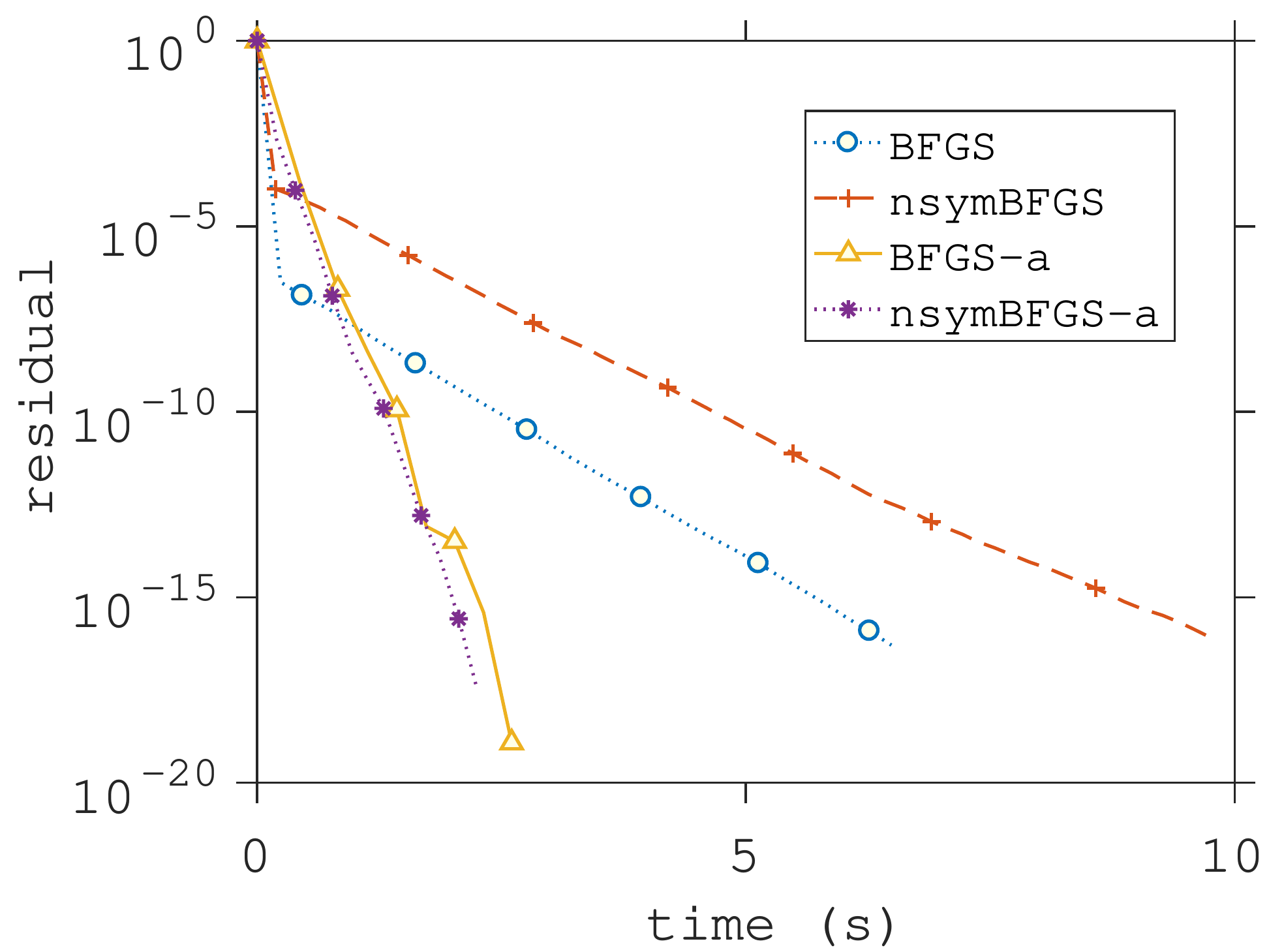}
\end{minipage}%
\begin{minipage}{0.30\textwidth}
  \centering
\includegraphics[width =  \textwidth ]{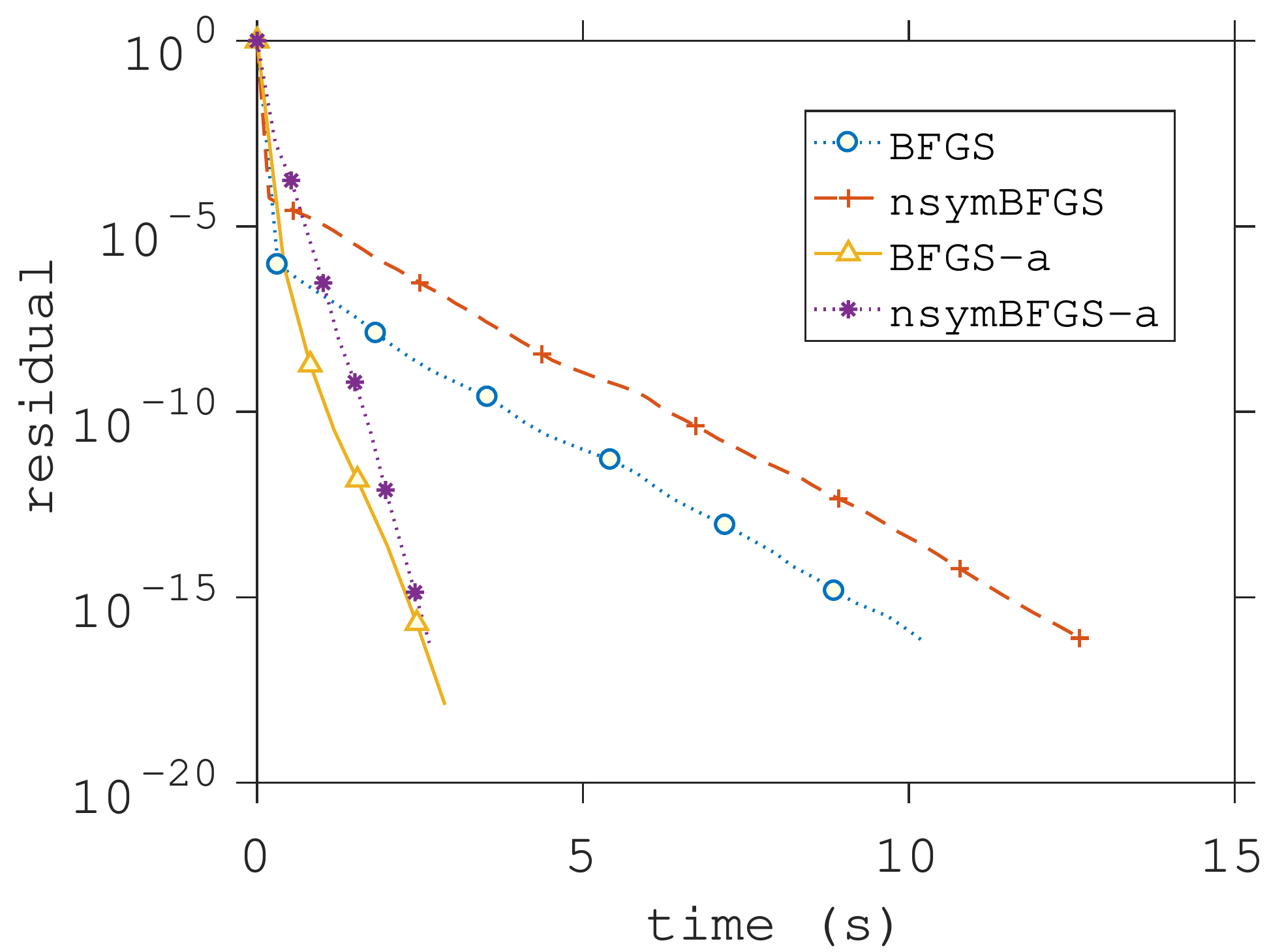}
\end{minipage}%
\begin{minipage}{0.30\textwidth}
  \centering
\includegraphics[width =  \textwidth ]{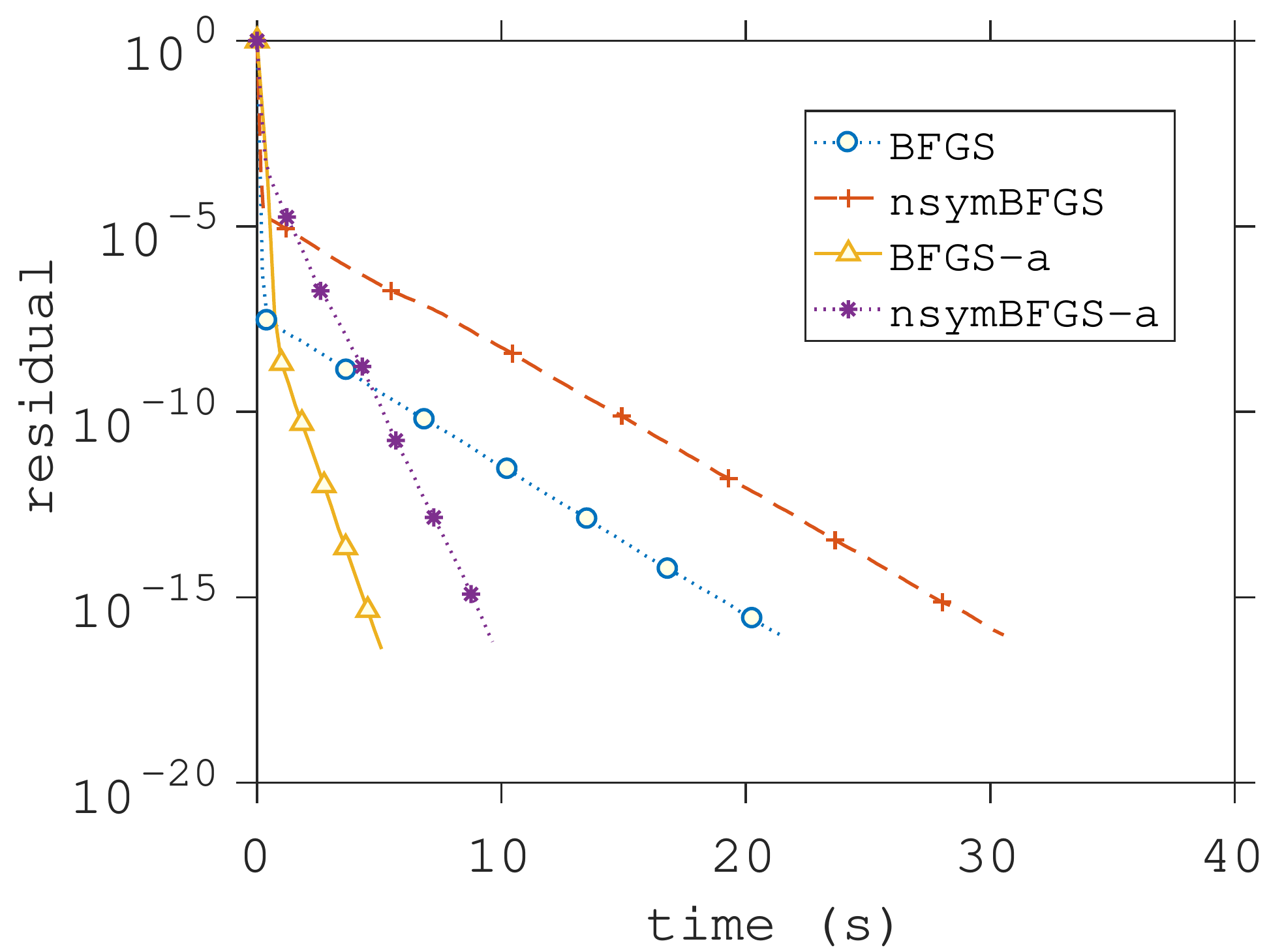}
\end{minipage}%
    \caption{   Eigenvalues set to $1,100,100,\dots 100$. From left to right we have: Coordinate sketch with convenient probabilities, coordinate sketch with uniform probabilities and Gaussian sketch respectively. 
}\label{fig:rand_conv_100}
\end{figure}

\begin{figure}[H]
    \centering
\begin{minipage}{0.30\textwidth}
  \centering
\includegraphics[width =  \textwidth ]{convenientrandomjj=4-time}
\end{minipage}%
\begin{minipage}{0.30\textwidth}
  \centering
\includegraphics[width =  \textwidth ]{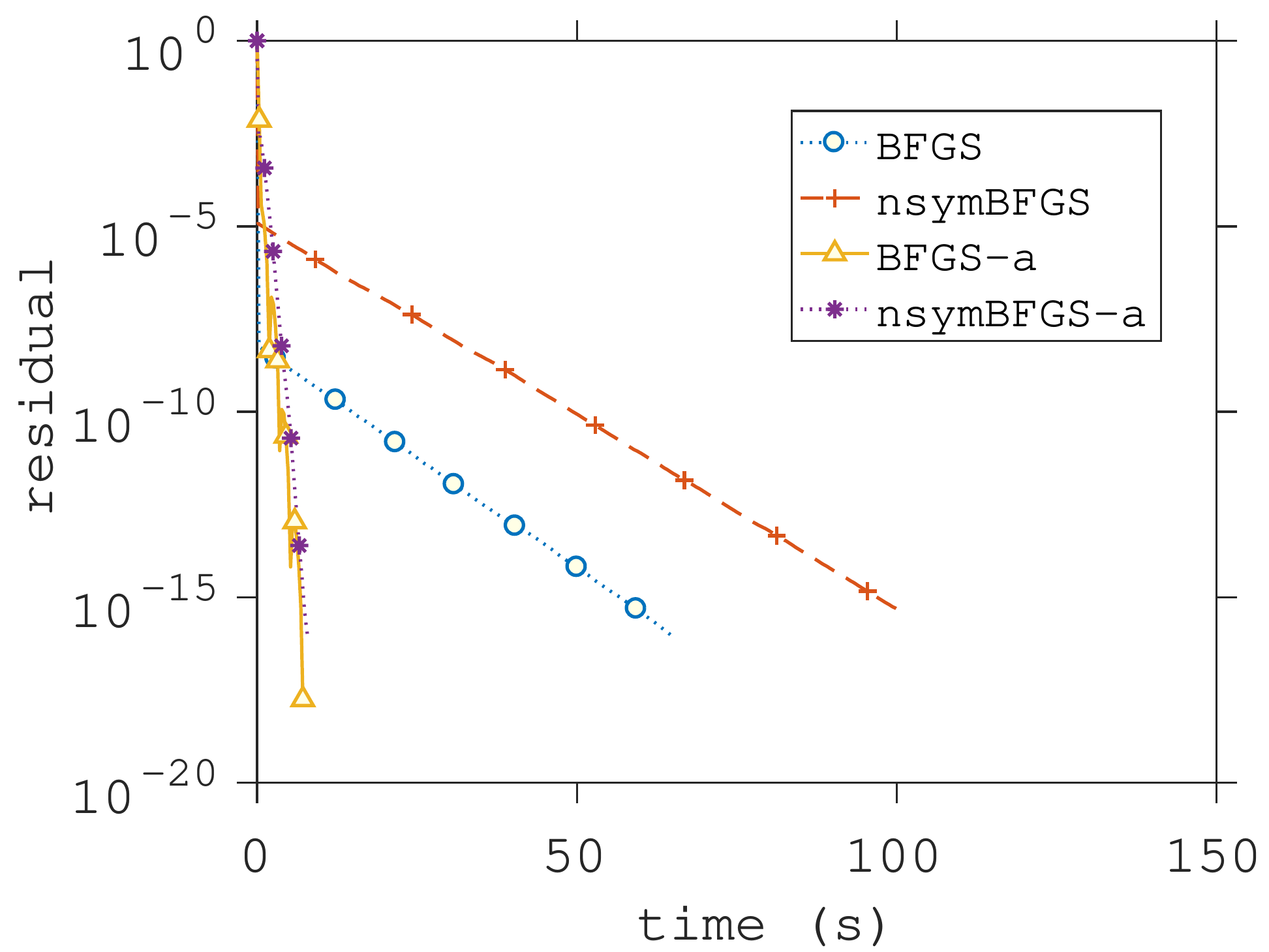}
\end{minipage}%
\begin{minipage}{0.30\textwidth}
  \centering
\includegraphics[width =  \textwidth ]{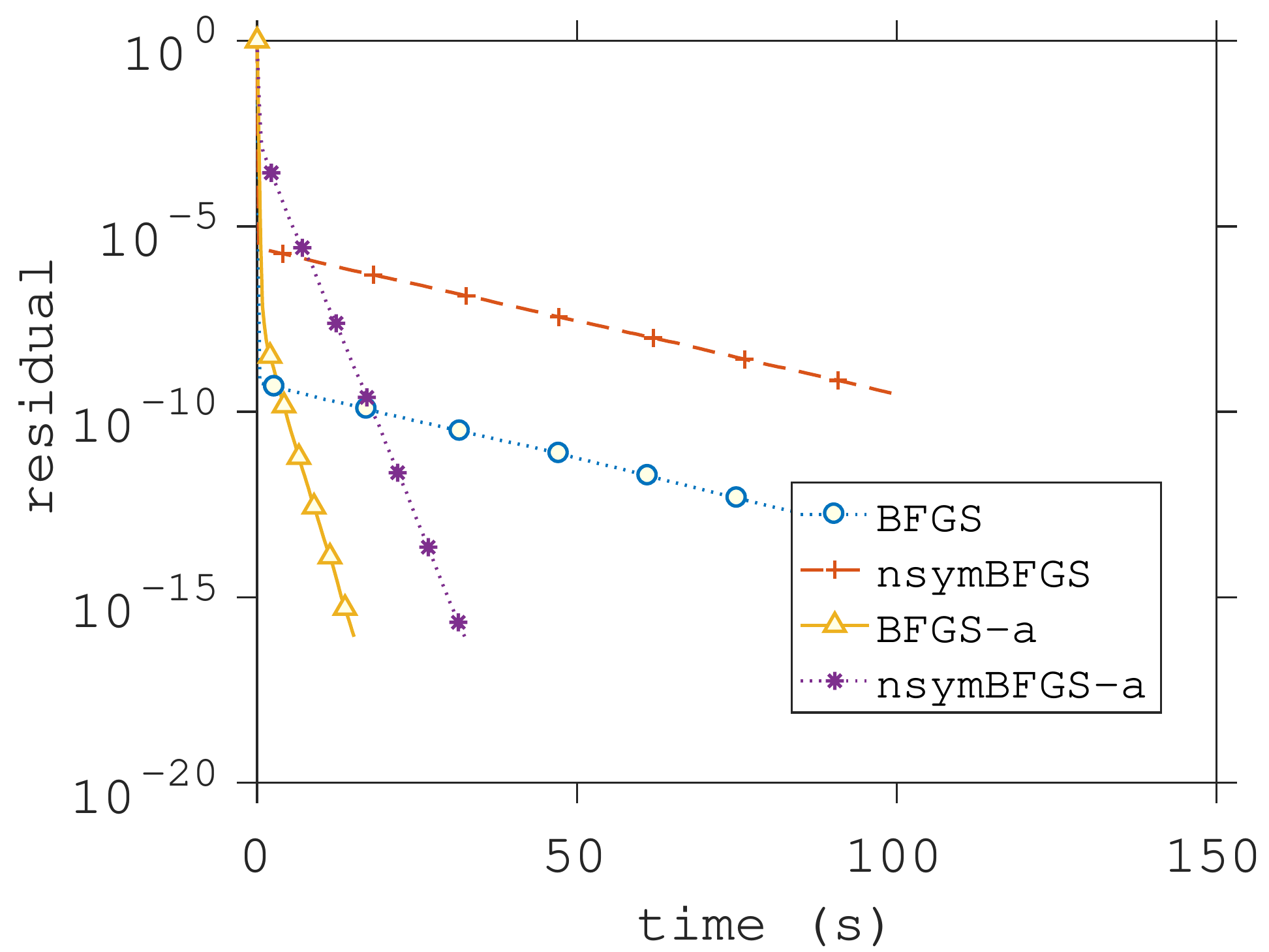}
\end{minipage}%
    \caption{  Eigenvalues set to $1,1000,1000,\dots 1000$. From left to right we have: Coordinate sketch with convenient probabilities, coordinate sketch with uniform probabilities and Gaussian sketch respectively. 
}\label{fig:rand_conv_1000}
\end{figure}

\begin{figure}[H]
    \centering
\begin{minipage}{0.30\textwidth}
  \centering
\includegraphics[width =  \textwidth ]{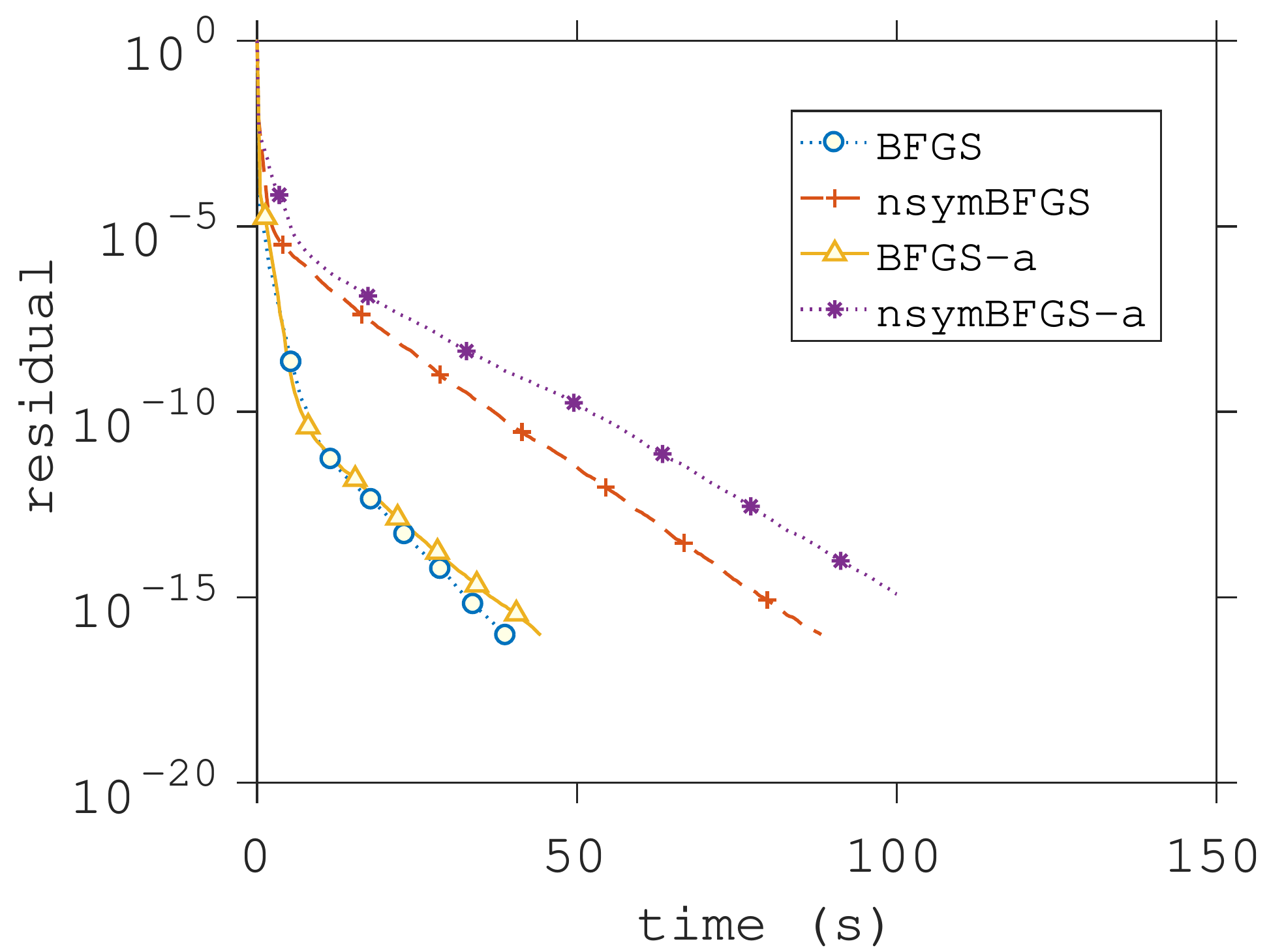}
\end{minipage}%
\begin{minipage}{0.30\textwidth}
  \centering
\includegraphics[width =  \textwidth ]{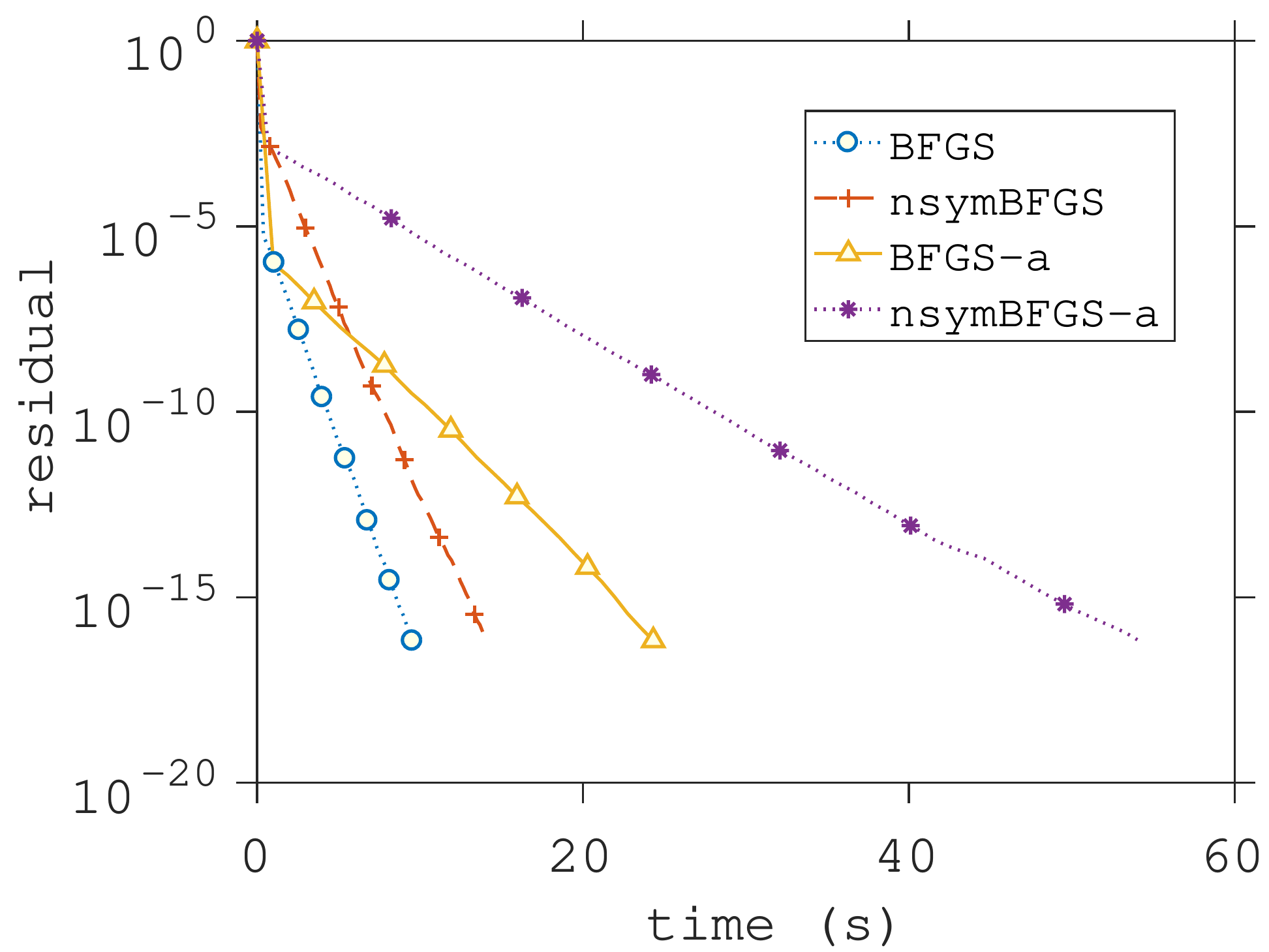}
\end{minipage}%
\begin{minipage}{0.30\textwidth}
  \centering
\includegraphics[width =  \textwidth ]{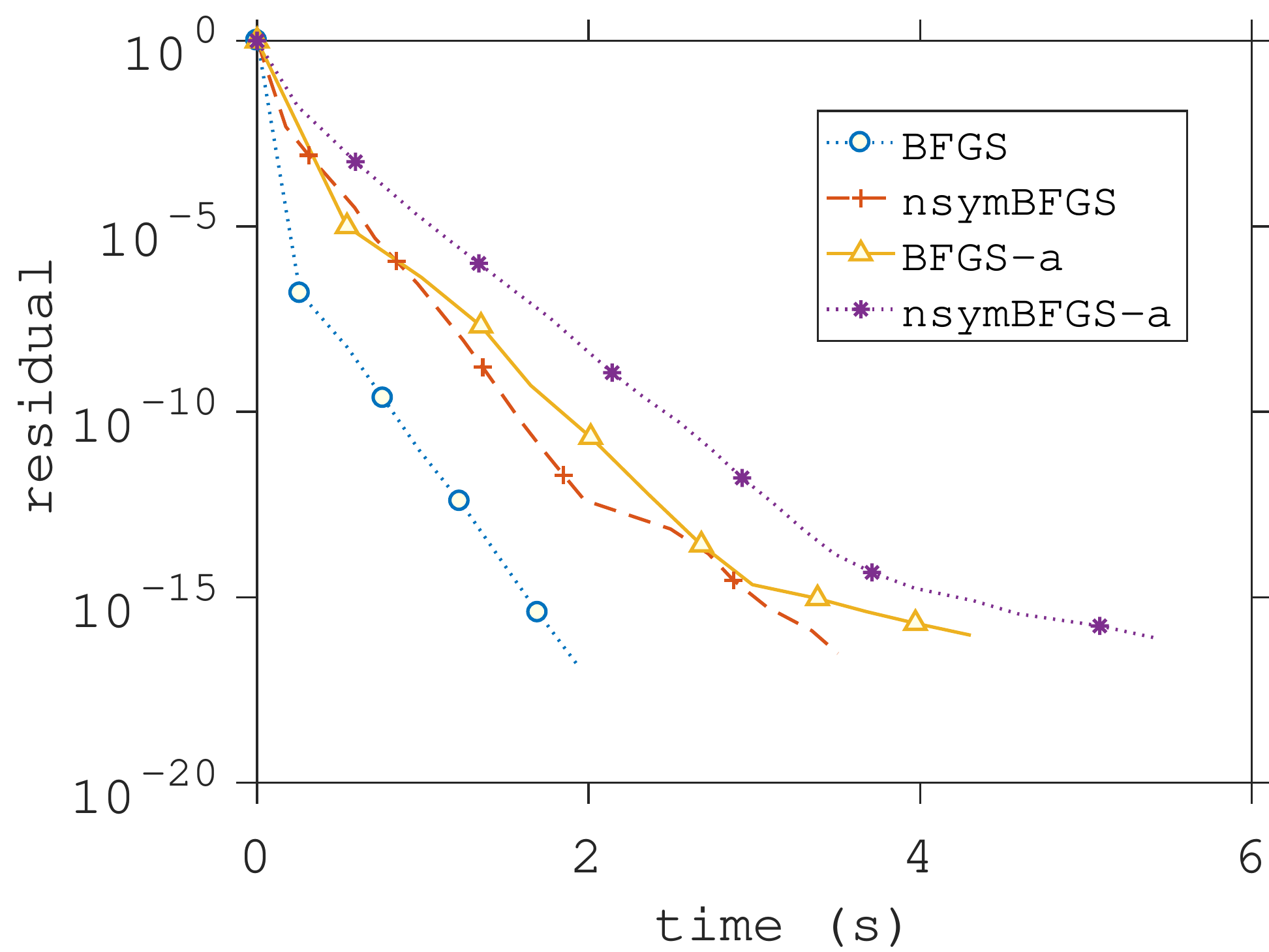}
\end{minipage}%
    \caption{  Eigenvalues set to $10000,1,1,\dots 1$. From left to right we have: Coordinate sketch with convenient probabilities, coordinate sketch with uniform probabilities and Gaussian sketch respectively. 
}\label{fig:rand_conv_110000}
\end{figure}

The numerical experiments in this section indicate that one might choose $\mu ,\nu$ as per Section~\ref{sec:convenient_munu}. In other words, one might pretend to be in the setting when symmetry is not enforced and coordinate sketches with convenient probabilities are used. In fact, the practical speedup coming from the acceleration depends very strongly on the structure of matrix $A$.
 Another message to be delivered is that both preserving symmetry and acceleration yield a better convergence and they combine together well.

We also consider a problem where we pretend to not have access to $\lambda_{\min}(A)$, therefore we cannot choose $\mu=\mu^P$. Instead, we naively choose $\mu=\frac{1}{100\nu}$ and $\mu=\frac{1}{10000\nu}$.

\begin{figure}[H]
    \centering
\begin{minipage}{0.30\textwidth}
  \centering
\includegraphics[width =  \textwidth ]{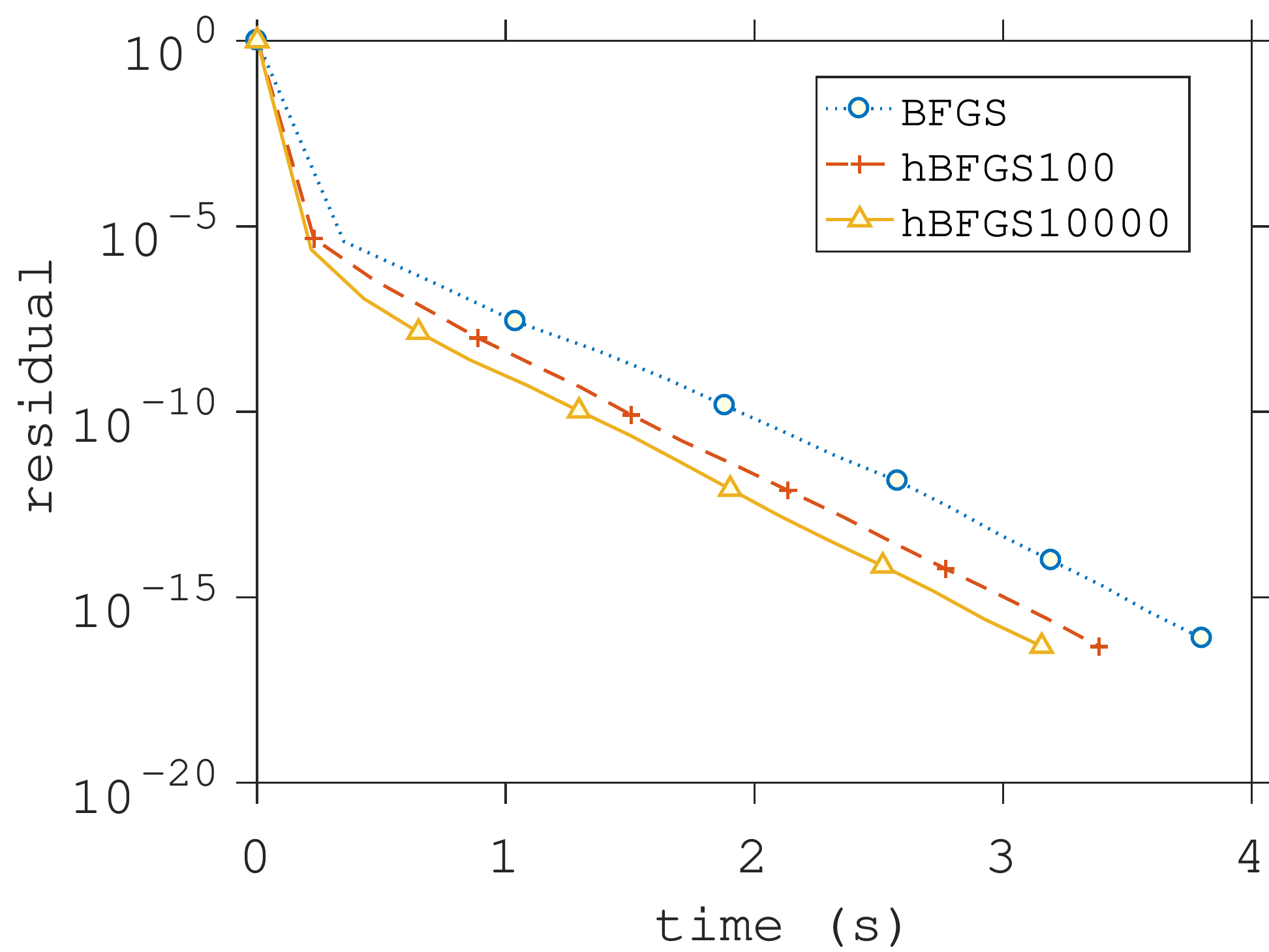}
\end{minipage}%
\begin{minipage}{0.30\textwidth}
  \centering
\includegraphics[width =  \textwidth ]{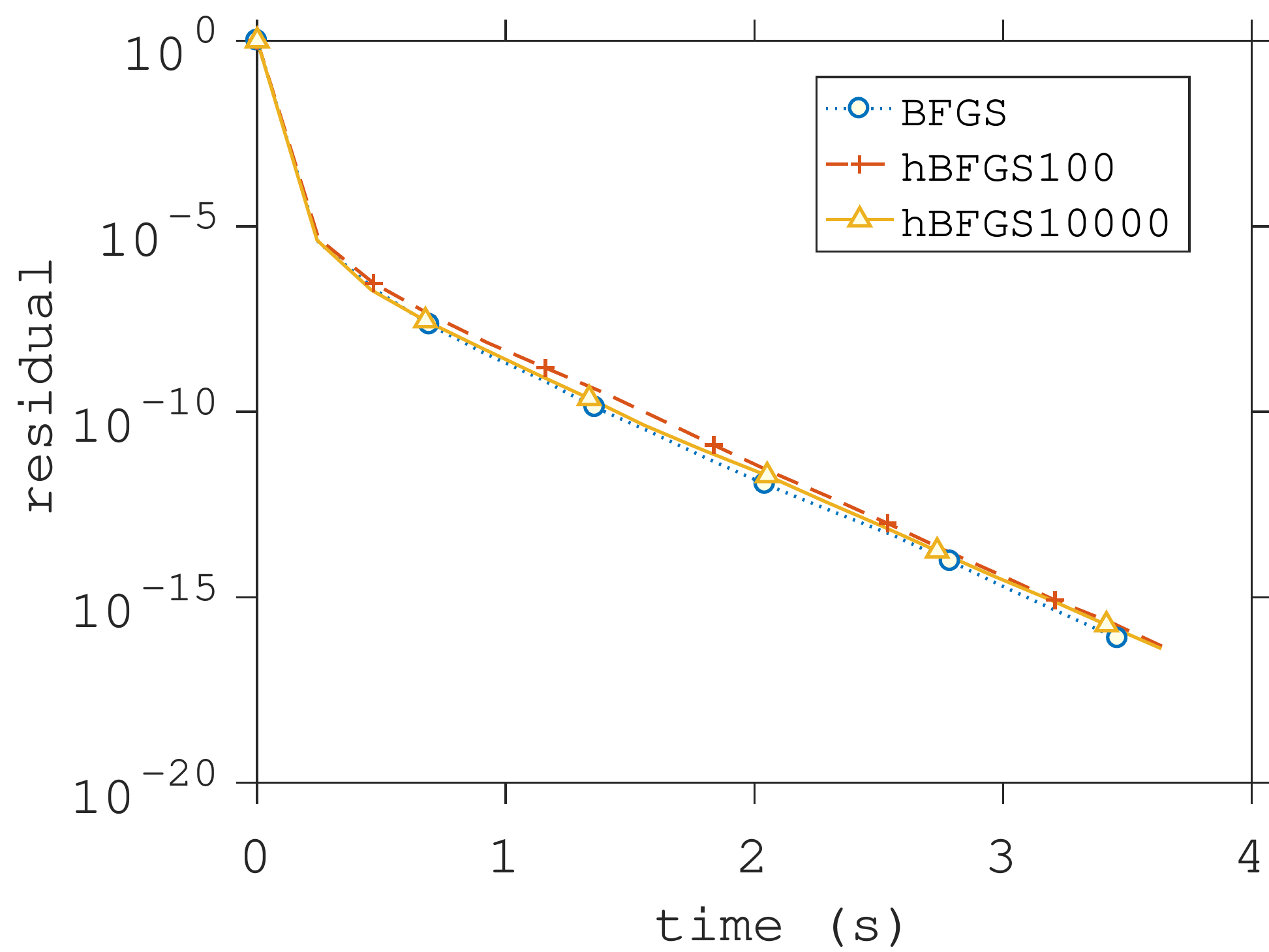}
\end{minipage}%
\begin{minipage}{0.30\textwidth}
  \centering
\includegraphics[width =  \textwidth ]{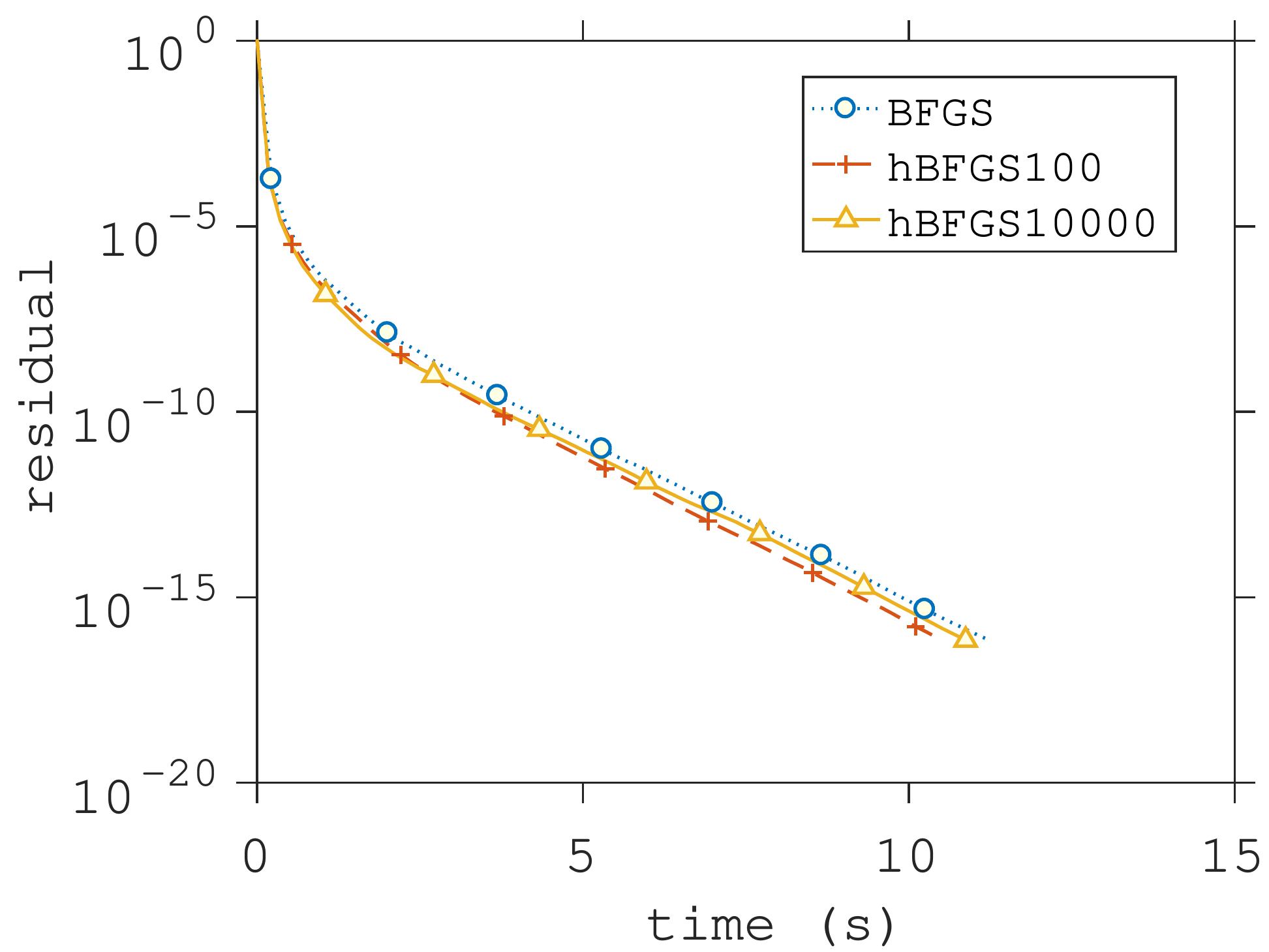}
\end{minipage}%
    \caption{  Eigenvalues set to $1,2,\dots, n$. From left to right we have: Coordinate sketch with convenient probabilities, coordinate sketch with uniform probabilities and Gaussian sketch respectively. 
}\label{fig:rand_conv_110000}
\end{figure}

\begin{figure}[H]
    \centering
\begin{minipage}{0.30\textwidth}
  \centering
\includegraphics[width =  \textwidth ]{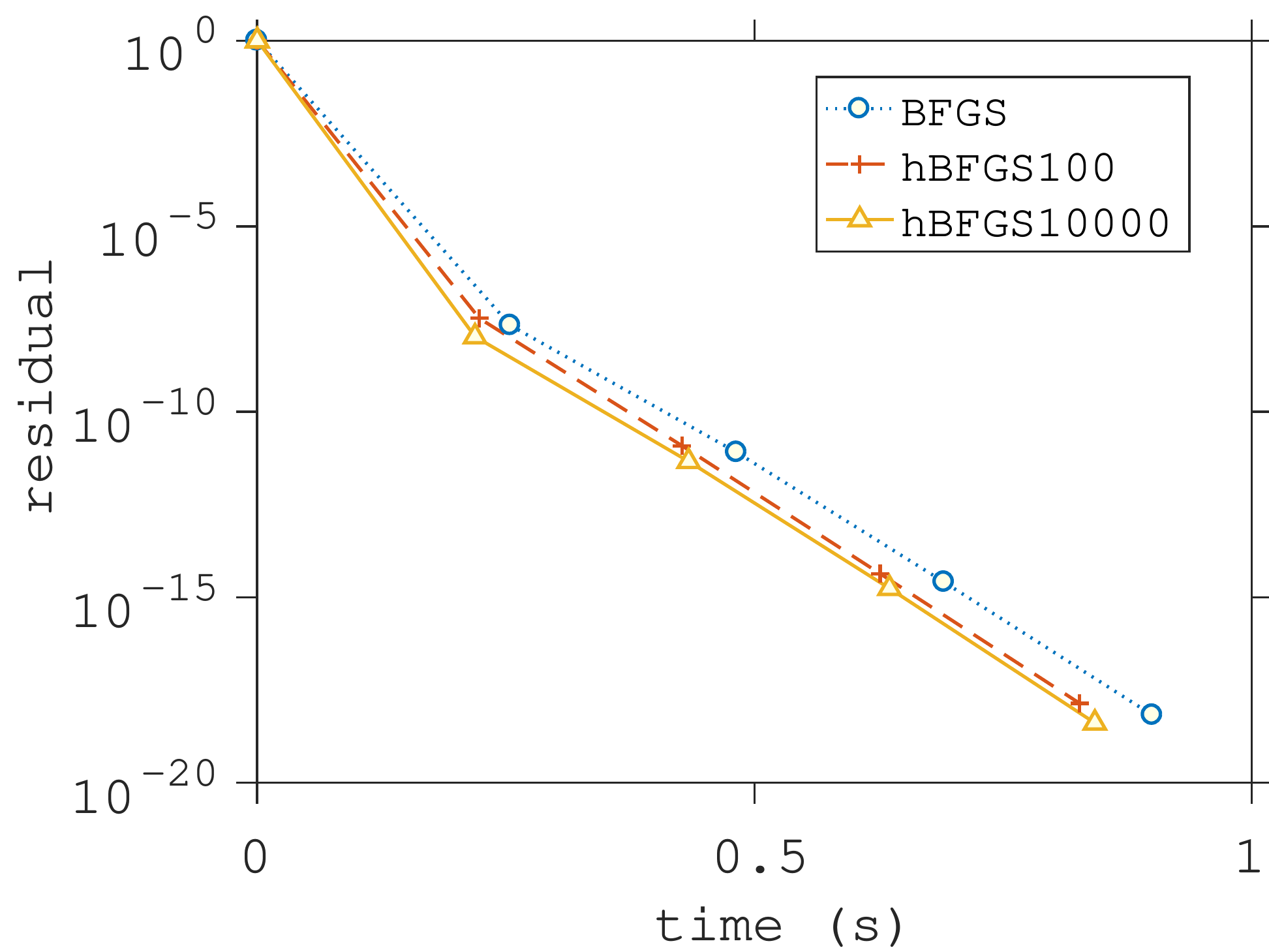}
\end{minipage}%
\begin{minipage}{0.30\textwidth}
  \centering
\includegraphics[width =  \textwidth ]{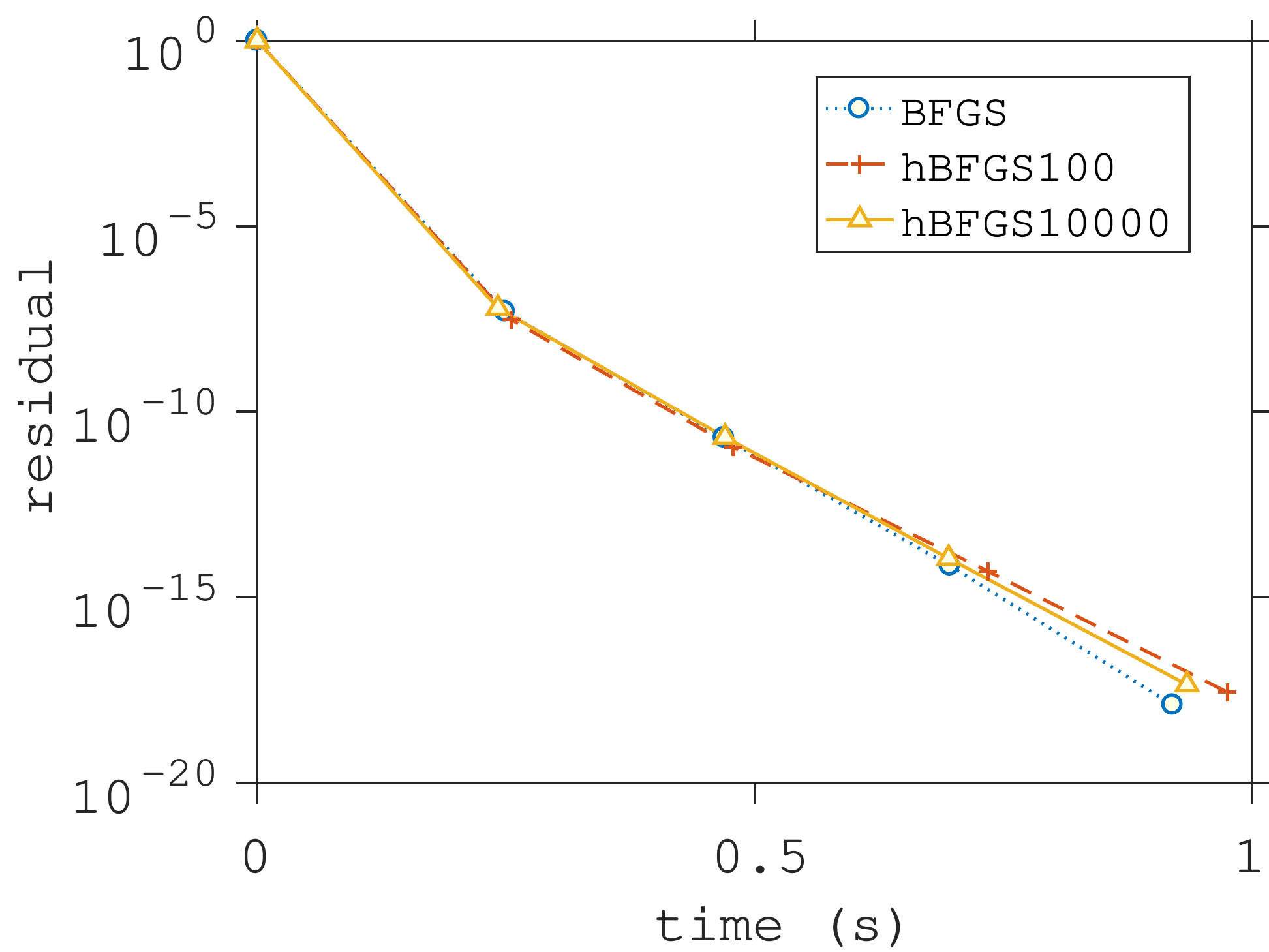}
\end{minipage}%
\begin{minipage}{0.30\textwidth}
  \centering
\includegraphics[width =  \textwidth ]{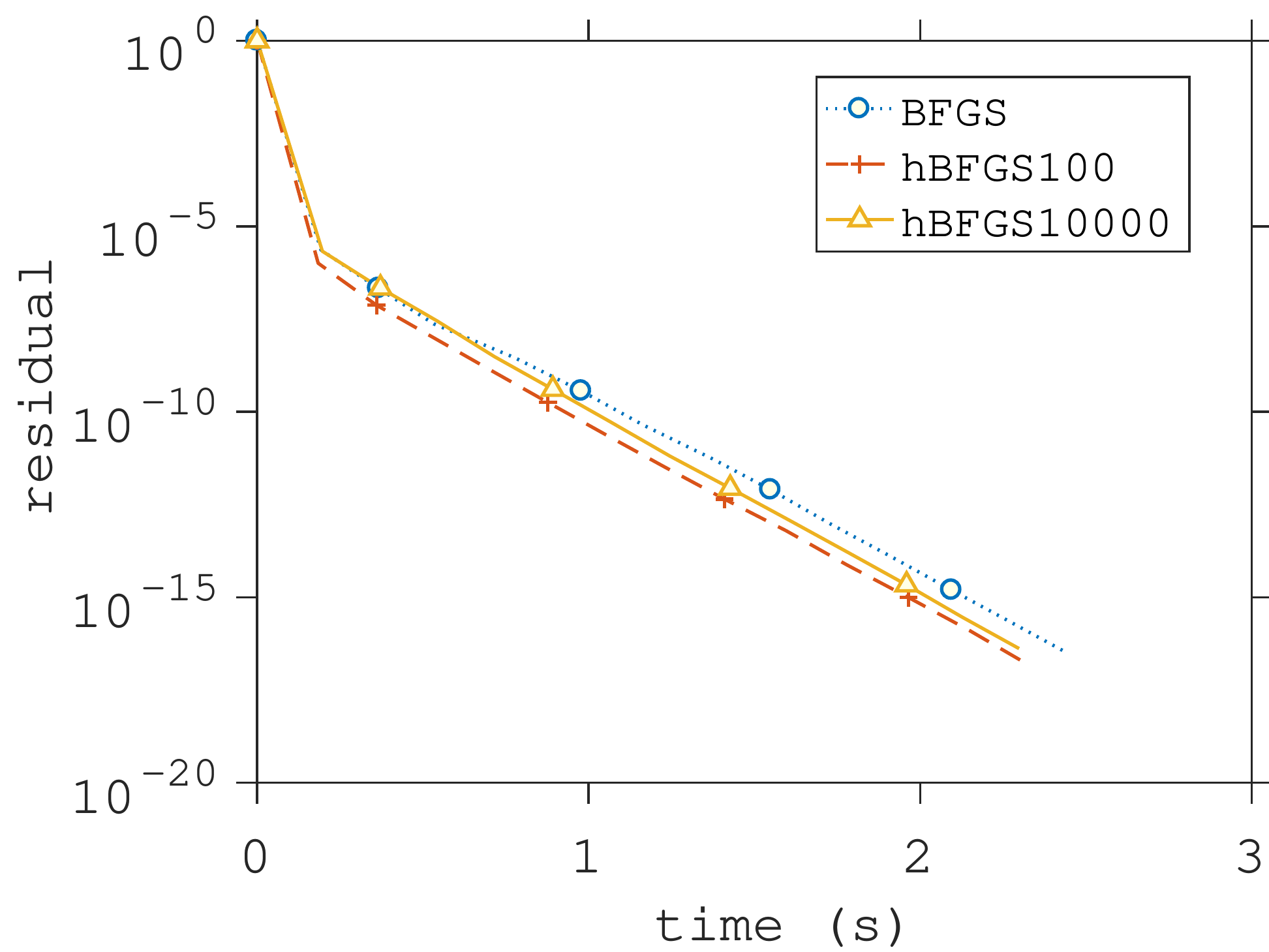}
\end{minipage}%
    \caption{Eigenvalues set to $1,10,10,\dots 10$. Coordinate sketch with convenient probabilities, coordinate sketch with uniform probabilities and Gaussian sketch respectively. 
}\label{fig:rand_conv_110000}
\end{figure}

\begin{figure}[H]
    \centering
\begin{minipage}{0.30\textwidth}
  \centering
\includegraphics[width =  \textwidth ]{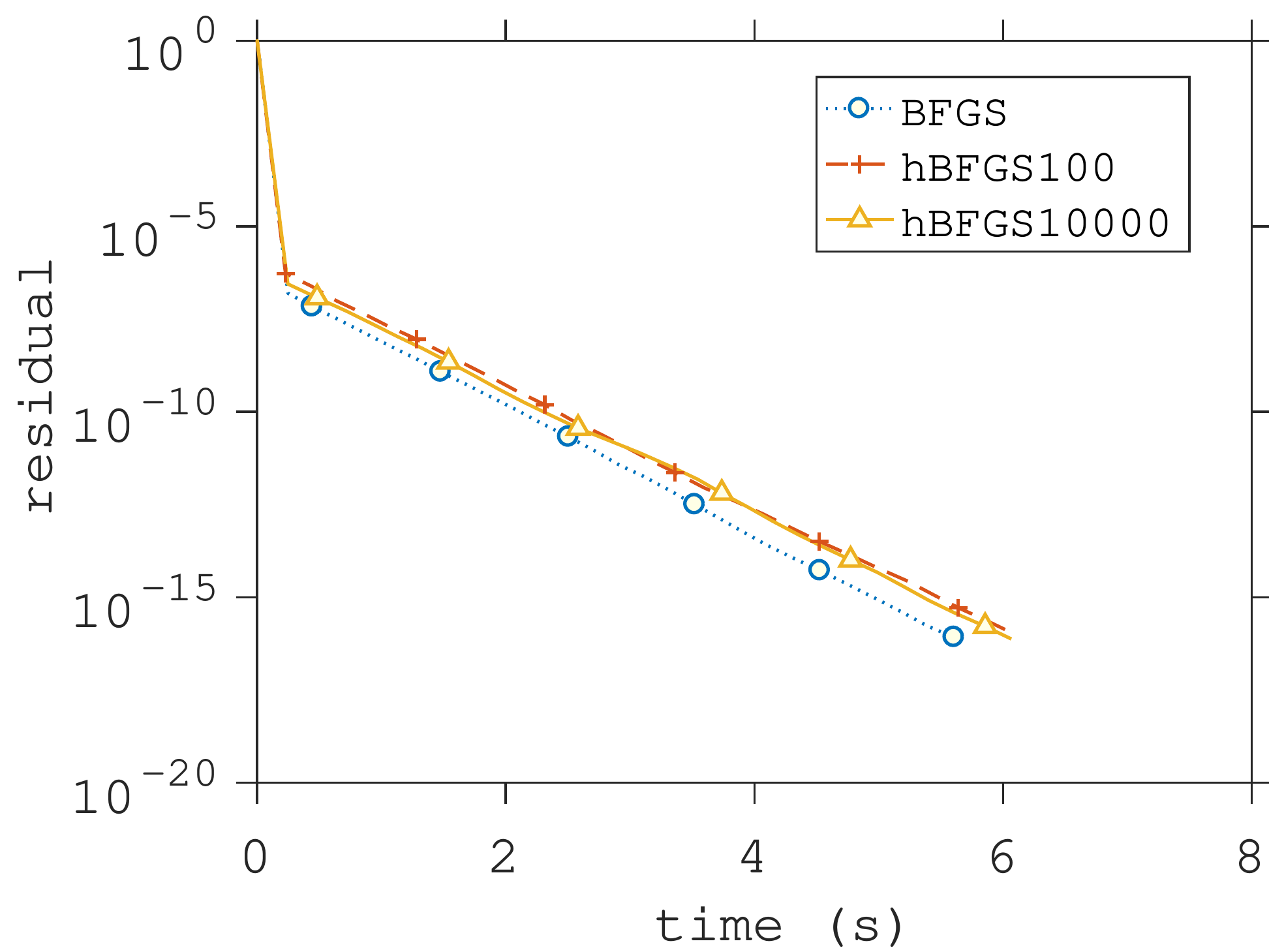}
\end{minipage}%
\begin{minipage}{0.30\textwidth}
  \centering
\includegraphics[width =  \textwidth ]{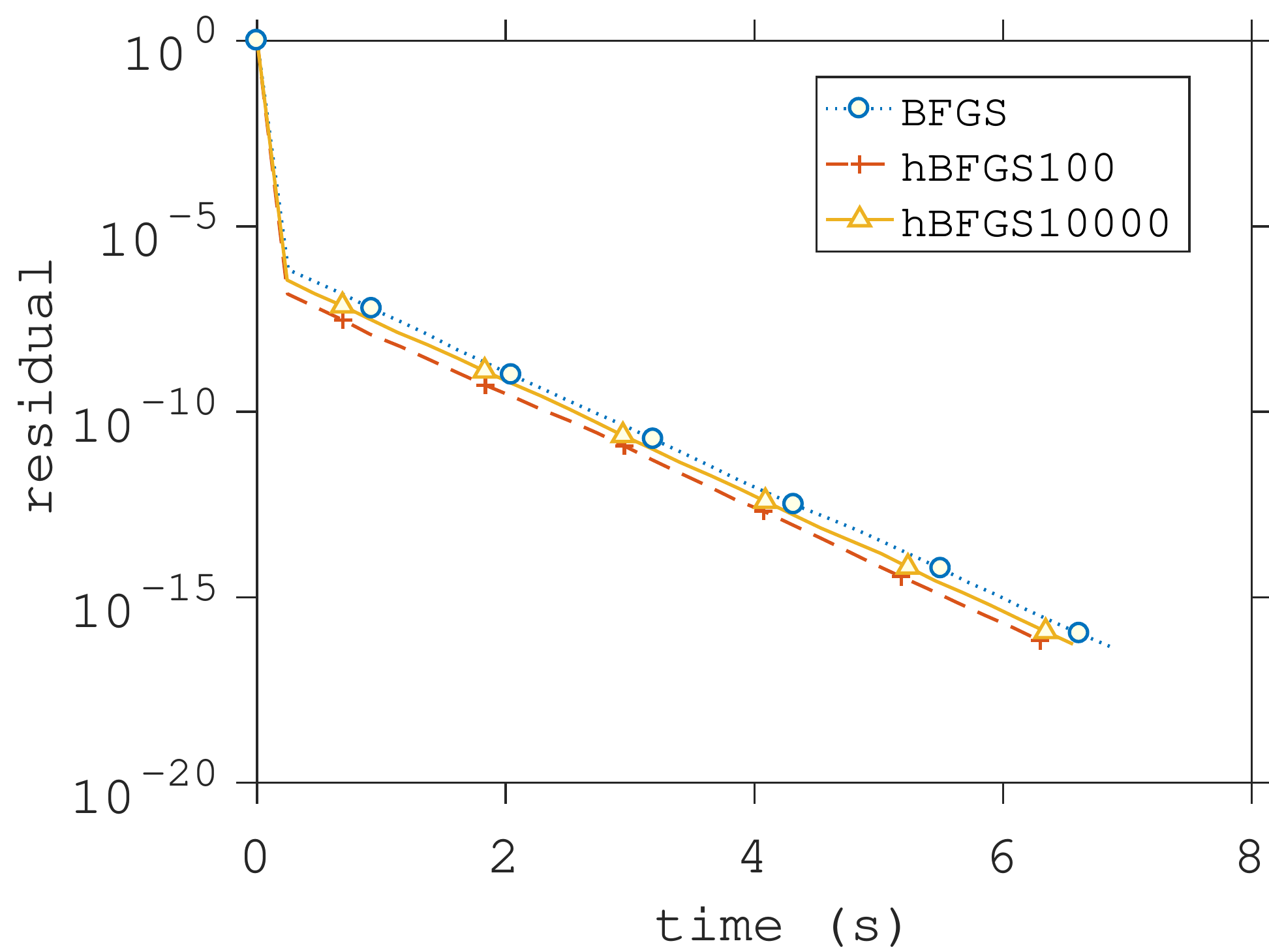}
\end{minipage}%
\begin{minipage}{0.30\textwidth}
  \centering
\includegraphics[width =  \textwidth ]{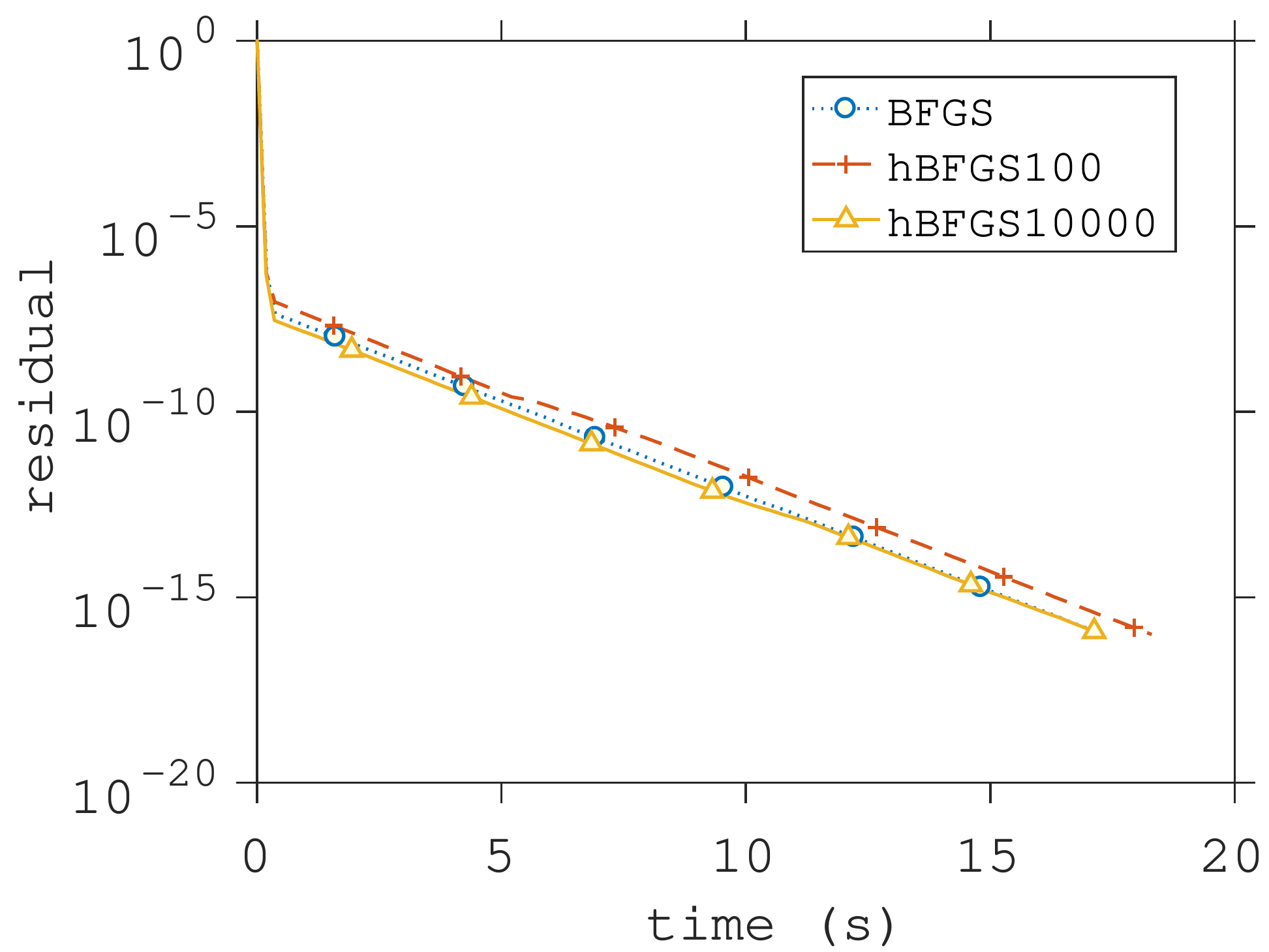}
\end{minipage}%
    \caption{Eigenvalues set to $1,100,100,\dots 100$. From left to right we have: Coordinate sketch with convenient probabilities, coordinate sketch with uniform probabilities and Gaussian sketch respectively. 
}\label{fig:rand_conv_110000}
\end{figure}

\begin{figure}[H]
    \centering
\begin{minipage}{0.30\textwidth}
  \centering
\includegraphics[width =  \textwidth ]{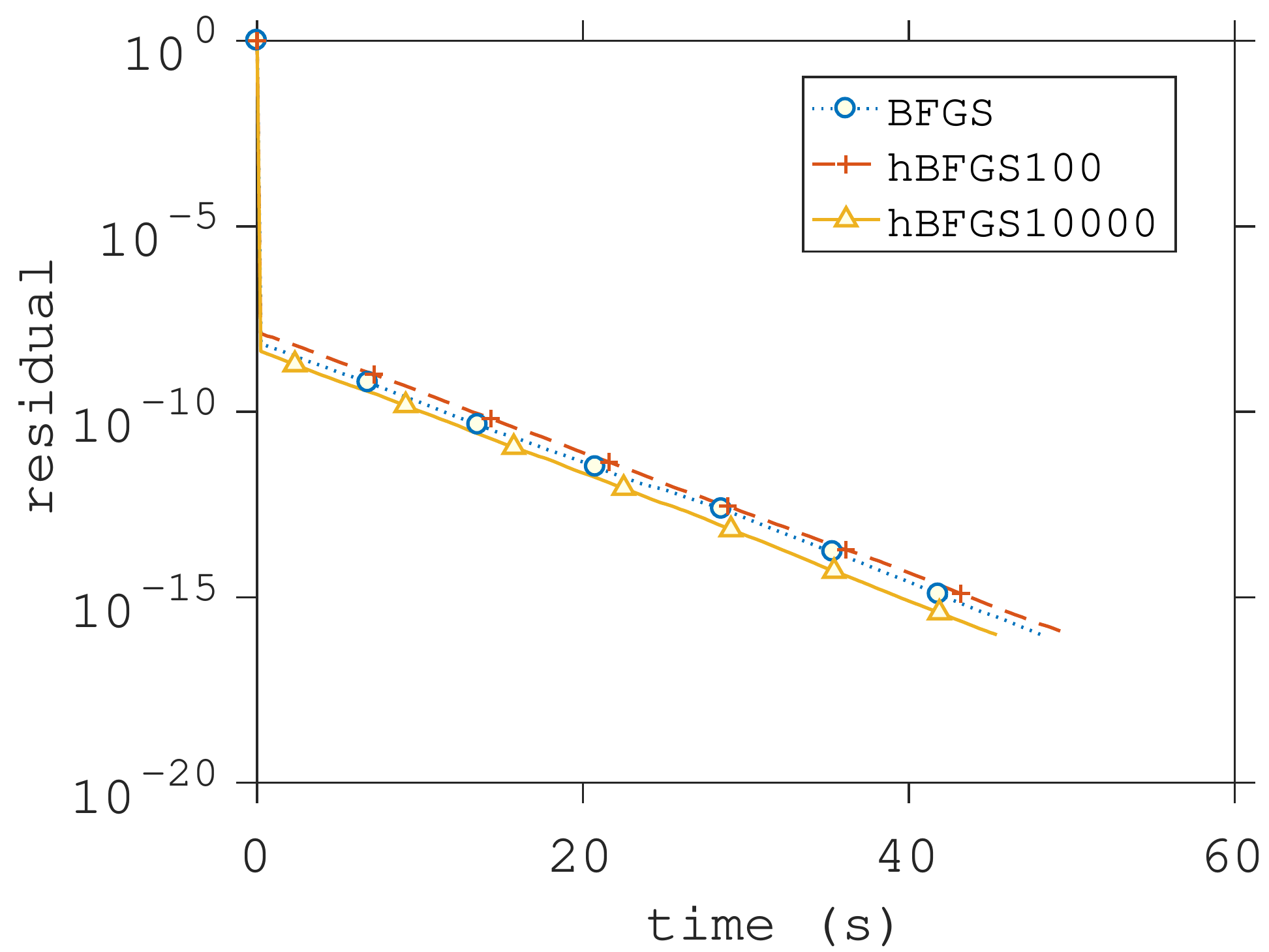}
\end{minipage}%
\begin{minipage}{0.30\textwidth}
  \centering
\includegraphics[width =  \textwidth ]{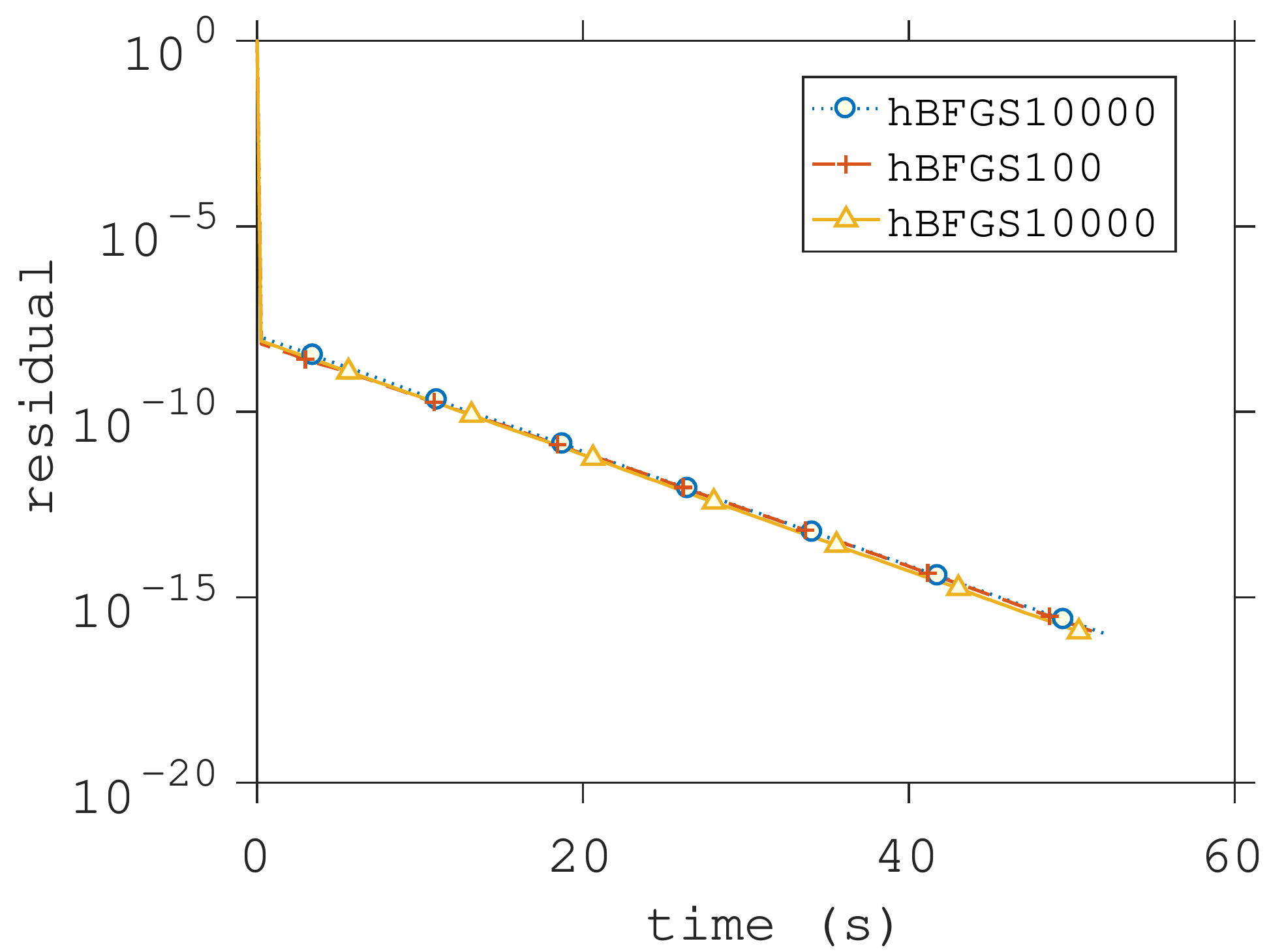}
\end{minipage}%
\begin{minipage}{0.30\textwidth}
  \centering
\includegraphics[width =  \textwidth ]{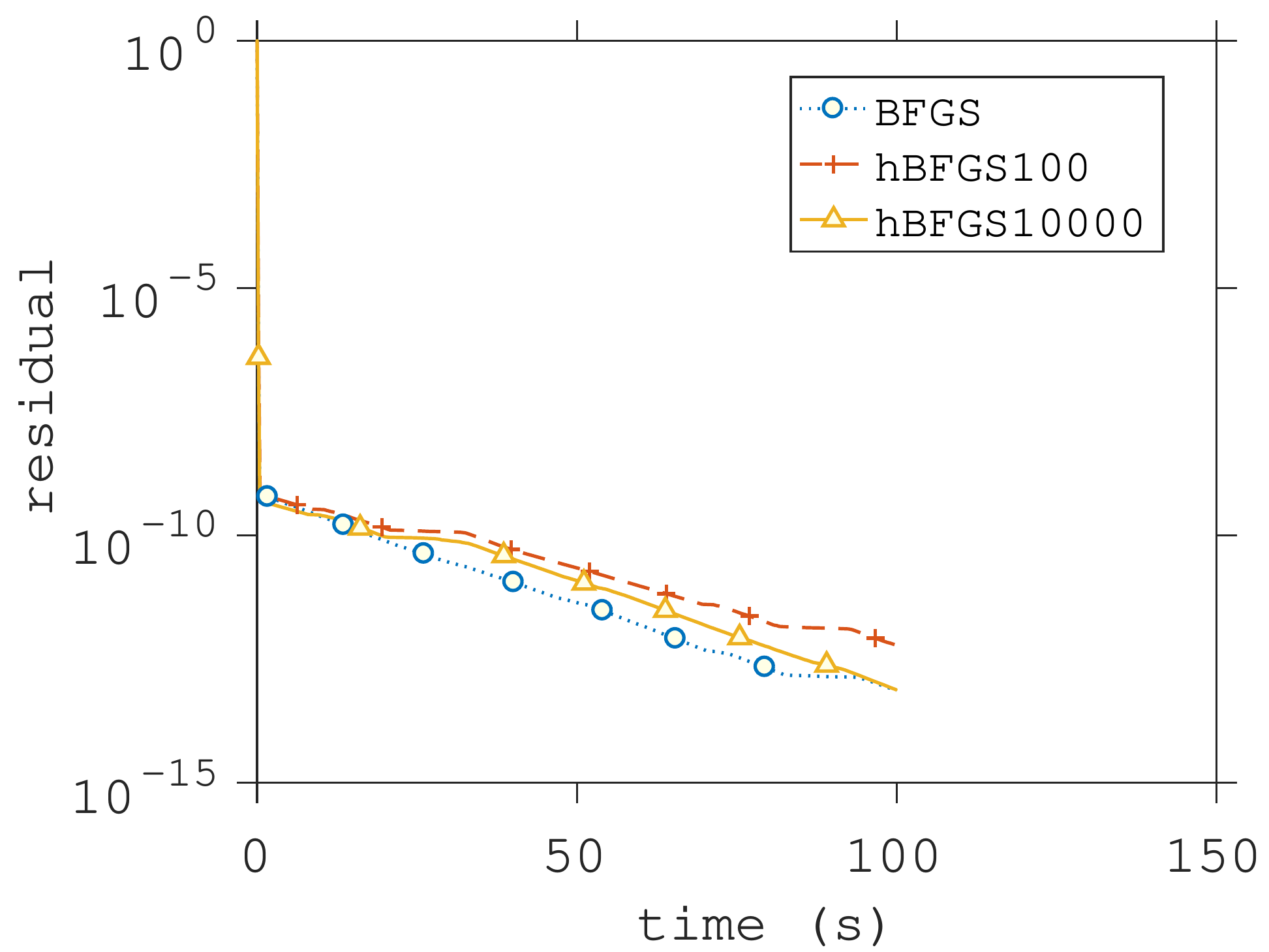}
\end{minipage}%
    \caption{Eigenvalues set to $1,1000,1000,\dots 1000$. From left to right we have: Coordinate sketch with convenient probabilities, coordinate sketch with uniform probabilities and Gaussian sketch respectively. 
}\label{fig:rand_conv_110000}
\end{figure}

\begin{figure}[H]
    \centering
\begin{minipage}{0.30\textwidth}
  \centering
\includegraphics[width =  \textwidth ]{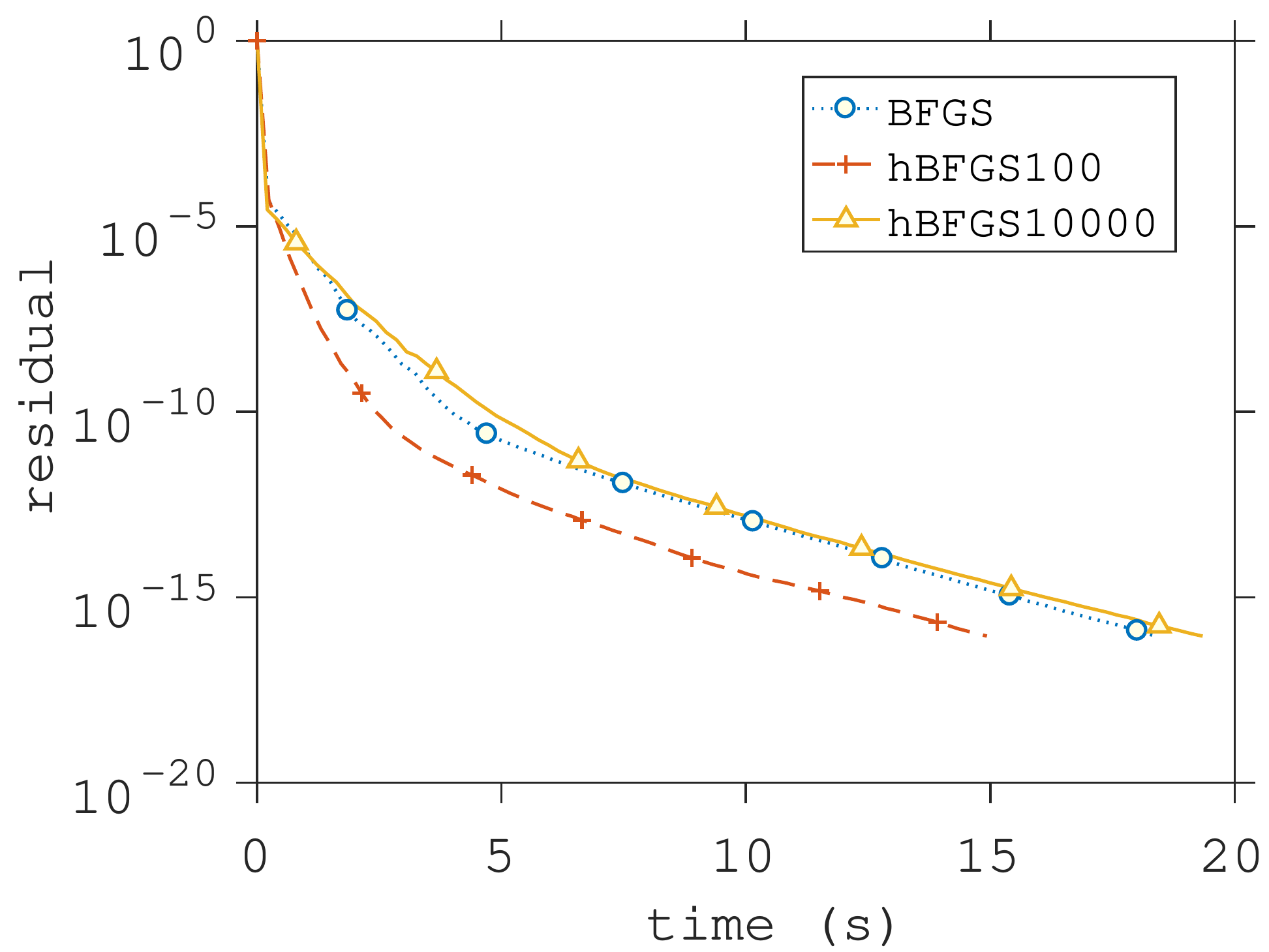}
\end{minipage}%
\begin{minipage}{0.30\textwidth}
  \centering
\includegraphics[width =  \textwidth ]{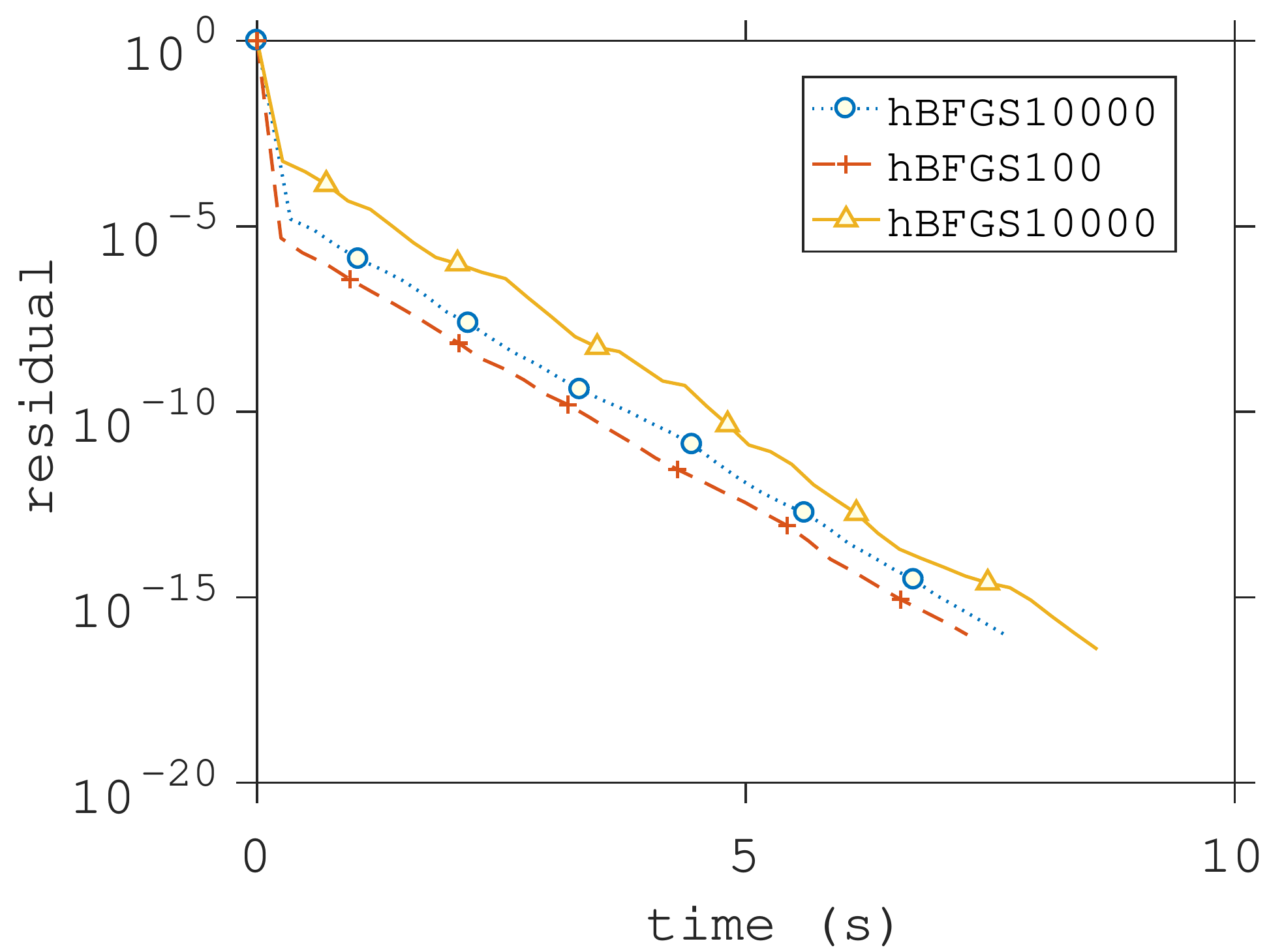}
\end{minipage}%
\begin{minipage}{0.30\textwidth}
  \centering
\includegraphics[width =  \textwidth ]{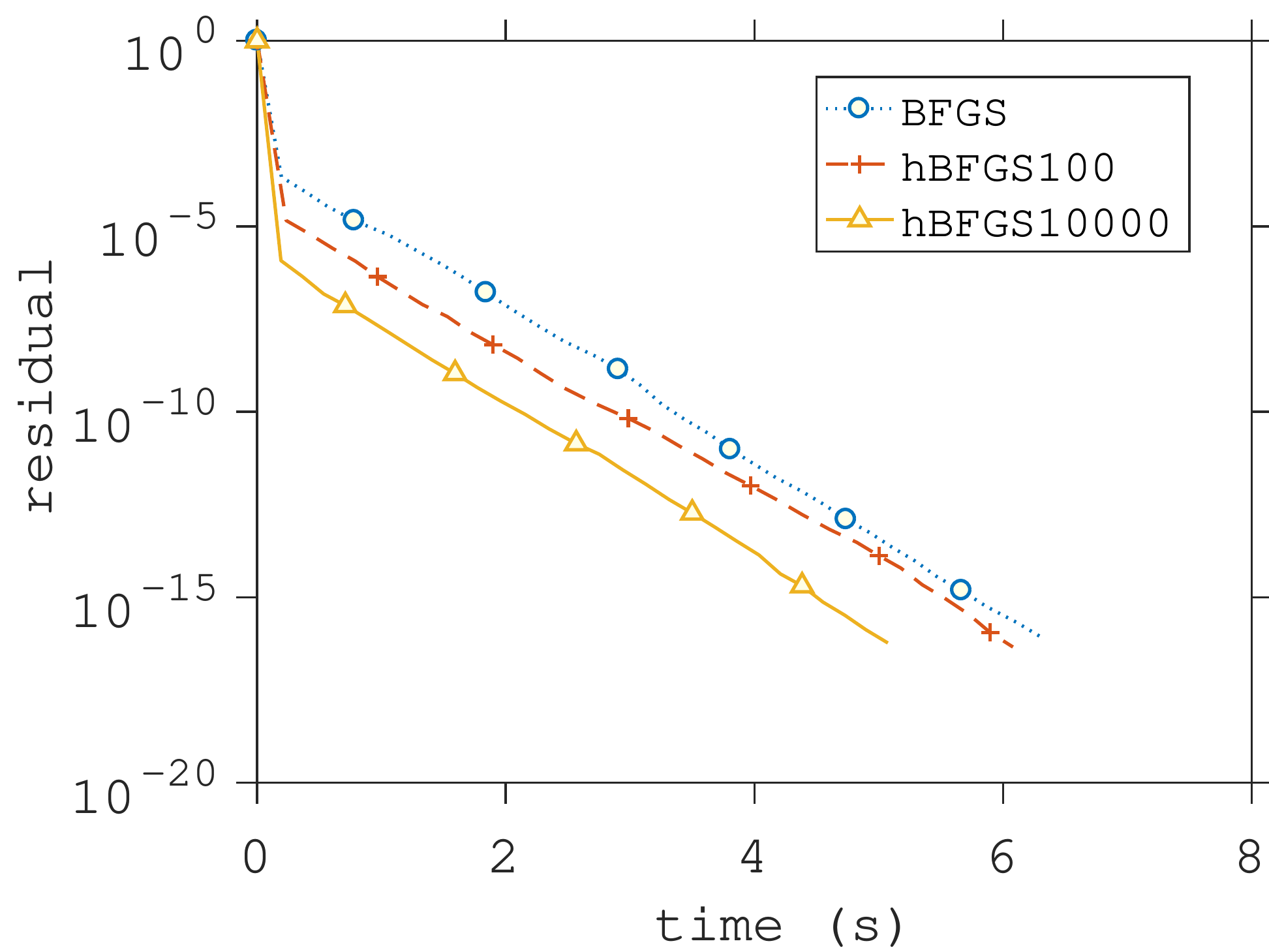}
\end{minipage}%
    \caption{Eigenvalues set to $10000,1,1,\dots 1$. From left to right we have: Coordinate sketch with convenient probabilities, coordinate sketch with uniform probabilities and Gaussian sketch respectively. 
}\label{fig:rand_conv_110000}
\end{figure}

Notice that once the acceleration parameters are not set exactly (but they are still reasonable), we observe that the performance of the accelerated algorithm is essentially the same as the performance of the nonaccelerated algorithm. We have observed the similar behavior when setting $\mu=\mu^P$ for Gaussian sketches. 

\subsubsection{Sensitivity to the acceleration parameters}
Here we investigate the sensitivity of the accelerated BFGS to the parameters $\mu$ and $\nu$. First we compute $\nu^P, \mu^P$ and from this we extract the following exponential grids: $\mu_i =2^{i-4}\mu$ and $\nu_i=5^{i-4}\nu$ for $i=1,2,\dots 7$. To gauge the gain is using acceleration with a particular $(\mu, \nu)$ pair, we run the accelerated algorithm for a fixed time then store the error of the final iterate. We then compute average per iteration decrease and divide it by average per iteration decrease of nonaccelerated algorithm. Thus if the resulting difference is less than one, then the accelerated algorithm was faster to nonaccelerated. 

In the plots below, $n=200$ was chosen. We focused on 2 problems described in the previous section---when the eigenvalues are uniformly distributed and when the the largest eigenvalue have multiplicity $n-1$. 

\begin{figure}[H]
    \centering
\begin{minipage}{0.33\textwidth}
  \centering
\includegraphics[width =  \textwidth]{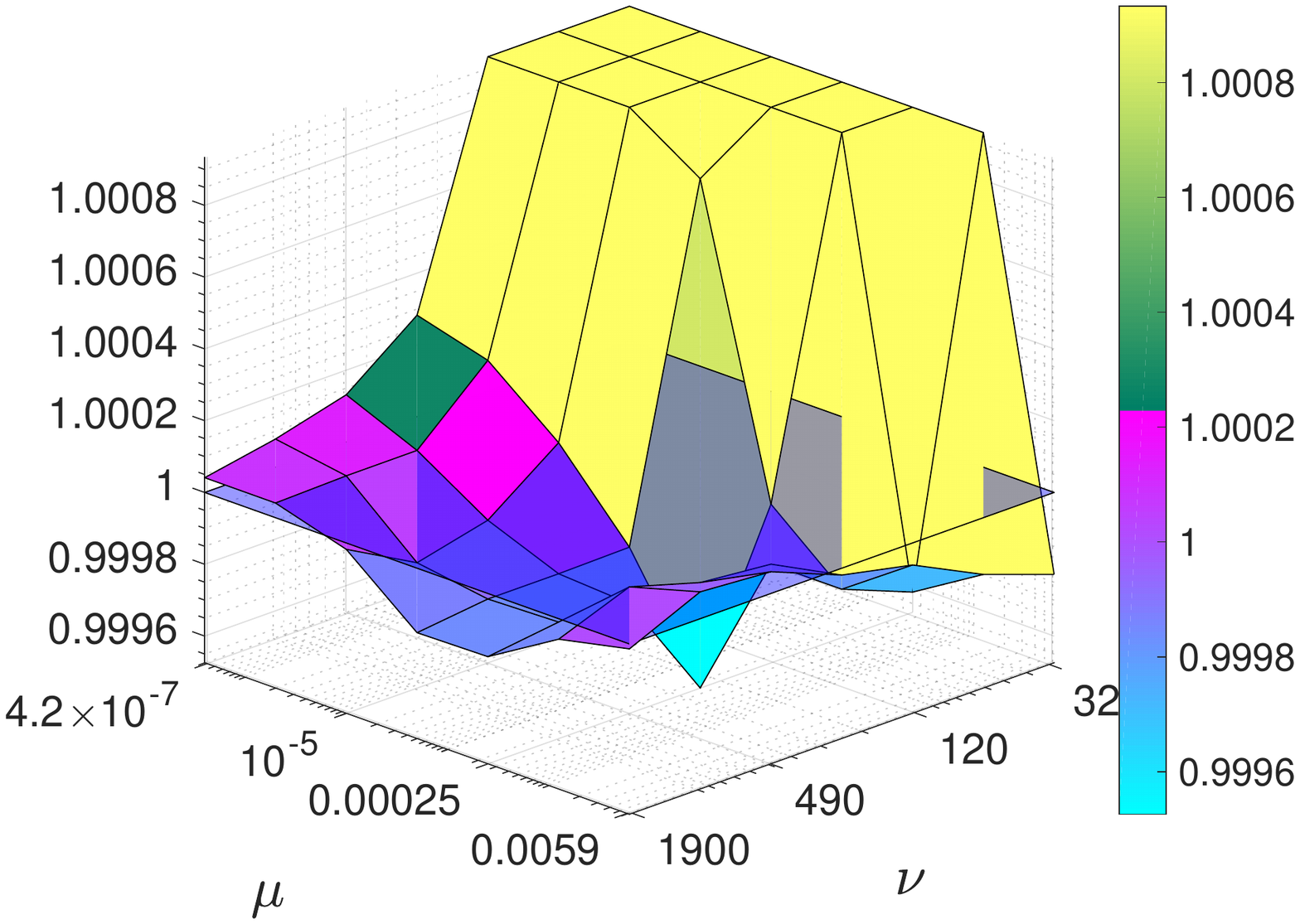}
\end{minipage}%
\begin{minipage}{0.33\textwidth}
  \centering
\includegraphics[width =  \textwidth ]{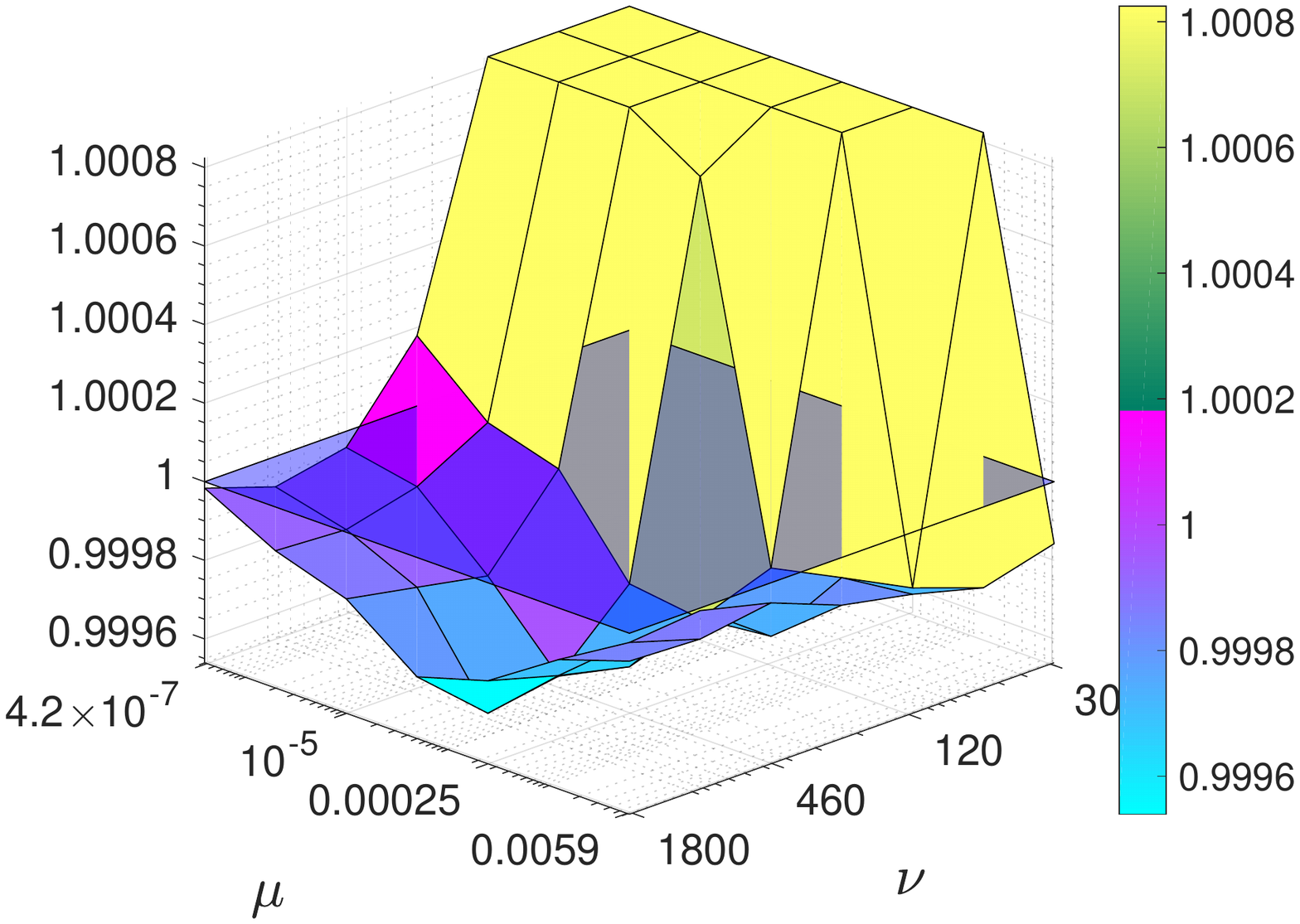}
\end{minipage}%
\begin{minipage}{0.33\textwidth}
  \centering
\includegraphics[width =  \textwidth ]{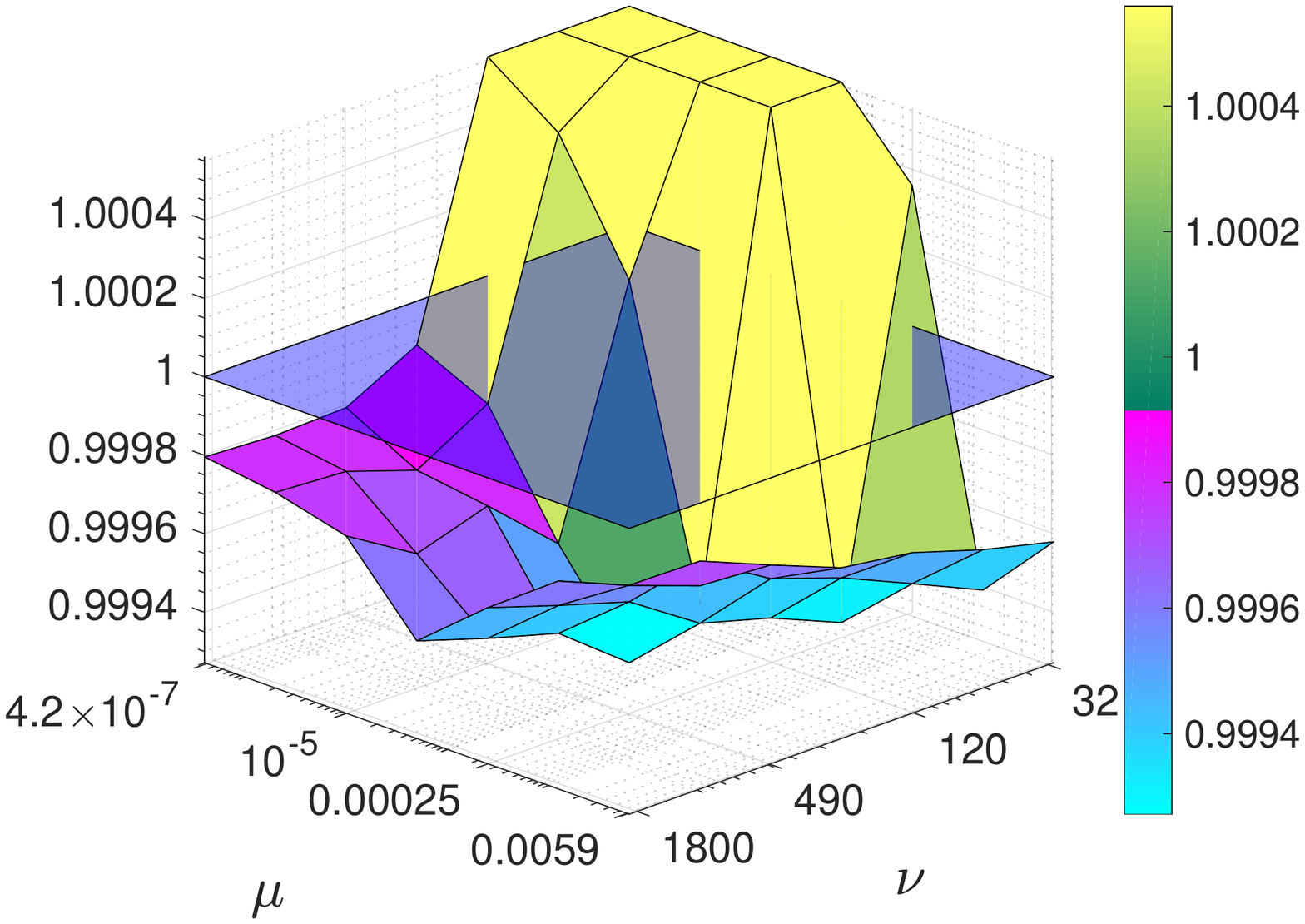}
\end{minipage}%
    \caption{Sensitivity to acceleration parameters. Eigenvalues of $A$ are set to $1,2\dots,n$. From left to right we have: Coordinate sketches with convenient probabilities, coordiante sketches with uniform probabilities and Gaussian sketches. Choice of parameters as per \eqref{eq:munu_conv_paper} in the middle of plots. Each instance was run for 5 seconds. }
\label{fig:heat_gauss_rand}
\end{figure}

\begin{figure}[H]
    \centering
\begin{minipage}{0.33\textwidth}
  \centering
\includegraphics[width =  \textwidth]{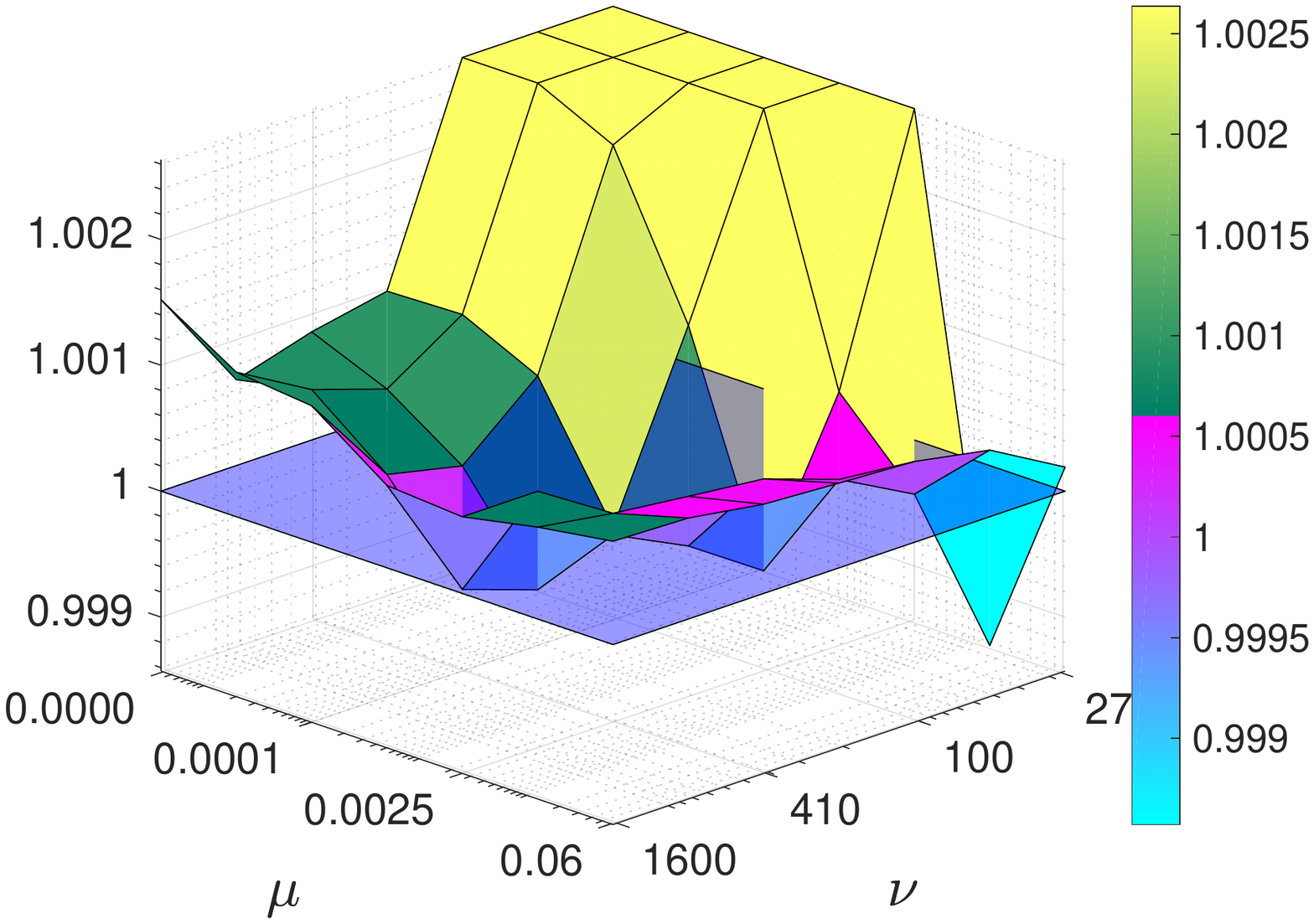}
\end{minipage}%
\begin{minipage}{0.33\textwidth}
  \centering
\includegraphics[width =  \textwidth ]{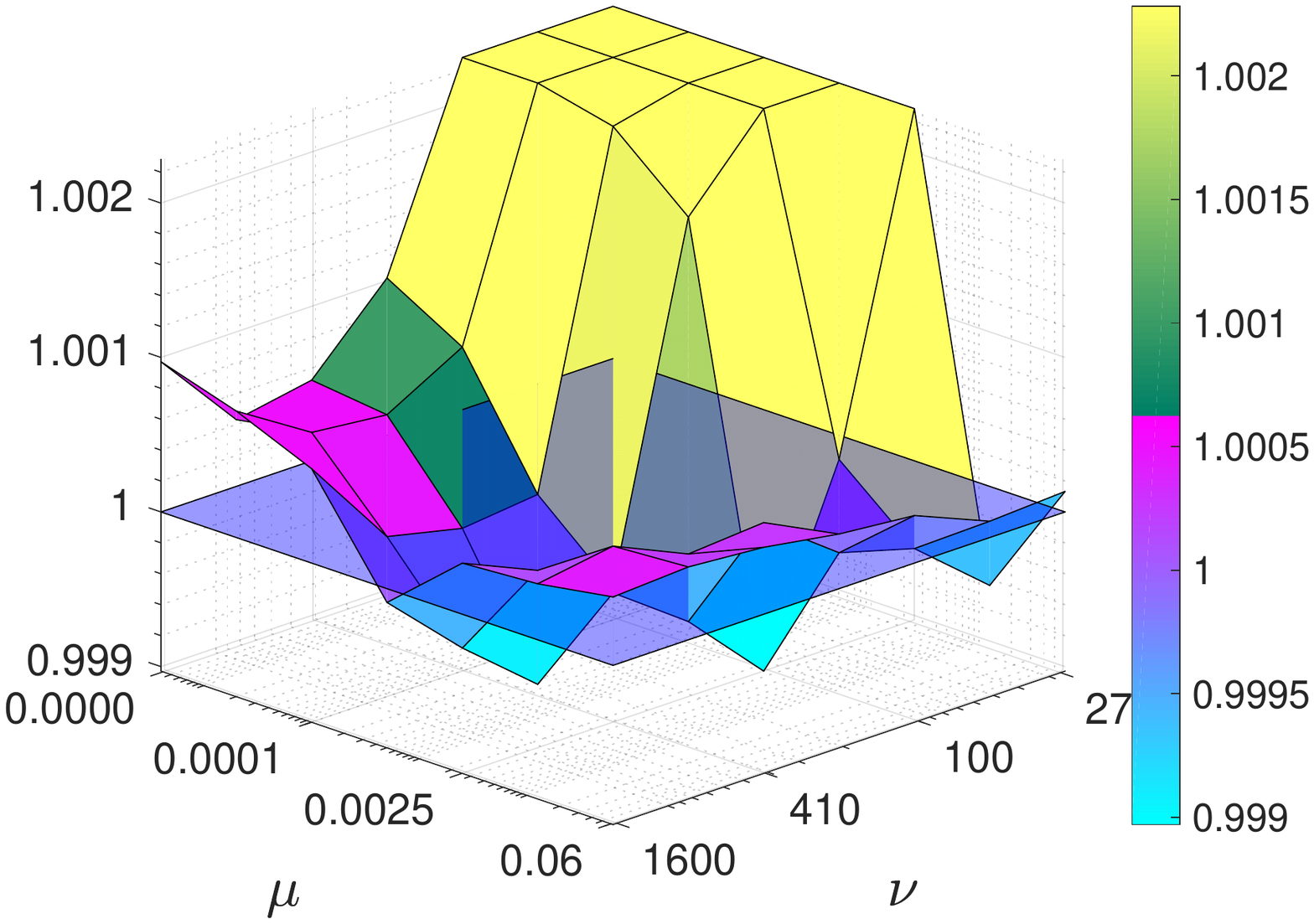}
\end{minipage}%
\begin{minipage}{0.33\textwidth}
  \centering
\includegraphics[width =  \textwidth ]{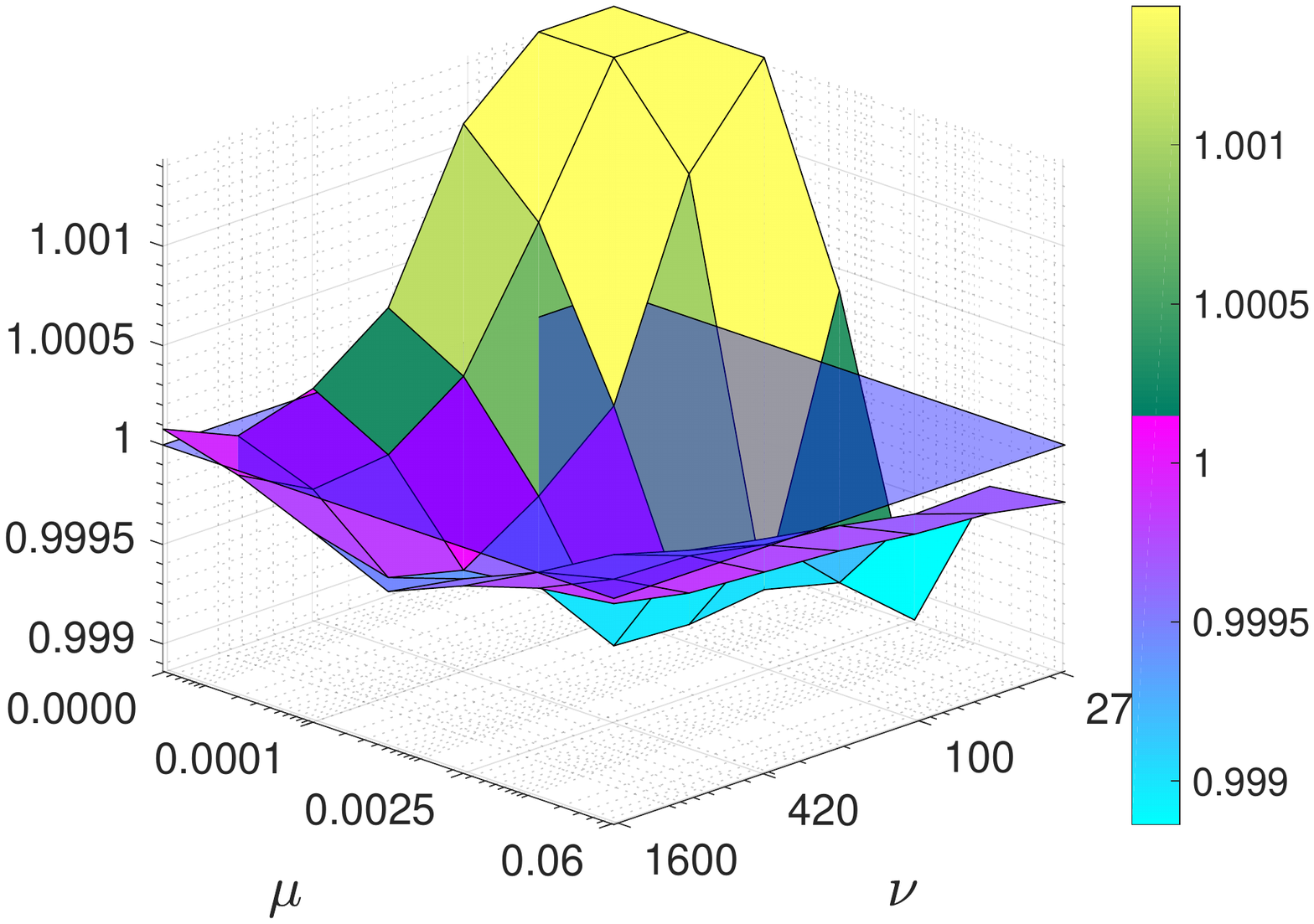}
\end{minipage}%
    \caption{Sensitivity to acceleration parameters. Eigenvalues of $A$ are set to $1,10, 10,\dots,10 $. From left to right we have: Coordinate sketches with convenient probabilities, coordiante sketches with uniform probabilities and Gaussian sketches. Choice of parameters as per \eqref{eq:munu_conv_paper} in the middle of plots. Each instance was run for 2 seconds.}
    \label{fig:heat_gauss_rand}
\end{figure}

\begin{figure}[H]
    \centering
\begin{minipage}{0.33\textwidth}
  \centering
\includegraphics[width =  \textwidth]{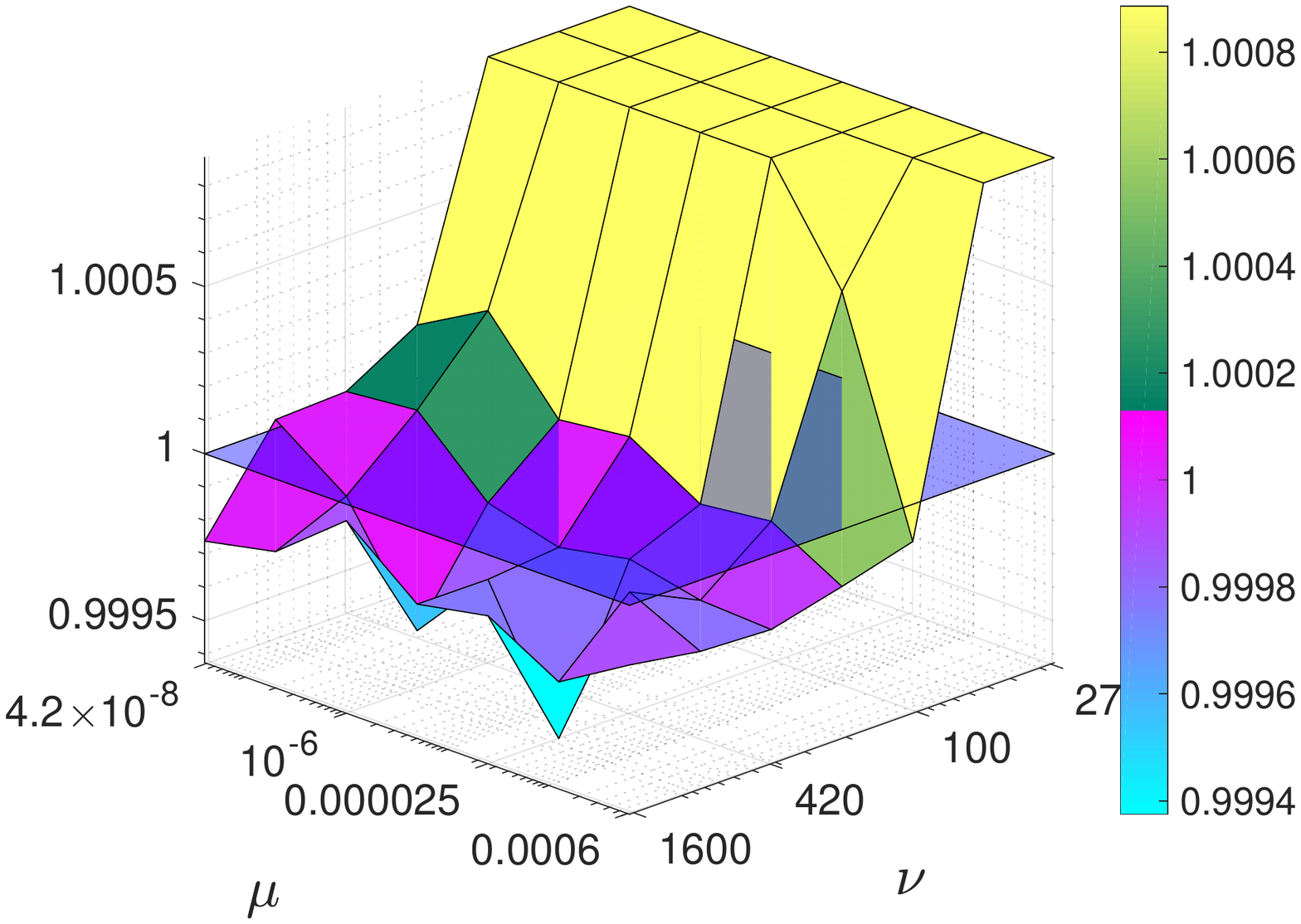}
\end{minipage}%
\begin{minipage}{0.33\textwidth}
  \centering
\includegraphics[width =  \textwidth ]{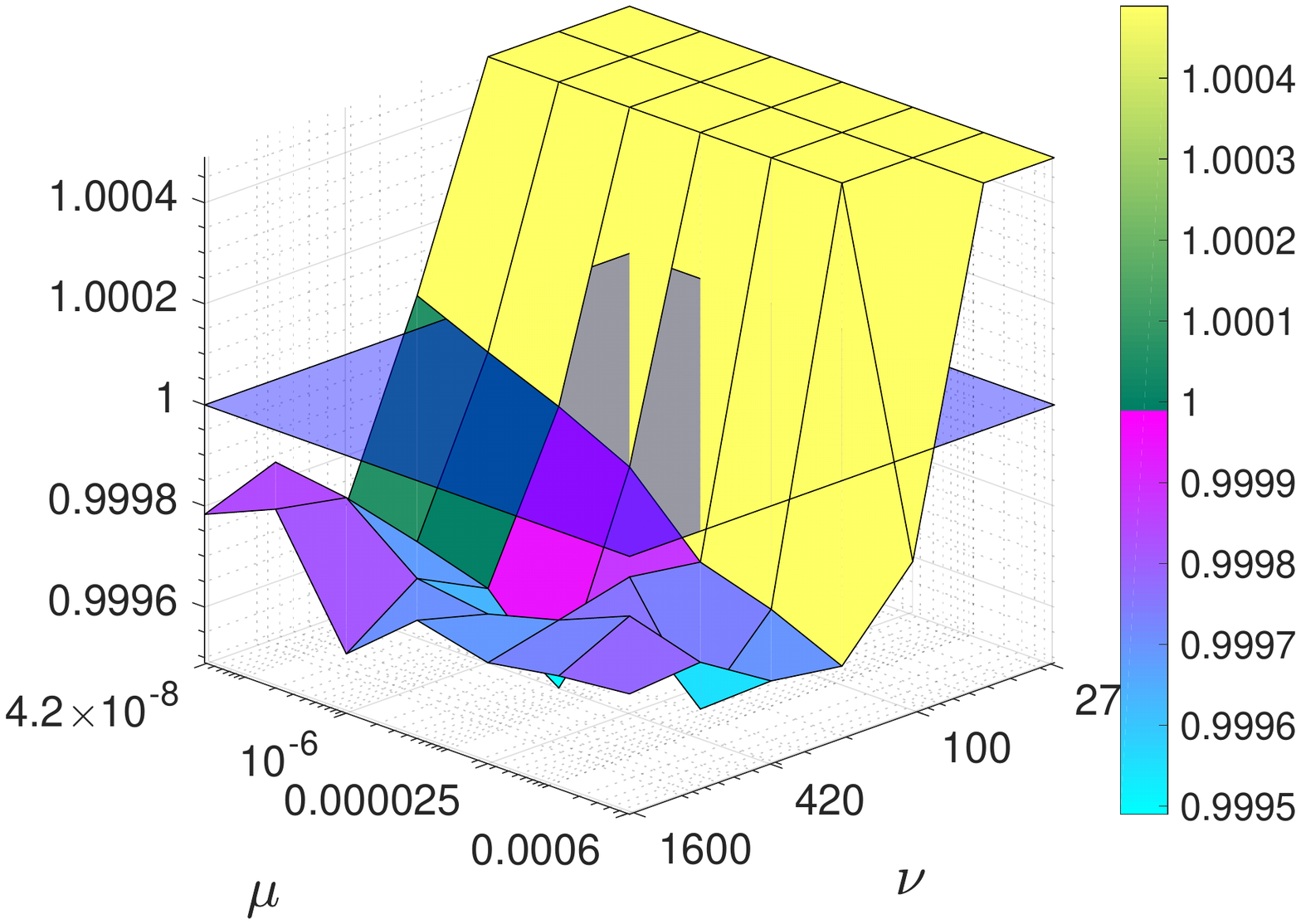}
\end{minipage}%
\begin{minipage}{0.33\textwidth}
  \centering
\includegraphics[width =  \textwidth ]{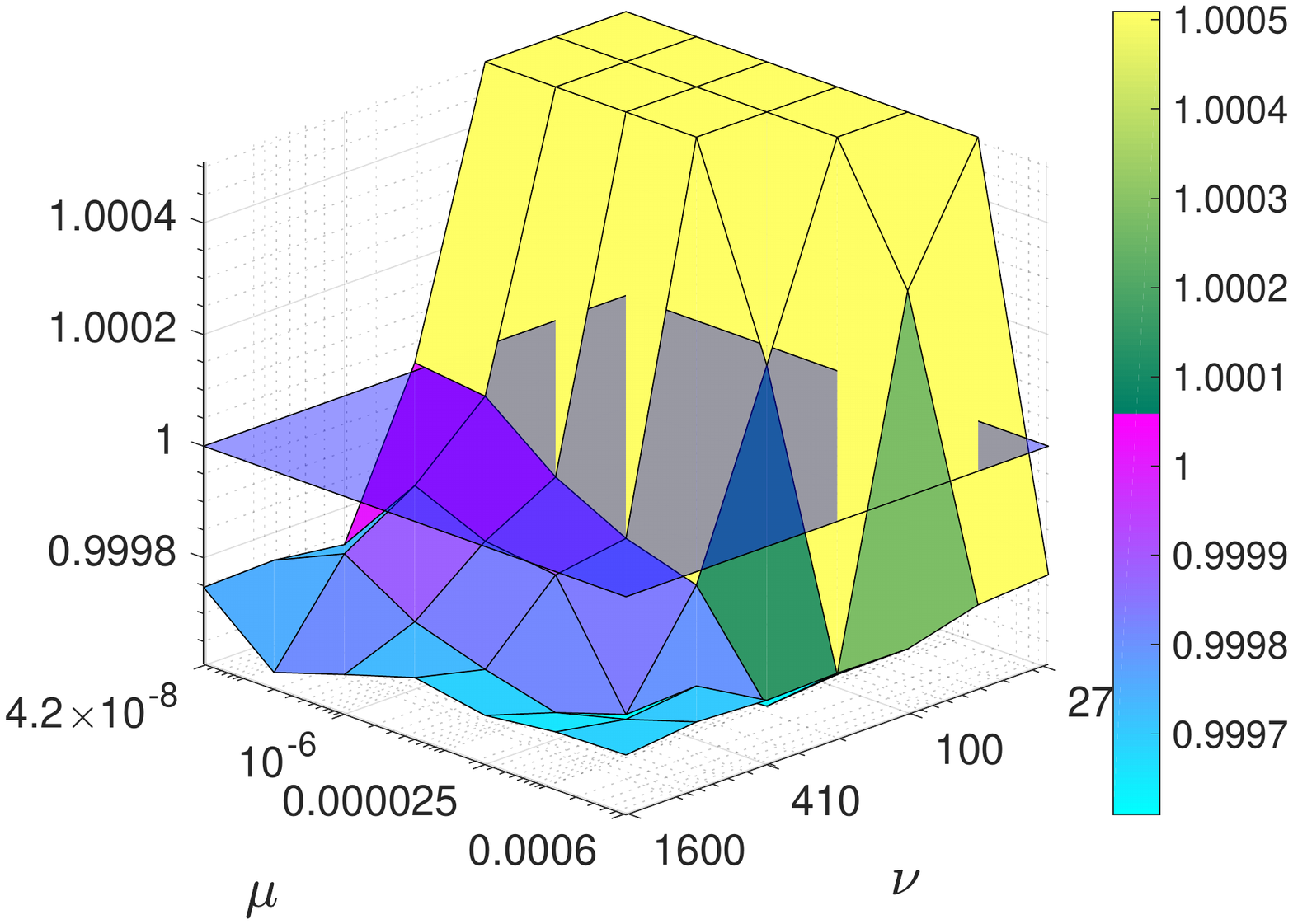}
\end{minipage}%
       \caption{Sensitivity to acceleration parameters. Eigenvalues of $A$ are set to $1,1000, 1000, \dots,1000$. From left to right we have: Coordinate sketches with convenient probabilities, coordiante sketches with uniform probabilities and Gaussian sketches. Choice of parameters as per \eqref{eq:munu_conv_paper} in the middle of plots. Each instance was run for 10 seconds.}
\label{fig:heat_gauss_rand}
\end{figure}

The crucial aspect to make the accelerated algorithm to converge is to set $\nu$ large enough.  In fact, combination of both small $\nu$ and small $\mu$ leads almost always to non-convergent algorithm. On the other hand, it seems that once $\nu$ is chosen correctly, big enough $\mu$ leads to fast convergence. This indicates how to compute $\mu$ in practice (recall that computing $\nu$ is feasible)---one needs just to choose it small enough (definitely smaller than $\frac{1}{\nu}$).

\subsection{Experiments with LIBSVM}

Next we investigate if the accelerated BFGS update improves upon the standard BFGS update when applied to the Hessian $\nabla^2 f(x)$  of ridge regression problems
of the form
\begin{equation}\label{eq:ridgeMatrix}
\min_{x\in \R^n}f(x)\eqdef \frac{1}{2}\norm{Ax-b}_2^2 + \frac{\lambda}{2} \norm{x}_2^2,\quad  \quad\nabla^2 f(x) = A^\top A+\lambda I,
\end{equation}
using data from LIBSVM~\cite{Chang2011}. Datapoints (rows of $A$) were normalized such that $\|A_{i:}\|^2=1$ for all $i$ and the regularization parameter was chosen as $\lambda=\frac{1}{m}$.

First, we run the experiments on smaller problems when parameters $\mu$, $\nu$ are precomputed for coordinate sketches with convenient probabilities \eqref{eq:munu_conv_paper}.

\begin{figure}[H]
    \centering
\begin{minipage}{0.30\textwidth}
  \centering
\includegraphics[width =  \textwidth ]{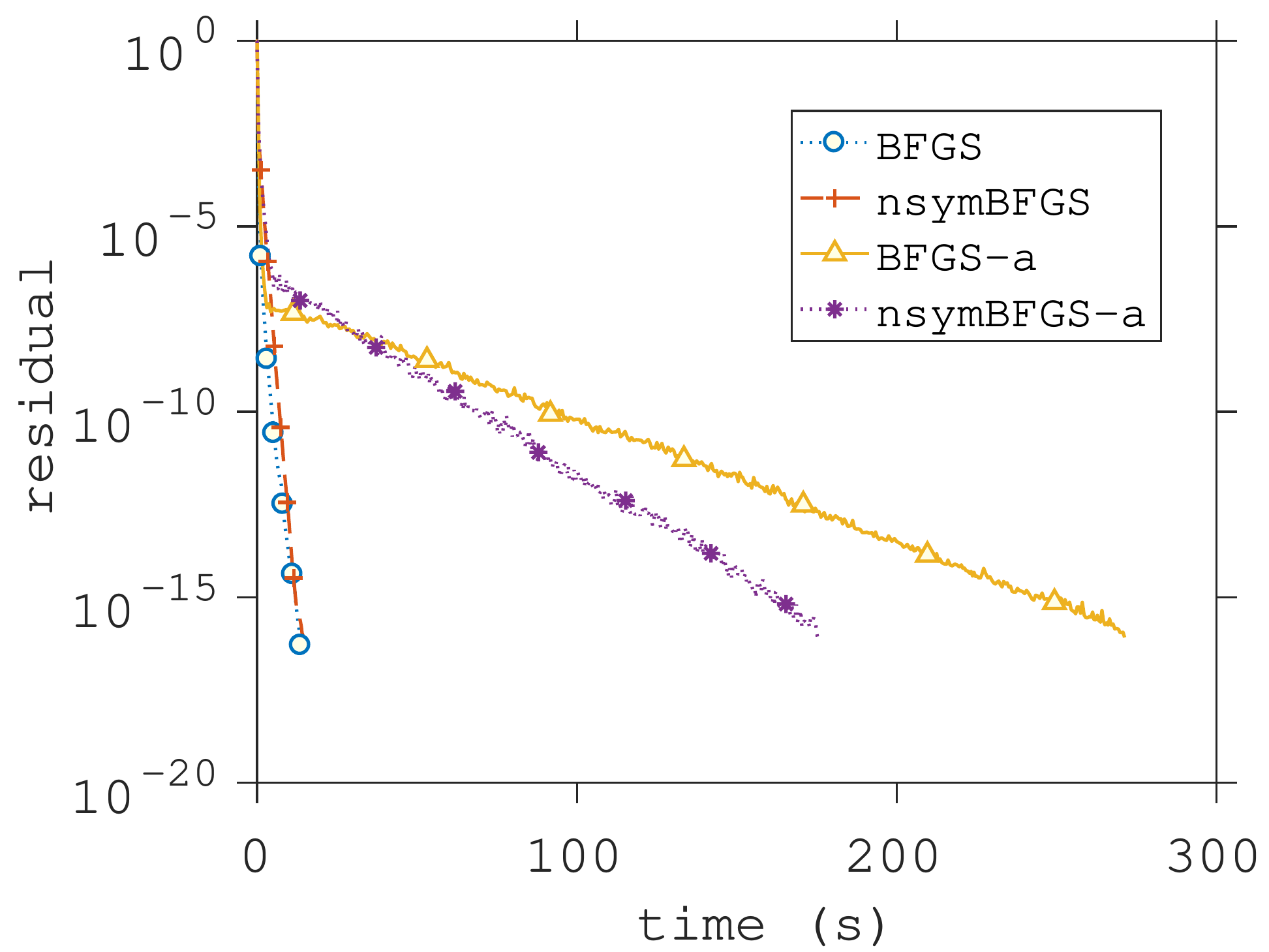}
\end{minipage}%
\begin{minipage}{0.30\textwidth}
  \centering
\includegraphics[width =  \textwidth ]{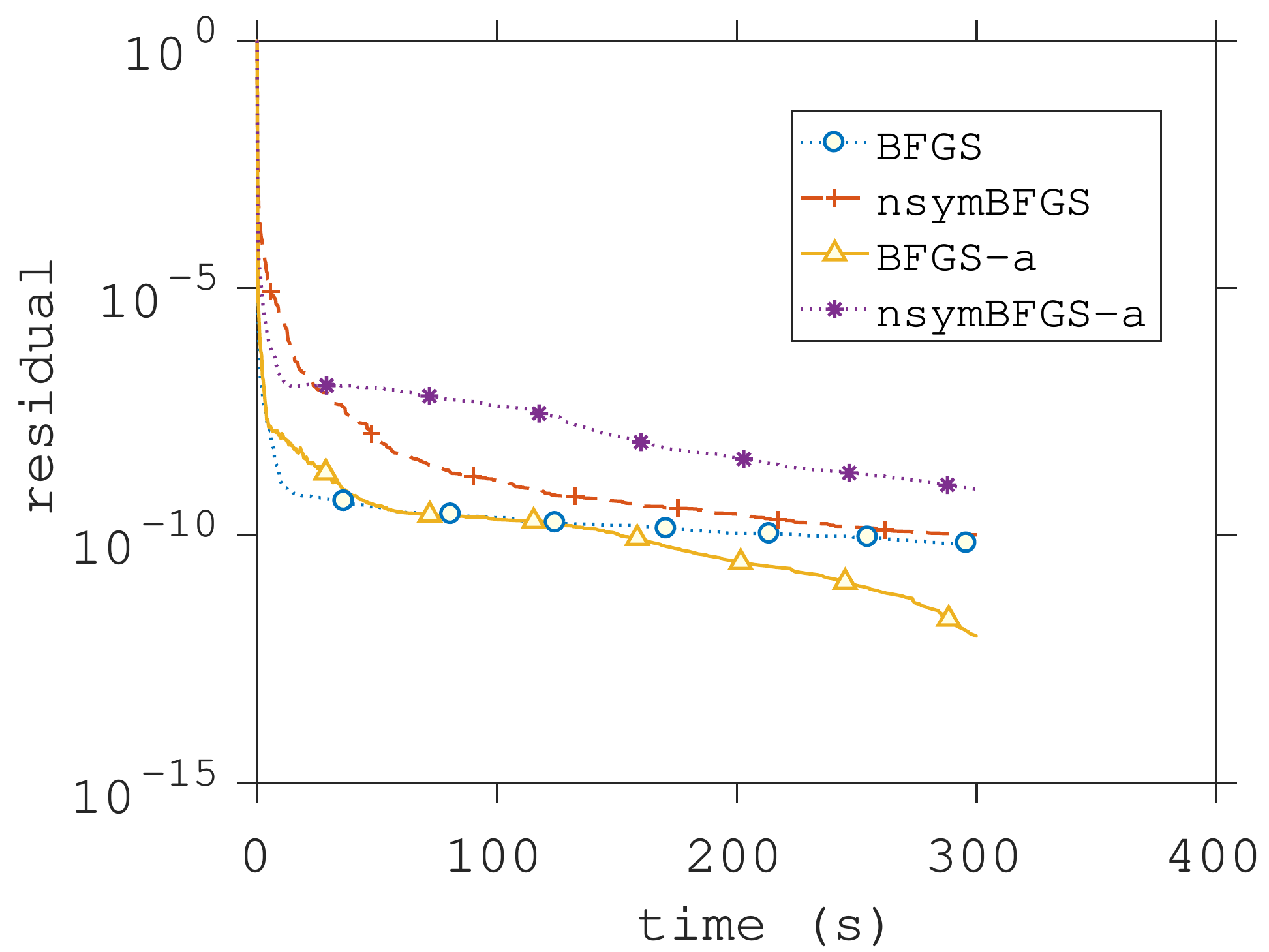}
\end{minipage}%
\begin{minipage}{0.30\textwidth}
  \centering
\includegraphics[width =  \textwidth ]{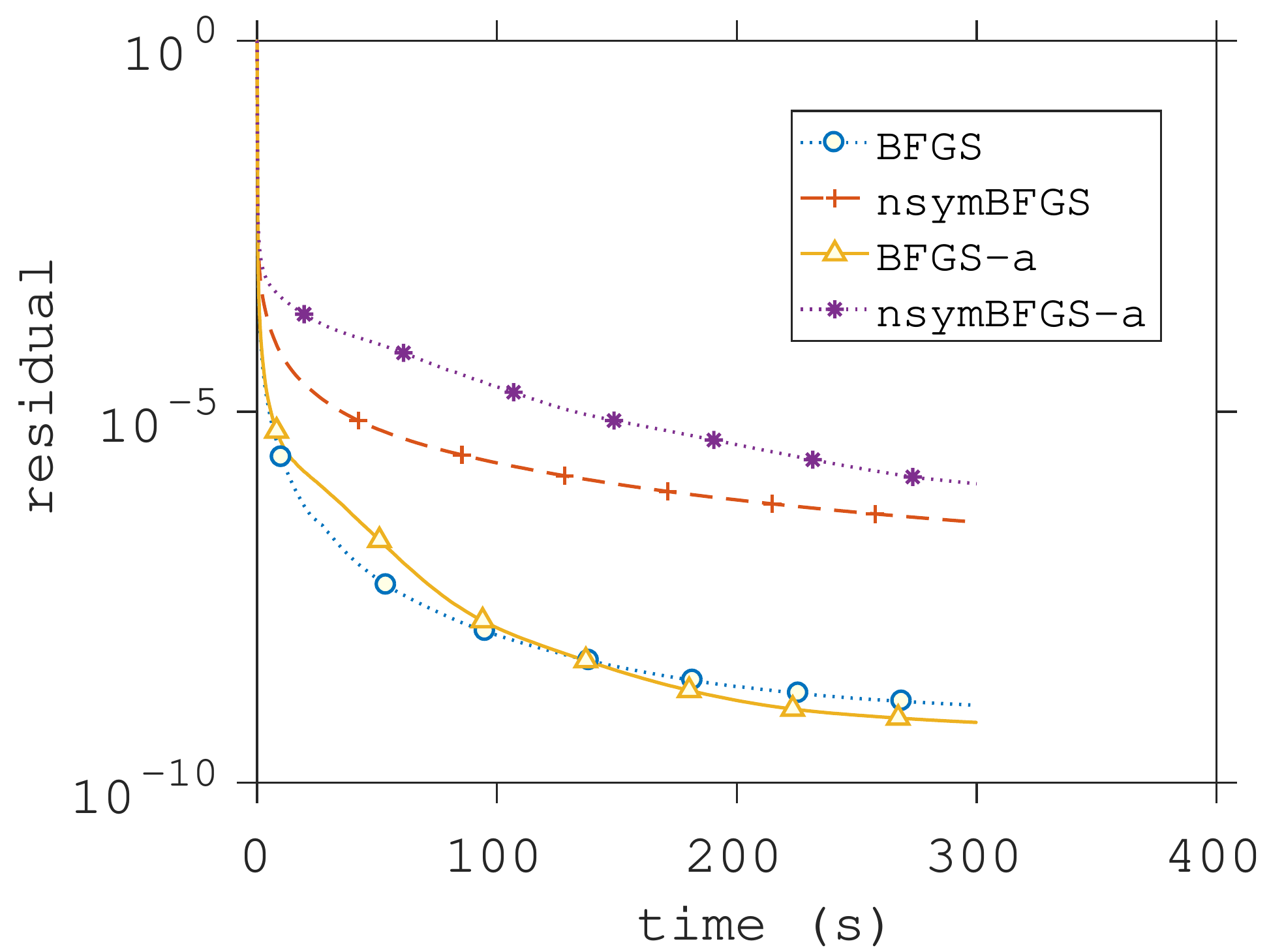}
\end{minipage}%
    \caption{  Dataset aloi: $n=128$. From left to right we have: Coordinate sketch with convenient probabilities, coordinate sketch with uniform probabilities and Gaussian sketch respectively. 
}\label{fig:aloig}
\end{figure}

\begin{figure}[H]
    \centering
\begin{minipage}{0.30\textwidth}
  \centering
\includegraphics[width =  \textwidth ]{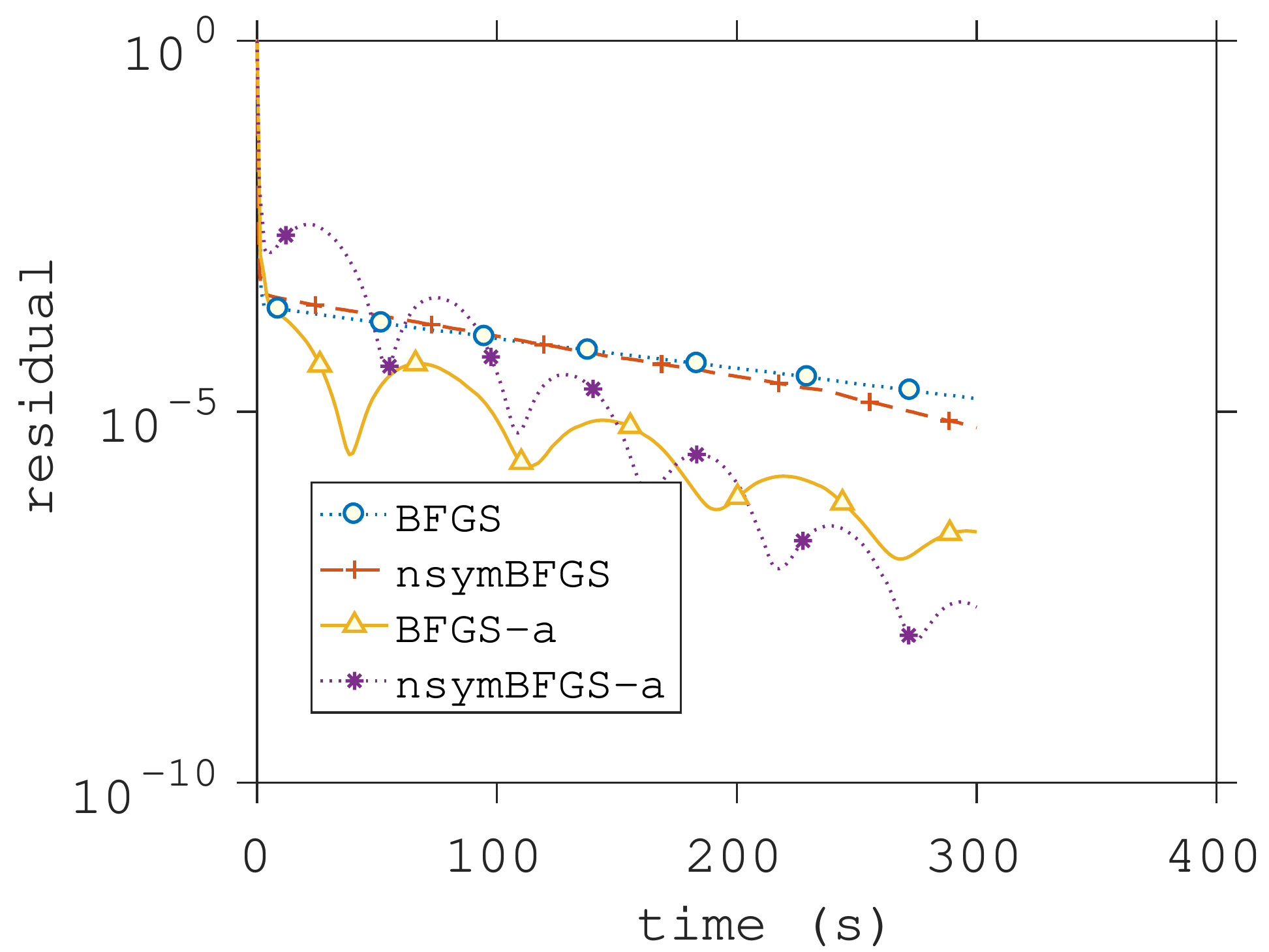}
\end{minipage}%
\begin{minipage}{0.30\textwidth}
  \centering
\includegraphics[width =  \textwidth ]{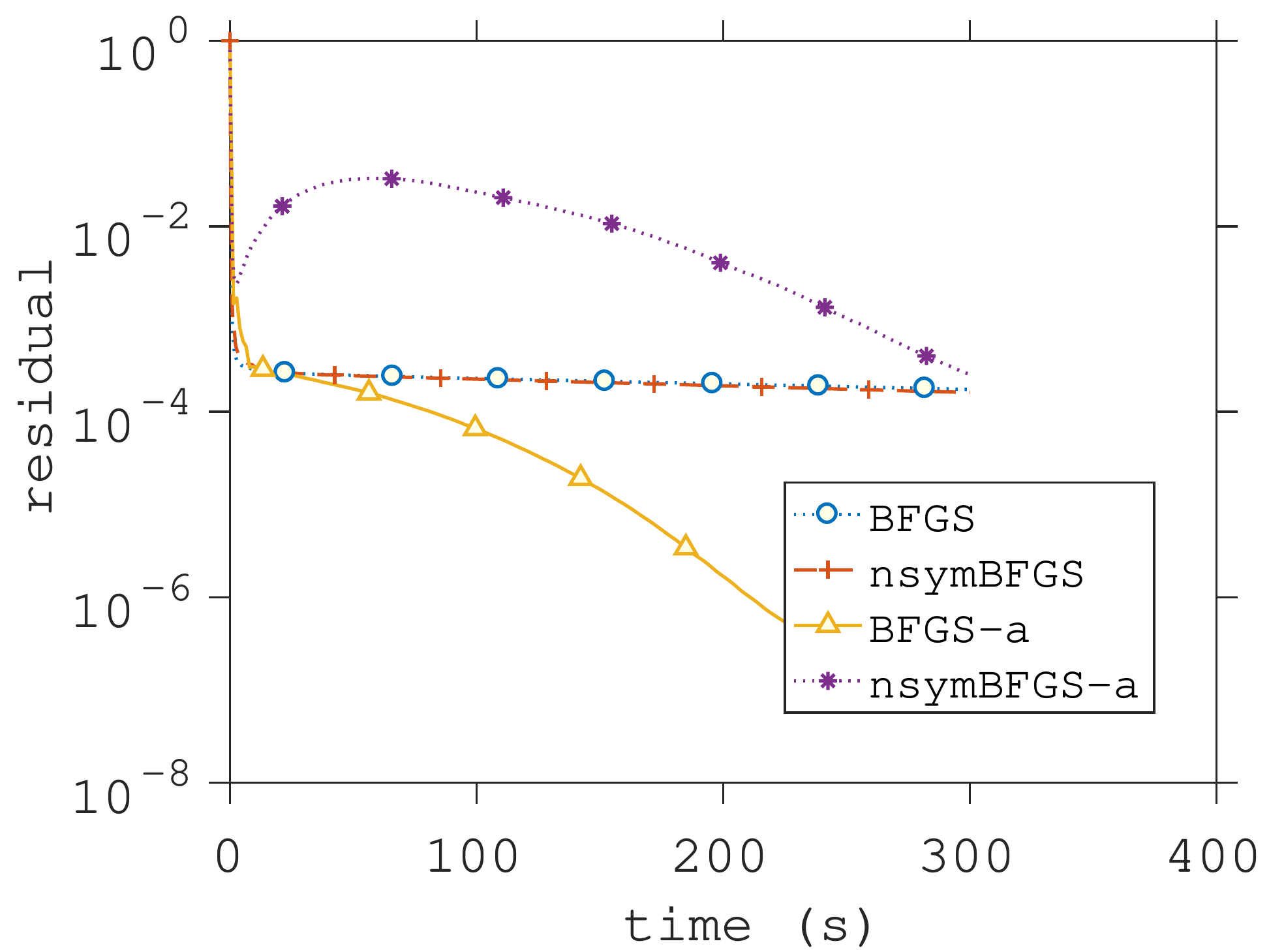}
\end{minipage}%
\begin{minipage}{0.30\textwidth}
  \centering
\includegraphics[width =  \textwidth ]{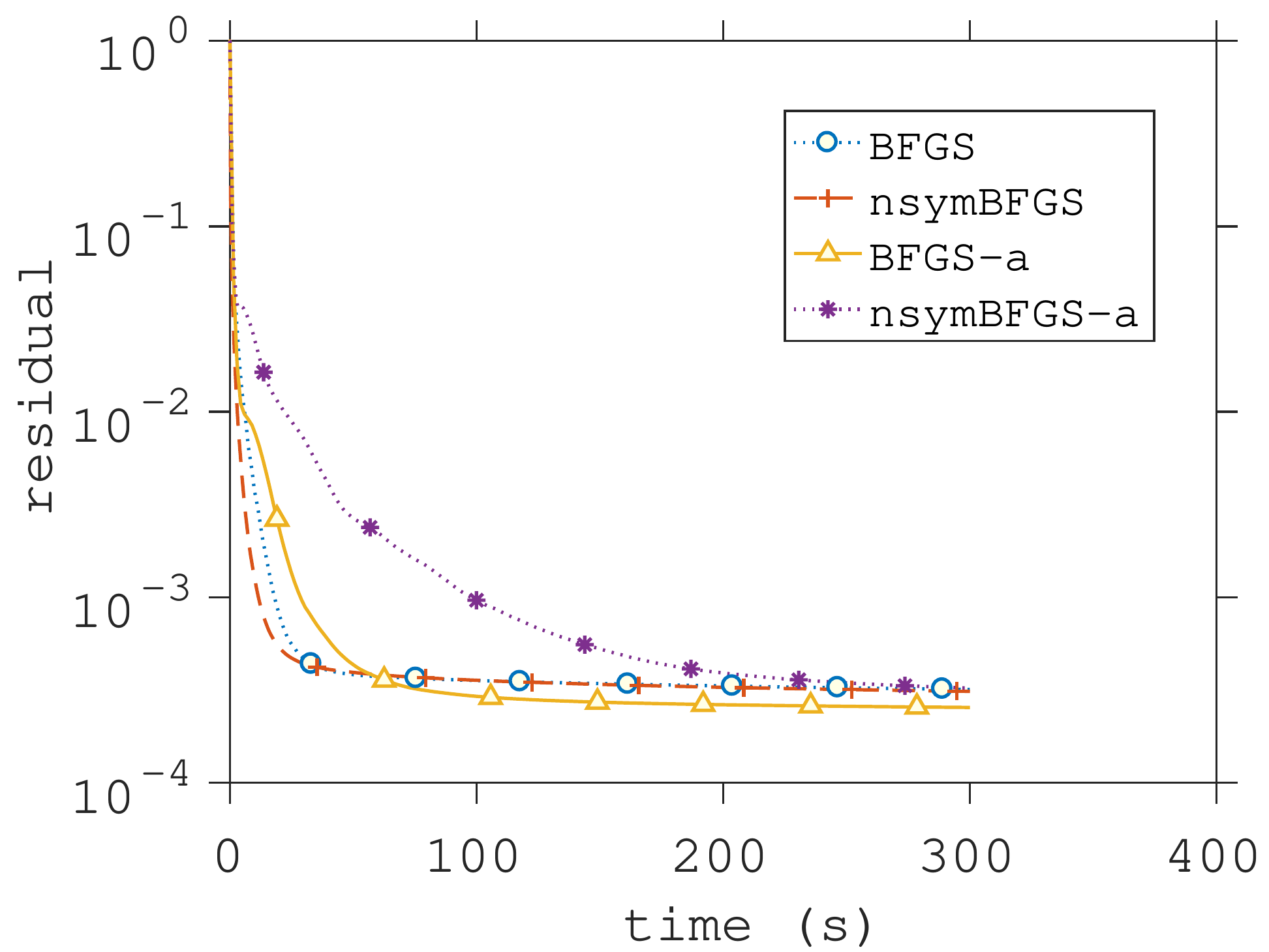}
\end{minipage}%
    \caption{  Dataset w1a: $n=300$. From left to right we have: Coordinate sketch with convenient probabilities, coordinate sketch with uniform probabilities and Gaussian sketch respectively. 
}\label{fig:w1a}
\end{figure}

\begin{figure}[H]
    \centering
\begin{minipage}{0.30\textwidth}
  \centering
\includegraphics[width =  \textwidth ]{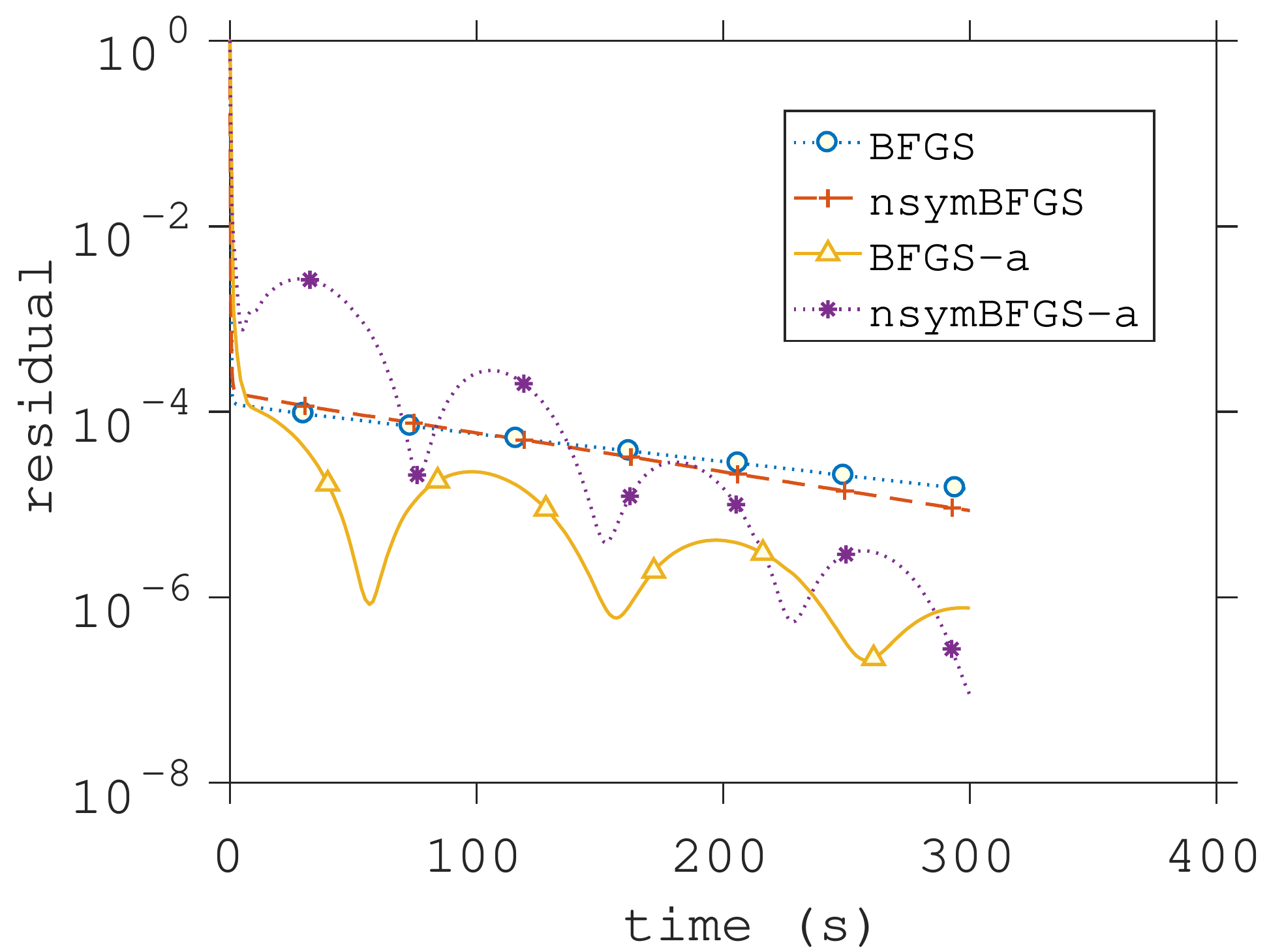}
\end{minipage}%
\begin{minipage}{0.30\textwidth}
  \centering
\includegraphics[width =  \textwidth ]{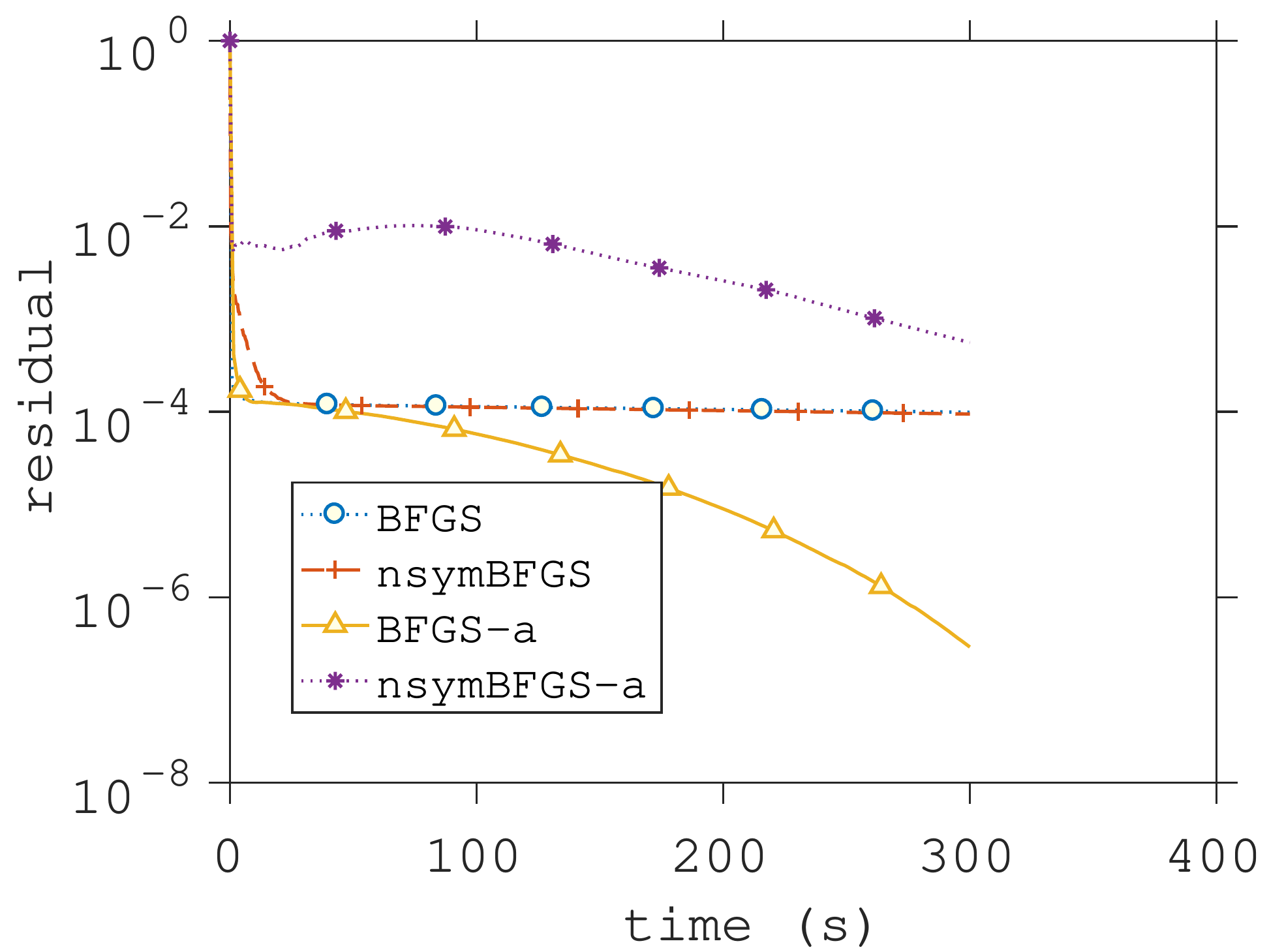}
\end{minipage}%
\begin{minipage}{0.30\textwidth}
  \centering
\includegraphics[width =  \textwidth ]{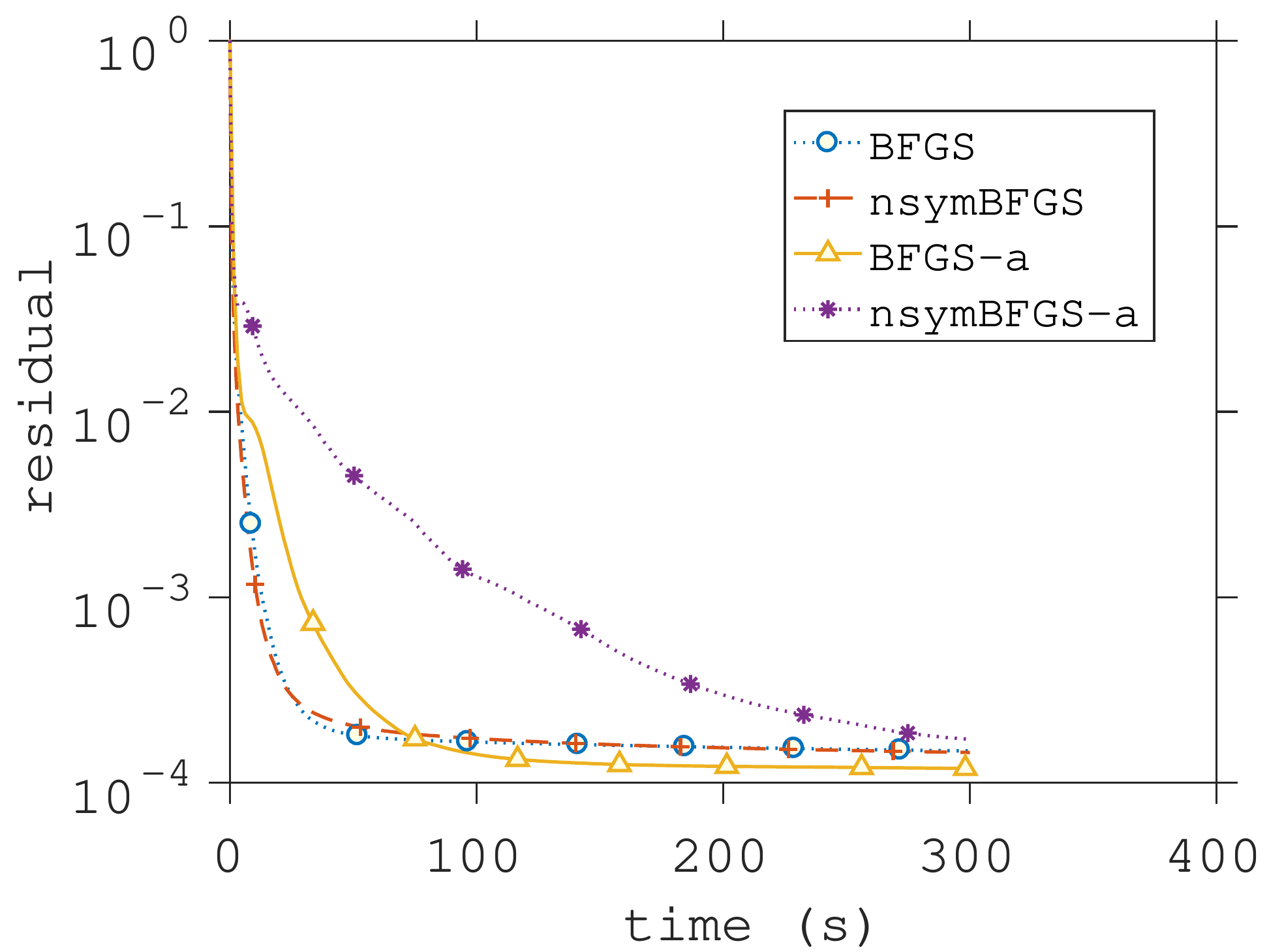}
\end{minipage}%
    \caption{  Dataset w2a: $n=300$. From left to right we have: Coordinate sketch with convenient probabilities, coordinate sketch with uniform probabilities and Gaussian sketch respectively. 
}\label{fig:w2a}
\end{figure}

\begin{figure}[H]
    \centering
\begin{minipage}{0.30\textwidth}
  \centering
\includegraphics[width =  \textwidth ]{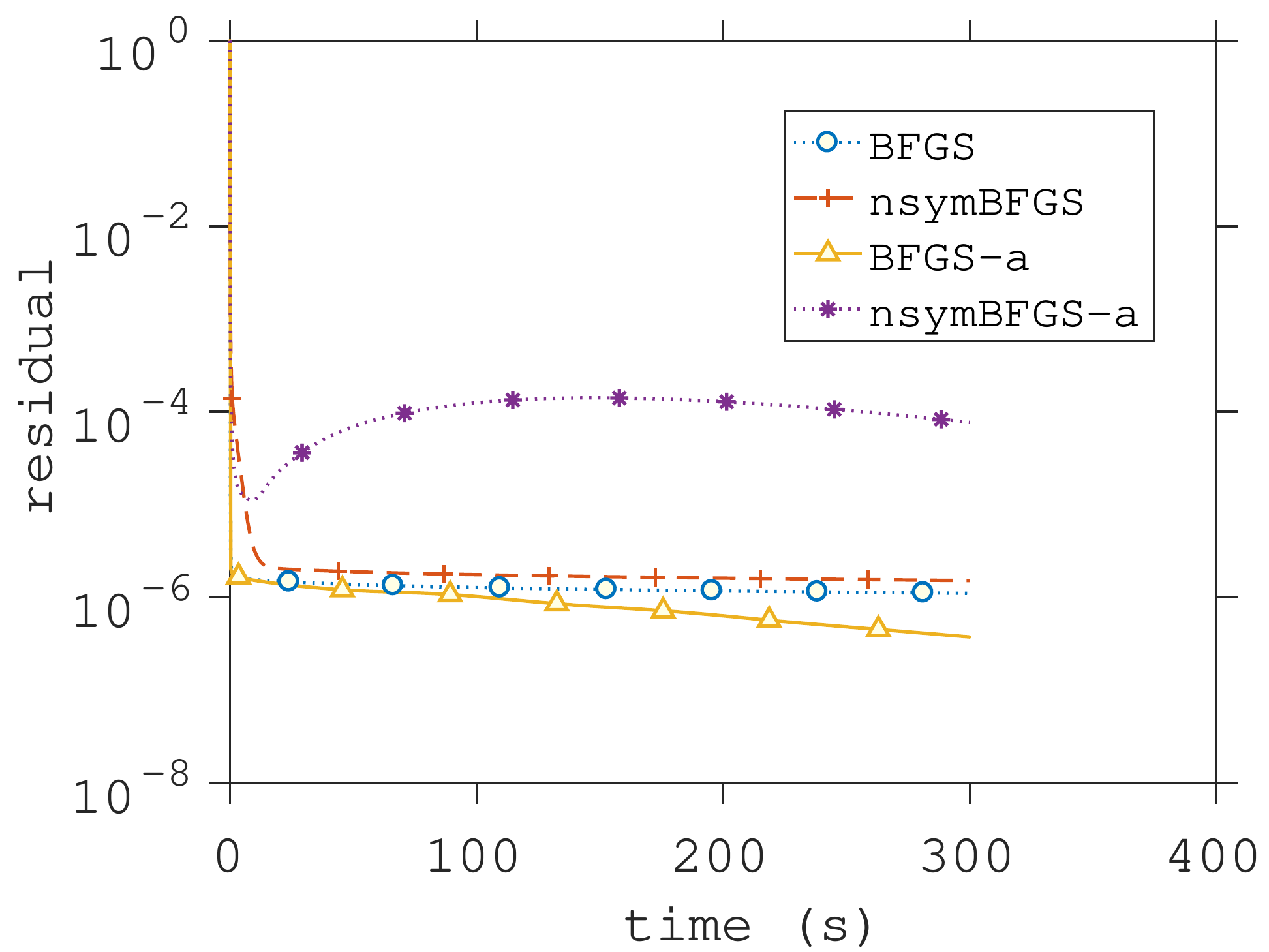}
\end{minipage}%
\begin{minipage}{0.30\textwidth}
  \centering
\includegraphics[width =  \textwidth ]{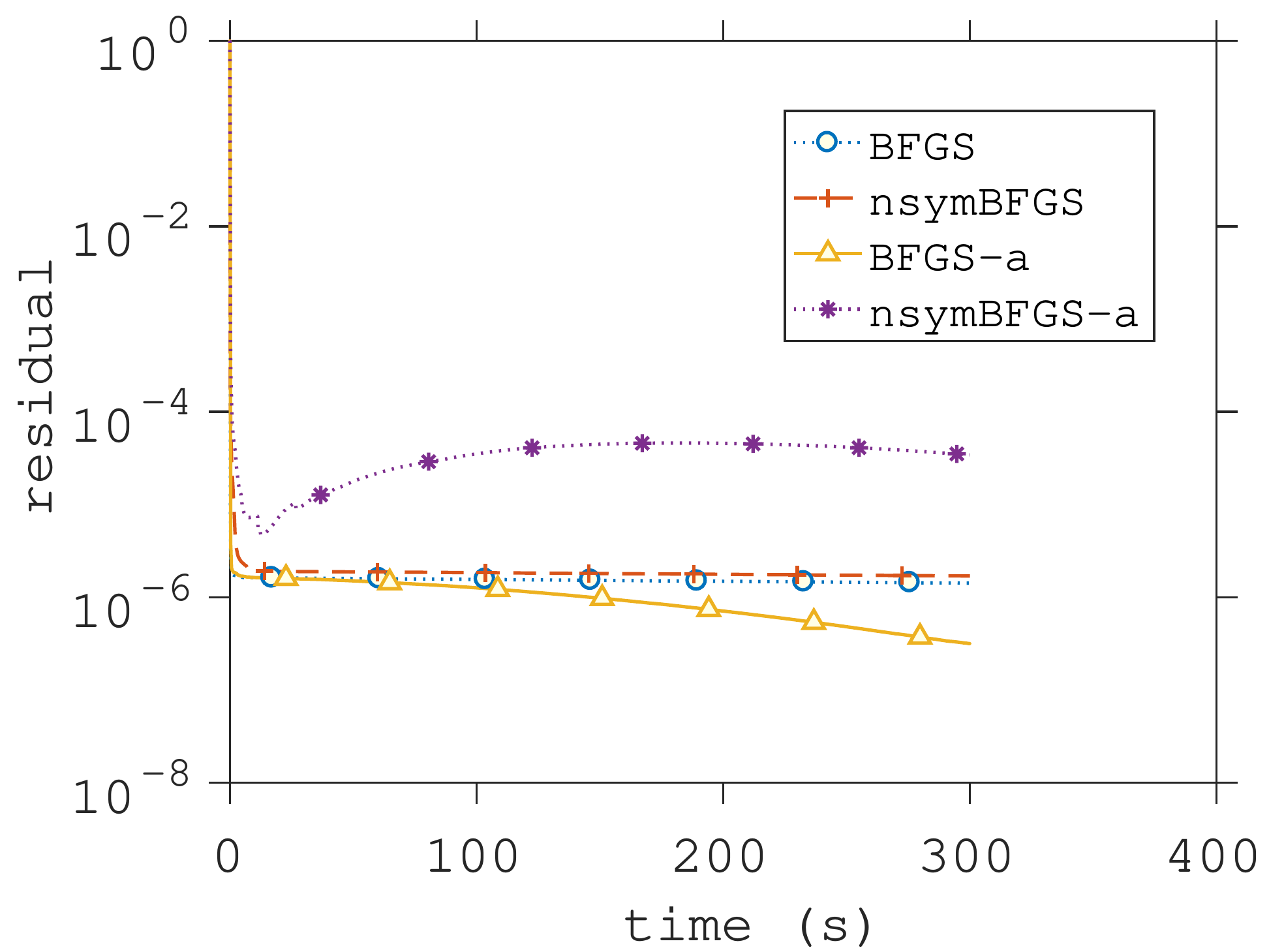}
\end{minipage}%
\begin{minipage}{0.30\textwidth}
  \centering
\includegraphics[width =  \textwidth ]{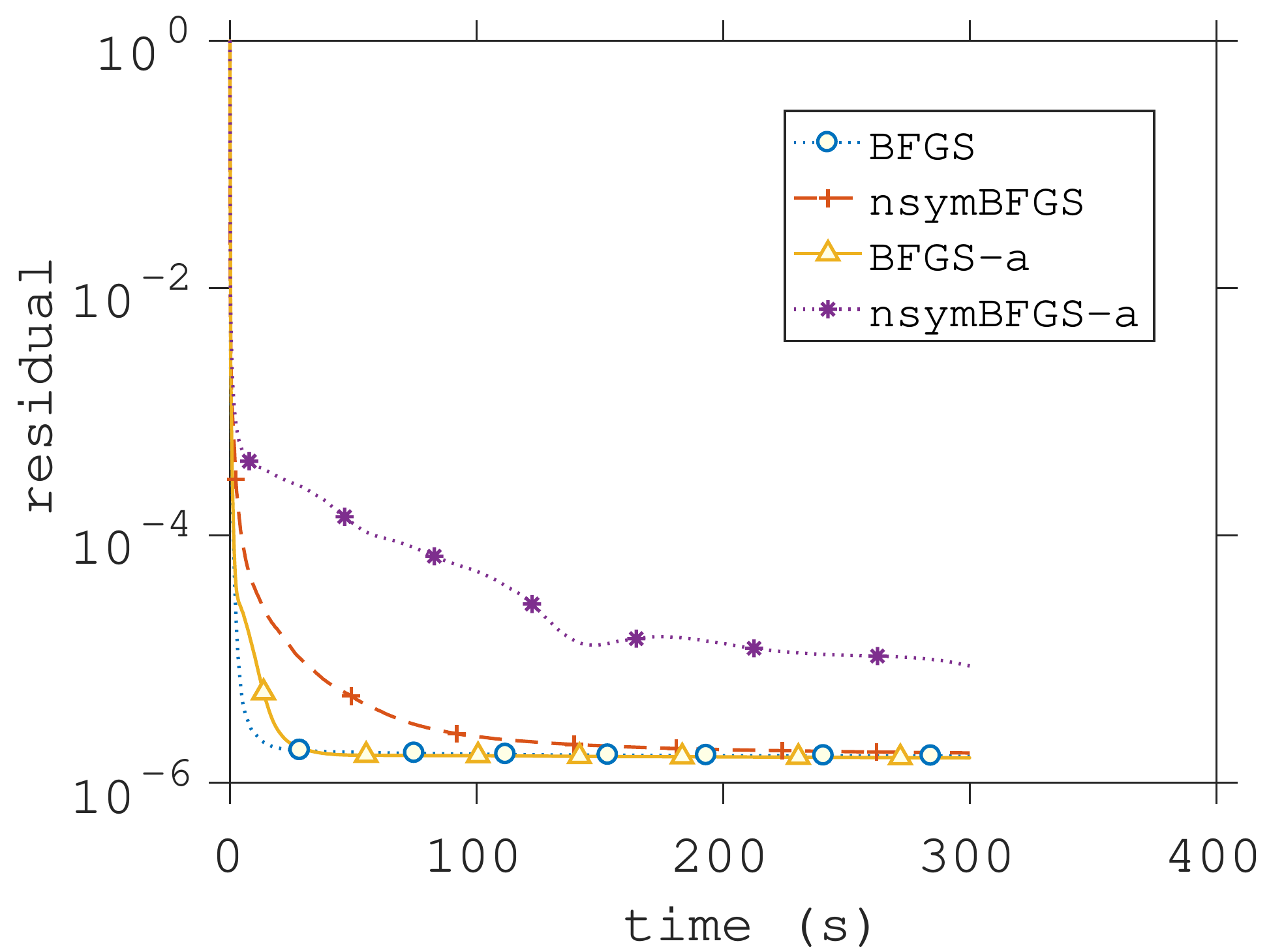}
\end{minipage}%
    \caption{  Dataset mushrooms: $n=112$. From left to right we have: Coordinate sketch with convenient probabilities, coordinate sketch with uniform probabilities and Gaussian sketch respectively. 
}\label{fig:mushroomsxx}
\end{figure}

\begin{figure}[H]
\centering
\begin{minipage}{0.30\textwidth}
  \centering
\includegraphics[width =  \textwidth ]{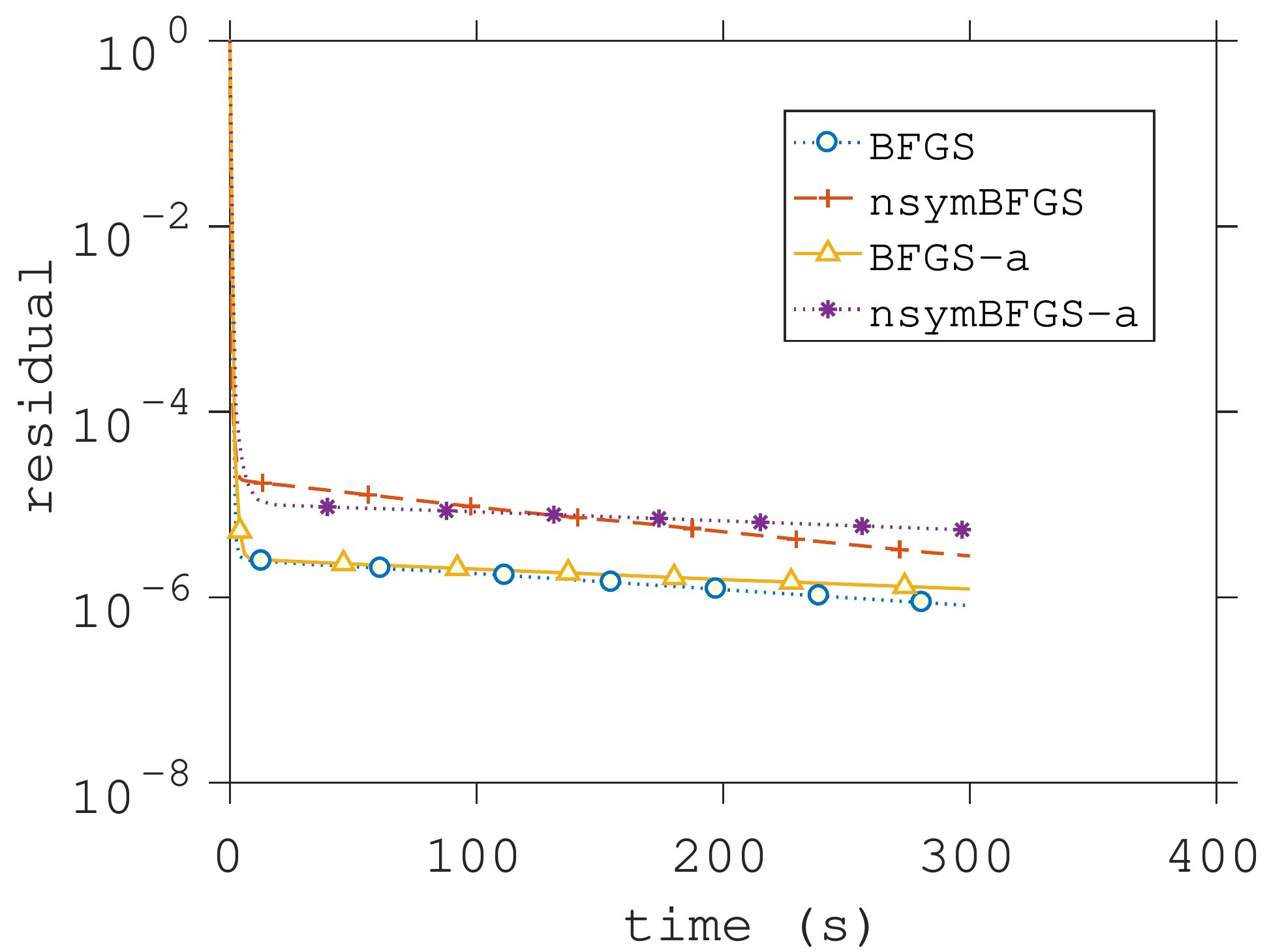}
\end{minipage}%
\begin{minipage}{0.30\textwidth}
  \centering
\includegraphics[width =  \textwidth ]{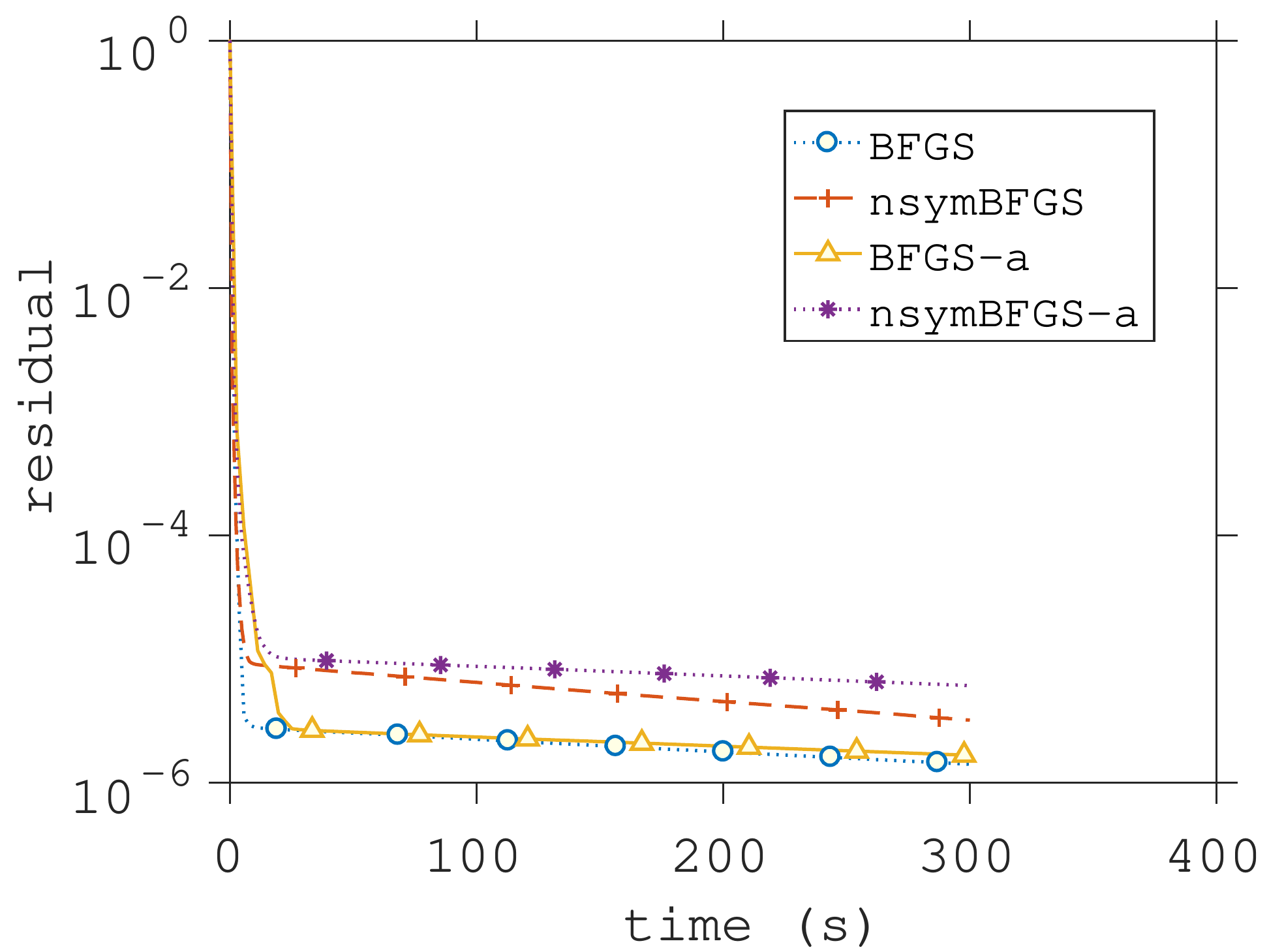}
\end{minipage}%
\begin{minipage}{0.30\textwidth}
  \centering
\includegraphics[width =  \textwidth ]{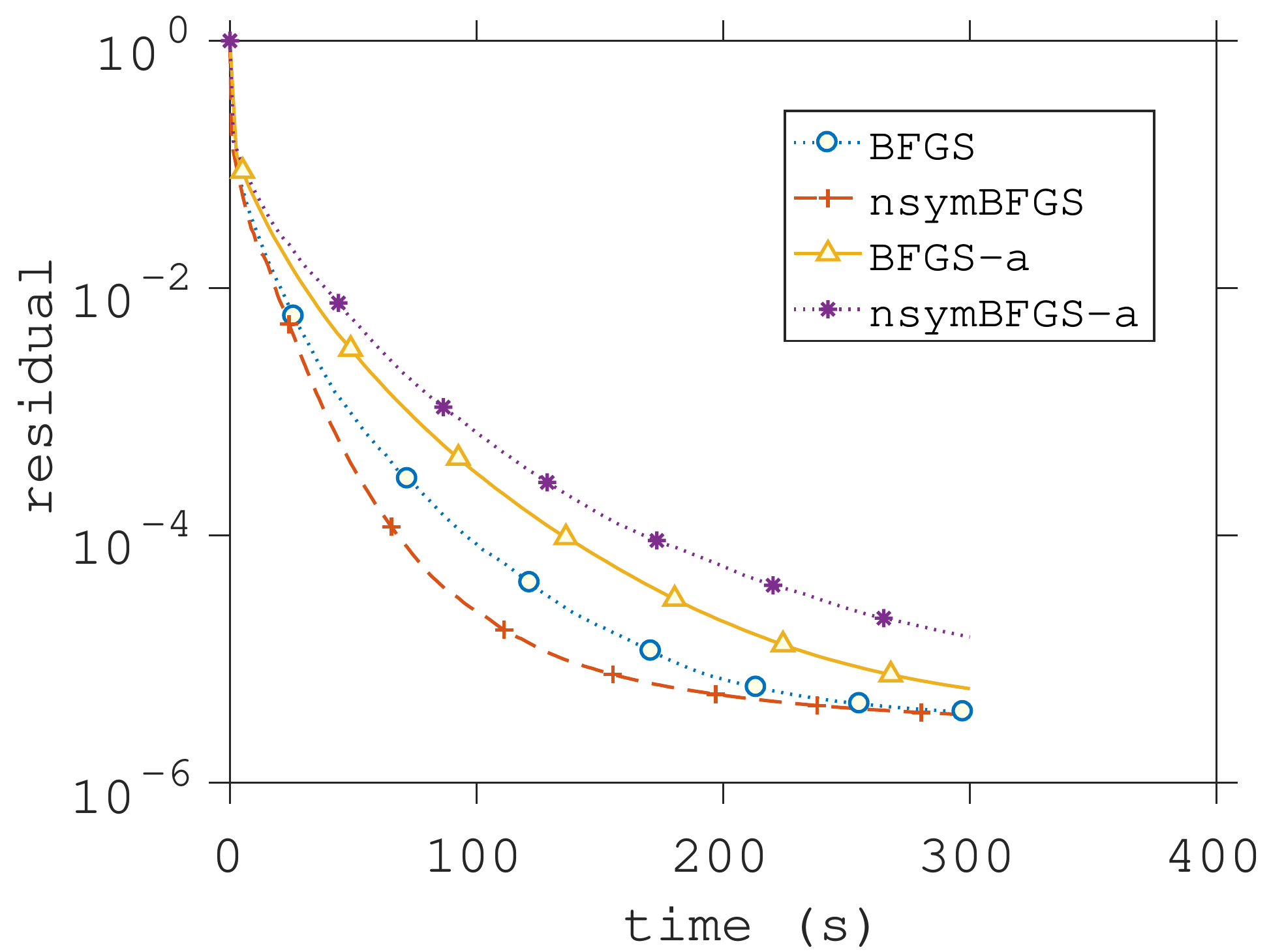}
\end{minipage}%
    \caption{  Dataset protein: $n=357$. From left to right we have: Coordinate sketch with convenient probabilities, coordinate sketch with uniform probabilities and Gaussian sketch respectively. 
}\label{fig:protein}
\end{figure}

\begin{figure}[H]
    \centering
\begin{minipage}{0.30\textwidth}
  \centering
\includegraphics[width =  \textwidth ]{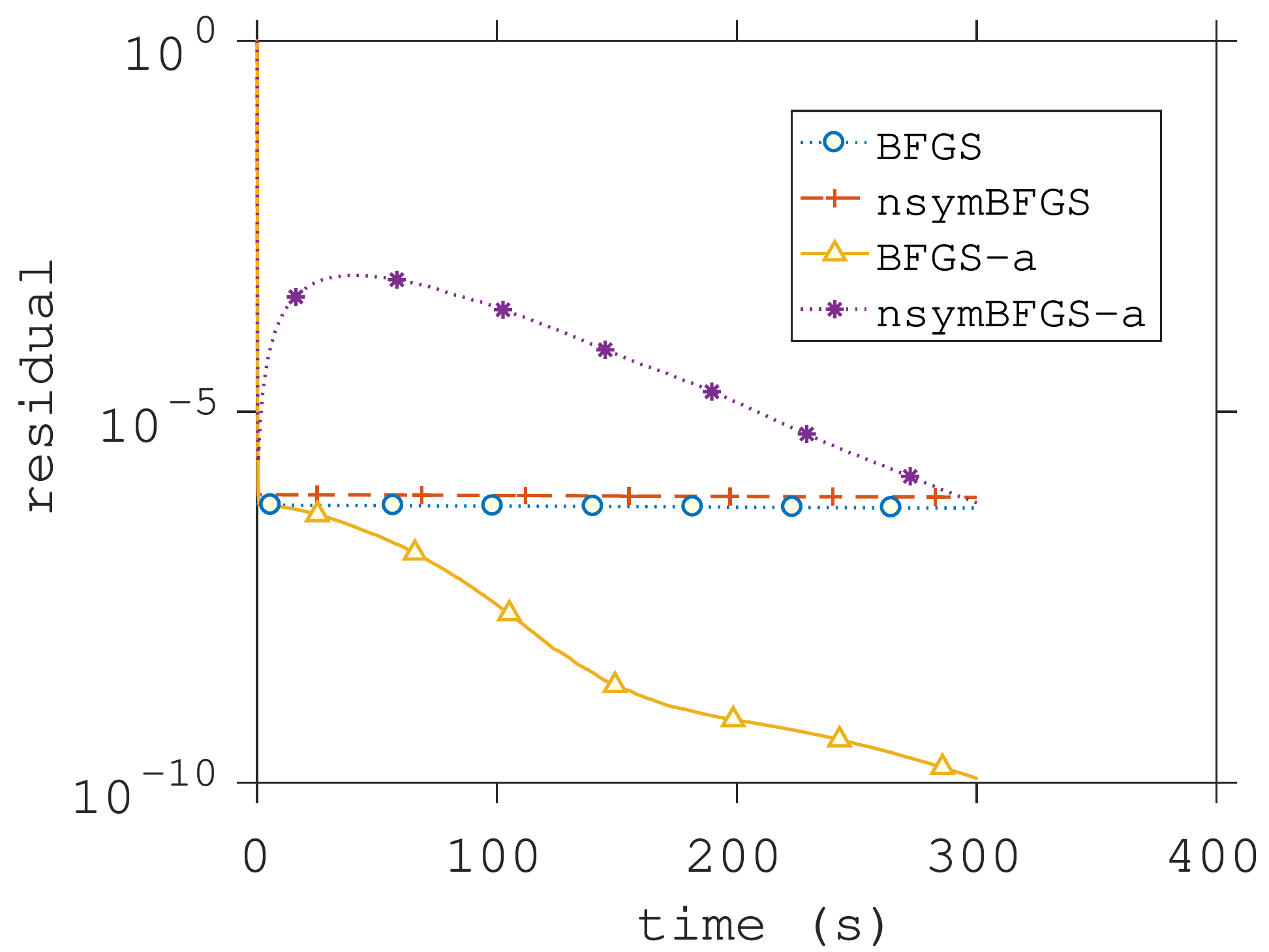}
\end{minipage}%
\begin{minipage}{0.30\textwidth}
  \centering
\includegraphics[width =  \textwidth ]{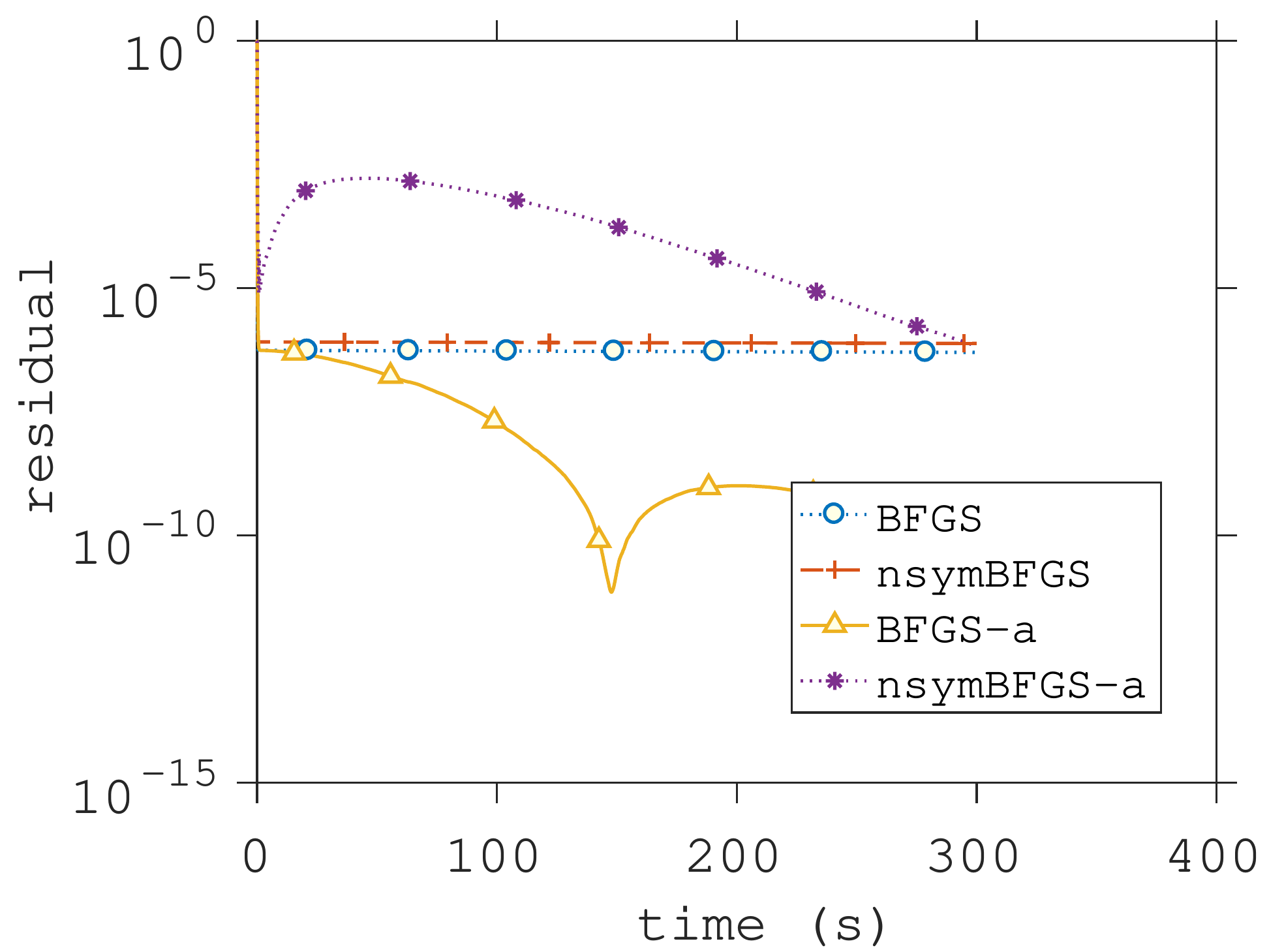}
\end{minipage}%
\begin{minipage}{0.30\textwidth}
  \centering
\includegraphics[width =  \textwidth ]{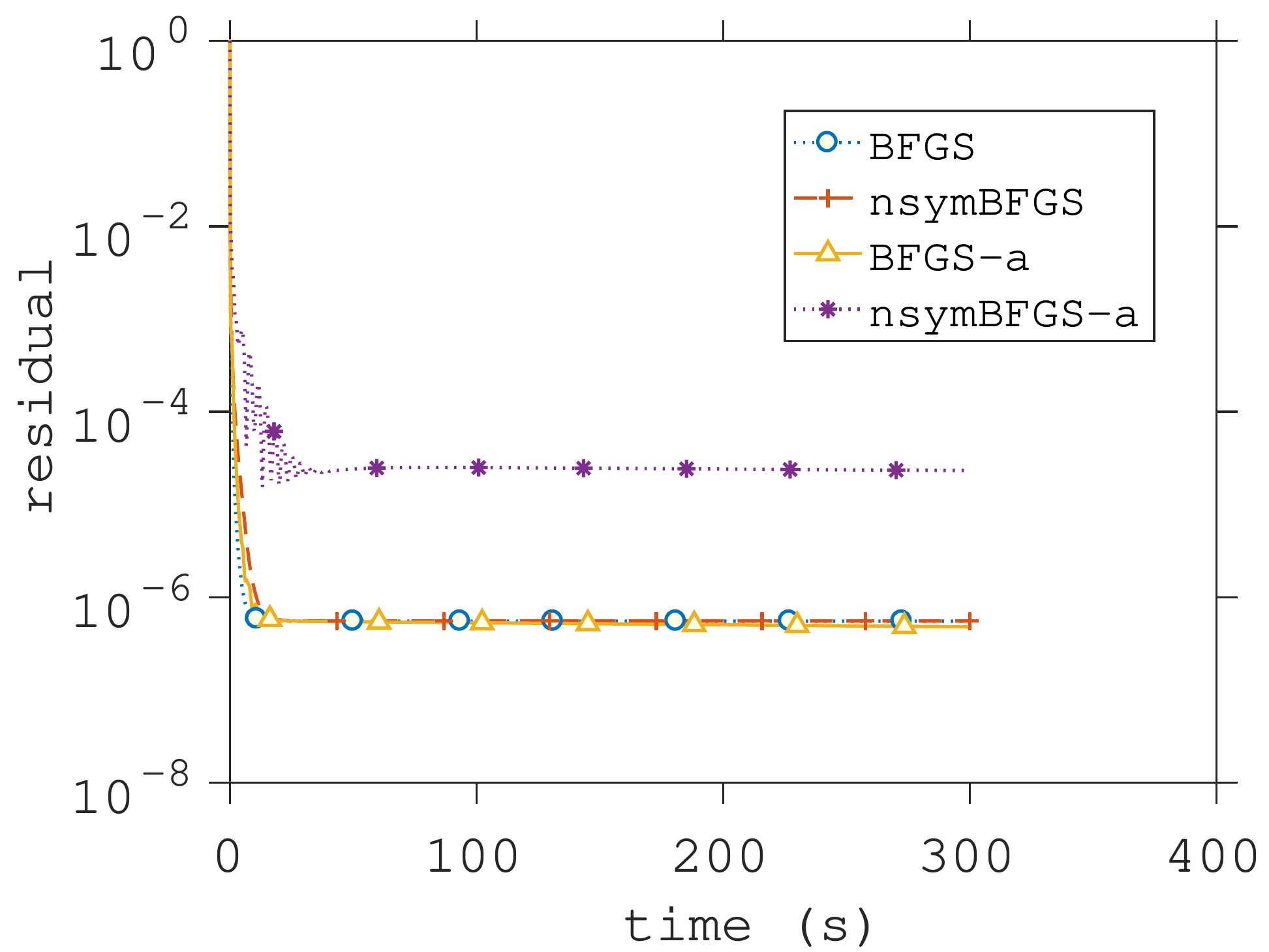}
\end{minipage}%
    \caption{  Dataset phishing: $n=68$. From left to right we have: Coordinate sketch with convenient probabilities, coordinate sketch with uniform probabilities and Gaussian sketch respectively. 
}\label{fig:splice-scale}
\end{figure}

In the vast majority of examples, the accelerated method performed significantly better than the nonaccelerated method for coordinate sketches (with both convenient and uniform probabilities), however the methods were comparable for Gaussian sketches. We believe that this is due to the fact that choice of parameters as per~\eqref{eq:munu_conv_paper} is close to the optimal parameters for coordinate sketches, and further for Gaussian sketches. However, the experiments on coordinate sketches indicates that for some classes of problems, accelerated algorithms with finely tuned parameters bring a great speedup compared to nonaccelerated ones. 

We also consider a problem where we do not compute $\lambda_{\min}(A)$, and therefore we cannot choose $\mu=\mu^P$ in \eqref{eq:munu_conv_paper}. Instead, we choose $\mu=\frac{1}{100\nu}$ and $\mu=\frac{1}{10000\nu}$.

\begin{figure}[H]
    \centering
\begin{minipage}{0.30\textwidth}
  \centering
\includegraphics[width =  \textwidth ]{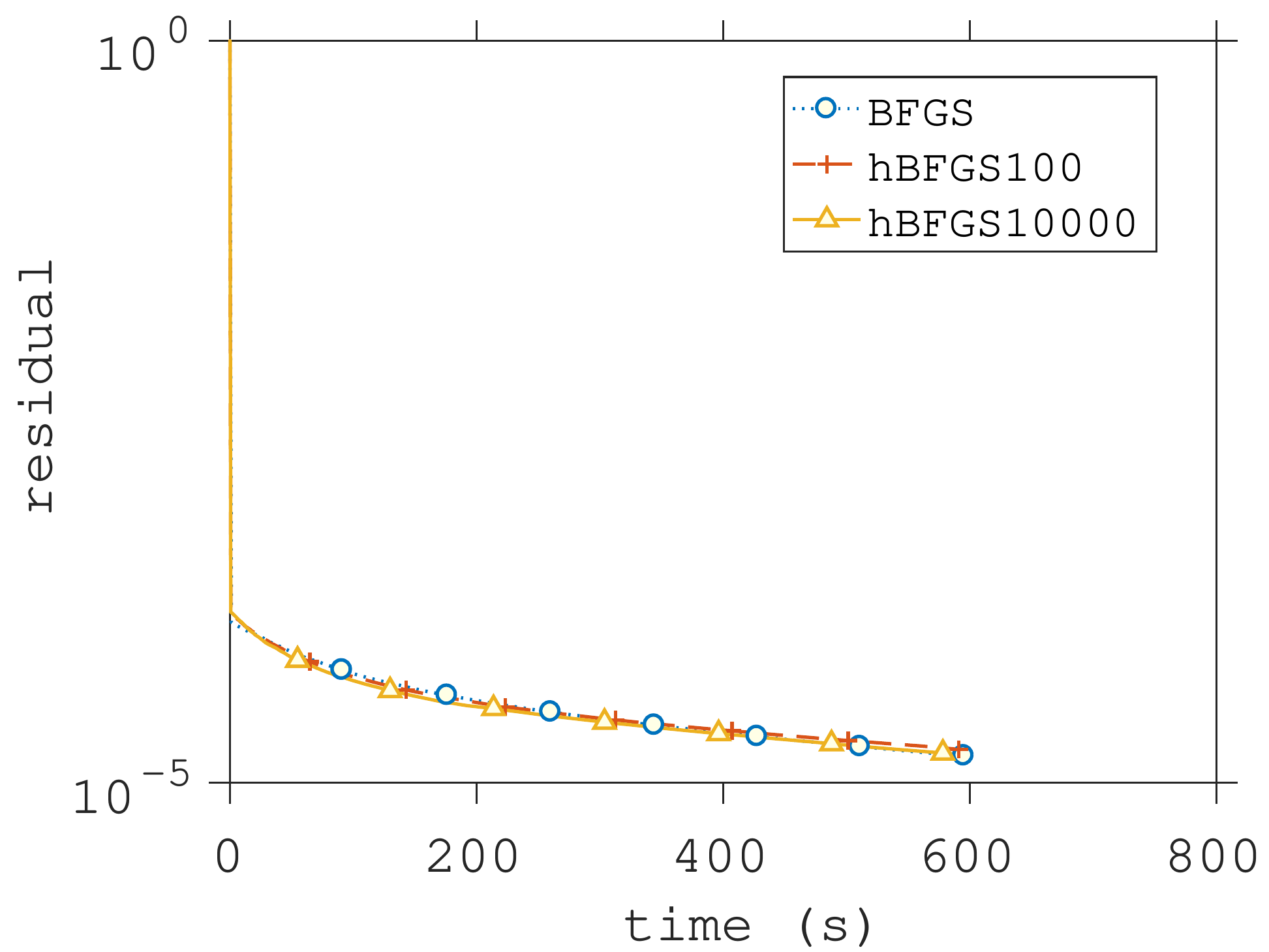}
\end{minipage}%
\begin{minipage}{0.30\textwidth}
  \centering
\includegraphics[width =  \textwidth ]{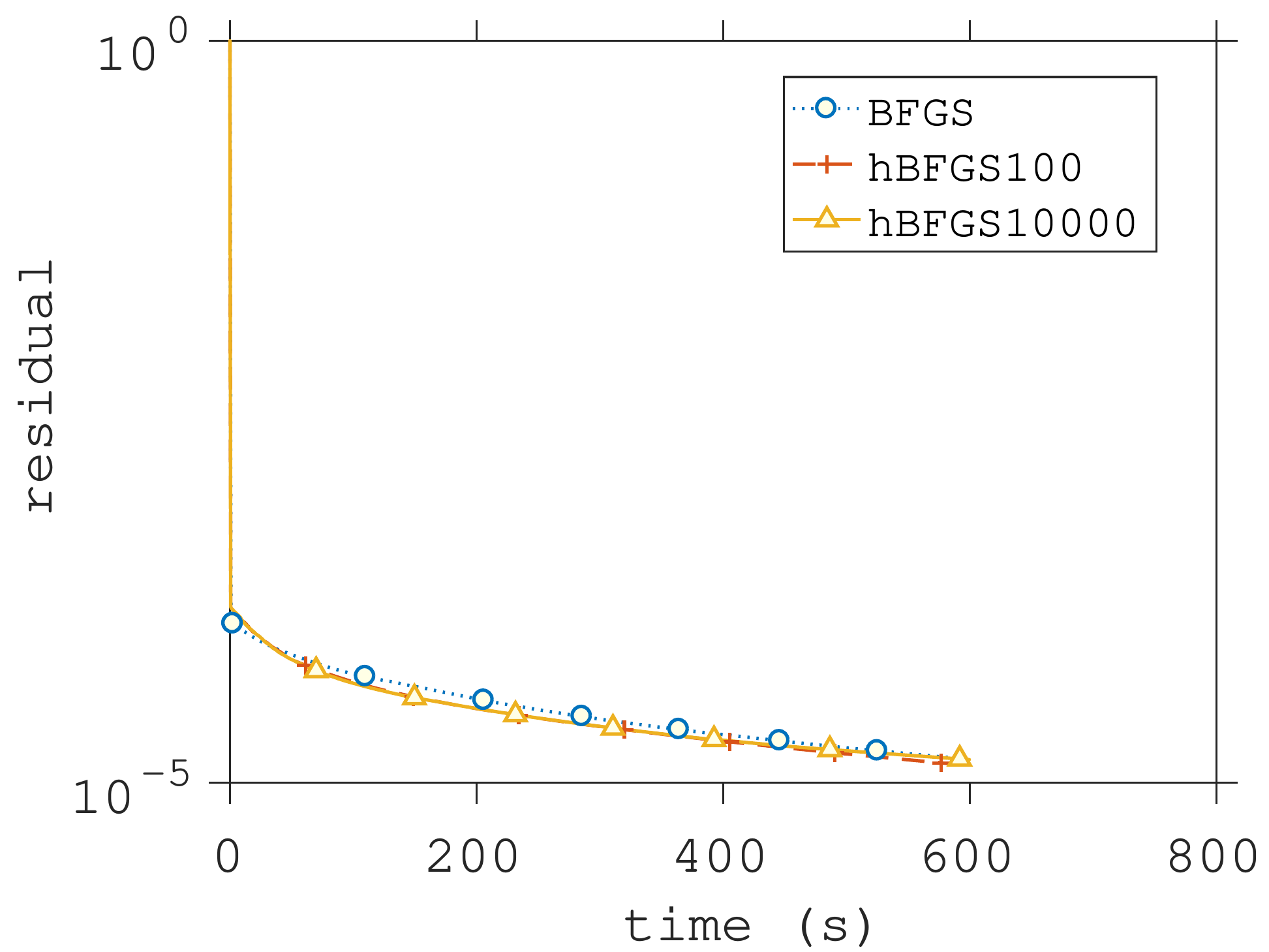}
\end{minipage}%
\begin{minipage}{0.30\textwidth}
  \centering
\includegraphics[width =  \textwidth ]{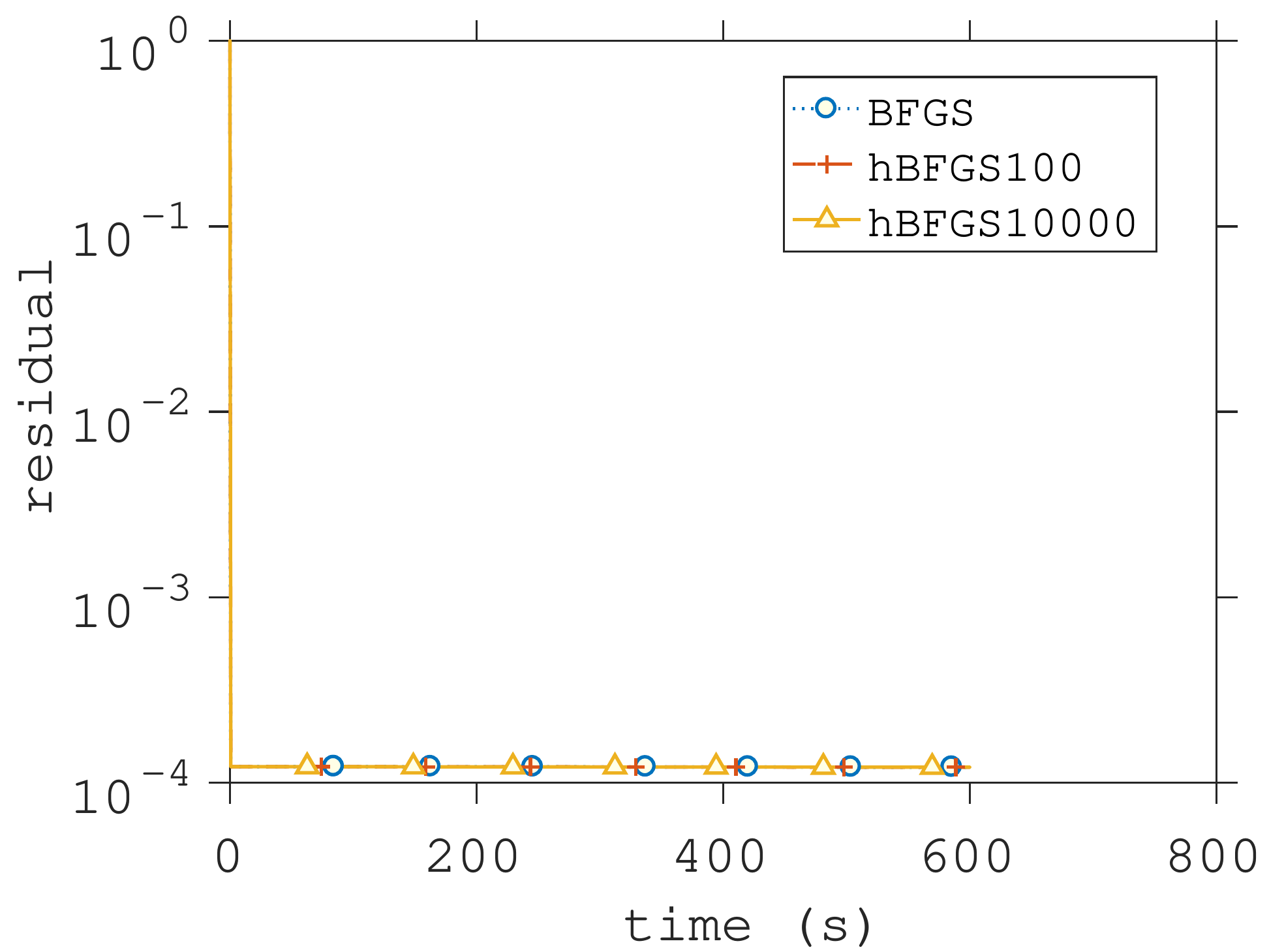}
\end{minipage}%
    \caption{  Dataset madelon: $n=500$. From left to right we have: Coordinate sketch with convenient probabilities, coordinate sketch with uniform probabilities and Gaussian sketch respectively. 
}\label{fig:splice-scale}
\end{figure}

\begin{figure}[H]
    \centering
\begin{minipage}{0.30\textwidth}
  \centering
\includegraphics[width =  \textwidth ]{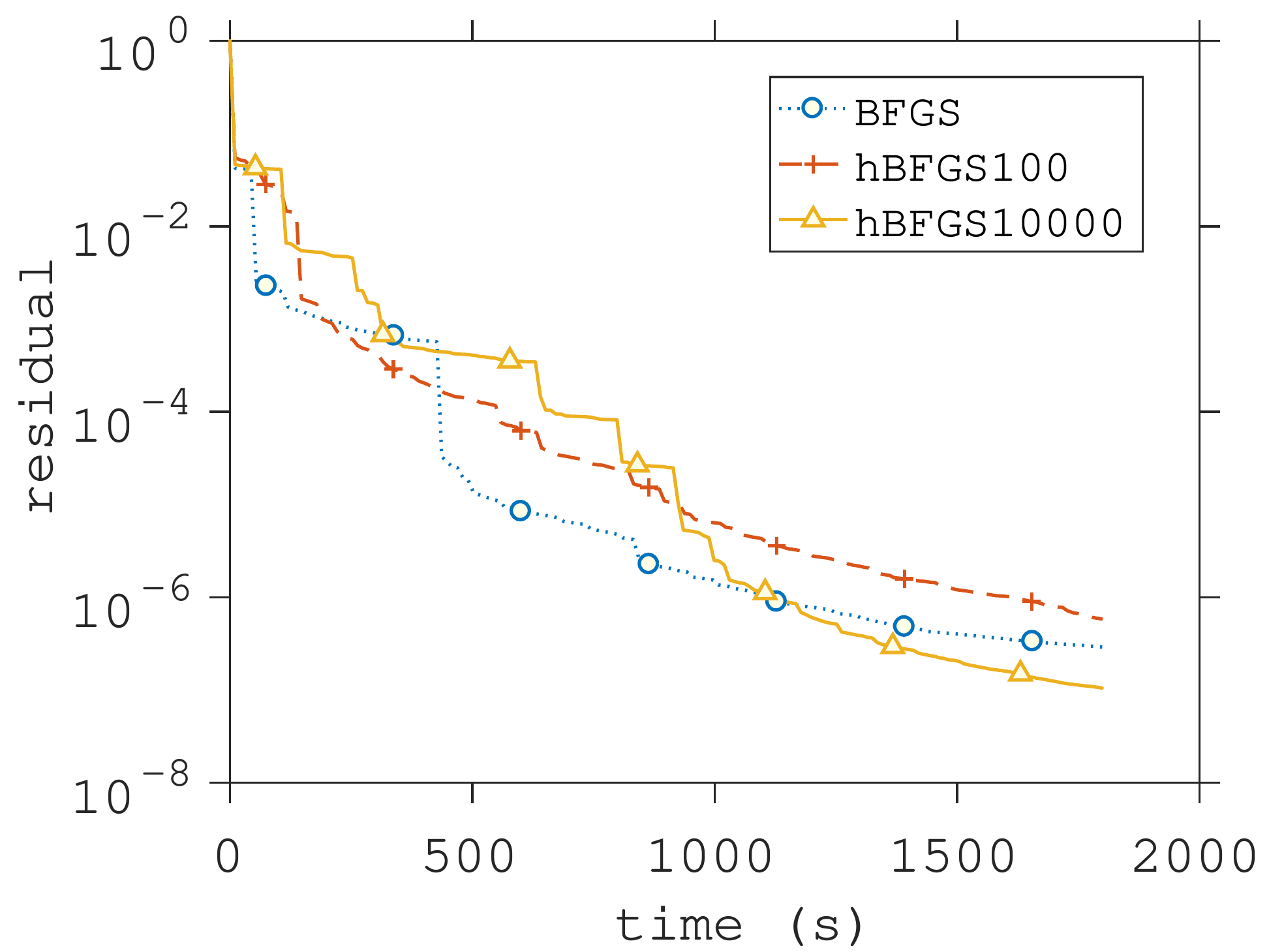}
\end{minipage}%
\begin{minipage}{0.30\textwidth}
  \centering
\includegraphics[width =  \textwidth ]{coordheur2epsilon-normalized-time}
\end{minipage}%
\begin{minipage}{0.30\textwidth}
  \centering
\includegraphics[width =  \textwidth ]{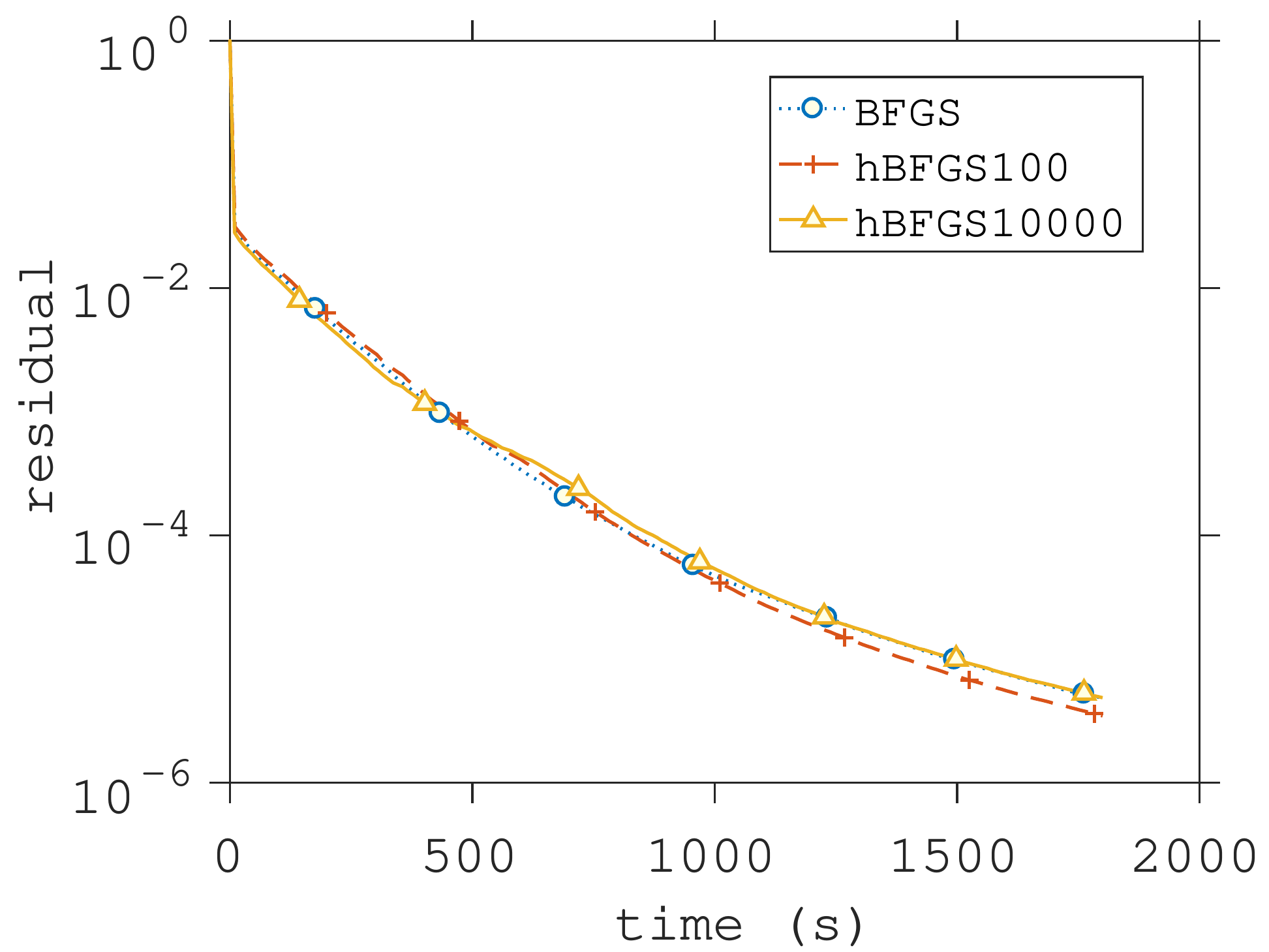}
\end{minipage}%
    \caption{  Dataset epsilon: $n=2000$. From left to right we have: Coordinate sketch with convenient probabilities, coordinate sketch with uniform probabilities and Gaussian sketch respectively. 
}\label{fig:splice-scale}
\end{figure}

\begin{figure}[H]
    \centering
\begin{minipage}{0.30\textwidth}
  \centering
\includegraphics[width =  \textwidth ]{convenientheur2svhn-time}
\end{minipage}%
\begin{minipage}{0.30\textwidth}
  \centering
\includegraphics[width =  \textwidth ]{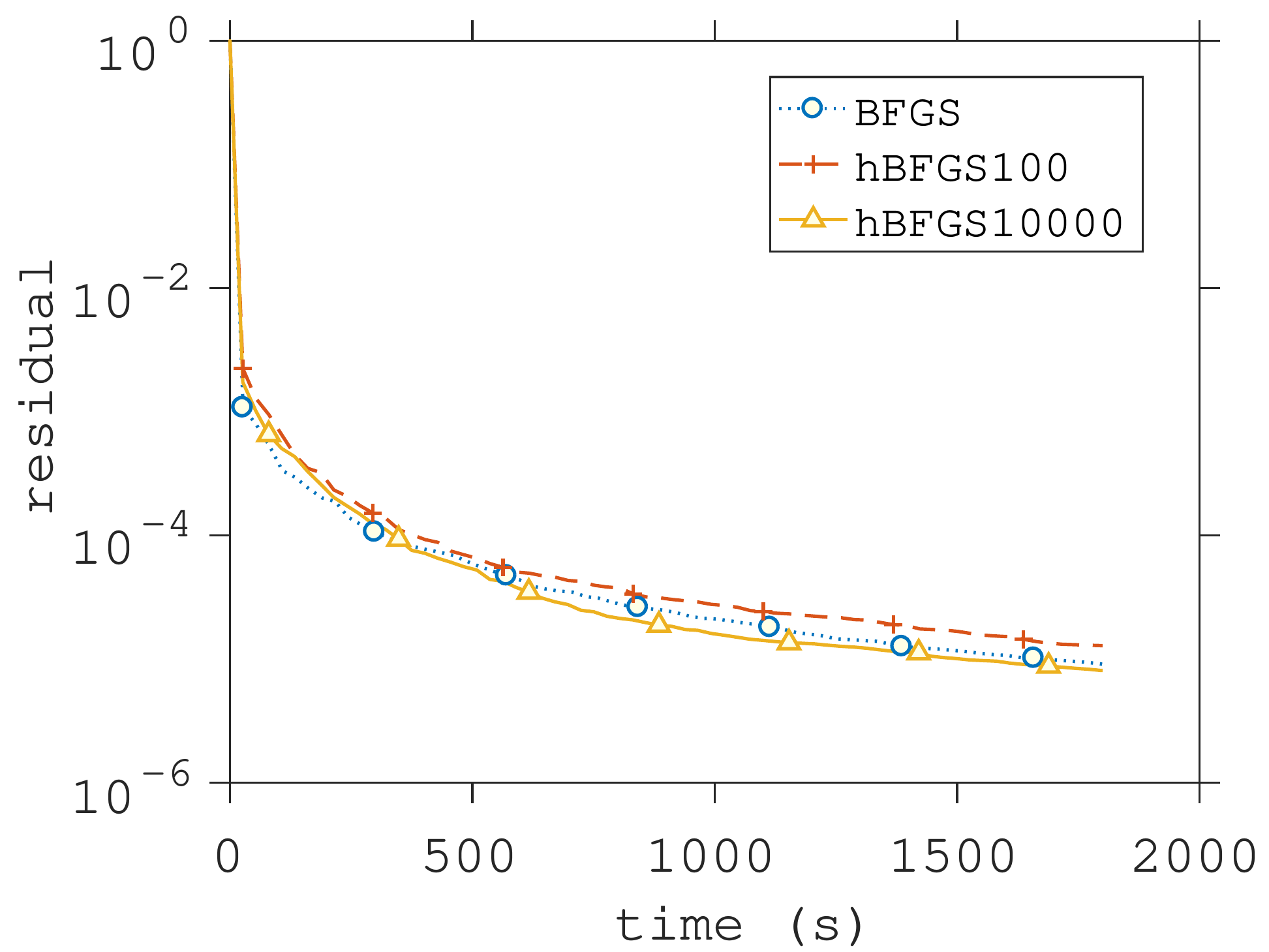}
\end{minipage}%
\begin{minipage}{0.30\textwidth}
  \centering
\includegraphics[width =  \textwidth ]{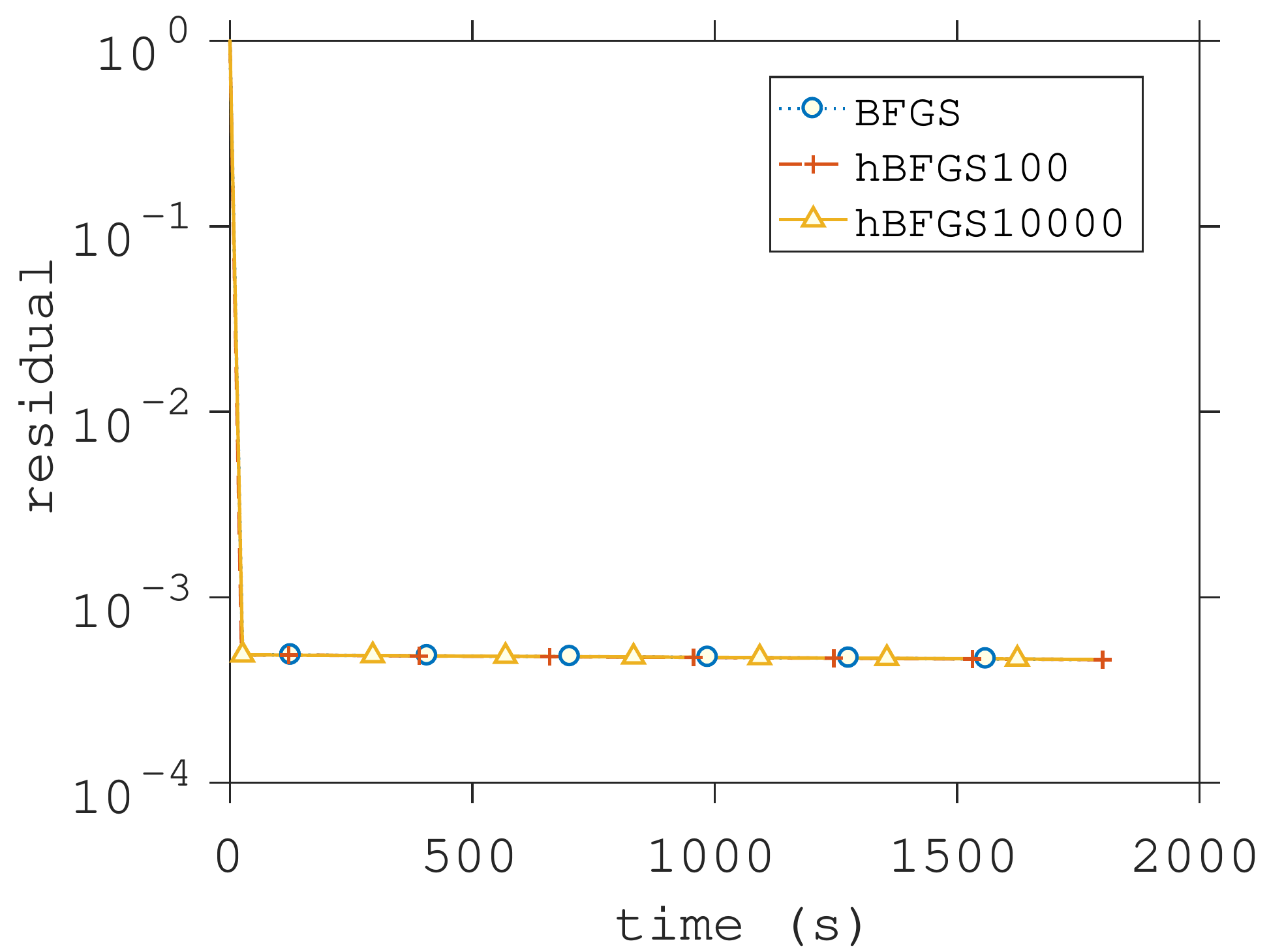}
\end{minipage}%
    \caption{  Dataset svhn: $n=3072$. From left to right we have: Coordinate sketch with convenient probabilities, coordinate sketch with uniform probabilities and Gaussian sketch respectively. 
}\label{fig:splice-scale}
\end{figure}

\begin{figure}[H]
    \centering
\begin{minipage}{0.30\textwidth}
  \centering
\includegraphics[width =  \textwidth ]{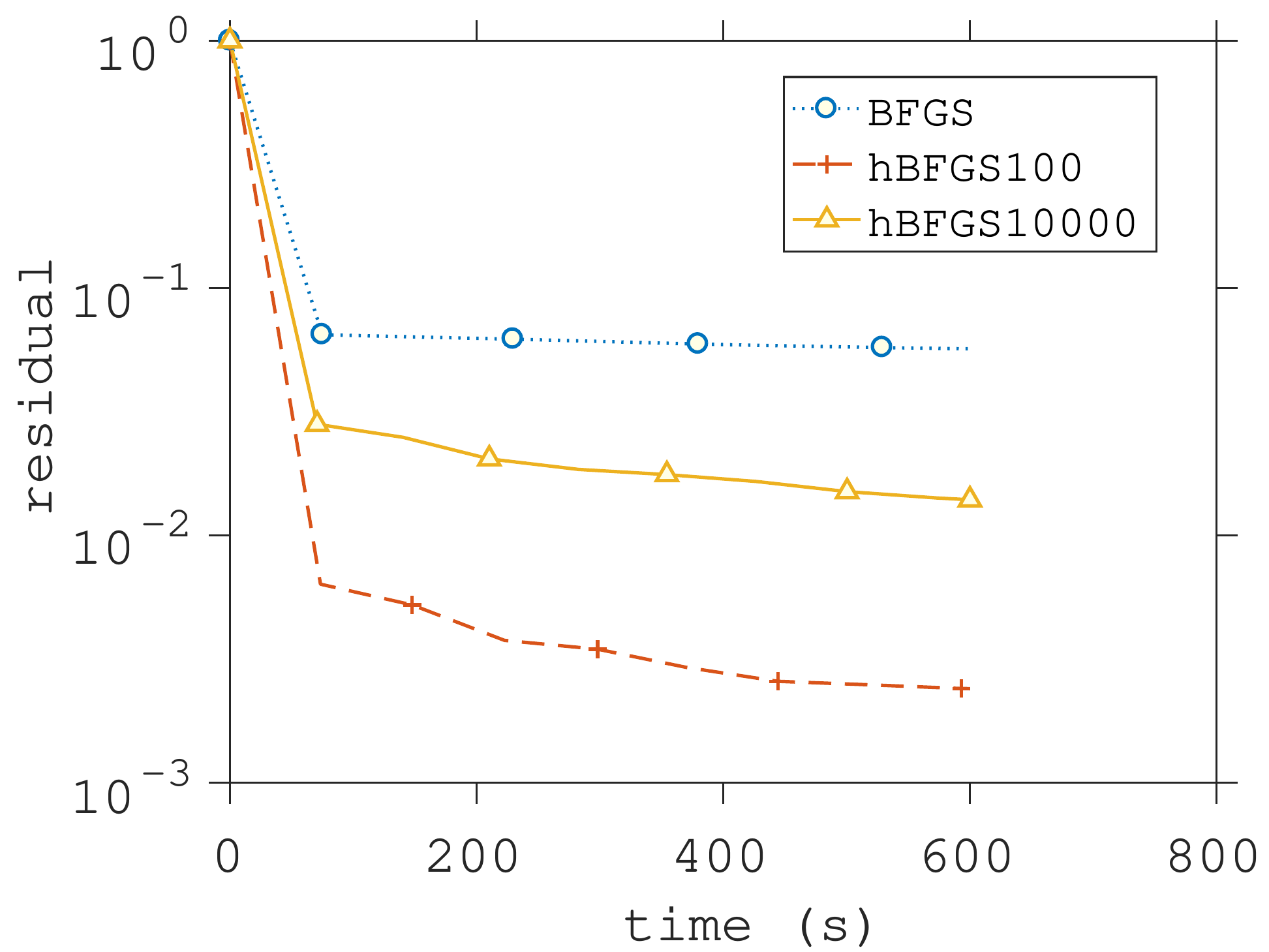}
\end{minipage}%
\begin{minipage}{0.30\textwidth}
  \centering
\includegraphics[width =  \textwidth ]{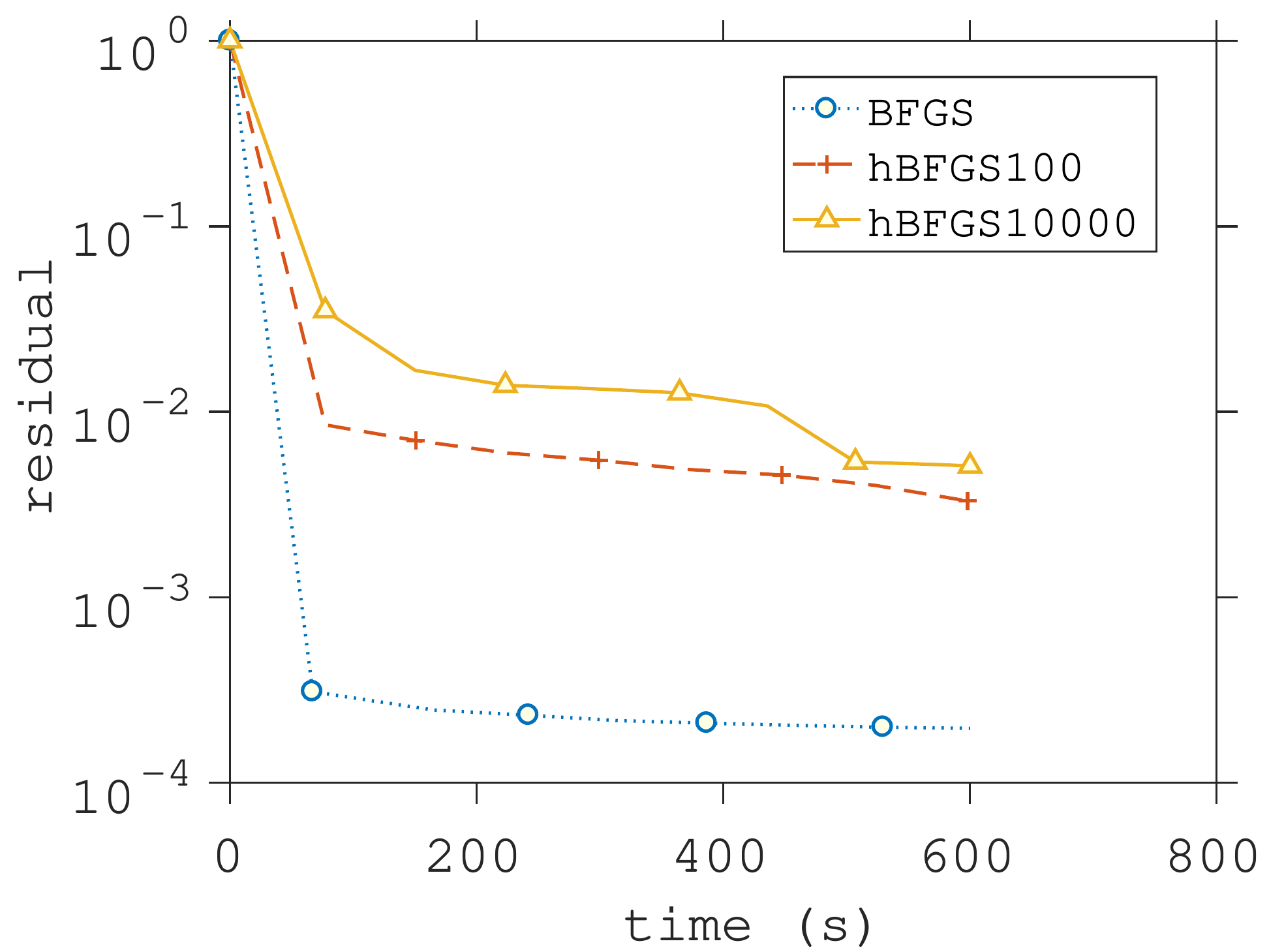}
\end{minipage}%
\begin{minipage}{0.30\textwidth}
  \centering
\includegraphics[width =  \textwidth ]{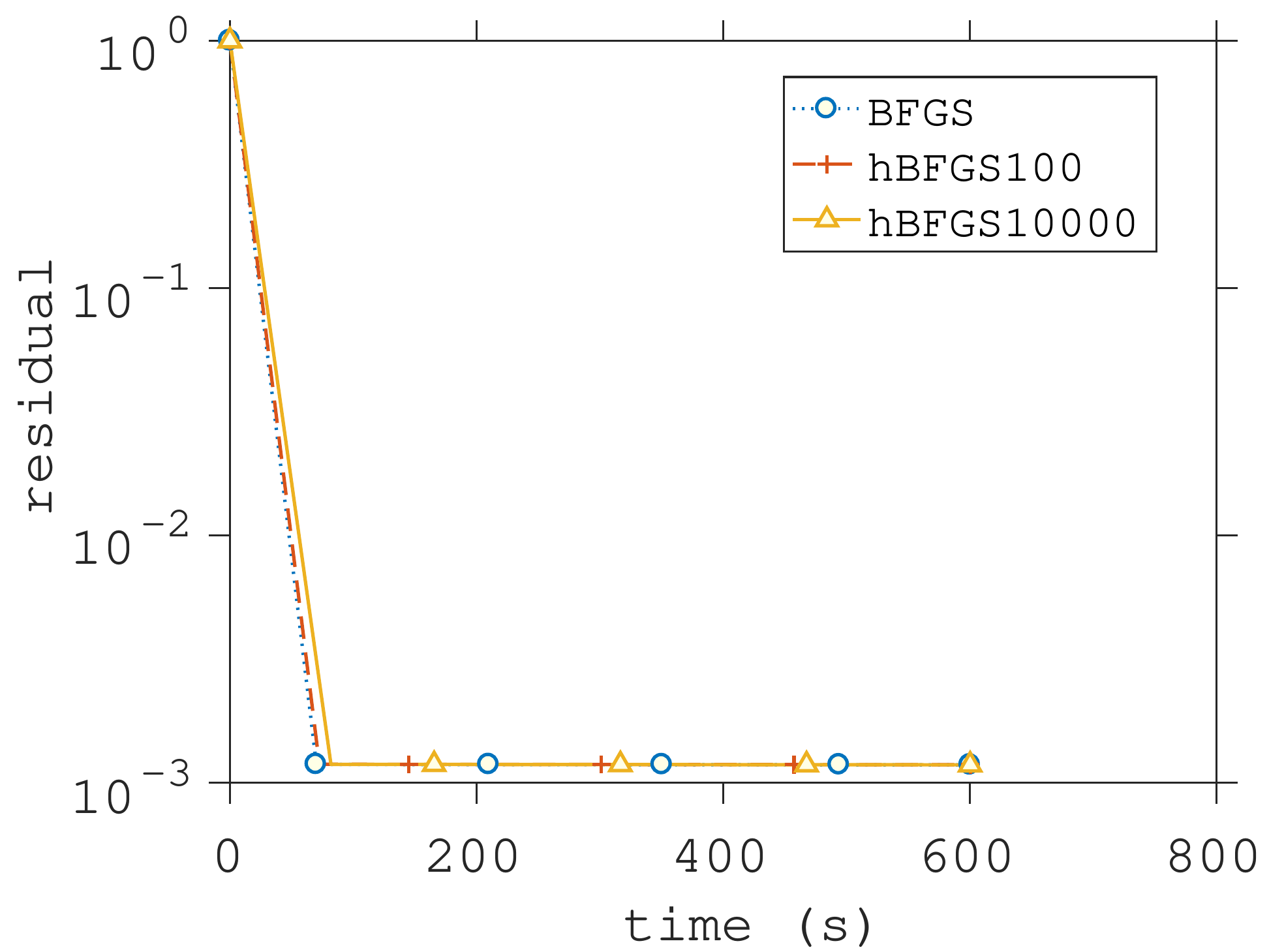}
\end{minipage}%
    \caption{  Dataset gisette: $n=5000$. From left to right we have: Coordinate sketch with convenient probabilities, coordinate sketch with uniform probabilities and Gaussian sketch respectively. 
}\label{fig:splice-scale}
\end{figure}

Notice that once the acceleration parameters are not set exactly (but they are still reasonable), we observe that the performance of the accelerated algorithm is essentially the same as the performance of the nonaccelerated algorithm, which is essentially the same conclusion as for artificially generated examples.  

\subsection{Additional optimization experiments}

In Figure~\ref{fig:australiantime} we solve the same problems with the same setup as in~\ref{fig:australiantime}, but now we plot the time versus the residual (as opposed to iterations versus the residual).  Despite the more costly iterations, the accelerated BFGS method can still converge faster than the classic BFGS method.  
 
\begin{figure}[!h]
    \centering
\begin{minipage}{0.24 \textwidth}
\includegraphics[width =  \textwidth ]{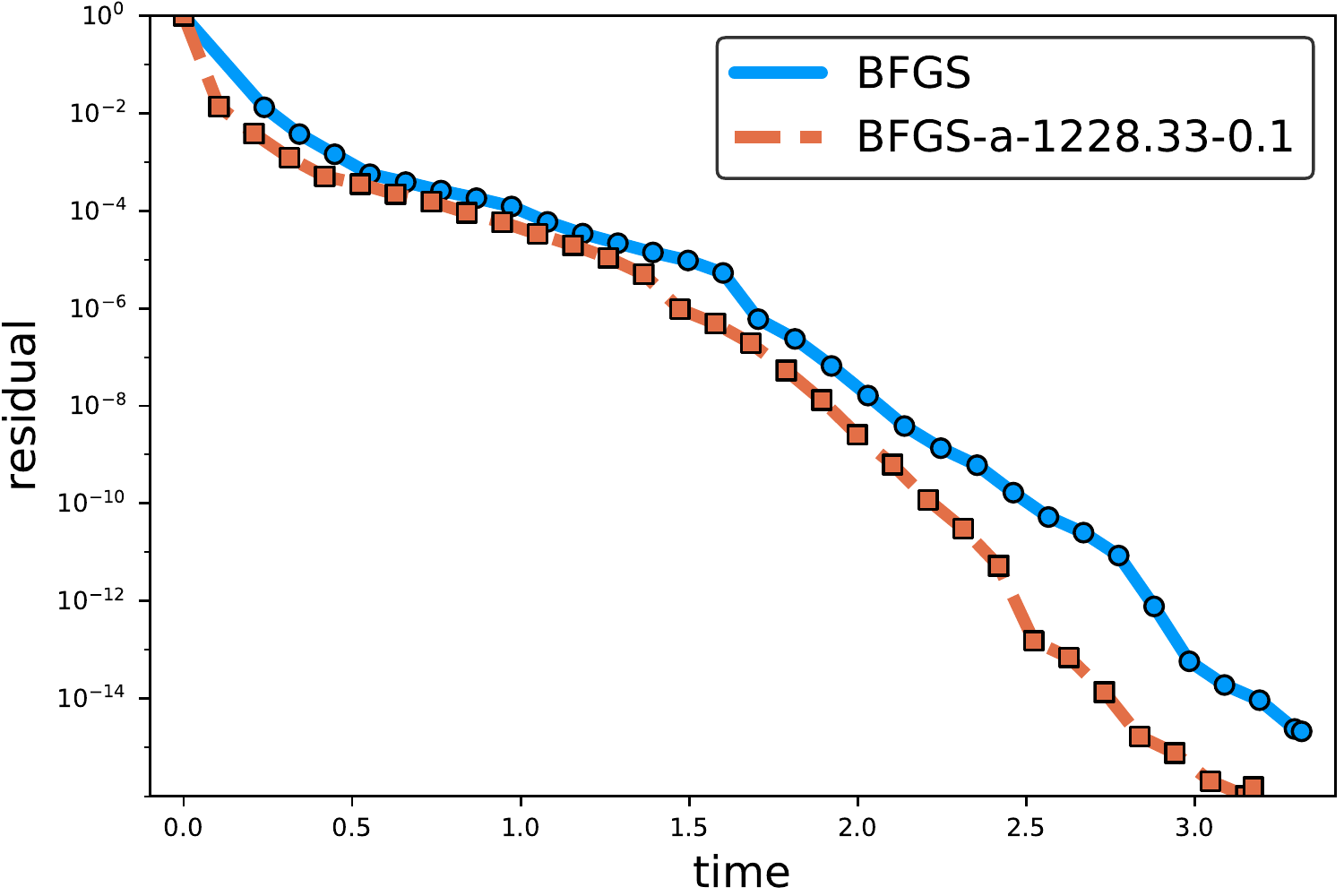}
\end{minipage}%
\begin{minipage}{0.24 \textwidth}
\includegraphics[width =  \textwidth ]{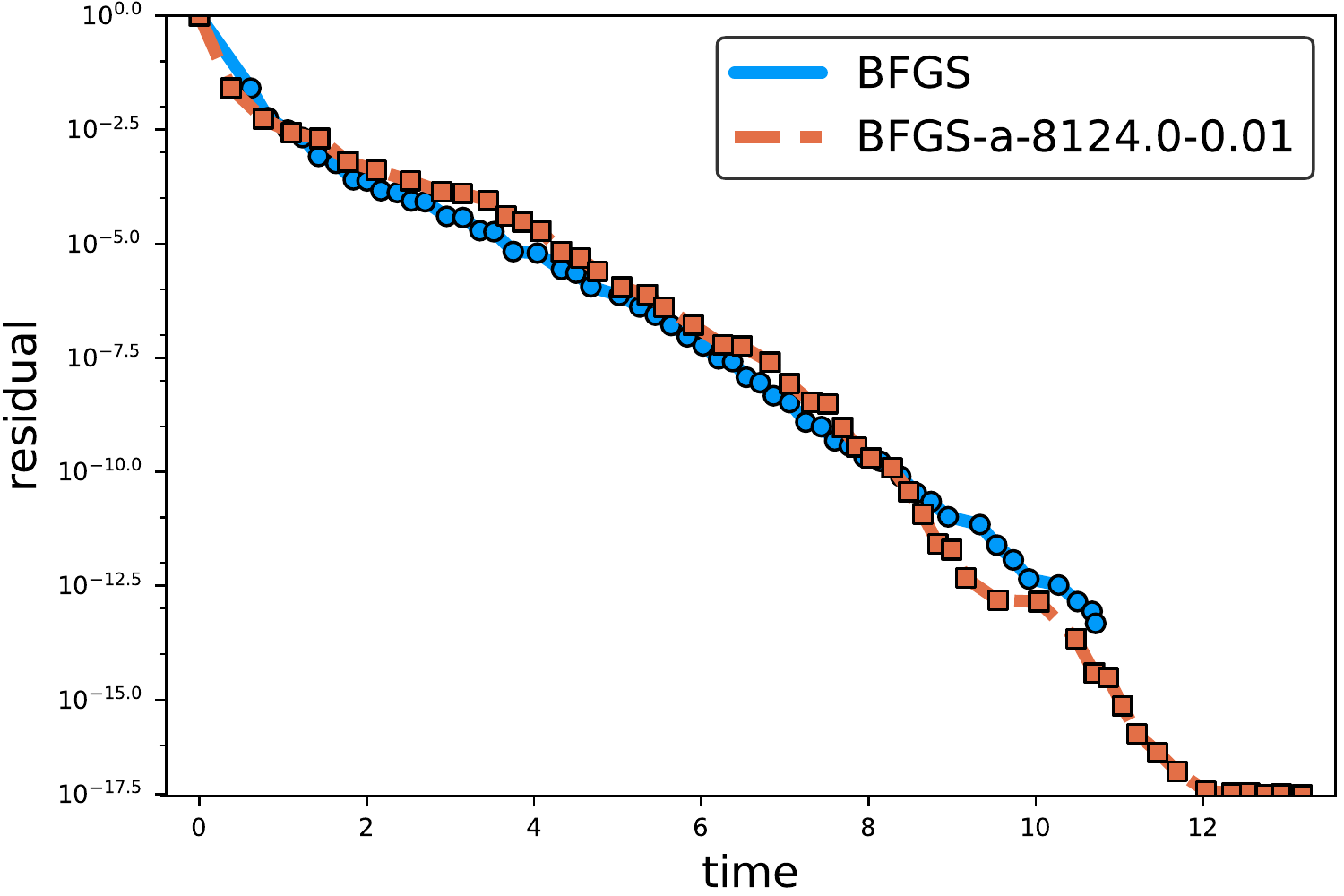}
\end{minipage}
\begin{minipage}{0.24	 \textwidth}
\includegraphics[width =  \textwidth ]{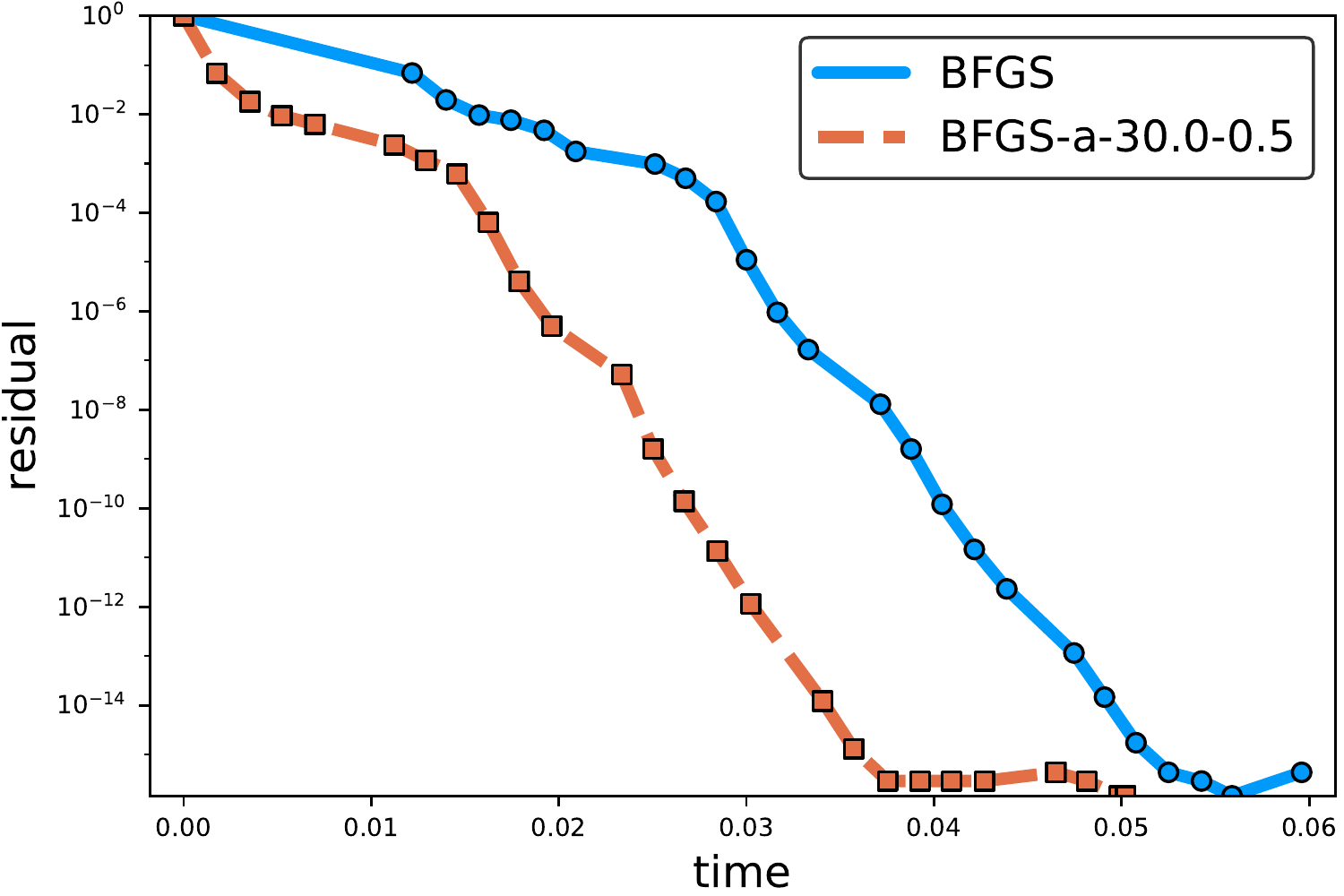}	
\end{minipage}%
\begin{minipage}{0.24 \textwidth}
\includegraphics[width =  \textwidth ]{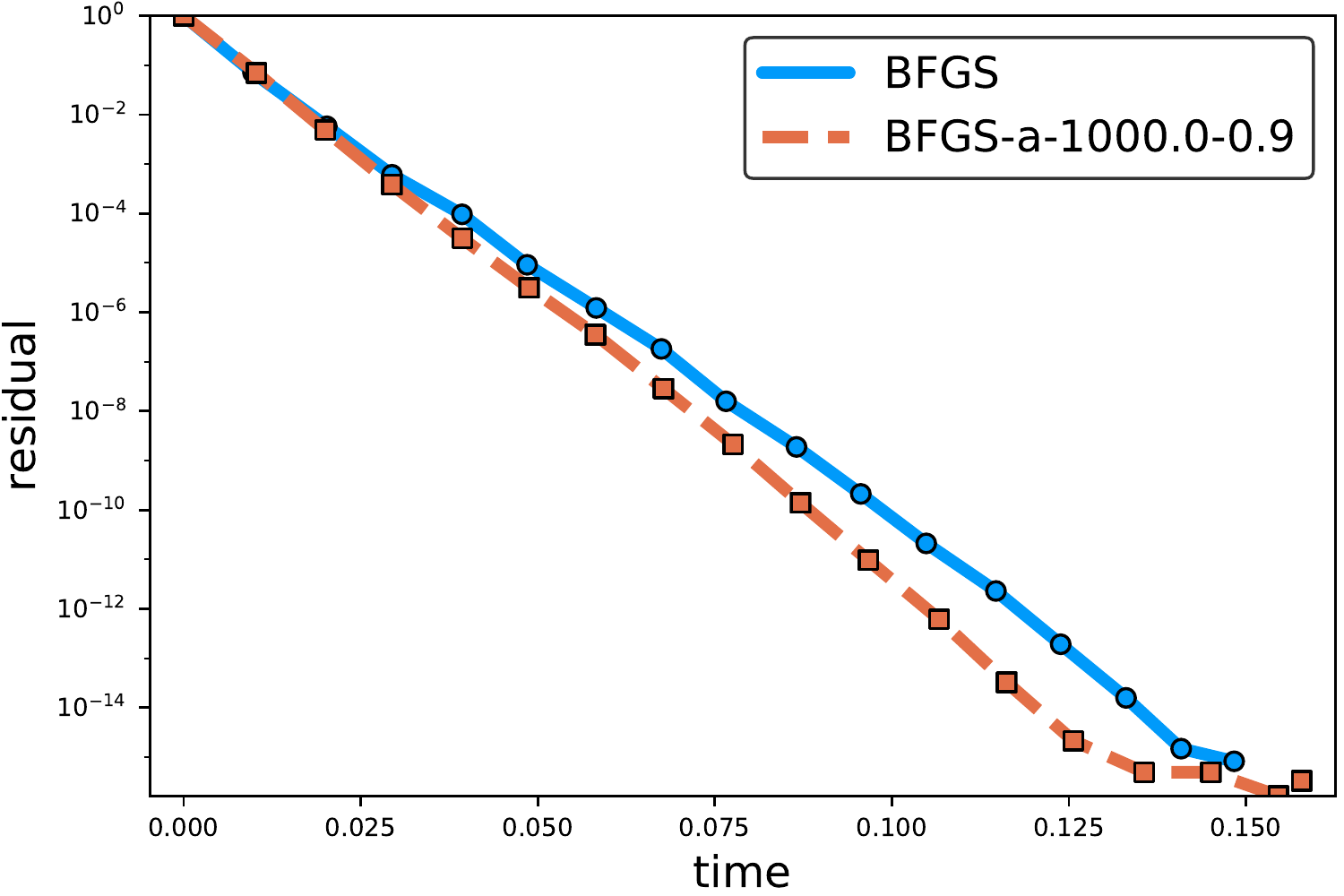}
\end{minipage}%
\caption{Algorithm~\ref{alg:bfgs_opt} (BFGS with accelerated matrix inversion quasi-Newton update) vs standard BFGS. From left to right:  \texttt{phishing}, \texttt{mushrooms}, \texttt{australian} and \texttt{splice} dataset.}
\label{fig:australiantime}
\end{figure}

We also give additional experiments with the same setup to the ones found in Section~\ref{sec:accBFGSexp}. 
Much like the \texttt{phishing} problem in Figure~\ref{fig:australian}, the problems \texttt{madelon}, \texttt{covtype} and \texttt{a9a} in Figures~\ref{fig:bfgs_opt_libsvm4},~\ref{fig:bfgs_opt_libsvm6} and~\ref{fig:bfgs_opt_libsvm7} did not benefit that much from acceleration.
 Indeed, we found in our experiments that even when choosing extreme values of $\mu$ and $\nu$, the generated inverse Hessian would not significantly deviate from the estimate that one would obtain using the standard BFGS update. Thus on these two problems there is apparently little room for improvement by using acceleration.    

\begin{figure}[H]
	\centering
	\begin{minipage}{0.25\textwidth}
		\centering
		\includegraphics[width = \textwidth ]{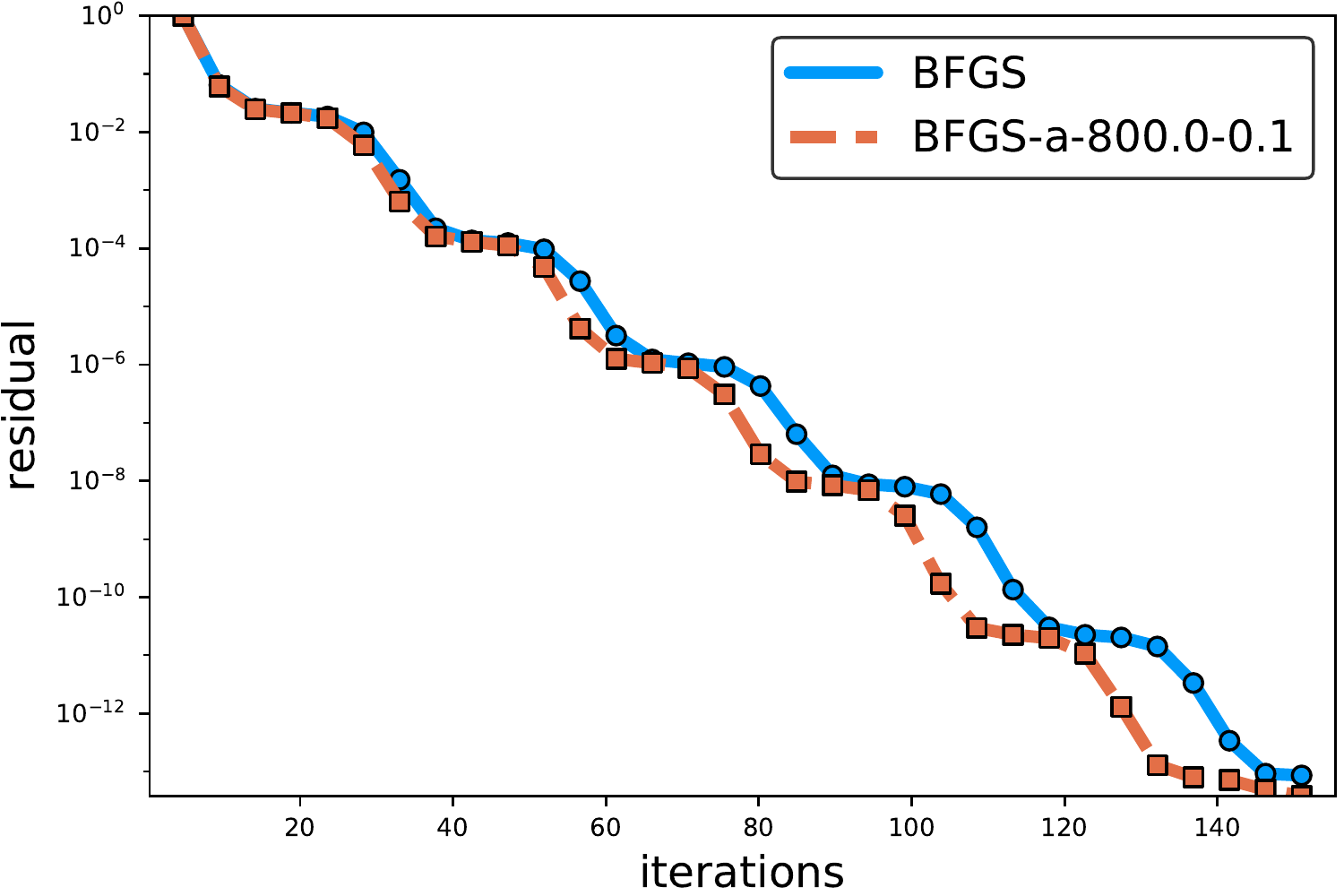}
		\caption{  \texttt{madelon}:}
		\label{fig:bfgs_opt_libsvm4}
	\end{minipage}
	\begin{minipage}{0.25\textwidth}
		\centering
		\includegraphics[width=\textwidth]{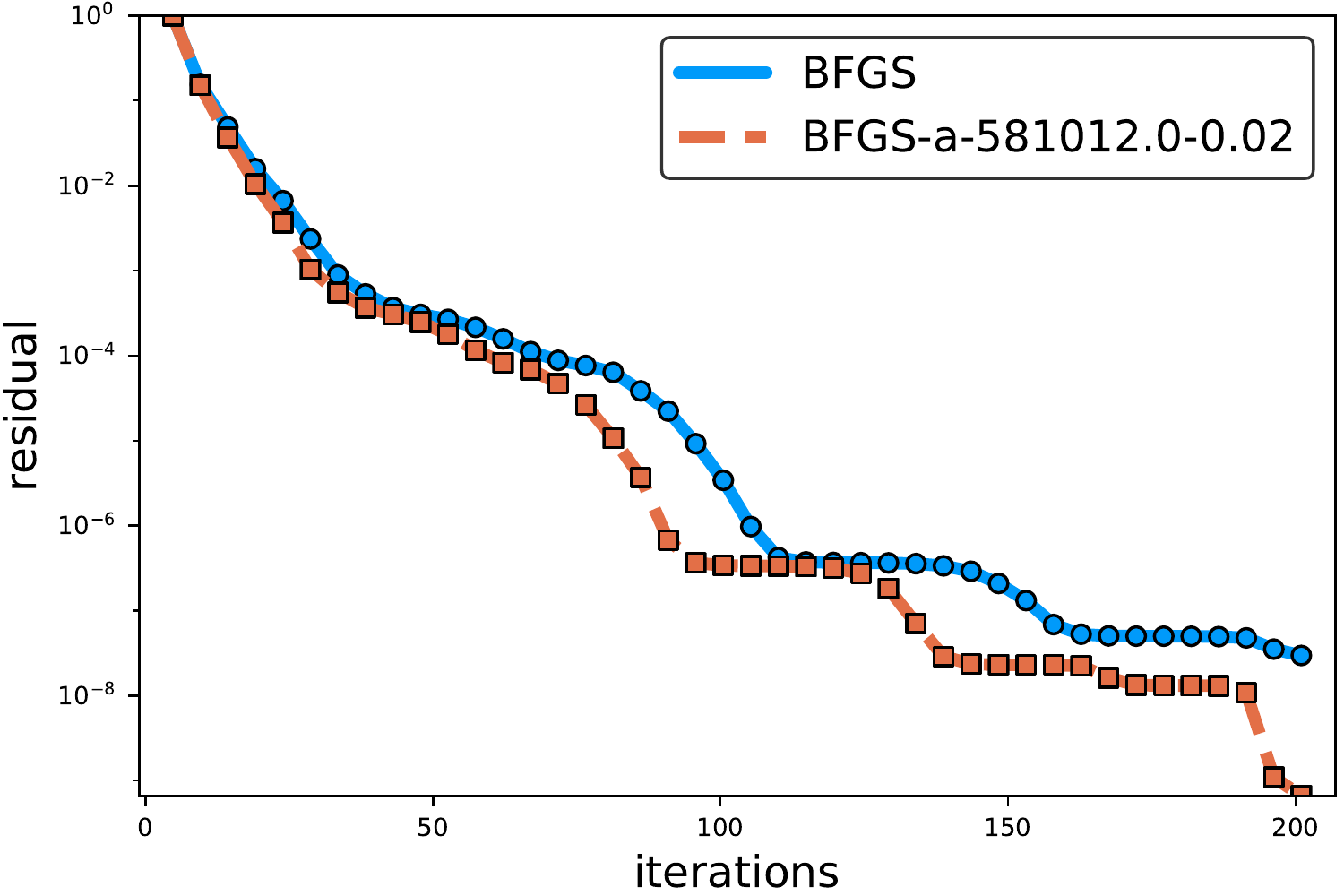}
		\caption{  \texttt{covtype}}
		\label{fig:bfgs_opt_libsvm6}
	\end{minipage}%
		\begin{minipage}{0.25\textwidth}
			\centering
			\includegraphics[width=\textwidth]{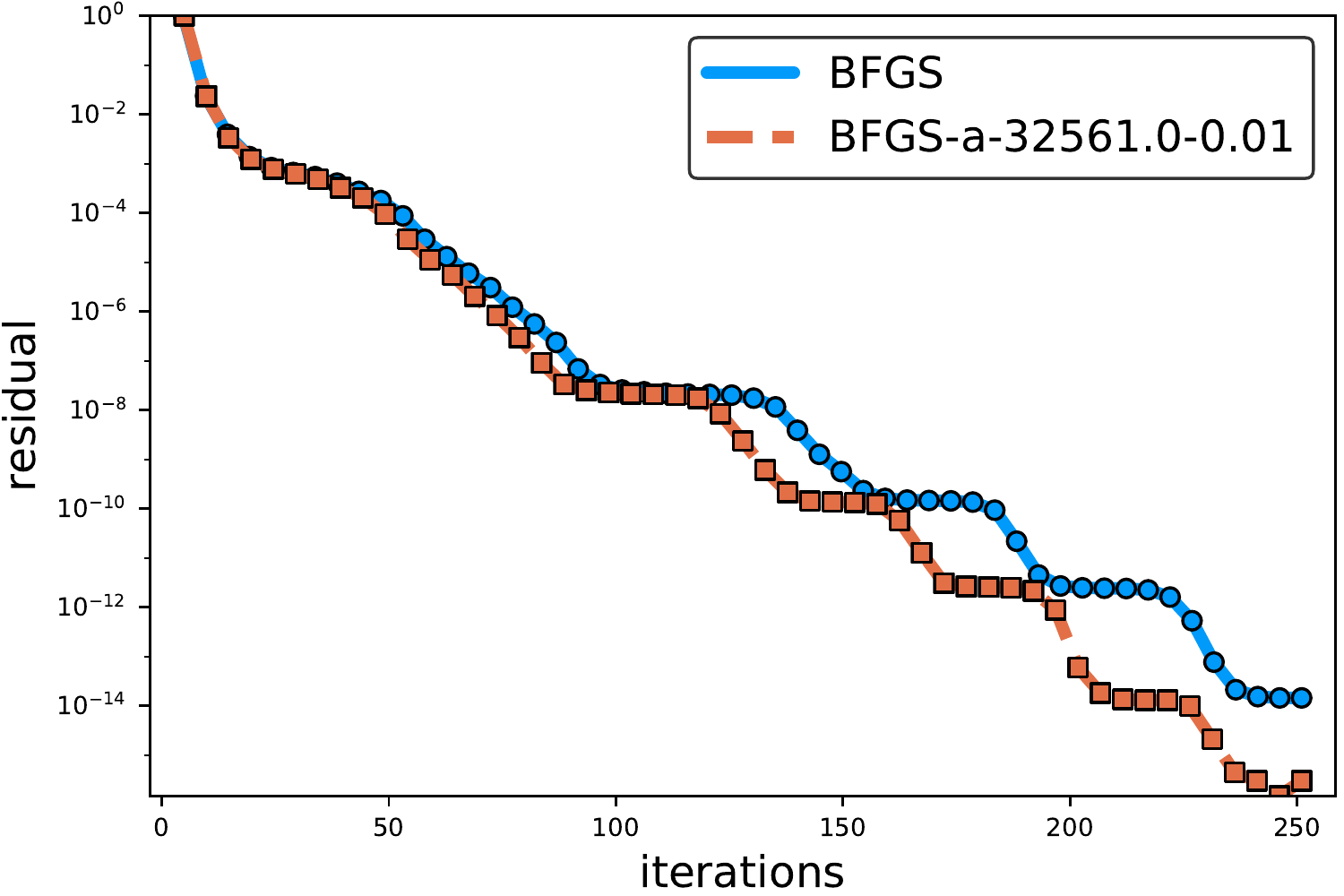}	
			\caption{  \texttt{a9a}}
			\label{fig:bfgs_opt_libsvm7}
		\end{minipage}%
	\label{fig:bfgs_opt_libsvm}
\end{figure}

\section{Conclusions and Extensions}
We developed an accelerated sketch-and-project method for solving linear systems in Euclidean spaces. The method was applied to invert positive definite matrices, while keeping their symmetric structure. Our accelerated matrix inversion algorithm was then incorporated into an optimization framework to develop both accelerated stochastic and deterministic BFGS, which to the best of our knowledge, are {\em the first  accelerated quasi-Newton updates.} 

We show that under a careful choice of the  parameters of the method, and depending on the problem structure and conditioning, acceleration might result into significant speedups both for the matrix inversion problem and for the stochastic BFGS algorithm. We confirm experimentally that our accelerated methods can lead to speed-ups when compared to the classical BFGS algorithm.

As a future line of research, it might be interesting to study the accelerated BFGS algorithm (either deterministic or stochastic)  further, and provide a convergence analysis on a suitable class of functions. Another interesting area of research might be to combine accelerated BFGS with limited memory \cite{liu1989limited} or engineer the method so that it can efficiently compete with first order algorithms for some empirical risk minimization problems, such as, for example \cite{gower2016stochastic}. 

As we show in this work, {\em Nesterov's acceleration can be applied to quasi-Newton updates}. We believe this is a surprising fact, as quasi-Newton updates have not been understood as optimization algorithms, which prevented the idea of applying acceleration in this context.

Since since second-order methods are becoming more and more ubiquitous in machine learning and data science, we hope that our work will motivate further advances at the frontiers of big data optimization.

\bibliographystyle{plain}
\bibliography{accel_mat_inv}

\clearpage
\appendix

\clearpage

\section{Proofs for Section~\ref{sec:ami_paper} \label{sec:proof_Euclidean}}

\subsection{Proof of Lemma~\ref{lem:Z_bounds}}
First note that $Z$ is a self-adjoint positive operator and thus so is $\E{Z}.$  Consequently.
 \begin{eqnarray}
 \mu &\overset{\eqref{eq:mu+nu}}{=}& \inf_{x \in \Range{\A^*}} \frac{\dotprod{\E{Z}x,x}}{\dotprod{x,x}} \nonumber \\
 &\overset{\eqref{eq:exactness}}{=} &
 \inf_{x \in \Range{\E{Z}}} \frac{\dotprod{\E{Z}x,x}}{\dotprod{x,x}} \nonumber\\
 & \overset{\mbox{  Lemma~\ref{lem:pseudo} item~\ref{it:pseudorange} }}{=}& \inf_{x \in \cX} \frac{\dotprod{\E{Z}\E{Z}^\dagger x,\E{Z}^\dagger x}}{\dotprod{\E{Z}^\dagger x,\E{Z}^\dagger x}} \nonumber\\
  & \overset{\mbox{  Lemma~\ref{lem:pseudo} item~\ref{it:pseudoTTdagT} }}{=}& \inf_{x \in \cX} \frac{\dotprod{\E{Z}^\dagger x,x}}{\dotprod{\E{Z}^\dagger x,\E{Z}^\dagger x}} \nonumber\\
   & \overset{\mbox{  Lemma~\ref{lem:squareroot} }}{=}& 
  \inf_{z \in \Range{(\E{Z}^\dagger)^{1/2}}} \frac{\dotprod{ z,z}}{\dotprod{\E{Z}^\dagger z, z}} \qquad (\mbox{set }z = (\E{Z}^\dagger)^{1/2}x)\nonumber  \\
  & \overset{\eqref{eq:RangeGhalf} }{=}& 
  \frac{1}{\norm{\E{Z}^{\dagger}}}.\label{eq:as9d8n923}
 \end{eqnarray} 
 
 For the bounds~\eqref{eq:nubnds} we have that
\begin{eqnarray*}
 \nu &\overset{\eqref{eq:mu+nu}}{=}& \sup_{x \in \Range{\A^*}} \frac{\E{\dotprod{\E{Z}^\dagger Zx,Zx}}}{\dotprod{\E{Z}x,x}}  \\
 &\leq & \sup_{x \in \Range{\A^*}} \frac{\norm{\E{Z}^\dagger} \E{\norm{Zx}_2^2}}{\dotprod{\E{Z}x,x}}\\
 & =& \norm{\E{Z}^\dagger}\\
 & \overset{\eqref{eq:as9d8n923}}{\leq} & \frac{1}{\mu}.
 \end{eqnarray*} 
To bound $\nu$ from below we use that $\E{Z}^\dagger$ is self adjoint together with that the map $X \mapsto \dotprod{X\E{Z}^\dagger Xx,x}$ is convex over the space of self-adjoint operators $X \in L(\cX)$ and for a fixed $x \in \cX$. Consequently by Jensen's inequality
\begin{equation}
\E{\dotprod{Z\E{Z}^\dagger Zx,x}} \geq \dotprod{\E{Z}\E{Z}^\dagger \E{Z}x,x}
\overset{ \mbox{  Lemma~\ref{lem:pseudo} item~\ref{it:pseudoTTdagT} }}{=}\dotprod{\E{Z} x, x}.\label{eq:ZEZZjen}
\end{equation}
Finally
\begin{eqnarray*}
\nu & \overset{\eqref{eq:ZEZZjen}}{\geq} & \sup_{x \in \Range{\A^*}} \frac{\dotprod{\E{Z}x,x}}{\dotprod{\E{Z}x,x}}  = 1.
\end{eqnarray*}

Lastly, to show \eqref{eq:nu_lower} we have
 \begin{eqnarray*}
 \Rank{\A^*} & \overset{\eqref{eq:exactness}}{=}& \Rank{\E{Z}} \\
  & \overset{\mbox{  Lemma~\ref{lem:projrank}+ Lemma~\ref{lem:pseudo} (\emph{\ref{it:pseudoproj}}) }}{=}& \Tr{\E{Z}\E{Z}^\dagger} = \E{\Tr{Z \E{Z}^\dagger}}\\
 &= & \E{\Tr{Z \E{Z}^\dagger Z}} \\
 &\leq & \nu \E{\Tr{Z}} \overset{\mbox{  Lemma~\ref{lem:projrank}}}{=} \nu   \E{\Rank{Z}},
 \end{eqnarray*}
where we used that  $\dotprod{ \E{Z \E{Z}^\dagger Z} u,u}  \leq  \nu \dotprod{\E{Z} u,u}$ for every $u \in \Range{\E{Z}}=\Range{\A^*}=\cX.$ \qed

 {\bf Proof} that $X \mapsto \dotprod{X\E{Z}^{\dagger}Xx,x} = \norm{Xx}_{\E{Z}^{\dagger}}^2$ is convex: Let $G= \E{Z}^{\dagger}$ then
 \begin{eqnarray*}
\norm{(\lambda X +(1-\lambda)Y)x}_G^2 &= &
\lambda^2 \norm{X x}_G^2 +(1-\lambda)^2  \norm{Yx}_G^2
+2\lambda(1-\lambda) \dotprod{x X GY ,x}\\
&= & -\lambda(1-\lambda)\norm{(X-Y)x}_G^2 \\
& &+ \lambda  \norm{X x}_G^2+(1-\lambda)\norm{Yx}_G^2\\
&\leq &\lambda  \norm{X x}_G^2+(1-\lambda)\norm{Y x}_G^2 . \hskip5cm \qed 
\end{eqnarray*}

\subsection{Technical lemmas to prove Theorem~\ref{theo:conv}}

\begin{lemma} \label{lem:invariant}For all $k \geq 0,$ the vectors $y_k-x_*, \, x_k-x_*$ and $v_k-x_*$ belong to $\Range{\A^*}.$
\end{lemma}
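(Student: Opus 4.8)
The statement is an induction on $k$. The plan is to show that all three sequences $y_k - x_*$, $x_k - x_*$, $v_k - x_*$ stay in the subspace $\Range{\A^*}$, exploiting the algebra of the updates together with the fact that each $Z_k$ maps into $\Range{\A^*}$.

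First I would establish the base case. By construction $v_0 = x_0$, and from~\eqref{eq:xsol} we have $x^* \in x_0 + \Range{\A^*}$, equivalently $x_0 - x_* \in \Range{\A^*}$; hence also $v_0 - x_* \in \Range{\A^*}$. This handles $k=0$ for both $x$ and $v$.

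Next comes the inductive step. Assume $x_k - x_*, v_k - x_* \in \Range{\A^*}$. Since line~4 of Algorithm~\ref{alg:SketchJac} gives $y_k = \alpha v_k + (1-\alpha)x_k$, we get $y_k - x_* = \alpha(v_k - x_*) + (1-\alpha)(x_k - x_*)$, a linear combination of elements of the subspace $\Range{\A^*}$, so $y_k - x_* \in \Range{\A^*}$. Now by~\eqref{eq:Z} the operator $Z_k = \A^*\cS_k^*(\cS_k\A\A^*\cS_k^*)^{\dagger}\cS_k\A$ has range contained in $\Range{\A^*}$ (its leftmost factor is $\A^*$), so $g_k = Z_k(y_k - x_*) \in \Range{\A^*}$. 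Then line~7 yields $x_{k+1} - x_* = (y_k - x_*) - g_k \in \Range{\A^*}$, and line~8 yields $v_{k+1} - x_* = \beta(v_k - x_*) + (1-\beta)(y_k - x_*) - \gamma g_k \in \Range{\A^*}$, each being a linear combination of elements of $\Range{\A^*}$. This closes the induction.

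I do not expect a serious obstacle here: the only points requiring care are (i) noting that $x_0 - x_* \in \Range{\A^*}$, which is exactly the content of~\eqref{eq:xsol}, and (ii) observing that $\Range{Z_k} \subseteq \Range{\A^*}$ regardless of the random sketch, which is immediate from the form~\eqref{eq:Z}. Everything else is closure of a subspace under linear combinations. One small subtlety worth a remark is that $g_k$ in line~6 is written as $Z_k(y_k - x_*)$; this identity itself uses $b \in \Range{\A}$ (as noted after~\eqref{eq:IZprojres} via Lemma~\ref{lem:pseudo}), so strictly I would invoke that to justify $g_k = Z_k(y_k-x_*)$ before using it.
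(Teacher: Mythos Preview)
Your proposal is correct and follows essentially the same approach as the paper: an induction using $x_0-x_*\in\Range{\A^*}$ from~\eqref{eq:xsol}, the fact that $g_k\in\Range{\A^*}$ since the leftmost factor of $Z_k$ is $\A^*$, and closure of the subspace under the affine updates. The only cosmetic difference is that you carry $x_k-x_*$ and $v_k-x_*$ in the induction hypothesis and derive $y_k-x_*$ inside the inductive step, whereas the paper carries all three; your added remark on why $g_k=Z_k(y_k-x_*)$ is a welcome clarification but not strictly needed, since the explicit formula $g_k=\A^*\cS_k^*(\cS_k\A\A^*\cS_k^*)^{\dagger}\cS_k(\A y_k-b)$ already places $g_k$ in $\Range{\A^*}$.
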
 
\begin{proof}
 Note that $x_0 =y_0 =x_0$ and in view of~\eqref{eq:xsol} we have $x_* \in x_0 + \Range{\A^*}.$ So $y_0 -x_* \in \Range{\A^*},$  $v_0 -x_* \in \Range{\A^*}$ and  $x_0 -x_* \in \Range{\A^*}.$ Assume by induction that  $y_k -x_* \in \Range{\A^*},$ $v_k -x_* \in \Range{\A^*}$ and  $x_k -x_* \in \Range{\A^*}.$ Since $g_k \in \Range{\A^*}$ and $x_{k+1} = y_k - g_k$ we have
 \[x_{k+1}-x_* = (y_k-x_*) -g_k \in \Range{\A^*}.\]
 Moreover, 
 \[v_{k+1}-x_* = \beta(v_k -x_*) + (1-\beta)(y_k-x*)-\gamma g_k \in \Range{\A^*}.\]
 Finally
  \[y_{k+1}-x_* = \alpha v_{k+1}+(1-\alpha)x_{k+1}-x_* = \alpha (v_{k+1}-x_*)+(1-\alpha)(x_{k+1}-x_*)\in \Range{\A^*}.\]
\end{proof}

\begin{lemma}
\begin{equation}\label{eq:Enormnubnd}
\E{ \norm{Z_k(y_k -x_*)}_{\E{Z}^{\dagger}}^2 \, | \, y_k} \leq \nu \norm{y_k -x_*}_{\E{Z}}^2
\end{equation} 
\end{lemma}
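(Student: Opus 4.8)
The plan is to establish the bound by conditioning on $y_k$, so that $y_k - x_*$ is a fixed vector lying in $\Range{\A^*}$ (by Lemma~\ref{lem:invariant}), and then exploit the projection structure of $Z_k = Z(\cS_k)$. Write $w \eqdef y_k - x_* \in \Range{\A^*}$. Since $Z_k$ is a self-adjoint positive projection (Lemma~\ref{lem:Z}), we have $Z_k^* \E{Z}^\dagger Z_k$ appearing naturally when we expand the squared norm:
\[
\E{\norm{Z_k w}_{\E{Z}^\dagger}^2 \,|\, y_k} = \E{\dotprod{\E{Z}^\dagger Z_k w, Z_k w}} = \dotprod{\E{Z\E{Z}^\dagger Z} w, w},
\]
where the expectation is over $\cS_k \sim \cD$ and we used self-adjointness of $Z_k$ to move it across the inner product, i.e. $\dotprod{\E{Z}^\dagger Z_k w, Z_k w} = \dotprod{Z_k \E{Z}^\dagger Z_k w, w}$.

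\textbf{Key steps.} First I would reduce to the deterministic inequality $\dotprod{\E{Z\E{Z}^\dagger Z} w, w} \leq \nu \dotprod{\E{Z} w, w}$ for all $w \in \Range{\A^*}$. This is almost immediate from the definition of $\nu$ in~\eqref{eq:mu+nu}: by construction,
\[
\nu = \sup_{x \in \Range{\A^*}} \frac{\dotprod{\E{Z\E{Z}^\dagger Z}x,x}}{\dotprod{\E{Z}x,x}},
\]
so $\dotprod{\E{Z\E{Z}^\dagger Z}w,w} \leq \nu \dotprod{\E{Z}w,w}$ for every $w \in \Range{\A^*}$ directly (the supremum is well defined by the remark following~\eqref{eq:mu+nu}, using exactness and $\A \neq 0$; one should note $\dotprod{\E{Z}w,w} > 0$ for $w \in \Range{\A^*}\setminus\{0\}$ since $\Range{\E{Z}} = \Range{\A^*}$). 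Combining this with the expansion above, and recalling $\norm{y_k - x_*}_{\E{Z}}^2 = \dotprod{\E{Z}w,w}$, gives exactly~\eqref{eq:Enormnubnd}.

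\textbf{Main obstacle.} There is no serious obstacle here; the statement is essentially a restatement of the definition of $\nu$ after the algebraic identity $\E{\norm{Z_k w}_{\E{Z}^\dagger}^2} = \dotprod{\E{Z\E{Z}^\dagger Z}w,w}$ is recorded. The only mild care needed is the measurability/well-definedness point: one must justify that $w = y_k - x_* \in \Range{\A^*}$ (which is Lemma~\ref{lem:invariant}) so that the supremum defining $\nu$ actually applies, and that taking the conditional expectation commutes with the linear-algebraic manipulations since $Z_k$ depends only on the fresh sample $\cS_k$, independent of $y_k$. I would present the argument in three short lines: (i) expand the norm and use self-adjointness of $Z_k$; (ii) take expectation to get $\dotprod{\E{Z\E{Z}^\dagger Z}w,w}$; (iii) apply the definition of $\nu$ together with Lemma~\ref{lem:invariant}.
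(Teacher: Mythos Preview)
Your proposal is correct and follows essentially the same approach as the paper: expand the squared norm using self-adjointness of $Z_k$, take expectation to obtain $\dotprod{\E{Z\E{Z}^\dagger Z}w,w}$, and then invoke the definition of $\nu$ together with $y_k - x_* \in \Range{\A^*}$. The paper's proof is the same three-line computation, with the range membership used as the first line rather than cited via Lemma~\ref{lem:invariant}.
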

\begin{proof}
Since $y_k -x_* \in \Range{\A^*}$ we have that
\begin{eqnarray*}
\E{ \norm{Z_k(y_k -x_*)}_{\E{Z}^{\dagger}}^2 \, | \, y_k} &=  & 
 \dotprod{\E{Z_k\E{Z}^{\dagger}Z_k}(y_k -x_*),(y_k -x_*)} \\
 & \overset{\eqref{eq:mu+nu}}{\leq} &  \nu \dotprod{\E{Z}(y_k -x_*),(y_k -x_*)} \\
 &= & \nu \norm{y_k -x_*}_{\E{Z}}^2. 
\end{eqnarray*}
\end{proof}

\begin{lemma}
\label{lemma:threepoint}
\begin{equation}\label{eq:ykxkident}
 \norm{y_k-x_*}_{\E{Z}}^2 = \norm{y_k-x_*}^2 -\E{\norm{x_{k+1}-x_*}^2 \, | \, y_k}\end{equation}
\end{lemma}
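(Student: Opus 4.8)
The plan is to reduce everything to the identity $x_{k+1}-x_* = (I-Z_k)(y_k-x_*)$ recorded in~\eqref{eq:IZprojres}, and to exploit that $Z_k$ is a self-adjoint idempotent, which is exactly the content of Lemma~\ref{lem:Z}. First I would note that since $Z_k$ is a self-adjoint projection, so is $I-Z_k$; in particular $(I-Z_k)^* = I-Z_k$ and $(I-Z_k)^2 = I-Z_k$.

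Next I would compute the squared norm of the residual after the sketch-and-project step. Using~\eqref{eq:IZprojres} and then idempotence and self-adjointness of $I-Z_k$,
\[
\norm{x_{k+1}-x_*}^2 = \dotprod{(I-Z_k)(y_k-x_*),\,(I-Z_k)(y_k-x_*)} = \dotprod{(I-Z_k)(y_k-x_*),\,y_k-x_*},
\]
which expands to $\norm{y_k-x_*}^2 - \dotprod{Z_k(y_k-x_*),\,y_k-x_*}$.

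Finally I would take the conditional expectation given $y_k$. Since $\cS_k$ (hence $Z_k$) is drawn independently of $y_k$, we have $\E{Z_k\mid y_k} = \E{Z}$, so
\[
\E{\norm{x_{k+1}-x_*}^2 \mid y_k} = \norm{y_k-x_*}^2 - \dotprod{\E{Z}(y_k-x_*),\,y_k-x_*} = \norm{y_k-x_*}^2 - \norm{y_k-x_*}_{\E{Z}}^2,
\]
and rearranging yields the claimed identity. There is essentially no obstacle here beyond correctly invoking Lemma~\ref{lem:Z} for the projection property and the independence of $\cS_k$; the whole argument is a two-line expansion of a projected residual, analogous to the classical ``three-point'' or Pythagorean identity for orthogonal projections, which explains the name of the lemma.
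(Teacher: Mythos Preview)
Your proof is correct and follows essentially the same approach as the paper: both use~\eqref{eq:IZprojres} together with the fact that $Z_k$ is a self-adjoint projection (Lemma~\ref{lem:Z}) to reduce $\norm{(I-Z_k)(y_k-x_*)}^2$ to $\dotprod{(I-Z_k)(y_k-x_*),y_k-x_*}$, then take conditional expectation. If anything, you are slightly more explicit than the paper in invoking the projection property and the independence of $\cS_k$ from $y_k$.
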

\begin{proof}
\begin{eqnarray*}
\E{\norm{x_{k+1}-x_*}^2 \, | \, y_k} &= & \E{\norm{(I-Z_k)( y_k -x_*)}^2 \, | \, y_k} \\
&= &\dotprod{(I-\E{Z})( y_k -x_*),y_k -x_*}\\
&= &\norm{y_k -x_*}^2 - \norm{y_k -x_*}^2_{\E{Z}}. 
\end{eqnarray*} 
\end{proof}
\subsection{Proof of Theorem \ref{theo:conv}}
\label{subsec:proof_main_thm}
Let $r_k \eqdef \norm{v_{k}-x_*}^2_{\E{Z}^\dagger}$. It follows that
\begin{eqnarray}
r_{k+1}^2 & =& \norm{v_{k+1}-x_*}^2_{\E{Z}^\dagger} \nonumber\\
& =& \norm{\beta v_k +(1-\beta)y_k -x_*- \gamma Z_k(y_k -x_*)}^2_{\E{Z}^\dagger}\nonumber\\
&=&\underbrace{\norm{\beta v_k +(1-\beta)y_k -x_*}^2_{\E{Z}^\dagger}}_{I} +\gamma^2\underbrace{\norm{Z_k(y_k -x_*)}^2_{\E{Z}^\dagger}}_{II} \nonumber \\
& &-2\gamma \underbrace{\dotprod{\beta (v_k-x_*) +(1-\beta)(y_k -x_*), \E{Z}^\dagger Z_k(y_k -x_*)}}_{III} \nonumber \\
& =& I + \gamma^2 II -2\gamma III.\label{eq:proofstep1}
\end{eqnarray}

The first term can be upper bounded as follows
\begin{eqnarray}
I &=& \norm{\beta (v_k-x_*) +(1-\beta)(y_k -x_*)}^2_{\E{Z}^\dagger} \nonumber \\
& =&\beta^2 \norm{v_k-x_*}^2_{\E{Z}^\dagger}  + (1-\beta)^2 \norm{y_k-x_*}^2_{\E{Z}^\dagger} +2\beta(1-\beta) \dotprod{v_k-x_*,y_k -x_*}_{\E{Z}^\dagger}\nonumber \\
&\overset{\eqref{eq:paral1}}{=} & \beta\norm{v_k-x_*}^2_{\E{Z}^\dagger}  + (1-\beta) \norm{y_k-x_*}^2_{\E{Z}^\dagger} -\beta(1-\beta) \norm{v_k-y_k}^2_{\E{Z}^\dagger} \nonumber \\
& \leq & \beta r_k^2 + (1-\beta) \norm{y_k-x_*}^2_{\E{Z}^\dagger},\label{eq:Ibnded}
\end{eqnarray} 
where in the third equality we used a form of the parallelogram identity
\begin{equation}\label{eq:paral1}
2\dotprod{u,v} = \norm{u}^2 + \norm{v}^2 - \norm{u-v}^2,
\end{equation}
with $u = v_k-x_*$ and $v = y_k-x_*.$

Taking expectation with to $\cS_k$ in the third term in~\eqref{eq:proofstep1} gives
\begin{eqnarray}
\E{III \, | \, y_k, v_k, x_k} &= & \dotprod{\beta v_k +(1-\beta)y_k -x_*, \E{Z}^\dagger \E{Z}(y_k -x_*)} \nonumber \\
& =& \dotprod{\beta v_k +(1-\beta)y_k -x_*,y_k -x_*} \label{eq:explainsteplat} \\
& =& \dotprod{\beta \left[\frac{1}{\alpha}y_k - \frac{1-\alpha}{\alpha} x_k\right] +(1-\beta)y_k -x_*,y_k -x_*}\nonumber \\
&= & \dotprod{y_k - x_* +\beta \frac{1-\alpha}{\alpha}(y_k- x_k) ,y_k -x_*} \nonumber \\
& =& \norm{y_k - x_*}^2 + \beta \frac{1-\alpha}{\alpha}\dotprod{y_k- x_k,y_k-x_*}\nonumber \\
&=&\norm{y_k - x_*}^2  - \beta \frac{1-\alpha}{2\alpha}\left(\norm{x_k-x_*}^2-\norm{y_k- x_k}^2-\norm{y_k- x_*}^2 \right)\label{eq:IIIbnded}
\end{eqnarray}
where in the second equality~\eqref{eq:explainsteplat} we used that $y_k -x_* \in \Range{\A^*} \overset{\eqref{eq:exactness}}{ =} \Range{\E{Z}}$ together with a defining property of pseudoinverse operators $\E{Z}^\dagger\E{Z} w = w$ for all $w  \in \Range{\E{Z}}.$ In the last equality~\eqref{eq:IIIbnded} we used yet again the identity~\eqref{eq:paral1} with $u = y_k- x_k$ and $v = y_k-x_*.$

Plugging~\eqref{eq:Ibnded} and~\eqref{eq:IIIbnded} into~\eqref{eq:proofstep1}
and taking conditional expectation gives
\begin{eqnarray}
\E{r_{k+1}^2 \, | \, y_k, v_k, x_k} & =& I + \gamma^2 \E{II \, | \, y_k}  -2 \gamma \E{III \, | \, y_k,v_k,x_k} \nonumber\\
& \overset{\eqref{eq:Ibnded}+\eqref{eq:IIIbnded}+\eqref{eq:Enormnubnd}}{ =}& 
\beta r_k^2 + (1-\beta) \norm{y_k-x_*}^2_{\E{Z}^\dagger} + \gamma^2  \nu \norm{y_k-x_*}^2_{\E{Z}} \nonumber \\
& & +2\gamma \left(-\norm{y_k - x_*}^2  + \beta \frac{1-\alpha}{2\alpha}\left(\norm{x_k-x_*}^2-\norm{y_k- x_k}^2-\norm{y_k- x_*}^2 \right)\right)\nonumber\\
& \overset{\eqref{eq:ykxkident}+\eqref{eq:nubnds}}{\leq } & 
\beta r_k^2 + \frac{1-\beta}{\mu} \norm{y_k-x_*}^2 + \gamma^2  \nu\left( \norm{y_k-x_*}^2 -\E{\norm{x_{k+1}-x_*}^2 \, | \, y_k}\right) \nonumber \\
& & +2\gamma \left(-\norm{y_k - x_*}^2  + \beta \frac{1-\alpha}{2\alpha}\left(\norm{x_k-x_*}^2-\norm{y_k- x_*}^2\right)\right). \label{eq:continue_omega_proof}
\end{eqnarray}
Therefore we have that
\begin{eqnarray}
\E{r_{k+1}^2  +\gamma^2  \nu\norm{x_{k+1}-x_*}^2\, | \, y_k, v_k, x_k }
& \leq & 
\beta \left(r_k^2 + \underbrace{\gamma\frac{1-\alpha}{\alpha}}_{P_1}\norm{x_k-x_*}^2\right) \nonumber \\
& &+\left( \underbrace{\frac{1-\beta}{\mu} -2\gamma+\gamma^2  \nu - \beta \gamma\frac{1-\alpha}{\alpha}}_{P_2}\right)\norm{y_k-x_*}^2.    \nonumber 
\end{eqnarray}
To establish a recurrence, we need to choose the free parameters $\gamma, \alpha$ and  $\beta$ so that $P_1 =\gamma^2  \nu$ and $P_2 =0.$ Furthermore we should try to set $\beta$ as small as possible so as to have a fast rate of convergence. 
Choosing $\beta  =1 - \sqrt{\frac{\mu}{\nu}},$ $\gamma = \sqrt{\frac{1}{\mu \nu}},$ $\alpha = \frac{1}{1+\gamma\nu}$ gives $P_2 =0$, $\gamma^2 \nu = 1/\mu$ and 
 \begin{eqnarray}
\E{r_{k+1}^2  +\tfrac{1}{\mu}\norm{x_{k+1}-x_*}^2\, | \, y_k, v_k, x_k }
& \leq & \left(1 - \sqrt{\frac{\mu}{\nu}} \right) \left(r_k^2 + \tfrac{1}{\mu}\norm{x_k-x_*}^2\right).
\end{eqnarray}
Taking expectation and using the tower rules gives the result.\qed
%

\subsection{Changing norm \label{sec:change_norm}}
Given an invertible positive self-adjoint $B \in L(\cX),$ suppose we want to find the least norm solution of~\eqref{eq:primal} under the norm defined by
$\norm{x}_B  \eqdef \sqrt{\dotprod{Bx,x}}$ as the metric in $\cX$. That is,
we want to solve
\begin{equation} \label{eq:primalB_change_norm}
x^* \eqdef \arg\min_{x \in \cX} \tfrac{1}{2}\norm{x-x_0}_B^2, \quad \mbox{subject to} \quad \A x = b.
\end{equation}
By changing variables $x = B^{-1/2}z$ we have that the above is equivalent to solving
\begin{equation} \label{eq:primalz}
z^* \eqdef \arg\min_{z \in \cX} \tfrac{1}{2}\norm{z-z_0}^2, \quad \mbox{subject to} \quad \A B^{-1/2} z = b,
\end{equation}
with $x^* = B^{-1/2}z^*$, and $B^{1/2}$ is the unique symmetric square root of $B$ (see Lemma~\ref{lem:squareroot}). We can now apply Algorithm~\ref{alg:SketchJac} to solve~\eqref{eq:primalz} where $\A B^{-1/2}$ is  the system matrix.  Let $x_k$ and $v_k$ be the resulting iterates of applying Algorithm~\ref{alg:SketchJac}.
To make explicit this change in the system matrix we define the matrix
\[
Z_B \eqdef B^{-1/2}\A^*\cS_k^*(\cS_k\A B^{-1}\A^*\cS_k^*)^{\dagger}\cS_k \A B^{-1/2},
\]
and the constants
\begin{equation} \label{eq:muB}
\mu_B  \eqdef   \inf_{x \in \Range{B^{-1/2}\A^*}} \frac{\dotprod{\E{Z_B}x,x}}{\dotprod{x,x}}
\end{equation}
and
\begin{equation} \label{eq:nuB}
\nu_B  \eqdef   \sup_{x \in \Range{B^{-1/2}\A^*}} \frac{\dotprod{\E{Z_B\E{Z_B}^\dagger Z_B}x,x}}{\dotprod{\E{Z_B}x,x}}.
\end{equation}

 Theorem~\ref{theo:conv} then guarantees that
\[
\E{\norm{v_{k+1} -z_*}_{\E{Z_B }^\dagger}^2 +\frac{1}{\mu_B}\norm{x_{k+1} -z_*}^2 } \leq \left(1 - \sqrt{\frac{\mu_B}{\nu_B}} \right) \E{\norm{v_k -z_*}_{\E{Z_B }^\dagger}^2 + \frac{1}{\mu_B}\norm{x_k-z_*}^2}.
\]
 Reversing our change of variables $\bar{x}_k = B^{-1/2}x_k$ and $\bar{v}_k = B^{-1/2} v_k$ in the above displayed equation gives
  \begin{eqnarray}
&& \E{\norm{\bar{v}_{k+1} -x_*}_{B^{1/2}\E{Z_B}^\dagger B^{1/2}}^2 +\frac{1}{\mu_B}\norm{\bar{x}_{k+1} -x_*}_B^2 } \notag \\
&& \qquad \leq  \left(1 - \sqrt{\frac{\mu_B}{\nu_B}} \right) \E{\norm{\bar{v}_k -x_*}_{B^{1/2}\E{Z_B}^\dagger B^{1/2}}^2 + \frac{1}{\mu_B}\norm{\bar{x}_k-x_*}_B^2}.\label{eq:convzvbar2}
\end{eqnarray} 
Thus we recover the same exact from the main theorem in~\cite{MartinRichtarikAccell}, but in a much more general setting.

%

\section{Proof of Corollary~\ref{cor:sss}}

Clearly,  $Z=\frac{1}{A_{i,i}}A^{\frac12}SS^\top A^{\frac12}$, and hence
$ \E{Z}=\frac{A}{\Tr{A}}$ and $ \mu^P=\frac{\lambda_{\min}(A)}{\Tr{A}}.$
After  simple algebraic manipulations we get
\[
\E{\E{Z}^{-\frac12 }Z \E{Z}^{-1} Z\E{Z}^{-\frac12 }}=\Tr{A}^2\E{ \tfrac{1}{A_{i,i}^2}SS^\top SS^\top }  =\Tr{A}\diag{A_{i,i}^{-1}},
\]
and therefore
$
\nu^P= \lambda_{\max} \E{\E{Z}^{-\frac12 }Z \E{Z}^{-1} Z\E{Z}^{-\frac12 }} = \tfrac{\Tr{A}}{\min_i A_{i,i}}.
$

 \section{Adding  a stepsize $\omega$ \label{sec:omega}}
 
 In this section we enrich Algorithm~\ref{alg:SketchJac} with several {\em additional} parameters and study their effect on convergence of the resulting method.
 
 First, we consider an extension of Algorithm~\ref{alg:SketchJac} to a variant which uses a {\em stepsize parameter}  $0<\omega<2$. That is, instead of performing the update  
 \begin{equation} \label{eq:98gf98909} x_{k+1}=y_k - g_k,\end{equation}
 we perform the update
\begin{equation} \label{eq:089d8h09ff} x_{k+1}=y_k-\omega g_k.\end{equation}  Parameters $\alpha, \beta, \gamma$ are adjusted accordingly. The resulting method enjoys the rate
${\cal O}\left( \left(1-\sqrt{\frac{\nu}{\mu}\omega(2-\omega)}\right)^k \right),$
recovering the rate from Theorem~\ref{theo:conv} as a special case for $\omega=1$.  The formal statement follows.

\begin{theorem}
\label{thm:omega} 
Let $0 < \omega < 2$ be an arbitrary stepsize and define
\begin{align}
 \eta \eqdef 2\omega - \omega^2 \geq 0\,.
\end{align}
Consider a modification of Algorithm~\ref{alg:SketchJac} where instead of \eqref{eq:98gf98909} we perform the update \eqref{eq:089d8h09ff}. If we use the  parameters
\begin{align}
 \alpha &= \tfrac{1}{1 + \gamma \nu} &
 \beta  &= 1 -\sqrt{\tfrac{\mu \eta}{\nu}} & 
 \gamma &= \sqrt{\tfrac{\eta}{\mu \nu}},
\end{align}
then the iterates $\{v_k,x_k\}_{k \geq 0}$ of Algorithm~\ref{alg:SketchJac} satisfy
\begin{align*}
 &\E{\norm{v_{k} -x_*}_{\E{Z}^\dagger}^2 +\tfrac{1}{\mu}\norm{x_{k} -x_*}^2 } 
 \leq \left(1 - \sqrt{\tfrac{\mu \eta}{\nu}} \right)^k \E{\norm{v_0 -x_*}_{\E{Z}^\dagger}^2 + \tfrac{1}{\mu}\norm{x_0-x_*}^2}.
\end{align*}
\end{theorem}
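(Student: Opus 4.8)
The plan is to mimic the proof of Theorem~\ref{theo:conv} line by line, tracking how the stepsize $\omega$ enters, and to verify that the modified parameters cause the same cancellations. First I would redefine $g_k = Z_k(y_k-x_*)$ as before but note that now $x_{k+1} = y_k - \omega g_k$, so that $x_{k+1}-x_* = (I-\omega Z_k)(y_k-x_*)$; the analogue of Lemma~\ref{lem:invariant} still holds since $g_k \in \Range{\A^*}$ and the coefficients $\alpha,\beta,\gamma$ are real scalars. The two auxiliary estimates also need $\omega$-adjusted versions: the bound \eqref{eq:Enormnubnd} is unchanged (it concerns $g_k$ itself, not the update), while the three-point identity \eqref{eq:ykxkident} becomes
\[
\E{\norm{x_{k+1}-x_*}^2 \mid y_k} = \norm{y_k-x_*}^2 - (2\omega-\omega^2)\norm{y_k-x_*}^2_{\E{Z}} = \norm{y_k-x_*}^2 - \eta\norm{y_k-x_*}^2_{\E{Z}},
\]
using that $\E{(I-\omega Z_k)^*(I-\omega Z_k)} = I - (2\omega-\omega^2)\E{Z}$ because $Z_k$ is a self-adjoint projection (Lemma~\ref{lem:Z}), so $Z_k^2 = Z_k$.

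Next I would repeat the expansion of $r_{k+1}^2 = \norm{v_{k+1}-x_*}^2_{\E{Z}^\dagger}$ exactly as in \eqref{eq:proofstep1}, now with $v_{k+1}-x_* = \beta(v_k-x_*)+(1-\beta)(y_k-x_*) - \omega\gamma g_k$, so the term $III$ picks up a factor $\omega$ and $II$ picks up $\omega^2$. The bound on term $I$ via the parallelogram identity \eqref{eq:paral1} is untouched, giving $I \le \beta r_k^2 + (1-\beta)\norm{y_k-x_*}^2_{\E{Z}^\dagger}$. Taking conditional expectation of term $III$ reproduces \eqref{eq:IIIbnded} verbatim (the $\omega$ sits outside). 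Assembling everything as in \eqref{eq:continue_omega_proof} — which, note, is already written in the excerpt with the placeholder structure — I would arrive at
\[
\E{r_{k+1}^2 + \omega^2\gamma^2\nu\norm{x_{k+1}-x_*}^2 \mid y_k,v_k,x_k} \le \beta\Bigl(r_k^2 + P_1\norm{x_k-x_*}^2\Bigr) + P_2\norm{y_k-x_*}^2,
\]
where, after collecting terms, $P_1 = \omega\gamma\frac{1-\alpha}{\alpha}$ and $P_2 = \frac{1-\beta}{\mu} - 2\omega\gamma + \omega^2\gamma^2\nu - \beta\omega\gamma\frac{1-\alpha}{\alpha}$; note the coefficient $\frac{1-\beta}{\mu}$ comes from bounding $(1-\beta)\norm{y_k-x_*}^2_{\E{Z}^\dagger} \le \frac{1-\beta}{\mu}\norm{y_k-x_*}^2$ via \eqref{eq:nubnds}, and $\omega^2\gamma^2\nu\,\eta^{-1}\cdot\eta = \omega^2\gamma^2\nu$ uses the new three-point identity with the $\eta$ factor appearing where \eqref{eq:ykxkident} was used.

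The crux is then the algebraic verification that the stated choices $\gamma = \sqrt{\eta/(\mu\nu)}$, $\beta = 1-\sqrt{\mu\eta/\nu}$, $\alpha = 1/(1+\gamma\nu)$ simultaneously force $P_2 = 0$ and $P_1 = \omega^2\gamma^2\nu$, so that setting $G_k \eqdef r_k^2 + \omega^2\gamma^2\nu\,\norm{x_k-x_*}^2$ yields $\E{G_{k+1}\mid\cdot} \le \beta G_k$, and moreover that $\omega^2\gamma^2\nu = \tfrac{1}{\mu}$ so the Lyapunov function matches the one in the statement. I expect $\eta = 2\omega-\omega^2$ to be exactly what makes $\omega^2\gamma^2\nu = \omega^2\cdot\frac{\eta}{\mu\nu}\cdot\nu = \frac{\omega^2\eta}{\mu}$... which is \emph{not} $1/\mu$ unless I instead track the Lyapunov coefficient as $\frac{1}{\mu}$ absorbing a rescaling of $\gamma$; I would double-check the intended normalization, since the cleanest route is to note $2\omega\gamma - \omega^2\gamma^2\nu = \gamma\omega(2-\omega) \cdot \text{(stuff)}$ and that $P_2=0$ is a quadratic in $\gamma$ whose relevant root is $\gamma = \frac{1-\beta}{\mu(2\omega-\omega^2)}\cdot$(correction), reconciling with $\sqrt{\eta/(\mu\nu)}$ precisely when $\beta = 1-\sqrt{\mu\eta/\nu}$. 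This bookkeeping — matching the quadratic's root to the prescribed $\gamma$ and confirming the $P_1$ equation holds with the same $\alpha$ — is the main obstacle; everything else is a transcription of the $\omega=1$ argument with $\eta$ inserted at the single place where the projection identity is invoked. Finally, taking full expectation and iterating the one-step contraction $k$ times, together with $G_0 = r_0^2 + \tfrac{1}{\mu}\norm{x_0-x_*}^2$, yields the claimed bound with rate $\bigl(1-\sqrt{\mu\eta/\nu}\bigr)^k = \bigl(1-\sqrt{\tfrac{\mu}{\nu}\omega(2-\omega)}\bigr)^k$.
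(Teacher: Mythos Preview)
Your overall strategy is right, but there is a concrete misstep that explains why your algebra refuses to close. The modification in the theorem touches \emph{only} the $x$--update: line~\ref{ln:xupdate} becomes $x_{k+1}=y_k-\omega g_k$, while line~\ref{ln:vupdate} is left as $v_{k+1}=\beta v_k+(1-\beta)y_k-\gamma g_k$. You instead write $v_{k+1}-x_*=\beta(v_k-x_*)+(1-\beta)(y_k-x_*)-\omega\gamma g_k$ and conclude that term~$II$ carries $\omega^2$ and term~$III$ carries $\omega$. That is not what the algorithm does: $g_k$ is defined on line~\ref{ln:stochgrad} independently of the $x$--update, so the expansion $r_{k+1}^2=I+\gamma^2 II-2\gamma III$ in~\eqref{eq:proofstep1} is \emph{verbatim} the same as in the $\omega=1$ proof, with no $\omega$ factors anywhere.

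The single place where $\omega$ enters is exactly the one you identified, the three--point identity: with $x_{k+1}-x_*=(I-\omega Z_k)(y_k-x_*)$ one gets
\[
\eta\,\norm{y_k-x_*}_{\E{Z}}^2 \;=\; \norm{y_k-x_*}^2-\E{\norm{x_{k+1}-x_*}^2\mid y_k},\qquad \eta=2\omega-\omega^2.
\]
Using this in place of~\eqref{eq:ykxkident} turns the coefficient $\gamma^2\nu$ multiplying $\bigl(\norm{y_k-x_*}^2-\E{\norm{x_{k+1}-x_*}^2}\bigr)$ into $\gamma^2\nu/\eta$. With $\gamma=\sqrt{\eta/(\mu\nu)}$ this is precisely $1/\mu$, which is the Lyapunov coefficient in the statement; likewise $(1-\alpha)/\alpha=\gamma\nu$ and $(1-\beta)/\mu=\gamma$ drop out of the prescribed parameters, and the $P_2'$ term is handled. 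The moment you remove the spurious $\omega$'s from $II$ and $III$, the ``bookkeeping obstacle'' you flagged disappears and the argument goes through exactly as in the proof of Theorem~\ref{theo:conv}.
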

\begin{proof} See Appendix~\ref{sec:89g8f9009jJJJ}.
\end{proof}
 
 \section{Allowing for  different $\alpha$}\label{sec:alpha}
 
 In this section we study how the choice of  the key parameter $\alpha$ affects the convergence  rate.

This parameter determines how much the sequence $y_k = \alpha v_k + (1-\alpha) x_k$ resembles the sequence given by $x_k$ or by $v_k$. For instance, when $\alpha = 0$, $y_k \equiv x_k$, i.e., we recover the steps of the non-accelerated method, and thus one would expect to obtain the same convergence rate as the non-accelerated method. Similar considerations hold in the other extreme, when $\alpha \to 1$. We investigate this hypothesis, and especially discuss how $\beta$ and $\gamma$ must be chosen as a function of $\alpha$ to ensure convergence. 
 
 The following statement is a generalization of Theorem~\ref{theo:conv}. For simplicity, we assume that the optional stepsize that was introduced in Theorem~\ref{thm:omega} is set to one again, $\omega \equiv 1$.


\begin{theorem}
\label{thm:family}
Let $0 < \alpha < 1$ be fixed. Then the iterates $\{v_k,x_k\}_{k \geq 0}$ of Algorithm~\ref{alg:SketchJac} 
with parameters
    \begin{align}
    \beta(s)  &= \frac{1+s - s \sqrt{\frac{\nu + 4\mu s - 2\nu s + \nu s^2}{\nu s^2}}}{2s}\,, &
    \gamma(s) &= \frac{1}{(1-s \beta(s))\nu}\,. \label{eq:def_bg}
    \end{align}
where $\tau \eqdef \frac{1-\alpha}{\alpha}$ and $s \eqdef \frac{\tau}{\beta \gamma}$, 
satisfy
\begin{align*}
 &\E{\norm{v_{k} -x_*}_{\E{Z}^\dagger}^2 + \gamma \tau \norm{x_{k} -x_*}^2 } 
 \leq \rho^k \E{\norm{v_0 -x_*}_{\E{Z}^\dagger}^2 + \gamma \tau\norm{x_0-x_*}^2}.
\end{align*}
(or put differently):
\begin{align*}
 &\E{\norm{v_{k} -x_*}_{\E{Z}^\dagger}^2 + (1-\alpha)\gamma \norm{x_{k} -x_*}^2 } 
 \leq \rho^k \E{\norm{v_0 -x_*}_{\E{Z}^\dagger}^2 + (1-\alpha)\gamma \norm{x_0-x_*}^2}.
\end{align*}
where $\rho = \max\{\beta(s),s\beta(s)\}\leq 1$.
\end{theorem}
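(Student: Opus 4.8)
The plan is to follow the same template as the proof of Theorem~\ref{theo:conv} given in Section~\ref{subsec:proof_main_thm}, but carry the parameters $\beta,\gamma,\alpha$ (equivalently $\tau,s$) symbolically rather than plugging in the specific values. First I would note that the invariance Lemma~\ref{lem:invariant} and the two technical lemmas \eqref{eq:Enormnubnd} and \eqref{eq:ykxkident} do not depend on the particular choice of $\alpha,\beta,\gamma$, so they still apply verbatim: all iterates stay in $\Range{\A^*}$, the stochastic gradient term is controlled by $\nu$, and the three-point identity holds.

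Next I would set $r_k^2 \eqdef \norm{v_k - x_*}_{\E{Z}^\dagger}^2$ and expand $r_{k+1}^2$ exactly as in \eqref{eq:proofstep1}, splitting into the three terms $I, II, III$. Term $I$ is bounded using the parallelogram identity \eqref{eq:paral1} giving $I \le \beta r_k^2 + (1-\beta)\norm{y_k-x_*}_{\E{Z}^\dagger}^2$; term $II$ in expectation is at most $\nu\norm{y_k-x_*}_{\E{Z}}^2$ by \eqref{eq:Enormnubnd}; and for term $III$ I would take conditional expectation, use $\E{Z}^\dagger\E{Z}w = w$ on $\Range{\A^*}$, substitute $\beta v_k = \beta(\tfrac1\alpha y_k - \tfrac{1-\alpha}{\alpha}x_k)$, and apply \eqref{eq:paral1} again with $u = y_k - x_k$, $v = y_k - x_*$. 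This reproduces \eqref{eq:IIIbnded} with the general $\alpha$. Assembling, and using \eqref{eq:ykxkident} to trade $\norm{y_k-x_*}_{\E{Z}}^2$ for $\norm{y_k-x_*}^2 - \E{\norm{x_{k+1}-x_*}^2}$, plus $\norm{y_k-x_*}_{\E{Z}^\dagger}^2 \le \tfrac1\mu\norm{y_k-x_*}^2$, I would arrive at
\[
\E{r_{k+1}^2 + \gamma^2\nu\norm{x_{k+1}-x_*}^2 \,\mid\, \cdot} \le \beta\Bigl(r_k^2 + P_1\norm{x_k-x_*}^2\Bigr) + P_2\norm{y_k-x_*}^2,
\]
where $P_1 = \gamma\tfrac{1-\alpha}{\alpha} = \gamma\tau$ and $P_2 = \tfrac{1-\beta}{\mu} - 2\gamma + \gamma^2\nu - \beta\gamma\tau$.

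The heart of the argument — and the main obstacle — is the algebra of choosing $\beta(s),\gamma(s)$ so that the recursion closes with the claimed rate and the claimed Lyapunov function. I would want $P_2 \le 0$ (or $=0$) and the coefficient $\gamma^2\nu$ of $\norm{x_{k+1}-x_*}^2$ to match $\beta$ times the coefficient $P_1 = \gamma\tau$ of $\norm{x_k-x_*}^2$ \emph{up to the factor that produces the rate}; concretely the two-sided contraction claimed uses $\rho = \max\{\beta, s\beta\}$ where $s = \tau/(\beta\gamma)$, so I would check that $\gamma^2\nu = s\beta \cdot \gamma\tau / s \cdot (\text{something})$ — more precisely that with $\gamma\tau = s\beta\gamma^2$ one gets $r_{k+1}^2 + \gamma\tau\,\mathbb{E}\norm{x_{k+1}-x_*}^2 \le \rho(r_k^2 + \gamma\tau\norm{x_k-x_*}^2)$. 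The formula $\gamma(s) = \tfrac{1}{(1-s\beta(s))\nu}$ is exactly the reorganization of $\alpha = \tfrac{1}{1+\gamma\nu}$ combined with $s = \tau/(\beta\gamma)$, and $\beta(s)$ is the root of the quadratic $s\beta^2 - (1+s)\beta + \text{(lower order in }\mu/\nu) = 0$ coming from $P_2 = 0$; I would verify by direct substitution that this root lies in $[0,1]$ and that both $\beta(s)$ and $s\beta(s)$ are $\le 1$, which gives $\rho \le 1$. Finally, taking total expectation and iterating the one-step recursion yields the stated bound; the two displayed forms are identical since $\gamma\tau = (1-\alpha)\gamma/\alpha \cdot \alpha = \ldots$ — wait, rather since $\tau = (1-\alpha)/\alpha$ one has $\gamma\tau$ versus $(1-\alpha)\gamma$ differ only by the harmless factor $\alpha$ absorbed into the constant, so both statements are equivalent reformulations.
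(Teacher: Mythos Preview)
Your plan is essentially the paper's own proof: it too simply returns to the master inequality~\eqref{eq:continue_omega_proof} with $P_1=\gamma\tau$ and $P_2=\tfrac{1-\beta}{\mu}-2\gamma+\gamma^2\nu-\beta\gamma\tau$, requires $P_2\le 0$, reads off the rate $\rho=\max\{\beta,\,\beta\tau/(\gamma\nu)\}=\max\{\beta,s\beta\}$, and then checks $\rho\le 1$.

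Two places where your algebra is muddled deserve correction, because they would trip you up when you actually carry out the computation. First, the formula $\gamma(s)=\tfrac{1}{(1-s\beta(s))\nu}$ is \emph{not} a rewriting of the old relation $\alpha=\tfrac{1}{1+\gamma\nu}$; here $\alpha$ is a free parameter, so that relation no longer holds. What the paper does instead is treat $s$ as an independent parameter, solve $P_2=0$ \emph{linearly} for $\beta$ to get $\beta^\star(s,\gamma)=\tfrac{1+\mu\gamma^2\nu-2\mu\gamma}{1+s\mu\gamma^2\nu}$, and then minimize this over $\gamma$; the minimizing $\gamma$ is exactly $\gamma(s)=\tfrac{1}{(1-s\beta(s))\nu}$ and the minimum value is the stated $\beta(s)$. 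So $\beta(s)$ is not directly ``the root of a quadratic from $P_2=0$'' but the result of a two-step optimization. Second, your closing remark about the two displayed forms being equivalent ``up to a harmless factor $\alpha$'' is not an argument; $\gamma\tau=(1-\alpha)\gamma/\alpha$ and $(1-\alpha)\gamma$ genuinely differ by $1/\alpha$, so the two displays in the theorem statement are not literally the same Lyapunov function (the paper's statement itself is loose here). None of this affects the correctness of the overall approach.
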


We can now exemplify a few special parameter settings.

\begin{example}
For $\alpha = 1$, i.e., if $s \to 0$, we get the rate $\rho = 1-\frac{\mu}{\nu}$ with $\beta = 1-\frac{\mu}{\nu}$, $\gamma = \frac{1}{\nu}$.
\end{example}

\begin{example}
For $\alpha \to 0$, i.e., in the limit $s \to \infty$, we get the rate  $\rho = 1-\frac{\mu}{\nu}$. 
\end{example}

\begin{example}
The rate $\rho$ is minimized for $s=1$, i.e.,  $\beta=1- \sqrt{\frac{\nu}{\mu}}$ and $\gamma = \sqrt{\frac{1}{\mu \nu}}$; recovering Theorem~\ref{theo:conv}.
\end{example}

The best case, in terms of convergence rate for both non-unit stepsize and a variable parameter choice happened to be the default parameter setup. The non-optimal parameter choice was studied in order to have theoretical guarantees for a wider class of parameters, as in practice one might be forced to rely on sub-optimal / inexact parameter choices.

\section{Proof of Theorem~\ref{thm:omega}} \label{sec:89g8f9009jJJJ}

The proof follows by slight modifications of the proof of Theorem~\ref{theo:conv}.

First we adapt Lemma~\ref{lemma:threepoint}. As we have $x_{k+1} - x_* = (1-\omega Z_k)(y_k - x_*)$ the following statement follows by the same arguments as in the proof of Lemma~\ref{lemma:threepoint}.
\begin{lemma}[Lemma~\ref{lemma:threepoint}']
\label{lemma:mod_lemma}
\begin{equation}
 \eta \norm{y_k-x_*}_{\E{Z}}^2 = \norm{y_k-x_*}^2 -\E{\norm{x_{k+1}-x_*}^2 \, | \, y_k}
 \end{equation}
\end{lemma}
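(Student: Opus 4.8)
\textbf{Proof plan for Lemma~\ref{lemma:mod_lemma}.}
The plan is to mimic the proof of Lemma~\ref{lemma:threepoint} verbatim, substituting the modified update relation. First I would recall that with the stepsize $\omega$, line~\ref{ln:xupdate} of Algorithm~\ref{alg:SketchJac} becomes $x_{k+1} = y_k - \omega g_k$ with $g_k = Z_k(y_k-x_*)$, so that
\begin{equation*}
x_{k+1} - x_* = (I - \omega Z_k)(y_k - x_*).
\end{equation*}
Taking the squared norm and conditional expectation with respect to $\cS_k$, and using that $Z_k$ is a self-adjoint projection (Lemma~\ref{lem:Z}), so $Z_k^2 = Z_k$, I would expand
\begin{equation*}
\E{\norm{x_{k+1}-x_*}^2 \mid y_k} = \dotprod{\E{(I-\omega Z_k)^2}(y_k-x_*),\,y_k-x_*} = \dotprod{(I - (2\omega-\omega^2)\E{Z})(y_k-x_*),\,y_k-x_*},
\end{equation*}
since $(I-\omega Z_k)^2 = I - 2\omega Z_k + \omega^2 Z_k^2 = I - (2\omega-\omega^2)Z_k$. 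Recognizing $\eta = 2\omega-\omega^2$, this equals $\norm{y_k-x_*}^2 - \eta\norm{y_k-x_*}_{\E{Z}}^2$, which rearranges to the claimed identity.

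The only genuine content beyond the $\omega=1$ case is the elementary algebraic fact that $(I-\omega Z_k)^2 = I - \eta Z_k$ for a projection $Z_k$; everything else is linearity of expectation and the definition of $\norm{\cdot}_{\E{Z}}$. I do not anticipate any real obstacle here — the statement is essentially a bookkeeping lemma isolating the dependence on $\omega$ so that the main recurrence in the proof of Theorem~\ref{thm:omega} can be driven through with $\eta$ in place of $1$. The one point to be careful about is that $Z_k$ being a projection (needed for $Z_k^2 = Z_k$) is exactly the content of Lemma~\ref{lem:Z}, and that $y_k - x_* \in \Range{\A^*}$ (by Lemma~\ref{lem:invariant}) is what legitimizes passing between $\norm{\cdot}$ and $\norm{\cdot}_{\E{Z}}$ on the relevant subspace in the subsequent steps of the main proof, though for this particular lemma the identity holds on all of $\cX$ anyway.
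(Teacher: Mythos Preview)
Your proposal is correct and follows essentially the same approach as the paper's own proof: expand $\E{\norm{(I-\omega Z_k)(y_k-x_*)}^2\mid y_k}$ and use that $Z_k$ is a projection to collapse $(I-\omega Z_k)^2$ to $I-\eta Z_k$. If anything, your version is slightly more explicit in citing Lemma~\ref{lem:Z} for $Z_k^2=Z_k$, which the paper uses silently.
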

\begin{proof}
\begin{eqnarray*}
\E{\norm{x_{k+1}-x_*}^2 \, | \, y_k} &= & \E{\norm{(I-Z_k)( y_k -x_*)}^2 \, | \, y_k} \\
&= & \E{\dotprod{(I-\omega Z_k)( y_k -x_*),(I-\omega Z_k)y_k -x_*}}\\
&= &\norm{y_k -x_*}^2 - \eta \norm{y_k -x_*}^2_{\E{Z}}. 
\end{eqnarray*} 
\end{proof}

We now follow the same steps as in proof of Theorem~\ref{theo:conv} in Section~\ref{subsec:proof_main_thm}.
We observe, that the first time Lemma~\ref{lemma:threepoint} is applied is in equation~\eqref{eq:continue_omega_proof}. Using Lemma~\ref{lemma:mod_lemma} instead, gives
\begin{eqnarray}
\E{r_{k+1}^2 \, | \, y_k, v_k, x_k} & \leq &  
\beta r_k^2 + \frac{1-\beta}{\mu} \norm{y_k-x_*}^2 + \frac{\gamma^2  \nu}{\eta} \left( \norm{y_k-x_*}^2 -\E{\norm{x_{k+1}-x_*}^2 \, | \, y_k}\right) \nonumber \\
& & +2\gamma \left(-\norm{y_k - x_*}^2  + \beta \frac{1-\alpha}{2\alpha}\left(\norm{x_k-x_*}^2-\norm{y_k- x_*}^2\right)\right).
\end{eqnarray}
Therefore we have that
\begin{eqnarray}
\E{r_{k+1}^2  +\gamma^2  \nu\norm{x_{k+1}-x_*}^2\, | \, y_k, v_k, x_k }
& \leq & 
\beta \left(r_k^2 + \underbrace{\gamma\frac{1-\alpha}{\alpha}}_{P_1'}\norm{x_k-x_*}^2\right) \nonumber \\
& &+\left( \underbrace{\frac{1-\beta}{\mu} -2\gamma+ \frac{\gamma^2  \nu}{\eta} - \beta \gamma\frac{1-\alpha}{\alpha}}_{P_2'}\right)\norm{y_k-x_*}^2.    \nonumber 
\end{eqnarray}
Noting that $\frac{1-\alpha}{\alpha}= \gamma \nu$ and $\frac{\gamma^2 \nu}{\eta} = \frac{\gamma(1-\alpha)}{\eta \alpha} = \frac{1}{\mu}$, we observe $P_2' = 0$ and deduce the statement of Theorem~\ref{thm:omega}.

\section{Proof of Theorem~\ref{thm:family}} \label{sec:j98**JJ*(}

It suffices to study equation~\eqref{eq:continue_omega_proof}. We observe that for convergence the big bracket, $P_2$, should be negative,
\begin{align}
(1-\beta) \frac{1}{\mu} + \gamma^2 \nu - 2\gamma - \gamma \beta \frac{1-\alpha}{\alpha} \leq 0 \label{eq:cond1}
\end{align} 
The convergence rate is then
\begin{align}
\rho \eqdef \max \left\{\beta, \frac{(1-\alpha)\beta}{\alpha \gamma \nu}\right\}\,. \label{eq:rate}
\end{align}
or in the notation of Theorem~\ref{thm:family}, $\rho = \max\{\beta,s\beta\}$.

This means, that in order to obtain the best convergence rate, we should therefore choose parameters $\beta$ and $\gamma$ such that $\beta$ is as small as possible. This observation is true regardless of the value of $s$ (which itself depends on $\gamma$).

With the notation $\tau=s \gamma \beta$, we reformulate~\eqref{eq:cond1} to obtain
\begin{align}
\frac{1}{\mu} + \gamma^2 \nu - 2 \gamma \leq \beta \left( \frac{1}{\mu}  + s \gamma^2 \nu \right)
\end{align}
Thus we see, that $\beta$ cannot be chosen smaller than
\begin{align}
\beta^\star(s,\gamma) = \frac{1 + \mu \gamma^2 \nu - 2 \mu \gamma}{1 + s \mu \gamma^2 \nu}
\end{align}
Minimizing this expression in $\gamma$ gives
\begin{align}
\beta^\star(s) = \frac{1+s - s \sqrt{\frac{\nu + 4\mu s - 2\nu s + \nu s^2}{\nu s^2}}}{2s} \label{eq:beta_star}
\end{align}
with $\gamma^\star(s) = \frac{1}{(1-s \beta^\star(s))\nu}$.

We further observe that this parameter setting indeed guarantees convergence, i.e. $\rho \leq 1$. From~\eqref{eq:beta_star} we observe ($\nu > 0$, $s \geq 0$, $\mu \geq 0$):
\begin{align}
\beta^\star(s) \leq  \frac{1+s - \sqrt{\frac{\nu - 2\nu s + \nu s^2}{\nu}}}{2s} = \frac{1+s - (s-1)}{2s} = \frac{1}{s}
\end{align}
Hence $ s \beta^\star(s)  \leq 1$. On the other hand, $(1-s) \leq \sqrt{(1-s)^2 + \frac{4\mu s}{\nu}}$ and hence $(1+s) -\sqrt{(1-s)^2 + \frac{4\mu s}{\nu}} \leq 2s$, which shows $\beta^\star (s) \leq 1$.

\section{Proofs and Further Comments on Section~\ref{sec:asqn_pap}}

\subsection{Proof of Theorem~\ref{theo:qn}}

We perform a change of coordinates since it is easier to work with the standard Frobenius norm as opposed to the weighted Frobenius norm. Let $\hat{X} = A^{1/2}X A^{1/2}$  
so that~\eqref{eq:primalqN} and~\eqref{eq:qunac} become
\begin{equation} \label{eq:primalqNH}
\hat{X}_* \eqdef I=   \arg\min \norm{\hat{X}}_{F}^2 \quad \mbox{subject to} \quad  \hat{X} = I, \quad \hat{X} = \hat{X}^\top,
\end{equation}
and
\begin{equation}\label{eq:qunacI}
\hat{X}_{k+1}  =P+ \left(I-P\right) \hat{X}_{k}\left(I -P \right),
\end{equation}
respectively, where $P = A^{1/2}S(S^\top AS)^{-1}S^\top A^{1/2}.$  The linear operator that encodes the constaint in~\eqref{eq:primalqNX} is given by
$\hat{\A}(X) = \left( X, \, X - X^\top\right)$ the adjoint of which is given by
$\hat{\A}^*(Y_1,Y_2) = Y_1 + Y_2-Y_2^\top.$ Since
$\hat{\A}^*$ is clearly surjective, it follows that $\Range{\hat{\A}^*} = \R^{n \times n}$. 

Subtracting the identity matrix from both sides of~\eqref{eq:qunacI} and using that $P$ is a projection matrix, we have that
\begin{equation}\label{eq:qunacIres}
\hat{X}_{k+1} -I =\left(I-P\right) (\hat{X}_{k}-I)\left(I -P \right).
\end{equation}
To determine the $Z$ operator~\eqref{eq:Z}, from~\eqref{eq:IZprojres} and~\eqref{eq:qunacIres} we know that
\[ \left(I-P\right) (\hat{X}_{k}-I)\left(I -P \right)= (I - Z) (\hat{X}_k -I).\]
Thus for every matrix $X \in \R^{n \times n}$ we have that
\begin{equation} \label{eq:Zqn}
 Z(X) = X - \left(I-P\right) X\left(I -P \right) = XP + PX(I-P).\end{equation}
Denote column-wise vectorization of $X$ as $x$: $x\eqdef \Vect{X}$. To calculate a useful lower bound on~$\mu$, note that
\begin{eqnarray} \label{eq:XZXlower}
 \Tr{X^\top Z(X)} &= &\Tr{X^\top XP} + \Tr{X^\top PX(I-P)}\nonumber 
 \\
 &= &
 x^\top \Vect{XP}+x^\top \Vect{ PX(I-P)}\nonumber 
 \\
  &= &
 x^\top (P\otimes I)x +x^\top((I-P)\otimes P)x \nonumber 
 \\
& \overset{\eqref{eq:bigz}}{= } &
   x^\top \bigZ x ,
   \label{eq:XZXeq}
\end{eqnarray} 
where we used that $\Tr{A^\top B}=\Vect{A}^\top \Vect{B}$ and $\Vect{ AXB}=(B^\top \otimes A) \Vect{x}$ holds for any $A,B,X$.

Consequently, $\mu$ is equal to
\[
\mu \overset{\eqref{eq:mu}}{=}  \inf_{X \in \R^{n\times n}} \frac{\dotprod{\E{Z}X,X}_F}{\norm{X}_F^2}
 \overset{\eqref{eq:XZXeq}}{= }  \inf_{x \in \R^{n^2\times n^2}} \frac{   x^\top \E{\bigZ} x  }{x^\top x} 
 = \lambda_{\min}(\E{\bigZ}).
\]
Notice that we have $2\lambda_{\min}(\E{P}) \geq\lambda_{\min}(\E{\bigZ})\geq \lambda_{\min}(\E{P})$ since $(P\otimes I)+(I\otimes P)\geq \bigZ\geq (P\otimes I)$.

In light of Algorithm~\ref{alg:SketchJac}, the iterates of the accelerated version of~\eqref{eq:qunacI} are given by
\begin{eqnarray}
 \hat{Y}_k &=& \alpha \hat{V}_k + (1-\alpha) \hat{X}_k \nonumber\\
 \hat{G}_k & =& Z_k(\hat{Y}_k - I ) \nonumber\\
\hat{X}_{k+1} &=& \hat{Y}_k - \hat{G}_k \nonumber\\
\hat{V}_{k+1} &=& \beta \hat{V}_k +(1-\beta)\hat{Y}_k - \gamma \hat{G}_k \label{eq:qunacacc}\
\end{eqnarray}
where $\hat{Y}_k,\hat{V}_{k},\hat{G} \in \R^{n\times n}.$ From Theorem~\ref{theo:conv} we have that $\hat{V}_k$ and $\hat{X}_k$ converge to the identity matrix according to
\begin{equation} \label{eq:aas89hah}
 \E{\norm{\hat{V}_{k+1} -I}_{\E{Z}^\dagger}^2 +\frac{1}{\mu}\norm{\hat{X}_{k+1} -I}^2_{F} } \leq \left(1 - \sqrt{\frac{\mu}{\nu}} \right) \E{\norm{\hat{V}_k -I}_{\E{Z}^\dagger}^2 + \frac{1}{\mu}\norm{\hat{X}_k-I}^2_{F}},
 \end{equation}
where $\norm{X}_{\E{Z}^\dagger}^2 = \dotprod{\E{Z}^\dagger X, X}_F.$
Changing coordinates back to $ \hat{X}_k = A^{1/2}X_k A^{1/2}$ and defining
$Y_k \eqdef A^{-1/2} \hat{Y}_k A^{-1/2}$, $V_k \eqdef A^{-1/2} \hat{V}_k A^{-1/2}$ and $ G_k \eqdef A^{-1/2} \hat{G}_k A^{-1/2}$, we have that~\eqref{eq:aas89hah} gives~\eqref{eq:qnaccconv}. Furthermore, using the same coordinate change applied to the iterates~\eqref{eq:qunacacc} gives Algorithm~\ref{alg:qn}.

\subsection{Matrix inversion as linear system~\label{sec:alternate}}
Denote $x=\Vect{\mX}$, i.e. $x$ is $n^2$ dimensional vector such that $X_{(n(i-1)+1):ni}=\mX_{:,i}$. Similarly, denote $e=\Vect{\mI}$. System \eqref{eq:system} can be thus rewritten as 
\begin{equation}
 (I\otimes A)x=e.
\label{eq:system_vector}
\end{equation}

Notice that all linear sketches of the original system $\mA \mX=\mI$ can be written as 
\begin{equation}\label{eq:sketch_general}
\gS^\top  (I\otimes A)x=\gS^\top  e
\end{equation}
for a suitable $n^2\times n^2$ matrix $\gS$, therefore the setting is fairly general.

\subsubsection{Alternative proof of Theorem \ref{theo:qn}}
Let us now, for a purpose of this proof, consider sketch matrix $\gS$ to capture only sketching the original matrix system $AX=I$ by left multiplying by $S$, i.e. $\gS=(I\otimes S)$, as those are the considered sketches in the setting of Section~\ref{sec:asqn_pap}. 

As we have 
\[
\Tr{BX^\top BX}=\Vect{BX B}^\top x=x^\top (B\otimes B) x, 
\]
weighted Frobenius norm of matrices is equivalent to a special weighted euclidean norm of vectors. Define also $C$ to be a matrix such that $Cx=0$ if and only if $X=X^\top$. Therefore, \eqref{eq:primalqNX} is equivalent to 

\begin{equation} \label{eq:primalqNXvec}
x_{k+1}=   \arg\min \norm{x - x_k}_{A\otimes A}^2 \quad \mbox{subject to} \quad ( I\otimes S^\top) (I\otimes A) x = ( I\otimes S^\top) e, \quad Cx = 0,
\end{equation}
which is a sketch-and-project method applied on the linear system, with update as per \eqref{eq:qunac}:
\[
x^{k+1}=x^k-(H\otimes I)((I\otimes A) x -e ) -(I\otimes H)((I\otimes A) x -e )  + (HA\otimes H)  ((I\otimes A) x -e )
\]
for 
$H\eqdef S\left(S^\top AS\right)^{-1}S^\top.$
Using substitution $\hat{x}=(A^{\frac12}\otimes A^\frac12)x; \hat{S}=A^{\frac12}S$ and comparing to \eqref{eq:IZprojres}, we get 
\[
Z=I\otimes I-(I-P)\otimes(I-P)
\]
for $P$ as defined inside the statement of Theorem~\ref{theo:qn}. 
Therefore, we have all necessary information to apply the results from \cite{MartinRichtarikAccell}, recovering Theorem \ref{theo:qn}.

\section{Linear Operators in Euclidean Spaces}
Here we provide some technical lemmas and results for linear operators in Euclidean space, that we used in the main body of the paper. Most of these results can be found in standard textbooks of analysis, such as~\cite{pedersen1996}. We give them here for completion.
  
Let $\cX, \cY, \cZ$ be Euclidean spaces, equipped with inner products. Formally, we should use a notation that distinguishes the inner product in each space. But instead we use $\dotprod{\cdot,\,\cdot}$ to denote the inner product on all spaces, as it will be easy to determine from which space the elements are in. That is, for $x_1,x_2 \in \cX$, we denote by $\dotprod{x_1,x_2}$ the inner product between $x_1$ and $x_2$ in $\cX.$

Let
\[\norm{T} \eqdef \sup_{\norm{x}\leq 1} \norm{T x},\]
denote the operator norm of $T$.  Let $0 \in L(\cX,\cY)$ denote the zero operator and $I \in L(\cX,\cY)$ the identity map.

\paragraph{The adjoint.}
Let $T^* \in L( \cY , \cX)$ denote the unique operator that satisfies
\[\dotprod{Tx,y} = \dotprod{x,T^*y},\]
for all $x \in \cX$ and $y \in \cY.$ We say that $T^*$ is the \emph{adjoint} of $T$. We say $T$ is \emph{self-adjoint} if $T =T^*.$
Since for all $x\in \cX$ and $s\in \cS$,
\[ \langle x , (ST)^*s \rangle = \langle ST  x , s \rangle_\cS = \langle T  x , S^* s \rangle_\cY =  \langle  x , T^* S^* s \rangle,\]
we have \[(ST)^* = T^* S^*.\]

\begin{lemma} \label{lem:decomp}For $T \in L(\cX,\cY)$ we have that $\Range{T^*}^{\perp} = \Null{T}.$ Thus
	\begin{eqnarray}
	\cX &= & \Range{T^*} \oplus \Null{T}\\
	\cY &= & \Range{T} \oplus \Null{T^*}
	\end{eqnarray}
\end{lemma}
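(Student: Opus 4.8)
The statement is the standard orthogonal-decomposition fact for a bounded linear operator $T \in L(\cX,\cY)$ between Euclidean (finite-dimensional inner product) spaces. The core claim is the identity $\Range{T^*}^\perp = \Null{T}$; once this is in hand, both direct-sum decompositions follow by applying the elementary fact that a Euclidean space splits as the orthogonal direct sum of any subspace and its orthogonal complement, $\cX = \cW \oplus \cW^\perp$. So the plan has two stages: first prove the kernel/range orthogonality identity for $T$, then deduce the two displayed equations.

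For the first stage, I would argue by a chain of equivalences using only the defining property of the adjoint, $\dotprod{Tx,y} = \dotprod{x,T^*y}$ for all $x\in\cX$, $y\in\cY$. Fix $x\in\cX$. Then $x \in \Range{T^*}^\perp$ iff $\dotprod{x, T^*y} = 0$ for all $y\in\cY$ (since $\{T^*y : y\in\cY\}$ spans $\Range{T^*}$, orthogonality to the range is the same as orthogonality to every $T^*y$), iff $\dotprod{Tx, y} = 0$ for all $y\in\cY$ by the adjoint identity, iff $Tx = 0$ (taking $y = Tx$ shows $\norm{Tx}^2 = 0$; the converse is immediate), iff $x \in \Null{T}$. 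This gives $\Range{T^*}^\perp = \Null{T}$. Taking orthogonal complements of both sides and using $\cW^{\perp\perp} = \cW$ for subspaces of a finite-dimensional space yields the companion identity $\overline{\Range{T^*}} = \Null{T}^\perp$, and since we are in finite dimensions every subspace is closed, so $\Range{T^*} = \Null{T}^\perp$.

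For the second stage, apply the general orthogonal splitting: for any subspace $\cW \subseteq \cX$ one has $\cX = \cW \oplus \cW^\perp$ (this is a standard textbook fact in finite-dimensional inner product spaces, e.g.\ \cite{pedersen1996}). Taking $\cW = \Range{T^*}$ and using $\Range{T^*}^\perp = \Null{T}$ gives $\cX = \Range{T^*} \oplus \Null{T}$. For the second decomposition, simply repeat the whole argument with $T$ replaced by $T^*$, noting $(T^*)^* = T$ (which follows from the displayed computation on adjoints of compositions, or directly from the defining relation), so that $\Range{T}^\perp = \Null{T^*}$ and hence $\cY = \Range{T} \oplus \Null{T^*}$.

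There is essentially no hard obstacle here — the proof is routine once the adjoint relation and the finite-dimensionality (which makes all subspaces closed, so that ranges are closed and double complements return the original subspace) are used. The only point requiring a word of care is making explicit that "orthogonal to $\Range{T^*}$" is equivalent to "orthogonal to $T^*y$ for every $y$," i.e.\ that it suffices to test against a spanning set; and that closedness of $\Range{T^*}$ is what lets us pass from $\Range{T^*}^\perp = \Null{T}$ to $\Range{T^*} = \Null{T}^\perp$. In an infinite-dimensional Hilbert space one would only get $\overline{\Range{T^*}} = \Null{T}^\perp$, but since the paper works in Euclidean (finite-dimensional) spaces this subtlety does not arise.
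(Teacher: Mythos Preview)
Your proposal is correct and complete: the chain of equivalences via the adjoint identity is the standard argument, and your remarks on closedness in finite dimensions are apt. The paper itself does not give a proof but simply cites the textbook reference \cite{pedersen1996} (3.2.6), so your write-up in fact supplies the details that the paper defers to the literature.
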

\begin{proof}
	See 3.2.6 in~\cite{pedersen1996}.
\end{proof}

\subsection{Positive Operators}
We say that $G\in L(\cX)$ is positive if it is self-adjoint and if
$\dotprod{x,Gx} \geq 0$ for all $ x \in \cX$.
Let $(e_j)_{j=1}^{\infty} \in \cX$ be an orthonormal basis. The trace of $G$ is defined as
\begin{equation}\label{eq:tracedef}
\Tr{G} \eqdef \sum_{j=1}^{\infty} \dotprod{G e_j, e_j}.
\end{equation}
The definition of trace is independent of the choice of basis due to the following lemma.
\begin{lemma} If $U$ is unitary and $G\geq 0$ then
$\Tr{UGU^*} = \Tr{G}.$
\end{lemma}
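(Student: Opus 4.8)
The plan is to evaluate $\Tr{UGU^*}$ directly from the defining formula~\eqref{eq:tracedef} and manipulate the resulting sum, taking care \emph{not} to invoke independence of the trace on the chosen basis, since that independence is precisely the consequence this lemma is meant to furnish. Fix the orthonormal basis $(e_j)$ used in~\eqref{eq:tracedef}. Using only the adjoint identity $\dotprod{UGU^*e_j,e_j}=\dotprod{GU^*e_j,U^*e_j}$, write $\Tr{UGU^*}=\sum_j \dotprod{GU^*e_j,U^*e_j}$; note also that $UGU^*$ is self-adjoint and positive, so its trace is well defined. Since $U$ is unitary, $U^*$ is a surjective isometry, hence the vectors $f_j\eqdef U^*e_j$ form a \emph{complete} orthonormal basis of $\cX$, and the task reduces to showing $\sum_j\dotprod{Gf_j,f_j}=\sum_j\dotprod{Ge_j,e_j}$.

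Next I would introduce the unique positive square root $G^{1/2}$ of $G$, which exists by Lemma~\ref{lem:squareroot}, and rewrite each term as $\dotprod{Gf_j,f_j}=\norm{G^{1/2}f_j}^2$. Expanding this squared norm by Parseval's identity in the orthonormal basis $(e_i)$, and then moving $G^{1/2}$ across the inner product using that $G^{1/2}$ is self-adjoint, gives $\norm{G^{1/2}f_j}^2=\sum_i |\dotprod{f_j,G^{1/2}e_i}|^2$, so that $\Tr{UGU^*}=\sum_j\sum_i |\dotprod{f_j,G^{1/2}e_i}|^2$. Because every summand is nonnegative, Tonelli's theorem for sums permits interchanging the order of summation; summing first over $j$ and applying Parseval once more — this time in the orthonormal basis $(f_j)=(U^*e_j)$, which is complete precisely because $U$ is unitary — collapses the inner sum to $\norm{G^{1/2}e_i}^2=\dotprod{Ge_i,e_i}$. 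Hence $\Tr{UGU^*}=\sum_i\dotprod{Ge_i,e_i}=\Tr{G}$.

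The one genuine subtlety, and the step I expect to need the most care, is the interchange of the two summations: this is exactly where positivity of $G$ enters, guaranteeing that all terms $|\dotprod{f_j,G^{1/2}e_i}|^2$ are nonnegative so that Tonelli applies irrespective of whether $\cX$ is finite- or infinite-dimensional (in the finite-dimensional case the interchange is of course trivial). A secondary point worth stating cleanly is the completeness, not merely orthonormality, of $(U^*e_j)$, since Parseval is applied in that basis. If one were willing to assume the cyclic identity $\Tr{ST}=\Tr{TS}$ for the relevant trace-class operators, there is a one-line alternative, $\Tr{UGU^*}=\Tr{U^*UG}=\Tr{G}$; I would avoid relying on it here, however, as that property has not been established in the text and, like basis-independence, is most naturally proved by the same Parseval/Tonelli computation.
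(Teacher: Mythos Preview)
Your argument is correct: the Parseval/Tonelli computation you outline is exactly the standard textbook proof of basis-independence of the trace for positive operators, and you handle the two delicate points (completeness of $(U^*e_j)$ and the interchange of sums) properly. The paper itself does not prove this lemma at all but simply cites Pedersen~\cite{pedersen1996}, Sections~3.4.3--3.4.4; what you have written is essentially the proof one finds there, so there is nothing to compare.
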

\begin{proof}
	See 3.4.3 and 3.4.4 in~\cite{pedersen1996}.
\end{proof}
\begin{lemma} \label{lem:projrank} If $P \in L(\cX)$ is a projection matrix then
	$\Tr{P} = \dim(\Range{P}) = \Rank{P}.$
\end{lemma}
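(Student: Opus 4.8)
The plan is to compute $\Tr{P}$ directly in an orthonormal basis chosen to be compatible with the decomposition of $\cX$ induced by $P$, taking advantage of the fact (the lemma immediately preceding this one) that the value of the trace does not depend on which orthonormal basis is used. First I would record the elementary identity that a projection restricts to the identity on its own range: if $v \in \Range{P}$, then $v = Pw$ for some $w$, so $Pv = P^2 w = Pw = v$. Since the projections considered in this paper are self-adjoint, $\Null{P} = \Range{P^*}^{\perp} = \Range{P}^{\perp}$ by Lemma~\ref{lem:decomp}, so that $\cX = \Range{P} \oplus \Null{P}$ is an \emph{orthogonal} direct sum, with $P$ acting as the identity on the first summand and as zero on the second.

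Next I would choose an orthonormal basis $(e_j)$ of $\cX$ adapted to this splitting: let $e_1,\dots,e_r$ be an orthonormal basis of $\Range{P}$, where $r \eqdef \dim(\Range{P}) = \Rank{P}$, and let the remaining basis vectors lie in $\Null{P}$. Then $\dotprod{Pe_j,e_j} = \dotprod{e_j,e_j} = 1$ for $j \le r$, while $\dotprod{Pe_j,e_j} = 0$ for $j > r$. Substituting into the definition~\eqref{eq:tracedef} of the trace gives $\Tr{P} = \sum_j \dotprod{Pe_j,e_j} = r = \dim(\Range{P}) = \Rank{P}$, which is exactly the assertion.

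The argument is entirely routine; there is no real obstacle. The only point deserving a word of care is the orthogonality of $\Range{P}$ and $\Null{P}$, which is why I would explicitly invoke Lemma~\ref{lem:decomp} (and hence use that the projections in play are self-adjoint) rather than treating $P$ as a bare idempotent, for which the orthonormal-basis definition of the trace would require a slightly different bookkeeping.
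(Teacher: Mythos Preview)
Your proof is correct and follows essentially the same approach as the paper: pick an orthonormal basis adapted to $\Range{P}$ and read off the trace from~\eqref{eq:tracedef}. Your version is in fact slightly more careful, since you explicitly complete the basis of $\Range{P}$ to one of $\cX$ using the orthogonal complement $\Null{P}$ (invoking Lemma~\ref{lem:decomp} and self-adjointness), whereas the paper's proof only names the basis vectors in $\Range{P}$ and leaves the vanishing of the remaining terms implicit.
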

\begin{proof}  Let $d = \dim(\Range{P})$ which is possibly infinite.
	Given that $P$ is a projection we have that $\Range{P}$ is a closed subspace and thus there exists orthonormal basis  $(e_j)_{j=1}^{d} $ of $\Range{P}$. Consequently,
	$\Tr{P} \overset{\eqref{eq:tracedef}}{=} \sum_{j=1}^{d} 1 = d= \dim(\Range{P}).$
\end{proof}

A \emph{square root} of an operator $G \in L(\cX)$ is an operator $R\in L(\cX) $ such that $R^2 =G.$
\begin{lemma} \label{lem:squareroot}
	If $G: \cX \rightarrow \cX$ is positive, then there exists a unique positive square root of $G$ which we denote by $G^{1/2}.$ 
\end{lemma}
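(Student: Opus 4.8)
The plan is to reduce everything to the spectral theorem, which is available since $\cX$ is finite-dimensional (in an infinite-dimensional setting one would instead invoke the continuous functional calculus, or simply cite~\cite{pedersen1996}). \textbf{Existence.} Because $G$ is self-adjoint on a finite-dimensional space, there is an orthonormal basis $e_1,\dots,e_n$ of $\cX$ with $Ge_i=\lambda_i e_i$, and positivity gives $\lambda_i=\dotprod{Ge_i,e_i}\geq 0$. Define $R\in L(\cX)$ by $Re_i=\sqrt{\lambda_i}\,e_i$, extended linearly. Then $R$ is self-adjoint (it is diagonal with real entries in an orthonormal basis), it is positive since $\dotprod{Rx,x}=\sum_i\sqrt{\lambda_i}\,\lvert\dotprod{x,e_i}\rvert^2\geq 0$, and $R^2e_i=\lambda_i e_i=Ge_i$ for every $i$, so $R^2=G$. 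Set $G^{1/2}\eqdef R$.

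\textbf{Uniqueness.} Let $R'\in L(\cX)$ be any positive operator with $(R')^2=G$. First, $R'$ commutes with $G$, since $R'G=R'(R')^2=(R')^2R'=GR'$. Moreover $R=G^{1/2}$ is a polynomial in $G$: choosing $p$ with $p(\lambda_i)=\sqrt{\lambda_i}$ at each distinct eigenvalue of $G$ (Lagrange interpolation) gives $p(G)e_i=\sqrt{\lambda_i}\,e_i=Re_i$, hence $R=p(G)$. Therefore $R'$ commutes with $R$ as well, and $(R-R')(R+R')=R^2-(R')^2=G-G=0$. Fix $x\in\cX$ and put $y=(R-R')x$; then $(R+R')y=0$, so $0=\dotprod{(R+R')y,y}=\dotprod{Ry,y}+\dotprod{R'y,y}$ with both summands nonnegative, forcing $\dotprod{Ry,y}=0$. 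Positivity of $R$ and Cauchy--Schwarz for the semidefinite form $(u,v)\mapsto\dotprod{Ru,v}$ give $\dotprod{Ry,z}^2\leq\dotprod{Ry,y}\dotprod{Rz,z}=0$ for all $z$, hence $Ry=0$; likewise $R'y=0$. Thus $(R-R')y=0$, i.e. $(R-R')^2x=0$, and since $x$ is arbitrary $(R-R')^2=0$. As $R-R'$ is self-adjoint, $\norm{(R-R')x}^2=\dotprod{(R-R')^2x,x}=0$ for all $x$, so $R'=R$.

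I do not anticipate a genuine obstacle here; the only mildly delicate points lie inside the uniqueness argument, namely that any positive square root of $G$ must commute with $G$ (and hence with $G^{1/2}=p(G)$) and that a positive operator $R$ with $\dotprod{Ry,y}=0$ annihilates $y$. If one prefers to avoid the polynomial representation, the alternative is to observe that $R'$, commuting with $G$, leaves each eigenspace $\Null{G-\lambda_i I}$ invariant, and on that eigenspace restricts to a positive operator whose square is $\lambda_i I$, which must equal $\sqrt{\lambda_i}\,I$ there; since this matches $G^{1/2}$ on every eigenspace, $R'=G^{1/2}$.
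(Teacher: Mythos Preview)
Your proof is correct and self-contained. The paper, however, does not prove this lemma at all: it simply cites the standard reference (3.2.11 in~\cite{pedersen1996}), which handles the general Hilbert-space case via the continuous functional calculus. Your approach is genuinely different in that you exploit the finite-dimensionality assumed in the paper to give an explicit spectral-theorem construction and a direct uniqueness argument; this buys a fully elementary, reference-free proof at the cost of not covering the infinite-dimensional case (which the paper does not need anyway). Both the polynomial-interpolation route and the eigenspace-invariance alternative you sketch for uniqueness are standard and valid.
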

\begin{proof}
	See 3.2.11 in~\cite{pedersen1996}. 
\end{proof}

\begin{lemma} \label{lem:decompTGT} For any $T \in L(\cX,\cY)$ and any $G \in L(\cY,\cY)$ that is positive and injective,
	\begin{equation} \label{eq:NullTGT}
	\Null{T} =  \Null{T^*GT},
	\end{equation}
	and
	\begin{equation}\label{eq:RangeTGT}
	\overline{\Range{T^*}} =  \overline{\Range{T^*GT}}.
	\end{equation}
\end{lemma}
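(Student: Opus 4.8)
The plan is to deduce both statements from the corresponding facts for the operator $G^{1/2}T$, using Lemma~\ref{lem:squareroot} to extract the positive square root $G^{1/2}$ of $G$, and Lemma~\ref{lem:decomp} for the range/nullspace orthogonality. The key observation is that since $G$ is positive and injective, so is $G^{1/2}$: injectivity follows because $G^{1/2}x = 0$ implies $Gx = (G^{1/2})^2 x = 0$, hence $x=0$. In particular $\Null{G^{1/2}} = \{0\}$.

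First I would prove~\eqref{eq:NullTGT}. For one inclusion, if $Tx = 0$ then clearly $T^*GTx = 0$. For the reverse, suppose $T^*GTx = 0$. Then
\[
0 = \dotprod{T^*GTx, x} = \dotprod{GTx, Tx} = \dotprod{G^{1/2}Tx, G^{1/2}Tx} = \norm{G^{1/2}Tx}^2,
\]
so $G^{1/2}Tx = 0$, and by injectivity of $G^{1/2}$ we get $Tx = 0$. This proves $\Null{T^*GT} \subseteq \Null{T}$, and combined with the trivial inclusion gives~\eqref{eq:NullTGT}.

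Next I would prove~\eqref{eq:RangeTGT} by taking orthogonal complements (closures of ranges are recovered as the orthogonal complement of the corresponding nullspace, via Lemma~\ref{lem:decomp}, which gives $\overline{\Range{S}} = \Null{S^*}^\perp$ for any bounded $S$; alternatively one uses $\overline{\Range{S^*}} = \Null{S}^\perp$ directly). Applying Lemma~\ref{lem:decomp} to $T$ gives $\overline{\Range{T^*}} = \Null{T}^\perp$. Now observe that $(T^*GT)^* = T^*G^*T = T^*GT$ since $G$ is self-adjoint, so $T^*GT$ is self-adjoint; applying Lemma~\ref{lem:decomp} to $T^*GT$ gives $\overline{\Range{T^*GT}} = \Null{(T^*GT)^*}^\perp = \Null{T^*GT}^\perp$. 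By~\eqref{eq:NullTGT} the two nullspaces coincide, hence so do their orthogonal complements, yielding~\eqref{eq:RangeTGT}.

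The only mild subtlety — and the step I would be most careful about — is the use of the square root in infinite dimensions: one must invoke Lemma~\ref{lem:squareroot} to guarantee a \emph{self-adjoint positive} square root exists, and check that positivity plus injectivity of $G$ transfers to $G^{1/2}$ as above. Everything else is a routine manipulation of inner products and the standard range/nullspace duality of Lemma~\ref{lem:decomp}; no completeness or closure issues arise beyond the explicit closures already written into the statement.
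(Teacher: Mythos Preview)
Your proposal is correct and follows essentially the same route as the paper's own proof: establish $\Null{T}=\Null{T^*GT}$ via the positive square root $G^{1/2}$ from Lemma~\ref{lem:squareroot} and injectivity of $G^{1/2}$, then obtain~\eqref{eq:RangeTGT} by taking orthogonal complements through Lemma~\ref{lem:decomp}. Your write-up is slightly more explicit (noting self-adjointness of $T^*GT$ and justifying injectivity of $G^{1/2}$), but the argument is the same.
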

\begin{proof}
	The inclusion $\Null{T} \subset \Null{T^*GT}$ is immediate. For the opposite inclusion, let $x \in \Null{T^*GT}.$ Since $G$ is positive we have by Lemma~\ref{lem:squareroot} that there exists a square root with $G^{1/2}G^{1/2} =G.$ 
Therefore,
	$\dotprod{x,T^*GT x} = \dotprod{G^{1/2}Tx,G^{1/2}Tx} =0,$
	which implies that $G^{1/2}Tx =0$. Since $G$ is injective, it follows that $G^{1/2}$ is injective and thus $x \in \Null{T}$. Finally~\eqref{eq:RangeTGT} follows by taking the orthogonal complements of~\eqref{eq:NullTGT} and observing Lemma~\ref{lem:decomp}. 
\end{proof}

As an immediate consequence of~\eqref{eq:NullTGT} and~\eqref{eq:RangeTGT} we have the following lemma.
\begin{corollary} For $G: \cX \rightarrow \cX$ positive we have that
	\begin{eqnarray}
	\Null{G^{1/2}} & =& \Null{G} \label{eq:NullGhalf}\\
	\overline{\Range{G^{1/2}}} & =& \overline{\Range{G}}  \label{eq:RangeGhalf}
	\end{eqnarray}
\end{corollary}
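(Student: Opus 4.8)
The plan is to obtain this corollary as a direct specialization of Lemma~\ref{lem:decompTGT}; indeed that lemma was stated in the seemingly more general form, with an arbitrary positive injective operator wedged between $T^{*}$ and $T$, precisely so that this step is painless. First I would invoke Lemma~\ref{lem:squareroot} to produce the unique positive square root $G^{1/2}\in L(\cX)$; since it is positive it is in particular self-adjoint, so $(G^{1/2})^{*}=G^{1/2}$. Then I apply Lemma~\ref{lem:decompTGT} with the choice $T:=G^{1/2}$ and with the positive injective operator appearing in its statement taken to be the identity $I\in L(\cX)$ --- the identity is self-adjoint, nonnegative and injective, so the hypotheses are met. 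Under these substitutions the operator $T^{*}GT$ occurring in \eqref{eq:NullTGT}--\eqref{eq:RangeTGT} becomes $(G^{1/2})^{*}\,I\,G^{1/2}=G^{1/2}G^{1/2}=G$.

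Reading off the two conclusions of Lemma~\ref{lem:decompTGT} then gives exactly the corollary: \eqref{eq:NullTGT} becomes $\Null{G^{1/2}}=\Null{G}$, which is \eqref{eq:NullGhalf}, and \eqref{eq:RangeTGT} becomes $\overline{\Range{G^{1/2}}}=\overline{\Range{G}}$, which is \eqref{eq:RangeGhalf}. There is essentially no obstacle here: the only points that genuinely need checking are the self-adjointness of $G^{1/2}$ and the (trivial) verification that $I$ is an admissible middle factor, after which the statement is literally a rewriting of \eqref{eq:NullTGT} and \eqref{eq:RangeTGT}.

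For completeness I would also record the one-line direct argument, in case one prefers not to route through Lemma~\ref{lem:decompTGT}. If $Gx=0$ then $\norm{G^{1/2}x}^{2}=\dotprod{G^{1/2}x,G^{1/2}x}=\dotprod{x,Gx}=0$, so $G^{1/2}x=0$; conversely $G^{1/2}x=0$ forces $Gx=G^{1/2}(G^{1/2}x)=0$. Hence $\Null{G^{1/2}}=\Null{G}$. The range identity then follows by taking orthogonal complements and using Lemma~\ref{lem:decomp} together with the self-adjointness of $G$ and $G^{1/2}$: $\overline{\Range{G^{1/2}}}=\Null{G^{1/2}}^{\perp}=\Null{G}^{\perp}=\overline{\Range{G}}$.
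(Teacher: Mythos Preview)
Your proposal is correct and matches the paper's approach exactly: the paper simply states that the corollary is an immediate consequence of \eqref{eq:NullTGT} and \eqref{eq:RangeTGT}, which is precisely your specialization $T=G^{1/2}$ with the identity as the middle factor. Your supplementary direct argument is also fine and, if anything, more explicit than what the paper provides.
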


\subsection{Pseudoinverse}
 For a bounded linear operator $T$ define the pseudoinverse of $T$ as follows.
\begin{definition} \label{def:pseudoinverse} Let $T \in L(\cX,\cY)$ such that $\Range{T}$ is closed. $T^{\dagger}: \cY \rightarrow \cX$ is said to be the pseudoinverse if
\begin{enumerate}[i)]
\item $T^{\dagger} Tx = x $ for all $x \in \Range{T^* }.$
\item $T^{\dagger} x = 0$ for all $x \in \Null{T^* }.$
\item If $x \in \Null{T}$ and $y \in \Range{T^* }$ then $T^{\dagger}(x+y)=T^{\dagger}x +T^{\dagger}y.$
\end{enumerate}
\end{definition}
It follows directly from the definition (see~\cite{Desoer1963} for details) that $T^{\dagger}$ is a unique bounded linear operator. The following properties of pseudoinverse will be important.

\begin{lemma}[Properties of pseudoinverse] \label{lem:pseudo} Let $T \in L(\cX,\cY)$ such that $\Range{T}$ is closed. It follows that 
\begin{enumerate}[i)]
\item $ T T^\dagger T = T$ \label{it:pseudoTTdagT}
\item $\Range{T^{\dagger}} = \Range{T^*}$ and $\Null{T^{\dagger}} = \Null{T^*}$\label{it:pseudorange}
\item $(T^*)^{\dagger} = (T^{\dagger})^*$ \label{it:pseudoadjoint}
\item If $T$ is self-adjoint and positive then $T^{\dagger}$ is self-adjoint and positive.  \label{it:pseudoadjpos}
\item   $T^{\dagger}T T^* = T^*$, that is, $T^{\dagger}T$ projects orthogonally onto $ \Range{T^* }$ and along $\Null{T}.$  \label{it:pseudoproj}
\item Consider the linear system $Tx=d$ where $d \in \Range{T}$. It follows that
\begin{equation}
\textstyle
\label{eq:pseudoleastnorm}T^{\dagger}d = \arg \min_{x \in \cX} \tfrac{1}{2}\norm{x}^2 \quad \mbox{subject to} \quad Tx=d.
\end{equation}     \label{it:pseudoleastnorm}
\item $T^{\dagger} = T^* (TT^*)^{\dagger}$ \label{it:pseudoTadjTTadj}
\end{enumerate}
\end{lemma}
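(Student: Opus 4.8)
The plan is to verify the seven properties in an order that lets later items build on earlier ones, working throughout from Definition~\ref{def:pseudoinverse} and the orthogonal decompositions $\cX = \Range{T^*}\oplus\Null{T}$ and $\cY = \Range{T}\oplus\Null{T^*}$ from Lemma~\ref{lem:decomp}. First I would establish item~\emph{\ref{it:pseudoTTdagT}} ($TT^\dagger T = T$): for $x\in\cX$ write $x = x_1 + x_2$ with $x_1\in\Range{T^*}$, $x_2\in\Null{T}$; then $T^\dagger T x = T^\dagger T x_1 = x_1$ by property~(i) of the definition (note $Tx_2=0$), hence $TT^\dagger T x = Tx_1 = Tx$. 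Next, item~\emph{\ref{it:pseudorange}}: $\Null{T^\dagger}\supseteq\Null{T^*}$ is property~(ii) of the definition, while property~(iii) plus (i) show $T^\dagger$ is injective on $\Range{T^*}$, and since $\cY = \Range{T}\oplus\Null{T^*} = (\Null{T^*})^\perp\oplus\Null{T^*}$ and $\Range{T}^\perp = \Null{T^*}$, one gets $\Null{T^\dagger} = \Null{T^*}$ exactly; for the range, property~(i) gives $\Range{T^*}\subseteq\Range{T^\dagger}$, and property~(iii) together with (ii) forces $\Range{T^\dagger}\subseteq\Range{T^*}$, so $\Range{T^\dagger}=\Range{T^*}$.

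With these in hand I would turn to item~\emph{\ref{it:pseudoproj}}: from item~\emph{\ref{it:pseudoTTdagT}}, $T(T^\dagger T - I)=0$ on $\Range{T^*}$ and $T^\dagger T$ kills $\Null{T}$ (by the definition's property~(i) applied componentwise, since $T^\dagger T x_1 = x_1$ and $T^\dagger T x_2 = 0$), so $T^\dagger T$ is precisely the orthogonal projector onto $\Range{T^*}$ along $\Null{T}$; applying it to $T^*y\in\Range{T^*}$ gives $T^\dagger T T^* = T^*$. Item~\emph{\ref{it:pseudoleastnorm}} then follows: any solution of $Tx=d$ has the form $x = T^\dagger d + n$ with $n\in\Null{T}$ — indeed $T(T^\dagger d) = TT^\dagger T x_0 = Tx_0 = d$ for any particular solution $x_0$ with $x_0\in\Range{T^*}$ obtained via item~\emph{\ref{it:pseudoproj}} — and since $T^\dagger d\in\Range{T^*}\perp\Null{T}$, the Pythagorean theorem gives $\norm{x}^2 = \norm{T^\dagger d}^2 + \norm{n}^2$, minimized at $n=0$. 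For item~\emph{\ref{it:pseudoadjoint}} I would check that $(T^\dagger)^*$ satisfies the three defining axioms for the pseudoinverse of $T^*$, using Lemma~\ref{lem:decomp} to swap the roles of $\Range{T^*}/\Null{T}$ with $\Range{T}/\Null{T^*}$; uniqueness of the pseudoinverse then gives $(T^*)^\dagger = (T^\dagger)^*$. Item~\emph{\ref{it:pseudoadjpos}} is immediate from~\emph{\ref{it:pseudoadjoint}}: if $T=T^*$ then $(T^\dagger)^* = (T^*)^\dagger = T^\dagger$, so $T^\dagger$ is self-adjoint; positivity follows since on $\Range{T}=\Range{T^*}$ the operator $T$ is a bijection with positive inverse (diagonalize, or note $\dotprod{T^\dagger y, y} = \dotprod{T^\dagger y, TT^\dagger y}\ge 0$ using item~\emph{\ref{it:pseudoTTdagT}} and $y\in\Range{T}$ for $y$ in the relevant subspace, and $T^\dagger$ vanishes on the orthogonal complement).

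Finally, item~\emph{\ref{it:pseudoTadjTTadj}} ($T^\dagger = T^*(TT^*)^\dagger$): both sides vanish on $\Null{T^*}$ (the left by item~\emph{\ref{it:pseudorange}}, the right because $\Null{(TT^*)^\dagger} = \Null{(TT^*)^*} = \Null{TT^*} = \Null{T^*}$ using Lemma~\ref{lem:decompTGT} or a direct argument), so it suffices to check equality on $\Range{T} = \Range{TT^*}$ (again by Lemma~\ref{lem:decompTGT}, or \eqref{eq:RangeTGT} with $G=I$); for $y = TT^*w$ in this subspace, the right side is $T^*(TT^*)^\dagger TT^* w = T^* w'$ where $w'$ is the projection of $w$ onto $\Range{TT^*}$-relevant part, and one verifies $T(T^*(TT^*)^\dagger y) = TT^*(TT^*)^\dagger y = y$ by item~\emph{\ref{it:pseudoTTdagT}} applied to $TT^*$, while $T^*(TT^*)^\dagger y\in\Range{T^*}$; by the characterization in item~\emph{\ref{it:pseudoleastnorm}} this identifies it as $T^\dagger y$. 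The main obstacle I anticipate is the bookkeeping around closedness of $\Range{T}$ and making sure the orthogonal-complement arguments (via Lemma~\ref{lem:decomp}) are applied to genuinely closed subspaces, so that $\Range{TT^*} = \Range{T}$ and the projections are well-defined; everything else is a routine unwinding of the definition. Since the paper says these are standard facts, citing \cite{pedersen1996, Desoer1963} for the functional-analytic underpinnings and giving the above verifications should suffice. \qed
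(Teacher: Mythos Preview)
Your proposal is correct and follows essentially the same route as the paper: the paper simply cites \cite{Desoer1963} for items~\emph{\ref{it:pseudoTTdagT}}--\emph{\ref{it:pseudoproj}}, sketches item~\emph{\ref{it:pseudoleastnorm}} via $TT^\dagger d = d$ and the orthogonal decomposition $\Range{T^*}\oplus\Null{T}$ (exactly your argument), and says item~\emph{\ref{it:pseudoTadjTTadj}} follows from the previous items, whereas you spell out the verifications from Definition~\ref{def:pseudoinverse} explicitly. One small slip to fix: in your treatment of item~\emph{\ref{it:pseudorange}} you write ``$T^\dagger$ is injective on $\Range{T^*}$'' but mean $\Range{T}$ (the domain of $T^\dagger$ is $\cY$).
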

\begin{proof}
The proof of items~{\it \ref{it:pseudoTTdagT}, \ref{it:pseudorange}, \ref{it:pseudoadjoint}, \ref{it:pseudoadjpos}, \ref{it:pseudoproj}} can be found in~\cite{Desoer1963}. The proof of item~{\it\ref{it:pseudoleastnorm}} is alternative characterization of the pseudoinverse and it can be established by using that $d \in \Range{T}$ together with item~{\it\ref{it:pseudoTTdagT}} thus $TT^{\dagger}d =d$. The proof then follows by using the orthogonal decomposition $\Range{T^*} \oplus \Null{T}$ to show that $T^{\dagger}d $ is indeed the minimum of~\eqref{eq:pseudoleastnorm}. Finally item~\eqref{it:pseudoTadjTTadj} is a direct consequence of the previous items.
\end{proof}

\end{document}